\numberwithin{equation}{section}
\def\AA{{\mathbb A}}
\def\CC{{\mathbb C}}  
\def\DD{{\mathbb D}}
\def\HH{{\mathbb H}}
\def\Hcal{{\mathbb H}}
\def\PP{{\mathbb P}}
\def\QQ{{\mathbb Q}} 
\def\RR{{\mathbb R}} 
\def\ZZ{{\mathbb Z}}
\def\G{\Gamma}
\def\ct{{\rm cont}}
\def\qlog{\mathit{qlog}}
\def\llog{\mathit{log}}
\def\tot{{\mathit tot}} 
\def\bs{\backslash}
\newcommand{\eps}{\varepsilon}
\newcommand{\p}{\partial}
\def\Acal{{\mathcal A}}
\def\Ccal{{\mathcal C}}
\def\Cscr{{\mathscr C}}
\def\Dscr{{\mathscr D}}
\def\Fcal{{\mathcal F}} 
\def\Hcal{{\mathcal H}} 
\def\Ical{{\mathcal I}} 
\def\Kcal{{\mathcal K}}
\def\Lcal{{\mathcal L}}
\def\Lscr{{\mathscr L}}
\def\Mcal{{\mathcal M}}
\def\Ocal{{\mathcal O}}
\def\Pcal{{\mathcal P}}
\def\Pscr{{\mathscr P}}
\def\Rcal{{\mathcal R}}
\def\Scal{{\mathfrak S}}
\def\Vcal{{\mathcal V}}
\def\la{\langle}
\def\ra{\rangle}
\def\half{{\tfrac{1}{2}}}
\def\glie{{\mathfrak{g}}}
\def\hlie{{\mathfrak{h}}}
\def\klie{{\mathfrak{k}}}
\def\mfrak{{\mathfrak{m}}}
\def\cfrak{\mathfrak{c}}
\def\gfrak{\mathfrak{g}}
\def\nfrak{\mathfrak{n}}
\def\Rscr{\mathscr{R}}
\def\Vect{\mathscr{V}\!ect}
\def\ssm{\smallsetminus}
\def\pt{{\scriptscriptstyle\bullet}}
\newcommand\qLog{{}^q\!\!\Lcal og}
\newcommand\ad{\operatorname{ad}}
\newcommand\br{\operatorname{\mathit{br}}}
\newcommand\coker{\operatorname{Coker}}
\newcommand\edge{\operatorname{edg}}
\newcommand\Gr{\operatorname{gr}}
\newcommand\Hom{\operatorname{Hom}}
\newcommand\inj{\operatorname{Inj}}
\newcommand\KS{\operatorname{KS}}
\newcommand\Lie{\operatorname{Lie}}
\newcommand\sym{\operatorname{Sym}}
\newcommand\SL{\operatorname{SL}}
\newcommand\res{\operatorname{Res}}
\newcommand\sign{\operatorname{sgn}}
\newcommand\slin{\operatorname{\mathfrak{sl}}}
\newcommand\spec{\operatorname{Spec}}
\newcommand\supp{\operatorname{supp}}
\newcommand\PBW{\operatorname{PBW}}
\newcommand\PSL{\operatorname{PSL}}
\newcommand\tr{\operatorname{Tr}}
\newcommand\U{\operatorname{U}}
\newtheorem{theorem}{Theorem}[section]
\newtheorem{lemma}[theorem]{Lemma}
\newtheorem{proposition}[theorem]{Proposition}
\newtheorem{corollary}[theorem]{Corollary}
\newtheorem{definition}{Definition}\numberwithin{definition}{section}
\theoremstyle{remark}
\newtheorem{example}[theorem]{Example}
\newtheorem{remark}[theorem]{Remark}
\newtheorem{question}[theorem]{Question}
\title[Conformal blocks and Hodge theory]{Conformal blocks and the cohomology of configuration spaces of curves}
\author{Eduard Looijenga}
\address{\vbox{\noindent Mathematisch Instituut, Universiteit Utrecht\newline
Mathematics Department, University of Chicago}}
\begin{document}
\subjclass[2020]{14H81, 55R80, 81R10}
\begin{abstract}
We realize any space of conformal blocks attached to a punctured curve inside the cohomology of a configuration space of
that curve and compare the WZW connection with the Gau\ss-Manin connection.
\end{abstract}
\maketitle
\hfill{\textsl{In memory of Tonny A.~Springer (1926-2011)}}

\section*{Introduction} 
The theory of conformal blocks begins with taking as input some data of a discrete nature: a simple complex Lie algebra 
$\glie$, a positive integer $\ell$ and a finite number of finite dimensional irreducible representations 
$V_1, \dots, V_n$ of $\glie$. Once this is given,  the theory assigns to every compact Riemann surface $C$ and 
set of pairwise distinct  point $p_1,\dots , p_n$ on $C$ a finite dimensional complex vector space  that for the purpose of 
this introduction we shall denote by $\Vcal_\ell(C;  (V_i, p_i)_{i=1}^n)$ (if we  want this vector space  to be nonzero, 
then we must in fact require that each $\glie$-representation $V_i$ is \emph{of level} $\le \ell$, but there is no need 
to recall that condition here).
The construction of this vector space is essentially algebraic and so it is not a surprise that it depends holomorphically 
on the pointed surface $(C; p_1, \dots, p_n)$.  But what makes it remarkable is that a much stronger property is true: 
if we vary the complex structure on 
$C$ and the position of the points in holomorphic family over a  contractible complex manifold $S$, then the 
associated  projective spaces are canonically identified with each other. To be precise, the vector spaces in question 
define a holomorphic vector bundle 
over $S$ which comes endowed with a projectively flat connection (it will be an ordinary  flat connection if we also give a generator of the 
tangent space of $C(s)$ at every $p_i(s)$ and a generator of the associated Hodge line  $\det H^0(C(s), \Omega_{C(s)})$, all depending holomorphically on $s\in S$). This is the famous \emph{WZW connection}, 
which in another setting is called the \emph{Hitchin connection}. It is expected to be a unitary connection and this has been proved to be so when $\glie=\slin (r)$.

This connection is not easily written down, unless $C$ is 
the Riemann sphere in which case it has a very concrete matrix description given by the 
\emph{Knizhnik-Zamolodchikov} form. Its projective flatness gives rise to projective representations of the 
$n$-pointed mapping class group of genus $g=g(C)$ and this, in turn, is at the basis of its topological applications in 
knot theory. It is also via this interpretation that its unitarity  has been checked for $\glie=\slin (r)$ via the theory of quantum representations.   The theory is now well over  30 years old and comes in several  incarnations, 
among them a purely topological one, 
an algebraic one in the guise of the representation theory of quantum groups,  and one grounded in 
analysis. Yet so far it resisted in being understood in conventional algebro-geometric terms, except for the genus zero case: 
the fundamental paper by Ramadas \cite{ramadas}, which concerned the case $\glie=\slin(2)$ and proved its unitary nature,  led, as we showed in \cite{looij2010},  to a description of the spaces 
$\Vcal(\PP^1; (V_i, p_i)_{i=1}^n)$ in terms of the  Hodge theory of local systems of configuration  spaces on points in 
$\PP^1\ssm\{p_1, \dots, p_n\}$ with the KZ-connection being understood as a Gau\ss-Manin connection. 
The expectation was that  this approach would generalize to the other Lie algebras and indeed, 
shortly afterwards Belkale \cite{belkale} showed this to be the  case.

This article purports to begin a similar program for the higher genus case.
The first step amounts to construction  for every integer $N\ge 0$ a linear map
\[
\textstyle \gamma_N: \Vcal_\ell(C;  (V_i, p_i)_{i=1}^n)\hookrightarrow \Hom^{\sign_N}_\CC(\glie^{\otimes N}\otimes V,F^{-1}H^N(\inj_N(C\ssm\{ p_1, \dots, p_n\}); \CC(N))
\]
and a `logarithmic' companion 
\[
\textstyle \gamma'_N: \Vcal_\ell(C;  (V_i, p_i)_{i=1}^n)\hookrightarrow \Hom^{\sign_N}_\CC(\glie^{\otimes N}\otimes V,F^0H^N(\inj_N(C\ssm\{ p_1, \dots, p_n\}); \CC(N)),
\]
where $V:=V_1\otimes\cdots\otimes V_n$ and other notation used  at the right hand side is to be  understood as follows:  $\inj_N(C\ssm\{ p_1, \dots, p_n\})$ stands for the (configuration) space of injective maps 
$\{1, \dots, N\}\hookrightarrow C\ssm\{ p_1, \dots, p_n\}$. Its cohomology carries a mixed Hodge structure to which we here applied a Tate twist  with $\CC(N)$. Such a Tate twist identifies $H^N(\inj_N(C\ssm\{ p_1, \dots, p_n\}); \ZZ(N))$ with the Borel-Moore  homology group
($=$ homology with closed support) $H^{cl}_N(\inj_N(C\ssm \{ p_1, \dots, p_n\}); \ZZ)$, which in this context is somewhat more natural.
This implies that its Hodge filtration has trivial $F^1$ and  that $F^0$ is the smallest possibly nonzero item of this filtration. The companion $\gamma'_N$ has its target represented by a space of homomorphisms that take values in this $F^0$ part, which is represented by the space of  regular $N$-forms on $\inj_N(C\ssm\{ p_1, \dots, p_n\})$ with logarithmic poles
along the boundary divisor in $C^N$. The permutation group $\Scal_N$ acts on this space as well as on  $\glie^{\otimes N}\otimes V$, and the superscript ${}^{\sign_N}$ means the we only consider the linear maps  that transform according to the sign character.  This
is replaced by the trivial character (so that we take the $\Scal_N$-equivariant maps) if we replace the  $N$-forms by polydifferentials  in that  degree.

Two of our main results,  Theorems \ref{thm:globaldual1} and \ref{thm:globaldual2}, say that $\gamma_N$ and $\gamma'_N$ are 
embeddings when $N$ is large and  characterize the image of both (for all $N$). Let us only mention here that $\gamma_N$ determines its logarithmic companion and vice versa and that $\gamma_{N-1}$ is obtained from  $\gamma_{N}$ as a residue of and likewise for $\gamma'_{N-1}$. This leads us to compare the WZW-connection and the Gau\ss-Manin connection: 
Theorem \ref{thm:gmversuswzw} describes their difference and 
shows how this difference factors  through a Kodaira-Spencer homomorphism.

Central to the theory of conformal blocks are the \emph{propagation property}  and the \emph{factorization property}.
The propagation property is absorbed by our geometric approach in the sense that it is embodied in the existence of global 
polydifferentials, but the reader will find the factorization property conspicuously missing. Although there are no serious obstacles for including it in our treatment (this requires considering configuration spaces 
of the smooth parts of nodal curves), including this here would serve no obvious purpose and  result in making the paper even longer than it already is.  Another, 
admittedly  somewhat more personal reason for its omission is that all the known  derivations of a (Verlinde) formula for the dimension of  
$\Vcal_\ell(C;  (V_i, p_i)_{i=1}^n)$ are based on the factorization property and we thus want to keep alive our belief that this De Rham approach should ultimately lead to a more direct verification (that also bypasses the WZW connection to prove that we get a vector bundle over $S$).  

The ultimate goal is to express $\Vcal_\ell(C;  (V_i, p_i)_{i=1}^n)$ entirely be  in terms of the De Rham theory of the 
fundamental groupoid of $C\ssm \{p_1, \dots, p_n\}$, when  restricted to the  points `at infinity' obtained by choosing a nonzero tangent vector at each $p_i$ and to do this in such a manner that  topological quantum field theory emerges as its Betti incarnation (see also Remark \ref{rem:topinterpretation}).  This is a   dream for now,   but it is in this spirit that we would like to see Segal's approach to the WZW theory \cite{segal} acquire its algebro-geometric formulation.
We should mention here that the  book \emph{Chiral Algebras} \cite{bd2004} by Beilinson-Drinfeld takes a full De Rham ($\Dscr$-module) approach to vertex algebras, and in particular  to conformal blocks. It is likely that the 
two approaches are related, but we admit that  at present it is not clear to us how.

The appearance of polydifferentials in this setting is not new. It is already implicit in the early work of 
Fred Cohen, who linked the cohomology of the configuration space of $N$ points in  $\CC$ with the free 
Lie algebra in $N-1$ variables (see his survey-like paper \cite{cohenf95}). 
To the best of our knowledge, the introduction of polydifferentials proper in a related context appeared first in a paper 
by Beilinson-Drinfeld \cite{bd}: polydifferentials  entered there in their description of the topological dual of the universal enveloping algebra
of a centrally extended loop algebra.  We used polydifferentials in a somewhat different manner in the KZ-setting  in \cite{looij2012} (see also \cite{BBM}). But the present article,  which we like to regard  as generalization of the work of Cohen alluded to above, was certainly inspired by the Beilinson-Drinfeld paper.

The road towards the proof of the main result is lengthier than we wish, but hopefully \emph{vaut le voyage}, 
as on our way we introduce some concepts and obtain results that might have an independent interest, 
especially where  they concern the cohomology of configuration spaces of punctured compact Riemann surfaces. 
This includes the computation of the periods  of a remarkable meromorphic  $2$-form on $C^2$ 
whose polar divisor is twice the diagonal that not so long ago appeared in a paper by Colombo-Frediani-Ghigi 
\cite{CFG},  and its higher dimensional  generalization as a meromorphic $N$-form on $C^{\ZZ/N}$ 
whose polar divisor is the sum of the diagonal hypersurfaces defined by the equality of two successive 
coordinates $z_i=z_{i+1}$, $i\in\ZZ/N$. These forms (that we  use to associate to $\gamma_N$ its logarithmic companion)
have such a classical  flavor that they might well have already been known in the 19th century.
\\

Let us now give a brief description of  the contents of each section. 
The central notion  of the first section is that of a \emph{Lie module structure with invariant form}, admittedly a mouthful, 
but thus named because this refers to the motivating example. 
We found this notion the most suitable cloth for presenting our main results in.  
We construct two examples which at first sight have nothing to do with Lie algebras: one is associated
with a complete discrete valuation ring and another with a smooth affine curve. 
It also leads us to the notion of what we have called a \emph{quasi-logarithmic} form and its  polydifferential counterpart.  
 While Subsection \ref{subsect:lierep} is modest in hardcore mathematical content
(as it merely introduces some of the notions in which some of our main results are later conveniently stated),  
it enables us to make at an early stage the connection with the work of Cohen and thus give the reader some idea 
in which direction where we are heading.

Section $2$ is devoted to the Hodge theory of the configuration space of an affine curve. 
In particular, we review  the spectral sequence that converges to the weight filtration  and do the same  for the 
Hodge filtration, both generalizing  other such spectral sequences in the literature (but in the end  all going back to 
Deligne's construction of a mixed Hodge structure on a smooth variety). It leads us to an analogue of the  Arnol'd algebra.

Section $3$ might be regarded as a chapter in the theory of Riemann surfaces and  is largely independent of the 
previous two sections. It is only when we formulate the summarizing Corollary \ref{cor:zetadef} that we find it 
convenient to use the language developed in Section $1$. We here introduce the canonical polydifferentials alluded to above.

In Section $4$ we first review how the vector spaces $\Vcal_\ell(C;  (V_i, p_i)_{i=1}^n)$ are defined and then  
apply the results obtained  in the previous sections to obtain a description of them in terms of polydifferentials.

The final Section $5$ exhibits the WZW connection in these terms and subsequently compares this with the Gau\ss-Manin connection. Our main result here  is  Theorem  
\ref{thm:gmversuswzw}. 
\\

\emph{Acknowledgements.} This paper  is part of a long term project that started more than a decade ago.
During that time I have of course been influenced
by others through their papers or through correspondence. I mention in particular the  paper \cite{bd} by 
Beilinson-Drinfeld  and a correspondence I had in 2011 with Ashok Raina in connection with his paper with 
Biswas \cite{br1}.

Parts of this research was carried out  while I was being supported by the National Science Foundation of the P.R.C.,  the Jump Trading Fund and the Mittag-Leffler Institute. I thank these three institutions for their support.

\subsubsection*{Conventions and notation}\label{subsubsect:conventions}
 Let $I$ be a finite set of $N$ elements.
If $N>0$ and $X$ is a topological space, then the  \emph{$I$-configuration space of} $X$ is the open subspace of $X^I$ 
defined by the injective maps $I\hookrightarrow X$. 
When $I=\{1, \dots, N\}$ we also call this the \emph{$N$-point configuration space}.
We shall therefore denote it by $\inj_I(X)$ resp.\ $\inj_N(X)$ (the literature has not agreed on notation:  
this is also denoted $F(X,N)$, $F_N(X)$, $\textit{Conf}_N(X)$, $\textit{Pconf}_N(X), \dots$).

We write $\sign (I)$ (resp.\ $\sign_N$) for $\wedge^N(\ZZ^I)$ (resp.\  $\wedge^N(\ZZ^N)$), 
the convention being that $\sign(\emptyset)=\ZZ$. So if $I$ is nonempty, then a linear order on $I$ defines a 
generator of $\sign (I)$ and two linear orders yield the same generator if and only if they differ by an even permutation.
Assume now $N>0$ and consider the set  $\tot(I)$ of linear orders on $I$, to be thought of as the set of bijections
$\{1, \dots, N\}\xrightarrow{\cong} I$. This is a principal set of both the symmetric group $\Scal_N$ (acting on the right) 
and the permutation group $\Scal(I)$ (acting on the left). If $A$ is a module over some ring $R$, 
then $A\otimes_R \cdots \otimes_R A$ (with $N$ tensor factors) is a left $R[\Scal_N]$-module and hence 
\[
A^{\otimes_R I}:= R^{\tot(I)}\otimes_{R[\Scal_N]} (A\otimes_R \cdots \otimes_R A)
\]
is a left $R[\Scal(I)]$-module.  It has the usual properties: it is universal for
$R$-multilinear maps with domain $A^I$ and we have canonical  isomorphisms 
$ A^{\otimes_R (I'\sqcup I'')}\cong A^{\otimes_R I'}\otimes A^{\otimes_R I''}$ (defined by juxtaposition) and 
$(A'\oplus A'')^{\otimes_R I}\cong \oplus_{I=I'\sqcup I''} 
(A')^{\otimes_R I'}\otimes (A'')^{\otimes_R I''}$. We could of course have defined $A^{\otimes_R I}$ more directly 
as the quotient of the free $R$-module generated by $A^I$ by the submodule generated by the usual relations. 
We stipulate that $A^{\otimes_R\emptyset}=R$.

This definition must be modified if we are dealing with a $\ZZ/2$-graded commutative algebra 
$A_\pt=A_+\oplus A_-$ obeying the Koszul rule. Then
$A_\pt\otimes_RA_\pt$ has also that structure by the usual (Koszul) rule, but the transposition must carry a sign 
in order that we get an $R$-algebra automorphism: it is then defined by 
$a\otimes_R b\mapsto (-1)^{|a|.|b|}b\otimes_R a$ (with $a, b$ homogenenous). 
So if $N=2$, then  the left $R^{\tot(I)}$-module structure
on $A_\pt\otimes_RA_\pt$ must be given by this action in order to define $A_\pt^{\otimes_R I}$ as a 
$\ZZ/2$-graded commutative algebra. Note that
its odd part contains the $R$-submodule $(A_-)^{\otimes_R I}\otimes\sign(I)$ 
(where we used the $\otimes_RI$-convention above). This generalizes in an evident manner to the case where $I$ 
is an arbitrary finite set. Such a  situation shows up if we apply the K\"unneth formula to $X^I$, where $X$ is a 
nonempty space of finite type for which $H^\pt(X; R)$ has no $R$-torsion: then $H^\pt(X^I; R)=H^\pt(X)^{\otimes_R I}$.

In what follows, $k$ denotes an algebraically closed field of characteristic zero (which is assumed to be $\CC$ 
whenever Hodge theory is involved).

Other nonstandard notation is listed below in order of appearance.\\

\smallskip
{\small
\begin{supertabular}{ll}
\ref{subsect:lierep} & $[ij]$, $I_{ij}$, $r_i$, $r_{ij}$, $r_i^p$, $s_{ij}$, $\Lcal^{s}_{\gfrak,V}$,  $\Lcal_{\gfrak,V}$, $\Rscr^s_P$,  $\Rscr^s$, $\Rscr_P$, 
$\Lcal og_{C,P}$, $\qLog_{C,P}$ \\
\ref{subsect:polydiff} &  $\Fcal^{(I)}$,  $\Omega^{(I)}$\\
\ref{subsect:LiemoduleDVR} & 
$\Ocal$, $K$, $\omega$, $\theta$,  $F^nK$ etc, $\DD$, $\DD^\times$, $\br$, $\omega^{(I)}\la\qlog\ra$, 
$o$,  $\DD^{(I)}$,  $\res_{i\to j}$,  $r'_{ij}$,  $r''_{ij}$, $\omega^{(I)}\la\qlog\ra_o$, $\omega^{(I)}\la\llog\ra$,  
\\
{} &  $\omega^{(I)}\la\qlog\ra_\G$, $\Lcal og_K$,  $\qLog_K$, 
$\eta_\sigma$, $\hat\eta_I$, $\hat\xi$\\
\ref{subsect:Liepointedcurve} & $\Omega_C( P)^{(I)}\la\qlog\ra$\\
\ref{subsect:basicss} & $\Delta_{I,P}$, $\Delta{ij}$, $\pi_i$, $\Delta_\Pcal$, $c(\Pcal)$, $E_\Pcal$, $A_\Pcal$, 
$A^\pt_{I,P}$, $g_{ij}$, $h_{ij}$, $g(Y)$, $h^p(Y)$, $\Acal(I)$, $\Acal_P(I)$\\
\ref{subsect:projcase} & $\mu$, $A^{\pt,\pt}_{I, P}(C)$, $W_q(|I|,P)$, $\pi^i$ \\
\ref{subsect:canbidiff}  & $\zeta$, $D_n$, $E_n$, $\zeta_n$, $\zeta_\G$, $\qLog_C$, $\hat\zeta_I$, $\hat\xi$\\
\ref{subsect:affineLie} & $\check c$, $\cfrak$, $\glie K$, $\widehat{\glie K}$, $\hat\hlie$, $\alpha_i$, $\theta$, $\check n_i$, $c$, $V^+$, $\lambda_i$, $1_\lambda$, $\tilde \Vcal_{\hat\lambda}(K)$, $\glie_-$, $\widehat{\glie K}_-$, 
$\Vcal_{\hat\lambda}(K)$, $\PBW_\pt$, $\rho$, $\check h$, $\bar c$\\
\ref{subsect:dualrep} & $\tilde\Vcal^{\hat\lambda}(K)$, $\Vcal^{\hat\lambda}(K)$, $\tilde\Vcal_\qlog^{\hat\lambda}(K)$, $\tilde\Vcal_\llog^{\hat\lambda}(K)$, $\Vcal_\qlog^{\hat\lambda}(K)$, $\Vcal_\llog^{\hat\lambda}(K)$, $K_P$,
$\tilde\Vcal_{\hat\lambda}(K_P)$, $\tilde\Vcal_{\hat\lambda}(K_P)$, $\Vcal_{\hat\lambda}(K_P)$, etc.\\
\ref{subsect:propagation} & $\Vcal_{\hat\lambda}(C,P)$, $\Vcal^{\hat\lambda}(C,P)$, $\tilde\Vcal_{\hat\lambda}(\hat C)$, $\tilde\Vcal_\ell(\hat C)$ etc.\\
\ref{subsect:global} &  $\Vcal_\qlog^{\hat\lambda}(C)$, $\Vcal_\llog^{\hat\lambda}(C)$\\
\ref{subsect:confblocks} &  $F$, $j$, $\mathring\Ccal$, $\mathring F$,$\vec p$, $\vec P$,  $\Kcal_P$, $\mfrak_p$, $\widehat{\glie\Kcal_P}$, $\tilde\Vcal_{\hat\lambda}(\Ccal/S,P)$, $\Vcal_{\hat\lambda}(\Ccal/S,P)$,  $F^{(N)}$, $\inj_N\!\mathring F$, $\gamma_N$, $\gamma'_N$\\
\ref{subsect:ks} & $\KS_{\Ccal/S,\vec P}$ \\
\ref{subsect:ss} & $\hat\theta$, $\xi'$, $L(D)$, $\Lscr(D)$ (local)\\
\ref{subsect:wzw} & $\widehat{\glie\Kcal_P}$,  $\tilde D$, $\hat D$, $\Lscr^\dagger$, $\Lscr(D)$ (global)\\ 
\end{supertabular}
}

\begin{small}
\tableofcontents
\end{small}
\section{Lie module structures and polydifferentials}\label{sect:polydiff}

\subsection{Lie module structures}\label{subsect:lierep}
In order to motivate the notion that we are about to introduce, take  a Lie algebra $\glie$ over $k$ and associate 
with every \emph{nonempty} finite set $I$ the $k$-vector space $\gfrak^{\otimes I}$. It is clear that the permutation group 
of $I$ acts on that space, and so we may regard this as a functor 
from the groupoid of finite sets to the category of $k$-vector spaces. This functor comes  with  a collection  of 
`contractions' that we will spell out.  If $(i,j)$ is an ordered pair 
of distinct elements of  $I$, and $I_{ij}$ denotes the quotient of $I$ which identifies $i$ and $j$, 
then write $[ij]$ for the common image of $i$ and $j$ in $I_{ij}$. Applying  the Lie bracket to these tensor factors defines a 
linear map
\[
r_{ij}: \gfrak^{\otimes I}\to \gfrak^{\otimes {I_{ij}}}.
\]
An  obvious commutativity relation, the antisymmetry of the Lie bracket and the Jacobi identity then amount to what we 
shall call the 
\begin{description}
\item[(Lie properties)] $r_{ij}=-r_{ji}$, $r_{[ij], k}r_{ij} +r_{[jk], i}r_{jk} +r_{[ki], k}r_{ki}=0$ 
(as a map $\glie^{\otimes I}\to\glie^{\otimes I_{ijk}}$), $r_{ij}r_{kl}=r_{kl}r_{ij}$ 
(as a map $\glie^{\otimes I}\to\glie^{\otimes {I_{ij,kl}}}$; note that $I_{ij,kl}=I_{kl,ij}$),  
assuming that the indices are pairwise distinct. 
\end{description}
If we are also given a representation $V$ of $\glie$, then we can associate with any (possibly empty) finite set $I$ the 
tensor product $\glie^{\otimes I}\otimes V$ instead. Besides the contractions $r_{ij}$ obeying the above properties, 
we have in addition for every $i\in I$ a contraction 
\[
r_i: \gfrak^{\otimes I}\otimes V\to \gfrak^{\otimes {(I\ssm\{ i\}})}\otimes V
\]
defined by letting the $i$th tensor factor of $\glie^{\otimes I}$ act on $V$. They satisfy the
\begin{description}
\item[(representation properties)] $r_{[ij]}r_{ij}=r_i r_j-r_jr_i$ and $r_kr_{ij}=r_{ij}r_k$, where again, the indices are distinct. 
\end{description}

The preceding case can in fact be regarded as corresponding to the trivial representation by allowing $I$ to be empty: take $V=\glie^{\otimes\emptyset}=k$ with each $r_i$  identically zero.

If we are given a collection of representations  of $\glie$ indexed by a finite set $P$, $\{V_p\}_{p\in P}$, then 
 $V:=\otimes_{p\in P} V_p$ can be regarded as a representation of $\glie^P$, hence comes for each $p\in P$ with a structural map $\glie\otimes V\to V$ giving rise to  a contraction $r^p_i: \gfrak^{\otimes I}\otimes V\to \gfrak^{\otimes {(I\ssm\{ i\}})}\otimes V$ satisfying the representation property. The fact that the representations commute is expressed by the
 \begin{description}
\item[(commutativity relations)]
$r^p_ir^q_j=r^q_jr^p_i$ when $p\not=q$ and $i\not=j$.
\end{description} 
If we are further given  a symmetric bilinear form $s: \glie\otimes\glie \to k$ invariant under the adjoint 
representation of $\glie$, then the contractions 
\[
s_{ij}: \gfrak^{\otimes I}\otimes V\to \gfrak^{\otimes {(I\ssm \{i, j\})}}\otimes V \quad (i,j\in I \text{ distinct}).
\]
defined by applying $s$ to the distinct tensor slots named $i$ and $j$ obey:
\begin{description}
\item[(invariant form)]
$s_{ij}=s_{ji}$, $s_{ij}s_{kl}=s_{kl}s_{ij}$,   $s_{[ij],k}r_{ij}+s_{[ik],j}r_{ik}=0$,  assuming that the indices are pairwise distinct,
\end{description}
where the last identity expresses the fact that $s(X_k,[X_i,X_j])+s(X_j, [X_i,X_k])=0$.
The skew-symmetry of the $r_{ij}$ and the symmetry of the $s_{ij}$ imply that
$s_{[ij],k}r_{ij}$ transforms under a  permutation of $(i,j,k)$ according the sign character.
It is clear that any scalar multiple of $s$ also satisfies the invariant form property. 
We denote this system $\Lcal_{\glie,P, V}^s$, but usually omit  $P$ and $V$ if all the representations are trivial 
(all $r^p_i$ are zero), omit $P$ when there is just  one representation and omit $s$ when we are dealing with the 
trivial invariant form (all $s_{ij}$ are zero).

\begin{example}
Here is a simple variation on the preceding.
If  we let $\glie$ act on itself via the adjoint representation and on $V$ via its diagonal embedding in 
$\glie^P$, then the contraction operators 
$r_{ij}$, $r_i^p$, $s_{ij}$ all become $\glie$-equivariant. 
So they descend to operators that 
have as source and target the spaces of $\glie$-co-invariants $((\glie^P)^{\otimes I}\otimes V)_\glie$ 
that satisfy the same identities (we could have done this for the isogeny spaces of any  irreducible representation of 
$\glie$, of course). We write $(\Lcal_{\glie,P, V}^s)_\glie$ for this system. 
It is worth noting that if we take $I=\{1, \dots, N\}$ with $N\ge 1$, that then for every $i\in I$, 
\[
\textstyle (\sum_{j\not=i}r_{ij} +\sum_{p\in P} r_i^p)(X_N\otimes\cdots \otimes X_1\otimes v)=
X_i(X_N\otimes\cdots \otimes X_1\otimes v),
\]
so that we have then forced the identity $\sum_{p\in P}r^p_i+ \sum_{j\not= i}   r_{ij}=0$  when we pass to $\glie$-covariants (this also makes sense for an arbitrary nonempty finite set $I$, provided we make  the identification 
$I\ssm\{i\}\xrightarrow{\cong} I_{ij}$).
\end{example}

\subsubsection*{The category $\Rscr^s_P$} \index{$\Rscr^s_P$!category}
Since other examples of such functors play a central role in this paper, 
the underlying structure deserves  a definition. 
Given a set $P$, we define a pre-additive category $\Rscr^s_P$ (which means that its morphism sets are abelian groups 
and the composition is bilinear) whose objects are finite sets and which contains the groupoid of finite sets as a 
subcategory  (so that  $\hom_{\Rscr^s_P}(I,I)$ contains the group ring of the permutation group $\Scal(I)$ of $I$). 
We assume that for every finite set $I$ we are given:
\begin{itemize}
\item[(i)] for $i, j\in I$ distinct, morphisms  
$r^I_{ij}\in\hom_{\Rscr^s_P}(I,I_{[ij]})$ and $s^I_{ij}\in \hom_{\Rscr^s_P}(I,I\ssm \{i,j\})$, 
\item[(ii)] for $i\in I$ and $p\in P$,  a morphism $r^{I,p}_i\in\hom_{\Rscr^s_P}(I, I\ssm\{i\})$,
\end{itemize}
which obey the identities we listed that are associated with the \textbf{Lie}, the 
\textbf{representation}, the \textbf{commutativity} and the  \textbf{invariant form} properties.
For instance, the transposition $(ij)\in\Scal(I)$ composed with $r_{ij}$ resp.\ $s_{ij}$ must equal 
$r_{ji}=-r_{ij}$ resp.\ $s_{ji}=s_{ij}$.
The category $\Rscr^s_P$ is assumed to be universal for these properties, that is, every morphism is a 
$\ZZ$-linear combination of composites of  bijections and contractions of the  above type. 
We stipulate that $\hom_{\Rscr^s_P}(\emptyset,\emptyset)$ equals $\ZZ$. 
In case $P$ is a singleton resp.\ empty, we denote this category simply by $\Rscr^s_*$ resp.\  $\Rscr^s$. We write $\Rscr_P$
for the full subcategory for which the morphisms $s^I_{ij}$ are all zero. 

\begin{definition}\label{def:liestructure}
Let  $P$ a finite set and $\Cscr$ a pre-additive category.  
A \emph{$P$-fold Lie module in $\Cscr$ with invariant form} or briefly, a 
\emph{$\Rscr^s_P$-module} in  $\Cscr$,  is an additive 
 functor $\Lcal:\Rscr^s_P\to \Cscr$. An \emph{$\Rscr^s_P$-homomorphism} 
 $\Lcal\to \Lcal'$ between two $\Rscr^s_P$-modules is a natural transformation from $\Lcal$ to $\Lcal'$. 
We say that a  $\Rscr^s_P$-module is  \emph{co-invariant} if $\sum_{p\in P}r^p_i+ \sum_{j\not= i} r_{ij}$ 
is identically zero for all $I$ and $i\in I$ (and where we use the identifications $I\ssm \{i\}\cong I_{ij}$ when $|I|\ge 2$).
\end{definition}

In this paper,  $\Cscr$ will be often the category of modules over a ring, or more generally, 
the category of modules over a ringed space.

Since an $\Rscr^s_P$-module $\Lcal$ is already given by its values on the skeleton of $\Rscr^s_P$, 
it suffices to give for every integer $N\ge 0$ an object $\Lcal_N$ of $\Cscr$   that  comes with an action of 
$\Scal_N$ and for which we are given structural morphisms of degree $-1$ and $-2$ as above.

\begin{example}[Fred Cohen's example]\label{rem:cohenalg} Let $\Lie(X_1, \dots, X_N)$ be the free $k$-Lie algebra generated by $X_1, \dots, X_N$.
It is clear that by applying successively $N-1$ operators 
$r_{ij}$ to $X_N\otimes\cdots \otimes X_1\in \Lie(X_1, \dots, X_N)^{\otimes N}$ we  
produce all the Lie monomials in $X_1,\dots, X_N$ of degree $N$. 
Fred Cohen \cite{cohenf95} constructed a linear isomorphism between their $k$-span 
$\Lie_k[N]$ in $\Lie(X_1, \dots, X_N)$  and the homology group with twisted coefficients 
$H_{N-1}(\inj_N(\CC); k)\otimes\sign_N$.  Cohen's identification is conveniently described  as a perfect  duality between 
$\Lie_k[N]$ and $H^{N-1}(\inj_N(\CC); k(N-1))\otimes\sign_N$, where we inserted a Tate twist by $\ZZ(N-1)$ to help 
us keep track of both orientation issues and Hodge structures. This duality is set up by associating with the Lie variable  
$X_i$  the map 
\[
H^{N-1}(\inj_N(\CC); k)\otimes \sign_N\to H^{N-2}(\inj_{N-1}(\CC); k)\otimes \sign_{N-1}(-1)
\] 
that is essentially the residue map along the diagonal defined by $z_i=z_{i+1}$. 
Since the space $\Lie_k[N]$ is already generated by the right nested brackets 
$\ad_{X_{\sigma(N)}}\ad_{X_{\sigma(n-1)}}\cdots \ad_{X_{\sigma(2)}}(X_{\sigma (1)})$, 
where $\sigma$ runs over the permutation group $\Scal_N$, this gives a recipe
for an  iterated  residue map $H^{N-1}(\inj_N(\CC); k(N-1))\otimes \sign_N\to H^0(\CC; k)=k$.

In terms  of our set-up,  Cohen's  identification amount to cyclic relations among the operators $r_{ij}$. 
If we agree to let $r_{kj}r_{ji}$ stand for 
$r_{k,[ji]}r_{ji}$, then we have for a principal $\ZZ/N$-set $I$ with $N\ge 2$: 
\begin{equation}\label{eqn:cyclic}
\textstyle \sum_{i\in I}  r_{N-1+i,N-2+i} \cdots r_{2+i,1+i} r_{1+i,i}=0
\end{equation}
as a relation in $\hom_\Rcal(I,\emptyset)$. For $N=2$  this returns the antisymmetry of $r_{ij}$ and for 
$N=3$ the Jacobi identity; for general $N$ this identity is corresponds to the
fact that if the sum of the cyclic permutations of the Lie word  $\ad_{X_N}\ad_{X_{N-1}}\cdots \ad_{X_2}(X_1)$ is zero,  
an identity equivalent to one due to  Klyaschko \cite{klyachko}.  To see this, note that  the universal enveloping algebra
of the free Lie algebra generated by $X_1,\dots, X_N$ is the free associative Lie algebra on these generators. 
By writing out $\ad_{X_N}\ad_{X_{N-1}}\cdots \ad_{X_2}(X_1)$
in this associative algebra, we see that it takes the form $X_NY-Y X_N$ with $Y$ a sum of expressions of signed 
monomials $\pm X_{\sigma (N-1)}\cdots X_{\sigma(1)}$ with $\sigma\in \Scal_{N-1}$. 
With every monomial $\pm X_N X_{\sigma (N-1)}\cdots X_{\sigma(1)}$ that appears in 
$\ad_{X_N}\ad_{X_{N-1}}\cdots \ad_{X_2}(X_1)$, also appears a cyclic transform  with opposite sign, namely 
$\mp X_{\sigma (N-1)}\cdots X_{\sigma(1)}X_N$ and from this the assertion readily follows. 

To turn  $I$ into a principal $\ZZ/N$-set $I$ is to give a $\sigma\in\Scal(I)$ which acts transitively on $I$ and so each such 
$\sigma$ defines a relation in $\hom_\Rcal(I,\emptyset)$. These are in fact the only relations  in 
$\Rscr(I)$ that do not come from a relation already present in a  proper quotient of $I$.

We also have relations in $\hom_{\Rscr^s}(\{1,2,\dots ,N\}, \emptyset)$ for $N\ge 3$ that involve $s$, namely
\begin{equation}\label{eqn:inversion}
s_{N, N-1}r_{N-1,N-2}\cdots r_{2,1}=(-1)^N s_{1,2} r_{2,3}\cdots r_{N-1, N},
\end{equation}
where again for $N=3$ this is merely the defining property.  This follows from iterated application of the Jacobi identity and the invariance property of $s$. 
(It is perhaps amusing to note that in the case of our motivating example, we get  for $N=4$  a multiple of the curvature tensor defined by the metric that $s$ defines  on a Lie group with Lie algebra 
$\glie$.)
\end{example}

We will encounter a generalization of Cohen's model in  our setting in Subsection \ref{subsect:Liepointedcurve}.
Indeed, this interplay between configuration spaces and Lie theory is the main theme of the present paper. 
For example, we shall see that when $P$ is given as a subset of $\CC$, then the module $\Rscr_P(N)$ 
(so here $s=0$) can be identified with 
$H_{N-1}(\inj_N(\CC))\otimes\sign_N$ or $H_N(\inj_N(\CC^I\ssm P))\otimes\sign_N$, 
according to whether or not $P$ is empty. Here is a topological  example in that spirit.

\begin{example}[the configuration space of an oriented punctured manifold]\label{example:surface}
Let $M$ be an oriented (topological) manifold of finite type of dimension $m\ge 2$. 
Given a finite set $I$, then for $i,j\in I$ distinct, 
consider the space of maps $I\to \mathring M$ that separate every pair $\not=\{i,j\}$. It contains
$\inj_{I_{ij}}(\mathring M)$ as a closed submanifold with  complement $\inj_I(M)$. 
The normal bundle of the submanifold and hence the Gysin map of this pair has a coboundary map 
$H^\pt(\inj_I(M))\to H^{\pt +1-m}(\inj_{I_{ij}}M)$. The order of $i,j$ is irrelevant, 
but this is not longer the case if we twist with the sign character. Indeed,  the associated map
\[
r_{ij}: H^{(m-1)|I|}(M^I)\otimes\sign (I)\to H^{(m-1)|I_{ij}|}(M^{I_{ij}})\otimes\sign(I_{ij})
\]
changes sign if we exchange $i$ and $j$. These maps define in fact a Lie structure
\[
\Lcal_{M}: I\mapsto H^{(m-1)|I|}(\inj (M))\otimes \sign(I).
\]
If we are also given a finite subset $P\subset M$, then for every $p\in P$ and $i\in I$, we have 
a Gysin sequence for the space of injective maps $I\to M$ with the property that $I\ssm \{p\}$
maps to $M\ssm P$. The coboundary of this Gysin sequence defines
\[
r^p_i: H^{(m-1)|I|}((M\ssm P)^I)\otimes\sign (I)\to H^{(m-1)|I\ssm \{i\}|}((M\ssm P)^{I\ssm \{i\}})\otimes \sign(I\ssm \{i\})
\]
and  makes $I\mapsto  H^{(m-1)|I|}((M\ssm P)^I)\otimes\sign (I)$ a $\Rscr_P$-module $\Lcal_{(M,P)}$.  
\end{example}

We will be concerned with an analogue for $m=2$ of the preceding example in the mixed Hodge category, 
with a compact Riemann surface taking the place of $M$.
As a first step,  we will define a  Lie representation  structure associated to a complete discrete valuation ring with 
residue field $k$. We first recall  the notion of a polydifferential.

\subsection{Polydifferentials for a curve}\label{subsect:polydiff} 
Let $C$ be a nonsingular curve over $k$ and  $I$ a nonempty finite set. For every $i\in I$ we denote by 
$\pi_i :C^I\to C$ (or $\pi ^I_i$) the corresponding projection. If $\Fcal$ is a coherent sheaf on $C$, then we write 
$\Fcal^{(I)}$ for the exterior tensor product  $\otimes_{i\in I} \pi_i^*\Fcal$ in the category of coherent sheaves
(so that $\Ocal_C^{(I)}=\Ocal_{C^I}$). The action of the permutation group $\Scal (I)$ on $C^I$ clearly lifts  
to $\Fcal^{(I)}$. Note that we have a $\Scal (I)$-equivariant identification of $\Omega^{(I)}_C$ with the sign twist  
of the canonical sheaf of $C^I$: $\Omega^{(I)}_C\cong \Omega^{|I|}_{C^I}\otimes\sign(I)$. 

If we take  for $\Fcal$ the graded module $\Omega_C^\pt=\Ocal_C\oplus \Omega_C$, then  
$(\Omega^\pt_C)^{(I)}$ is what we call  the \emph{sheaf of polydifferentials}  on $C^I$. 
Observe that there is a natural decomposition 
\[ 
\textstyle \Omega^\pt_C{}^{(I)}=\bigoplus_{J\subset I}\pi_J^*\Omega_C{}^{(J)}, 
\]
where $J$ runs over all the subsets of $I$ and $\pi_J: C^I\to C^J$ is the evident projection. 
For $\alpha\in \Omega^\pt_C{}^{(I)}$, we denote by  $\alpha^J$ for its component in 
$\pi_J^*\Omega_C^{(J)}$ so that $\alpha=\sum_{J\subseteq I} \alpha^J$.  
The graded sheaf  $\Omega_C^\pt$ is, albeit in a rather trivial manner, a commutative graded 
$\Ocal_C$-algebra, and $(\Omega^\pt_C)^{(I)}$ inherits from this the structure of a 
\emph{commutative}  graded (rather than a graded-commutative) $\Ocal_{C^I}$-algebra.

\subsection{Lie module structures attached to a DVR}\label{subsect:LiemoduleDVR}
Residue operators that involve a single factor have a meaning for polydifferentials. We clarify this by first doing the previous
construction on a local ring of a curve, or rather its completion.
So we start off with a complete discrete valuation ring  $(\Ocal,\mfrak)$ with residue field $k$. 
Since $k$ is assumed to be algebraically closed of characteristic zero, a uniformizer $t$ identifies 
$\Ocal$ with $k[[t]]$. We denote its field of fractions  by $K$ and denote by  $d:K \to \omega$  
the universal  $k$-derivation that is continuous for the $\mfrak$-adic topology (in terms 
of our  uniformizer, $\omega$ is simply the one-dimensional $K$-vector space spanned by $dt$ and 
$df=f'dt$). Recall that the residue map $\res: \omega\to k$ is intrinsically defined and gives rise to a perfect duality
\[
(f, \alpha)\in  K\times \omega\mapsto \res (f\omega)\in k
\] 
of topological vector spaces. Both $K$ and $\omega$ come with natural filtrations defined by the 
valuation: $F^nK:=\mfrak^n$ and $F^n\omega=\mfrak^{n-1}d\Ocal$  (with $n\in\ZZ$) so that $d$ maps 
$F^n\Ocal$ to $F^n\omega$ and $F^nK$ and $F^{1-n}\omega$  are each others annihilator with respect to this pairing.
In particular,  $F^1\omega=\Ocal dt$ is the $\Ocal$-module of regular continuous differentials. 
We write $\DD$ for $\spec(\Ocal)$ and denote its closed point $\spec (k)\in \DD$  by $o$   and  its  generic point
$\spec(K)$ by $ \DD^\times$. 

We are going to associate with $\Ocal$  an $\Rscr^s_P$-module. To this end we 
extend the notation that we used for polydifferentials on a curve to this local setting. 
For example, given a finite set $I$, we write $\Ocal^{(I)}$  for the completed  $I$-fold tensor product  over $k$ of 
$\Ocal$ (whose maximal ideal will be denoted $\mfrak_I$) and put $\DD^{(I)}:=\spec(\Ocal^{(I)})$ 
(so this is the formal completion of $\DD^I$ at its closed point).  
For $i\in I$ we have a projection $\pi_i:\DD^{(I)}\to \DD$. When a uniformizer $t$ of $\Ocal$ has been chosen, we 
will write $t_i$ for $\pi_i^*t$. If $i, j\in I$ are distinct, then the diagonal divisor $\Delta_{ij}$ in $\DD^{(I)}$ 
defined by $t_i-t_j$ is often identified with the closed embedding  $\DD^{(I_{ij})}\hookrightarrow \DD^{(I)}$.

Likewise $K^{(I)}$ resp.\ $\omega^{(I)}$ denotes the completed  $I$-fold tensor product of $K$ and $\omega^{(I)}$. 
So $K^{(I)}$ is obtained from $\Ocal^{(I)}$ by inverting  
$\prod_{i\in I} t_i$, where $t\in \mfrak$ is a uniformizer.
Since we have not imposed a linear order on $I$, we cannot regard an element of $\omega^{(I)}$ as an 
${|I|}$-form, but we may and will regard it as a polydifferential.
Taking the  residue in the $i$th factor defines a map of $\Ocal^{(I\ssm \{ i\})}$-modules
\[
r_i : \omega^{(I)}\to \omega^{(I\ssm \{ i\})}.
\]
If $i,j\in I$ are distinct, then $r_i$ and $r_j$ commute, this in contrast to the case of differential forms, 
where they anticommute. Now consider the natural extension
\[
r_i : \omega^{(I)}(\infty \Delta_{ij})\to \omega^{(I\ssm \{ i\})},
\]
where $(\infty \Delta_{ij})$ means that we allow poles of arbitrary order along $\Delta_{ij}$. 
Then $r_i$ and $r_j$ no longer commute. To explain what happens, let 
\[
r_{ij}:=\res_{i\to j}: \omega^{(I)}(\infty \Delta_{ij})\to \omega^{(I_{ij})}
\]
be the  residue operator  that 
takes  the residue at the diagonal divisor $\Delta_{ij}$  relative to the projection which forgets the $i$th component.

\begin{lemma}\label{lemma:commutator}
On $\omega^{(I)}(\infty \Delta_{ij})$ we have $[r_i,r_j]=r_{[ij]}r_{ij}$ and (hence) $r_{ij}=-r_{ji}$. 
\end{lemma}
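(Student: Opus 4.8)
The plan is to reduce everything to a local computation in two variables. Since $r_i$ and $r_{ij}$ only involve the two tensor slots $i,j$ (all other slots are passed through untouched, and are $k$-flat), I would first observe that the identity $[r_i,r_j]=r_{[ij]}r_{ij}$ on $\omega^{(I)}(\infty\Delta_{ij})$ follows from its special case $I=\{i,j\}$ by applying $-\otimes_k \omega^{(I\ssm\{i,j\})}$; so it suffices to work on $\omega^{(2)}(\infty\Delta)=\omega\,\hat\otimes_k\,\omega$ with the pole divisor $\Delta=\{t_1=t_2\}$, where $r_1,r_2\colon \omega^{(2)}(\infty\Delta)\to\omega$ are the partial residues at $t_1=0$ and $t_2=0$ respectively, and $r_{12}=\res_{1\to 2}$ is the residue along $\Delta$ taken in the $t_1$-variable (so its target is $\omega$ in the surviving variable $t_2$, and then $r_{[12]}\colon\omega\to k$ is the ordinary residue).

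First I would fix the uniformizer $t$ and write a general element of $\omega^{(2)}(\infty\Delta)$, after clearing denominators, as a (convergent, for the $\mfrak$-adic topology) $k$-linear combination of monomial polydifferentials
\[
\alpha = \frac{t_1^{a}\,t_2^{b}}{(t_1-t_2)^{m}}\,dt_1\wedge dt_2,\qquad a,b\in\ZZ_{\ge 0},\ m\ge 1,
\]
together with terms regular along $\Delta$ (for which both sides vanish: $r_{12}$ kills them, and $[r_1,r_2]$ kills them because on $\omega\,\hat\otimes_k\,\omega$ one has $r_1r_2=r_2r_1$ by the commutativity of partial residues noted just before the lemma). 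By bilinearity and continuity it is enough to check the identity on such monomials. For a fixed such $\alpha$ I would compute the three operators directly. The key partial-fraction step is to expand, for the purpose of applying $r_1$ (the residue at $t_1=0$),
\[
\frac{t_1^{a}}{(t_1-t_2)^{m}} = \frac{(-1)^m\,t_1^{a}}{t_2^{m}}\Bigl(1-\tfrac{t_1}{t_2}\Bigr)^{-m}
= (-1)^m \sum_{j\ge 0}\binom{m-1+j}{j}\,t_1^{a+j}\,t_2^{-m-j},
\]
which is a Laurent series in $t_1$ with coefficients in $k((t_2))$, and then $r_1\alpha$ is $2\pi i$-free, i.e.\ just the coefficient-extraction: $r_1(t_1^{p}\,dt_1)=\delta_{p,-1}$. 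One gets $r_1\alpha = 0$ unless the relevant exponent hits $-1$. Doing the same with $r_2$ applied to $\alpha$ (expanding around $t_2=0$), then composing in both orders, and likewise computing $r_{12}\alpha$ as the residue of the rational function $t_1\mapsto t_1^{a}t_2^{b}(t_1-t_2)^{-m}$ at $t_1=t_2$ (a shift $t_1=t_2+u$, expand, read off the $u^{-1}dt_2$-coefficient), followed by $r_{[12]}$ on the resulting one-variable differential in $t_2$, one reduces the whole lemma to a finite binomial identity in $a,b,m$. I expect this to collapse cleanly: both $[r_1,r_2]\alpha$ and $r_{[12]}r_{12}\alpha$ are nonzero only when $a+b=m-1$ (so that after taking one residue the other variable produces a genuine pole of total order matching a simple pole), and in that range both equal the same signed binomial coefficient, e.g.\ $(-1)^{?}\binom{m-1}{a}$ up to a sign I would pin down carefully.

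The antisymmetry $r_{ij}=-r_{ji}$ is then immediate: the right-hand side $r_{[ij]}r_{ij}$ must be antisymmetric in $(i,j)$ because the left-hand side $[r_i,r_j]=r_ir_j-r_jr_i$ manifestly is, and $r_{[ij]}=r_{[ji]}$ is symmetric (it is the ordinary residue on the single surviving slot, which does not remember which of $i,j$ it came from), so $r_{ij}=-r_{ji}$ follows; alternatively one sees it directly from the $u\mapsto -u$ symmetry of the defining integral for $\res_{i\to j}$ versus $\res_{j\to i}$, the sign coming exactly from the fact that in the polydifferential $dt_i\wedge dt_j$ there is a hidden orientation, whereas for partial residues of polydifferentials (as opposed to forms) there is none — which is the source of the stated contrast with the differential-forms case. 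The one genuine point requiring care, and the step I expect to be the main obstacle, is the sign bookkeeping: reconciling the "polydifferential" conventions (where $r_i,r_j$ commute) with the form-theoretic residue along $\Delta$ (which implicitly uses an ordering of the two diagonal-transverse directions), so that the sign in $r_{ij}=-r_{ji}$ comes out consistent with the $\sign(I)$-twist conventions fixed in the Conventions section; everything else is a bounded partial-fraction computation.
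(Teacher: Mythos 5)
Your overall plan — reduce to $|I|=\{i,j\}$ and verify the identity on monomials by explicit Laurent/partial-fraction expansion of residues — is exactly the route the paper takes, but the reduction to monomials is where your proposal breaks down. You propose to check on $\alpha=t_1^{a}t_2^{b}(t_1-t_2)^{-m}dt_1dt_2$ with $a,b\in\ZZ_{\ge 0}$, "after clearing denominators". You cannot clear denominators: multiplying by powers of $t_1t_2$ changes the element and does not commute with $r_1,r_2$, which are precisely the residues at $t_1=0$ and $t_2=0$. Worse, on the family you do allow, both sides of the identity vanish identically: with $a\ge 0$ your own expansion of $(t_1-t_2)^{-m}$ around $t_1=0$ produces only powers $t_1^{a+j}$ with $j\ge 0$, so $r_1\alpha=0$ (and likewise $r_2\alpha=0$), while $r_{[12]}r_{12}\alpha=0$ as well (scale invariance of the double residue forces $a+b=m-2$ together with $a\ge m-1$, which is impossible for $a,b\ge 0$). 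So the check as designed only ever verifies $0=0$ and excludes exactly the cases that carry the content of the lemma, namely monomials with poles along the coordinate divisors: in the paper's normalization $t_i^{-m_i-1}t_j^{-m_j-1}(t_i-t_j)^{-\ell}dt_idt_j$, where the nontrivial case is $m_i+m_j+\ell=0$ with $\ell>0$ (equivalently $a+b=m-2$, not your $a+b=m-1$, with one of $a,b$ negative). Finally, the decisive step — that $[r_1,r_2]\alpha$ and $r_{[12]}r_{12}\alpha$ agree as signed binomial coefficients, e.g.\ $(-1)^{\ell-1}\binom{\ell+m_i-1}{\ell-1}$ — is exactly what you defer ("I expect this to collapse cleanly"); that computation \emph{is} the paper's proof, so as it stands the lemma is not established.

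On the antisymmetry: deducing $r_{ij}=-r_{ji}$ by cancelling $r_{[ij]}$ from $r_{[ij]}r_{ij}=-r_{[ij]}r_{ji}$ is not licit, since $r_{[ij]}$ has a huge kernel, and in fact the cancellation genuinely fails at the level of operators on all of $\omega^{(I)}(\infty\Delta_{ij})$: for $\beta=t_1(t_1-t_2)^{-2}dt_1dt_2$ one has $\res_{1\to 2}\beta=dt$ while $\res_{2\to 1}\beta=0$ (both killed by the outer residue, consistently with the commutator identity). Your alternative "$u\mapsto -u$" argument is correct for the first-order polar part, which is all that survives composition with $r_{[ij]}$ and all that is used later (e.g.\ on $\omega^{(I)}\la\qlog\ra_o$, where the order-two part has constant biresidue), but you should state precisely in which sense the sign relation is being claimed rather than presenting the cancellation as a proof. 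Repair the monomial family (allow arbitrary integer exponents in $t_1,t_2$), fix the exponent bookkeeping, and actually carry out the binomial/sign verification; then your argument coincides with the paper's.
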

\begin{proof}
It suffices to check this for $I=\{i,j\}$, with $i,j$ distinct, on  a polydifferential of the form 
\[
\xi=\frac{dt_i dt_j}{t_i^{m_i+1}t_j^{m_j+1}(t_i -t_j)^\ell} \, ,
\] 
where $m_i, m_j, \ell$ are integers.  We must verify that $r_ir_j-r_jr_i=r_{[ij]}r_{ij}$.

When $\ell \le 0$, then it is clear that $r_ir_j=r_jr_i$  and $r_{ij}=0$. So let us assume that $\ell >0$. For a similar reason,
we can assume that at least one of $m_i$ and $m_j$ is $\ge 0$.  Let us assume $m_i\ge 0$.

In order to compute $r_{ij}(\xi)$, we substitute $(t_i,t_j)=(t+s,t)$, so that we have to take the residue with respect to $s$. 
We find
\[
\xi=\frac{dt ds}{(s+t)^{m_i+1} t^{m_j+1}s^\ell}=\frac{dt ds}{t^{m_i+m_j+2}s^\ell (1+s/t)^{m_i+1}}=
\frac{dt ds}{t^{m_i+m_j+2}s^\ell}\sum_{n\ge 0} \tbinom{n+m_i}{n}(-s/t)^n.
\]
Since $r_{ij}(\xi)$ is the coefficient of $s^{-1}ds$, we see only the term $n=\ell -1$ contributing, giving
\[
r_{ij}(\xi)=(-1)^{\ell-1}\tbinom{\ell+m_i-1}{\ell-1}\frac{dt}{t^{m_i+m_j+\ell+1}}.
\]
So $r_{[ij]}r_{ij}(\xi)$ is zero unless $m_i+m_j+\ell=0$ (so that $m_j\le 0$) and in that case, equal to 
$(-1)^{\ell-1}\tbinom{\ell+m_i-1}{\ell-1}$.

In order that $r_jr_i(\xi)$ be nonzero, one needs that $m_j\ge 0$ and then a straightforward computation yields that its value 
can only be  nonzero only when  $m_i+m_j+\ell=0$,  a case that is precluded, because we assumed 
$\ell>0$ and $m_i>0$.
Similarly, $r_ir_j(\xi)$ is nonzero only when $m_i\ge 0$, $m_i+m_j+\ell=0$ and we find that the value is then 
$(-1)^\ell\binom{\ell+m_i-1}{\ell -1}$.  Hence 
$r_ir_j(\xi)-r_jr_i(\xi)= (-1)^{\ell-1}\binom{\ell+m_i-1}{\ell -1}= r_{[ij]}r_{ij}(\xi)$ 
as desired. \end{proof}

We focus on the case when the pole order along each diagonal hypersurface  is at most two. We will associate three types 
of residues with this situation. Let us first describe these in the basic case where we have only two factors, 
so that $I$ consists of two distinct elements $i,j$.
Let $\sigma$ be the  transposition involution in $\Ocal^{(I)}$ which interchanges the factors. Then 
$\Ocal^{(I)}(2\Delta_{ij})/\Ocal^{(I)}$ inherits that action. Its `polar degree one' submodule
$\Ocal^{(I)}(\Delta_{ij})/\Ocal^{(I)}$ is precisely the $(-1)$-eigensubmodule  of $\sigma$, so that the eigendecomposition of
$\Ocal^{(I)}(2\Delta_{ij})/\Ocal^{(I)}$ splits it into a direct sum of two  free $\Ocal$-modules of rank one:
\[
\Ocal^{(I)}(2\Delta_{ij})/\Ocal^{(I)}=\Ocal^{(I)}(\Delta_{ij})/\Ocal^{(I)}\oplus
\big(\Ocal^{(I)}(2\Delta_{ij})/\Ocal^{(I)}\big)^{\sigma},
\]
We also get such a splitting of  $\omega^{(I)}(2\Delta_{ij})/\omega^{(I)}$:
\[
\omega^{(I)}(2\Delta_{ij})/\omega^{(I)}=
\omega^{(I)}(\Delta_{ij})/\omega^{(I)}\oplus
\big(\omega^{(I)}(2\Delta_{ij})/\omega^{(I)}\big)^{\sigma}.
\]
The residue $r_{ij}$ as defined above and  restricted to $\omega^{(I)}(2\Delta_{ij})$ of course
factors through this quotient.
We then define the linear map
\[
(r'_{ij}, r''_{ij}): \omega^{(I)}(2\Delta_{ij})\to 
\omega\oplus k
\]
as the reduction 
\[
\omega^{(I)}(2\Delta_{ij})\to \omega^{(I)}(2\Delta_{ij})/\omega^{(I)}=
\omega^{(I)}(\Delta_{ij})/\omega^{(I)}\oplus
\big(\omega^{(I)}(2\Delta_{ij})/\omega^{(I)}\big)^{\sigma}
\]
post-composed with the appropriate residue maps: on the first summand we let this be $r_{ij}$ and on the second 
summand $r_{ij}$ followed by the residue in  $o$. It follows that the restriction of  $[r_i,r_j]=r_{[ij]} r_{ij}$ to  
$\omega^{(I)}(2\Delta_{ij})$ is also equal to 
$r_{[ij]}r'_{ij} +r''_{ij}$.

The third type of residue is the \emph{biresidue}: it is the map $\br: \omega^{(I)}(2\Delta_{ij})\to K$ 
that in terms of a uniformizer sends 
$g(t_i,t_j)(t_i-t_j)^{-2}dt_idt_j$ to $g(t,t)$.  It is known (and straightforward to 
verify) that this notion is coordinate invariant.

\begin{example}\label{example:basic2}
We explicate these residue operators in terms of a uniformizer. Let $\alpha\in \omega^{(I)}(2\Delta_{ij})$ and write 
$\alpha$ as a sum compatible with the decomposition above: 
\[
\alpha=\alpha'+\alpha''\text{ with } \alpha'=\frac{f(t_i,t_j)dt_idt_j}{t_i-t_j} 
\text{  and  }  \alpha''=\frac{g(t_i,t_j)dt_idt_j}{(t_i-t_j)^2},
\] 
where $f,g$ lie in  $K^{(I)}$ and $g(t_i,t_j)=g(t_j,t_i)$. 
It is clear that $\br(\alpha)=\br(\alpha'')=g(t,t)$  and that 
\[
r'_{ij}(\alpha)=r_{ij}(\alpha')=
\res_{i\to j}\frac{f(t_i,t_j)dt_idt_j}{t_i-t_j}= f(t,t)dt,
\]
which ought to be viewed as a differential on  $\Delta_{ij}$, and we find that 
\[
r''_{ij}(\alpha)=\res_j\res_{i\to j}\frac{g(t_i,t_j)dt_idt_j}{(t_i-t_j)^2}=
\res_j \big(\res_{i\to j}\frac{g(t_i,t_j)dt_i}{(t_i-t_j)^2}\big)dt_j=\res \frac{\partial g}{\partial t_i}(t,t)dt. 
\]
\end{example}

\begin{lemma}\label{lemma:resformula}
Assume that in Example \ref{example:basic2}, $g$ is a constant (in $k$).  If $f_i, f_j\in K$, then
\[
\res\res_{i\to j} f_i(t_i)f_j(t_j)\alpha= \res f_if_j r'_{ij}(\alpha)+g\res (f_jdf_i),
\]
where we recall that Lemma \ref{lemma:commutator} tells us that the right hand side is also $[r_i, r_j](f_i(t_i)f_j(t_j)\alpha)$.
\end{lemma}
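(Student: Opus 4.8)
The plan is to evaluate the iterated residue on the left directly in terms of the chosen uniformizer $t$ and match it term by term against the right-hand side; the identification of that right-hand side with $[r_i,r_j]\big(f_i(t_i)f_j(t_j)\alpha\big)$ is then immediate from Lemma \ref{lemma:commutator}, because for $I=\{i,j\}$ the quotient $I_{ij}$ is a single point, so $r_{[ij]}$ is the residue map $\omega\to k$ and $[r_i,r_j]=\res\circ\res_{i\to j}$ on $\omega^{(I)}(\infty\Delta_{ij})$. Since both sides are $k$-linear in $\alpha$, I would treat the two summands $f_i(t_i)f_j(t_j)\alpha'$ and $f_i(t_i)f_j(t_j)\alpha''$ separately, using the explicit shapes of $\alpha',\alpha''$ from Example \ref{example:basic2} and, crucially, the hypothesis that $g$ is a constant.

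By definition $\res_{i\to j}$ is computed after the substitution $t_i=u+t_j$ as the residue at $u=0$ with $t_j$ a parameter. The $\alpha'$-summand becomes $u^{-1}f_i(u+t_j)f_j(t_j)f(u+t_j,t_j)\,dt_i\,dt_j$, whose residue at $u=0$ is the value of the numerator there, namely $f_i(t_j)f_j(t_j)f(t_j,t_j)\,dt_j$; applying the outer $\res$ and recalling $r'_{ij}(\alpha)=f(t,t)\,dt$ produces exactly $\res\big(f_if_j\,r'_{ij}(\alpha)\big)$. The $\alpha''$-summand becomes $g\,u^{-2}f_i(u+t_j)f_j(t_j)\,dt_i\,dt_j$, whose residue at $u=0$ is the coefficient of $u$, that is $g\,f_i'(t_j)f_j(t_j)\,dt_j$ by Taylor-expanding the Laurent series $f_i$ about $t_j$; applying the outer $\res$ and writing $df_i=f_i'\,dt$ gives $g\,\res(f_j\,df_i)$. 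Adding the two contributions yields the asserted formula. (Were $g$ not a constant, this step would pick up the extra term $\res\big(f_if_j\,\tfrac{\partial g}{\partial t_i}(t,t)\,dt\big)$, in line with the nonvanishing of $r''_{ij}$ in Example \ref{example:basic2}; the hypothesis is precisely what makes it disappear.)

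I would then record the consistency with the decomposition $[r_i,r_j]=r_{[ij]}r'_{ij}+r''_{ij}$ on $\omega^{(I)}(2\Delta_{ij})$ noted before the lemma: for $\beta:=f_i(t_i)f_j(t_j)\alpha$ one has $r''_{ij}(\beta)=0$, since the $\sigma$-symmetric polar-degree-two component of $\beta$ has numerator $\tfrac{g}{2}\big(f_i(t_i)f_j(t_j)+f_i(t_j)f_j(t_i)\big)$ and its contribution to $r''_{ij}(\beta)$ is $\tfrac{g}{2}\res\big(d(f_if_j)\big)=0$. I do not expect a genuine obstacle here: the whole statement is a short residue computation once coordinates are fixed. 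The only thing requiring care is the bookkeeping — multiplying $\alpha$ by $f_i(t_i)f_j(t_j)$ disturbs the $\sigma$-symmetric polar-degree-two part and produces an antisymmetric piece that must be reabsorbed into the polar-degree-one part before $r'_{ij}$ is applied, after which the cross terms reorganize, via $\res(f_i\,df_j)=-\res(f_j\,df_i)$, into the clean form above — together with the sign conventions in $\omega^{(I)}$, which is commutative rather than graded-commutative, so $\sigma$ fixes $dt_i\,dt_j$ and only permutes the arguments of the coefficient functions.
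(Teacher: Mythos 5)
Your proposal is correct and follows essentially the same route as the paper: the paper's proof likewise computes $\res_{i\to j}\big(f_i(t_i)f_j(t_j)\alpha\big)$ termwise from the decomposition $\alpha=\alpha'+\alpha''$ of Example \ref{example:basic2}, obtaining $f_if_j\,r'_{ij}(\alpha)+g\,f_j\,df_i$, and then applies $\res$. Your extra consistency check that $r''_{ij}\big(f_i(t_i)f_j(t_j)\alpha\big)=0$ via symmetrization and $\res\,d(f_if_j)=0$ is a correct (if optional) supplement not spelled out in the paper.
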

\begin{proof}
This is straightforward, for then 
\[
\res_{i\to j} f_i(t_i)f_j(t_j)\alpha = f_i(t)f_j(t)f(t,t)dt +g\frac{\p f_i(t)}{\p t_i} f_j(t)dt=
f_if_j r'_{ij}(\alpha) +g f_jdf_i
\]
and it remains to apply $\res$.
\end{proof}

When $I:=\{i,j\}$, we define the \emph{module of quasi-logarithmic bidifferentials,} denoted 
$\omega^{(I)}\la \qlog\ra$,  to  be the $\Ocal^{(I)}$-submodule of $(F^0\omega)^{(I)}(2\Delta_{ij})$ of forms on which 
both  $r_i$ and $r_j$ take values in $F^0\omega$. This 
means that in terms of a uniformizer an element of  $\omega^{(I)}\la\qlog\ra$ 
can be written as a sum of bidifferentials which have a denominator 
$t_it_j$, $t_i(t_i-t_j)$, $t_j(t_i-t_j)$ or $(t_i-t_j)^2$ (but the identity 
$(t_it_j)^{-1}= t_j^{-1}(t_i-t_j)^{-1}- t_i^{-1}(t_i-t_j)^{-1}$ shows that we can in fact eliminate $t_it_j$ as a denominator), 
equivalently, that in the above decomposition of $\alpha$, we have $f\in t_i^{-1}\Ocal^{(I)}+ t_j^{-1}\Ocal^{(I)}$ and 
$g\in\Ocal^{(I)}$ (more specifically, we could  take 
$g\in k[[t_i+t_j]]$). So the biresidue takes $\omega^{(I)}\la\qlog\ra$ to $\Ocal$.  

We define $\omega^{(I)}\la\qlog\ra_o$ to be $k$-subspace of $\omega^{(I)}\la\qlog\ra$ for which the biresidue is 
constant. In terms of the preceding decomposition of $\alpha$, this means that  we can choose $g$ to be a constant.
We denote the resulting $k$-linear form by $s: \omega^{(I)}\la\qlog\ra_o\to k$.

We also define the \emph{module of logarithmic bidifferentials} $\omega^{(I)}\la \llog\ra$ as the intersection of 
$\omega^{(I)}\la\qlog\ra$ with $\omega^{(I)}(\Delta_{ij})$. 
In this case we can do with the denominators $t_i(t_i-t_j)$ and $t_j(t_i-t_j)$ and so, as the terminology and notation 
suggest,  this module consists of logarithmic bidifferentials.
 
We  generalize these  notions to the case where $(i,j)$ is a distinct pair taken from any finite set $I$.
Let us denote by $\Delta_I$ the  union of all the diagonal divisors in $\DD^{(I)}$, but  let us also agree to 
abbreviate $\omega^{(I)}(n\Delta_I)$ simply by $\omega^{(I)}(n)$. The above definition extends  in an obvious way 
to give on $\omega^{(I)}(2)$ for every ordered, distinct  pair $(i,j)$ in $I$ 
a  biresidue map $\br_{ij}$  and a linear map
\[
(r'_{ij}, r''_{ij}): \omega^{(I)}(2)\to 
\omega^{(I_{ij})}(2)\oplus \omega^{(I\ssm\{ i,j\})}(2)
\]
with the property that 
\[
[r_i,r_j]=r_{[ij]}r_{ij}=r_{[ij]}r'_{ij} +r''_{ij}.
\] 
The definition of $\omega^{(I)}\la\qlog\ra$ and $\omega^{(I)}\la\qlog\ra_o$  is however not entirely straightforward 
as we want them to be stable under the residue maps $r_i$, $r_{ij}$ and $\br_{ij}$ that we have introduced and so the 
most logical way to introduce them is to force this property by  an inductive definition.

\begin{definition}\label{def:}
The $\Ocal^{(I)}$-module of \emph{quasi-logarithmic polydifferentials} $\omega^{(I)}\la\qlog\ra$ (resp.\ the space of 
\emph{special quasi-logarithmic polydifferentials}) $\omega^{(I)}\la\qlog\ra_o$ is for $|I|$ a singleton given as 
$F^0\omega$, for $|I|=2$ as above and for $|I|>2$ inductively defined as the $\Ocal^{(I)}$-submodule (resp.\ $k$-subspace) of
$\alpha\in \omega^{(I)}(2)$ with the property that 
\begin{enumerate}
\item[(i)] $r_i(\alpha)\in \omega^{(I\ssm\{i\})}\la\qlog\ra$ (resp.\  $r_i(\alpha)\in \omega^{(I\ssm\{i\})}\la\qlog\ra_o$),
\item[(ii)] $r_{ij}(\alpha)\in\omega^{(I_{ij})}\la\qlog\ra$ (resp.\ $r_{ij}(\alpha)\in\omega^{(I_{ij})}\la\qlog\ra_o$),
\item[(iii)] $\br_{ij}(\alpha)\in \Ocal^{(I_{ij})}\omega^{(I\ssm\{i,j\})}\la\qlog\ra$ (resp.\  
$\br_{ij}(\alpha)\in \omega^{(I\ssm\{i,j\})}\la\qlog\ra_o$, which we then
also denote as $s_{ij}(\alpha)$).
\end{enumerate}
The $\Ocal^{(I)}$-module of \emph{logarithmic polydifferentials}
$\omega^{(I)}\la\llog\ra$ is the intersection of $\omega^{(I)}\la\qlog\ra$ with $\omega^{(I)}(1)$.
\end{definition}

Perhaps we should explain that in part (iii) of this definition we use the pull--back along the projection 
$\DD^{(I_{ij})}\to \DD^{(I\ssm\{i,j\})}$ to identify polydifferentials on the target 
with polydifferentials on the source.

The following two examples may help to understand how restrictive our conditions are.

\begin{example}\label{example:triangle}
Consider
\[
\alpha:=\frac{fdt_1dt_2dt_3}{(t_1-t_2)(t_2-t_3)(t_3-t_1)}.
\] 
with $f\in K^{(3)}$. We claim that $\alpha$ is quasi-logarithmic if and only  if  $f\in \Ocal^{(3)}$  and is in 
addition special quasi-logarithmic precisely when the 
restriction of of $f$ to the main diagonal is constant.
Indeed, in order that the residues of $\alpha$ along each coordinate hyperplane are quasi-logarithmic it is 
necessary that  $f\in \Ocal^{(3)}$. This also suffices, for 
example
\[
r_{12}(\alpha)=- \frac{f(t_{12}, t_{12}, t_3)dt_{12} dt_3}{(t_{12}-t_3)^2}
\]
is then clearly quasi-logarithmic.  
Note that its biresidue on the diagonal $t_{12}=t_3$ is the restriction of $f$ to the main diagonal. 
Hence for $\alpha$ to be be special quasi-logarithmic, $f$  must be constant on the main diagonal $t_1=t_2=t_3$. 
This also suffices.
\end{example}

\begin{example}\label{example:notallowed}
Consider
\[
\beta:=\frac{fdt_1dt_2dt_3}{(t_1-t_2)^2(t_3-t_2)}.
\] 
with $f\in \Ocal^{(3)}$. We claim that $\beta$ is quasi-logarithmic if and only if  $f\in (t_1-t_2, t_2-t_3)$ (so that  
$\beta$ can then be written as a sum of two polydifferentials with denominators 
$(t_1-t_2)(t_3-t_2)$ and $(t_1-t_2)^2$). For  if $\beta $ is quasi-logarithmic, then 
\[
\br_{12}(\beta)= \frac{f(t_{12},t_{12}, t_3)dt_3}{t_3-t_{12}}
\]
must lie in $t_3^{-1}\Ocal^{(2)}dt_3$. This can only happen when $t_3-t_{12}$ divides $f(t_{12},t_{12}, t_3)$, 
which is indeed equivalent to: $f\in (t_1-t_2, t_2-t_3)$. Conversely, a form of this type is clearly quasi-logarithmic.
\end{example}

\subsubsection*{Graphical representation of the polar divisors}\label{rem:graphical}
The mechanics of this  inductive definition is perhaps best understood with the help of graphs.
Let $\G$ be a graph whose vertex set is the disjoint union of $I$ and  an element that we denote by $\star$,  
is without loops, but may be nonreduced in the sense that multiple edges are allowed. 
With an edge of  $\G$ connecting $i\in I$ with $j\in I\ssm \{i\}$ resp.\ $\star$ we associate the diagonal divisor 
$\Delta_{ij}$ resp.\ the hyperplane divisor 
$\pi_i^{-1}(o)$ and we denote by $\Delta_\G$ the divisor that we get by taking the sum over all the edges of $\G$. 
Note that $r_i$ resp.\ $r_{ij}$  is  zero on $\omega^{(I)}(\Delta_\G)$ unless $\G$ has an edge $e$ that connects 
$i$ with $\star$ resp.\ $j$, in which
case $\omega^{(I)}(\Delta_\G)$ is mapped onto $\omega^{(I)}(\Delta_{\G/e})$. 
We must be careful with these graphical representations though, even if we stick
to reduced graphs, as there are relations of the following type:
if $J\subset I\sqcup \{\star\}$ ia s subset with $|J|\ge 3$ and $\sigma\in\Scal(J)$  
a $|J|$-cycle which makes $J$ the vertex set of a
polygon $\G'$ (so that 
$\{j, \sigma (j)\}$ is an edge of $\G'$ for every $j\in J$), then 
\[
\textstyle \sum_{j\in J} \prod_{j\in J\ssm\{ i\}} (t_j-t_{\sigma(j)})^{-1} \prod_{i\in J}dt_j=0, 
\]
where we define $t_\star$ to be identically zero. 
This is visibly a relation in $\sum_j \omega^{(J)}(\Delta_{\G'_j})$, where $\G'_j$ is the chain obtained 
from $\G'$ by removing the edges containing $j$.
If we multiply this with an element of $\omega^{(I\ssm J)}(\G'')$, where $\G''$ is a reduced graph of the above type 
with vertex set $(I\sqcup \{\star\})\ssm J$, then we get a relation involving reduced graphs with vertex set 
$I\sqcup\{\star\}$. These are essentially the only type of relations that involve reduced polar divisors.

The following proposition appears already in a somewhat different form in \cite{sv}, \S 6.

\begin{proposition}\label{prop:graphical1}
The module $\omega^{(I)}\la\llog\ra$ of  logarithmic polydifferentials is a sum of submodules of the type 
$\omega^{(I)}(\Delta_\G)$, where $\G$ runs over the 
collection of graphs $\G$ with vertex set $I\sqcup\{\star\}$ of which each connected component of $\G$  
is a chain (possibly of zero length) and with $\star$ appearing as the end of some chain. 
\end{proposition}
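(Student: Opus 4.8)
The plan is to proceed by induction on $|I|$, the cases $|I|=1,2$ being immediate from the definitions (for $|I|=2$ a logarithmic bidifferential has denominators among $t_i(t_i-t_j)$, $t_j(t_i-t_j)$, which is exactly the statement that $\omega^{(I)}\la\llog\ra$ is the sum of the $\omega^{(I)}(\Delta_\G)$ over the chain-graphs with $\star$ at an end). For the inductive step I would take $\alpha\in\omega^{(I)}\la\llog\ra$, i.e.\ $\alpha\in\omega^{(I)}(1)$ all of whose residues $r_i(\alpha)$, $r_{ij}(\alpha)$ are again logarithmic. First I would analyze the polar divisor of $\alpha$: writing the support of the polar divisor of $\alpha$ as a reduced graph $\G_0$ on the vertex set $I\sqcup\{\star\}$ (an edge $\{i,j\}$ for each diagonal $\Delta_{ij}$ actually appearing, an edge $\{i,\star\}$ for each hyperplane $\pi_i^{-1}(o)$ actually appearing), the claim to prove is that $\alpha$ can be rewritten, using the polygon relations recalled before the proposition, as a sum of terms each supported on a subgraph of $\G_0$ whose connected components are chains with $\star$ at an end.

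The key combinatorial input is that the polygon relation $\sum_{j\in J}\prod_{j\in J\ssm\{i\}}(t_j-t_{\sigma(j)})^{-1}\prod_{i\in J}dt_j=0$ (with $t_\star:=0$) lets one "break" any cycle in $\G_0$: if $\G_0$ contains a cycle, multiply the polygon relation on that cycle by the appropriate polydifferential in the remaining variables to express $\alpha$ (or the relevant summand) as a combination of polydifferentials supported on graphs obtained by deleting one edge of the cycle. Iterating, one reduces to the case where $\G_0$ is a forest. Then the condition that each connected component be a \emph{chain} with $\star$ at an end is forced by looking at residues: if some vertex $i$ had three or more edges in its component, or a non-$\star$ leaf-component vertex were where we needed $\star$, then an appropriate $r_i$ or $r_{ij}$ or biresidue $\br_{ij}$ would fail to be logarithmic (this is exactly the phenomenon illustrated in Examples \ref{example:triangle} and \ref{example:notallowed}: a "Y" or a double edge forces a biresidue with a simple pole not of logarithmic type unless a numerator vanishing condition holds, which after applying the polygon/partial-fraction identities amounts to rewriting on a smaller graph). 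So I would argue: reduce to $\G_0$ a forest by the polygon relations; then for each tree component, use the inductive hypothesis applied to $r_i(\alpha)$ for leaves $i$ to peel off chains, the point being that a leaf $i$ of the polar graph contributes a genuine simple pole whose residue $r_i(\alpha)$ is logarithmic on $I\ssm\{i\}$, hence (induction) a sum of chain-graph terms, and conversely such a term lifts back.

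The main obstacle I expect is the bookkeeping in the reduction step: showing that after using the polygon relations to kill cycles one genuinely lands inside $\omega^{(I)}\la\llog\ra$ termwise (the relations are homogeneous in the polar order, so logarithmic poles stay logarithmic, but one must check no new hyperplane poles $\pi_i^{-1}(o)$ are created and that the "chain with $\star$ at an end" shape — rather than a chain with both ends in $I$ — is exactly what survives). Concretely, the subtle point is that a chain all of whose vertices lie in $I$ (no $\star$) would give a form like $dt_1\cdots dt_N/((t_1-t_2)\cdots(t_{N-1}-t_N))$ whose iterated residue down to a single variable is $dt$, which is \emph{not} in $F^0\omega=\Ocal\,dt$ unless... it is, so actually such chains \emph{are} logarithmic as forms but are \emph{not} of the stated type; the resolution is that the inductive logarithmic condition on $\omega^{(I)}\la\llog\ra$ (closure under all $r_i$, not just down to a point) together with the requirement that the \emph{final} residue land in $F^0\omega$ rather than in $\omega$ at a chosen point forces one endpoint of each chain to be $\star$ — I would make this precise by checking that for a chain $i_1-i_2-\cdots-i_m$ with both ends in $I$, the residue $r_{i_1}$ produces a form with a \emph{non-logarithmic} (order-$\ge 1$ but wrongly-normalized) behavior, or more simply that its biresidue or a subsequent residue escapes $F^0$, contradicting membership. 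Citing \cite{sv}, §6, I would note that the analogous statement there handles precisely this forest-of-chains normal form, so the argument can be modeled on it, adding only the $\star$-endpoint bookkeeping coming from our $F^0$-normalization.
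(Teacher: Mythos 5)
Your overall frame (induction on $|I|$, peeling off residues and lifting chain-type terms) is in the same spirit as the paper's argument, but the two claims your inductive step actually rests on are false. The shape of the polar graph of a logarithmic polydifferential is \emph{not} forced by the residue conditions, and the decomposition is \emph{not} into terms supported on subgraphs of your $\G_0$: for instance $\frac{dt_1dt_2}{t_1t_2}$, $\frac{dt_1dt_2dt_3}{t_1t_2t_3}$ and $\frac{dt_1dt_2dt_3}{t_2(t_1-t_2)(t_2-t_3)}$ all lie in $\omega^{(I)}\la\llog\ra$ (every $r_i$, $r_{ij}$, $\br_{ij}$ of them is again logarithmic or zero), even though their polar graphs have $\star$ in the middle of a chain, respectively a vertex of degree three. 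So the step ``a vertex of degree $\ge 3$, or $\star$ in the wrong place, makes some residue or biresidue non-logarithmic'' cannot work. What is true is that such forms can be \emph{rewritten}, and the rewriting necessarily introduces diagonal divisors absent from the original polar locus, e.g.\ $(t_1t_2)^{-1}=t_2^{-1}(t_1-t_2)^{-1}-t_1^{-1}(t_1-t_2)^{-1}$; the general such move is the triangle identity $\frac{1}{(t_i-t_j)(t_j-t_k)}=\frac{1}{(t_i-t_k)(t_j-t_k)}+\frac{1}{(t_i-t_j)(t_i-t_k)}$ (with $t_\star=0$ allowed). This move is the missing ingredient, and it is exactly what the paper uses: the induction hypothesis, applied to the residues, first places $\eta$ in a sum of modules $\omega^{(I)}(\Delta_\G)$ whose edge-contractions $\G/e$ are chain forests with $\star$ at an end, and then repeated triangle moves bring each $\G$ into the asserted normal form. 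Your use of the polygon relations also puts the cart before the horse: those are identities \emph{among} monomial-denominator (graph-type) forms, so they can only be invoked once one already knows $\alpha$ is a combination of such forms, which is the substance of the statement and is supplied by the residue-and-lift induction, not by inspecting the polar divisor.

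Second, you misread the statement and then propose to prove something false. Only the component of $\G$ containing $\star$ is required to have $\star$ as an end; components lying entirely in $I$ are permitted (with $\star$ then an isolated, zero-length chain). Correspondingly, $\frac{dt_1\cdots dt_N}{(t_1-t_2)\cdots(t_{N-1}-t_N)}$ \emph{is} logarithmic and \emph{is} of the stated type, so the contradiction you plan to extract from chains with both ends in $I$ does not exist; note also that $F^0\omega=\mfrak^{-1}d\Ocal$ (a simple pole is allowed), not $\Ocal\,dt=F^1\omega$, so even the ``final residue escapes $F^0$'' mechanism you invoke is not available.
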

\begin{proof}
This is clear when $|I|\le 1$. We proceed with induction on the size of $I$. If $|I|\ge 2$, 
then it follows from the preceding
that if $\eta\in\omega^{(I)}\la\llog\ra$, then $\eta$ is contained in a sum of submodules 
$\omega^{(I)}(\G)$, where $\G$ is a reduced
graph with the property for every edge $e$ of $\G$, each connected component of $\G/e$  is a chain and 
$\star$ is an end point of that chain.

Every triangle spanned by a subset of  $I\sqcup \{\star\}$ gives rise to a relation:  
if $\{i,j\}$ and $\{j,k\}$ are edges  of $\G$, then if  
$\G'$ and $\G''$ are the graphs obtained by removing the  bond $\{i,j\}$ resp.\ $\{j,k\}$ and replacing it by $\{i,k\}$, 
we have  $\omega^{(I)}(\Delta_\G)\subset \omega^{(I)}(\Delta_{\G'})+\omega^{(I)}(\Delta_{\G''})$. 
Successive application of this procedure shows that we can in fact take our $\G$ in the collection of the asserted type.
\end{proof}

We can do something similar for the quasi-logarithmic polydifferentials. 
Suppose $\sigma$ is a permutation of $I$. Such a permutation is graphically 
represented by collection of pairwise disjoint oriented polygons whose vertex set is a subset of $I$. 
If we fix a uniformizer $t\in \Ocal$, then with such a $\sigma$ we associate the
polydifferential
\[
 \eta_\sigma:=\prod_{i\in I} \frac{dt_i}{t_i-t_{\sigma (i)}}, 
\]
If we replace $\sigma$ by its inverse on a $\sigma$-invariant subset of $I$, 
then some of the polygons get an opposite orientation and $\eta_\sigma$ may change sign. So 
if $\supp(\sigma)$ is the set of $i\in I$ with $\sigma(i)\not=i$ and if $\G_\sigma$ denotes the graph 
(a disjoint union of  polygons) defined by $\sigma$  on $\supp(\sigma)$, then 
\[
\omega^{(I\ssm \supp (\sigma))}\la \log \ra \cdot\eta_\sigma
\]
is a subspace of $\omega^{(I)}\la\qlog\ra_o$ which only depends on $\G_\sigma$. 
We therefore denote it by $\omega^{(I)}\la\qlog\ra_{\Gamma_\sigma}$. The natural map 
\[
\oplus_{\G}\, \omega^{(I)}\la\qlog\ra_\G \to \omega^{(I)}\la\qlog\ra_o, 
\]
where in the left hand side is the sum over all graphs $\G$ with vertex set contained in $I$ and whose connected 
components are polygons with $\ge 2$ vertices, is easily seen to be surjective. It is in general not injective. 
For example, if we put
\[
\alpha_n(z_1,z_2, z_3, \cdots , z_n):= \frac{dz_1dz_2\cdots dz_n}{(z_n-z_{n-1})(z_{n-1}-z_{n-2})\cdots (z_2-z_1)(z_1-z_n)}
\]
then  $\alpha_4(z_1,z_2, z_3, z_4) +\alpha_4(z_2,z_1, z_3, z_4)+ \alpha_4(z_1,z_3,z_2, z_4)=0$ 
and this relation involves the polygons defined by
the $4$-cycles $(1,2,3,4,1)$, $(2,1,3,4)$ and $(2,1,3,4)$. So there is in general no uniqueness in representing a 
quasi-logarithmic polydifferential as a sum with terms in the
$\omega^{(I)}\la\qlog\ra_\G$. All we can say is that any relation among the 
$\Scal_n$-transforms of $\alpha_n$ the operators
$s_{21}r_{32}\cdots r_{n-2,n-1}r_{n,n-1}$ and their $\Scal_n$-transforms must take the value zero.
\\

\begin{proposition}\label{prop:F_K}
 If $\alpha \in \omega^{(I)}\la\qlog\ra_o$ and $f=\otimes_{i\in I} f_i\in K^{(I)}$ (with $f_i\in K$), then
\begin{gather*}
[r_i,r_j](f\alpha)=  f^{(ij)}\big( r_{[ij]}(\pi_{[ij]}^*(f_if_j)r'_{ij}(\alpha))+\res (f_jdf_i)s_{ij}(\alpha)\big),
\end{gather*}
where  $\pi_{[ij]}: \DD^{(I_{ij})}\to \DD$ is the coordinate with index 
$[ij]$ and $f^{(ij)}:=\otimes_{l\in I\ssm\{i,j\}} f_l \in K^{(I\ssm \{i,j\})}$.

Furthermore, the assignment $I\mapsto \omega^{(I)}\la\llog\ra$  resp.\  $I\mapsto \omega^{(I)}\la\qlog\ra_o$ together 
with the contractions 
$r_{ij}$, $r_i$ (and $s_{ij}$)  define  an $\Rscr_*$-module $\Lcal og_K$ resp.\ an $\Rscr_*^s$-module $\qLog_K$.
\end{proposition}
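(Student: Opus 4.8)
The plan is to prove the two assertions in turn: first the commutator formula, then the functoriality that upgrades $\Lcal og_K$, resp.\ $\qLog_K$, to a module over $\Rscr_*$, resp.\ $\Rscr^s_*$.

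For the formula I would argue locally, exactly in the style of Lemmas \ref{lemma:commutator} and \ref{lemma:resformula}, starting from the decomposition $[r_i,r_j]=r_{[ij]}r'_{ij}+r''_{ij}$ on $\omega^{(I)}(2)$ established just above. Since $r_i,r_j$, and hence $r'_{ij},r''_{ij}$, touch only the tensor slots $i$ and $j$ (placing their output in the slot $[ij]$), and multiplication by $f_l$ for $l\notin\{i,j\}$ commutes with all of them, the factor $f^{(ij)}$ pulls straight out and the identity reduces to the two-slot case, where it is Lemma \ref{lemma:resformula} together with Lemma \ref{lemma:commutator}. Spelling this out, one applies the decomposition to $f\alpha$: on the polar-degree-one part one pushes $f_i,f_j$ through the diagonal residue $r'_{ij}$ and restricts them to $\Delta_{ij}$, obtaining $r_{[ij]}r'_{ij}(f\alpha)=f^{(ij)}\,r_{[ij]}\big(\pi^*_{[ij]}(f_if_j)\,r'_{ij}(\alpha)\big)$; on the polar-degree-two part, of shape $g(t_i-t_j)^{-2}dt_idt_j$ (times the remaining slots), the residue $r''_{ij}$ differentiates the numerator $f_i(t_i)f_j(t_j)g$ in $t_i$ and restricts to $\Delta_{ij}$, producing $f_i'f_j\,g|_\Delta+f_if_j\,\partial_{t_i}g|_\Delta$. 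This is precisely where the hypothesis $\alpha\in\omega^{(I)}\la\qlog\ra_o$ is used: the biresidue $s_{ij}(\alpha)=\br_{ij}(\alpha)=g|_\Delta$ is independent of the diagonal coordinate, so by the symmetry $g(t_i,t_j)=g(t_j,t_i)$ one gets $\partial_{t_i}g|_\Delta=\tfrac12\tfrac{d}{dt}(g|_\Delta)=0$; the second term dies, leaving $r''_{ij}(f\alpha)=f^{(ij)}\,\res(f_j\,df_i)\,s_{ij}(\alpha)$, and summing the two pieces gives the stated identity. Taking all $f_l=1$ this also re-derives $[r_i,r_j]=r_{[ij]}r_{ij}$ on $\qLog_K$.

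For the module structure I would check, in order: (a) functoriality under the groups $\Scal(I)$; (b) that the contractions $r_i$, $r_{ij}$ and $\br_{ij}=s_{ij}$ carry $\omega^{(I)}\la\llog\ra$, resp.\ $\omega^{(I)}\la\qlog\ra_o$, into the required submodules; and (c) the defining relations. Step (a) is immediate because the residue operators are intrinsically defined, hence natural under permutation of the factors, the only signs being $r_{ij}=-r_{ji}$ (Lemma \ref{lemma:commutator}) and $s_{ij}=s_{ji}$ (the biresidue is visibly symmetric in $t_i,t_j$). Step (b) is essentially built into Definition \ref{def:}: for $|I|>2$, clauses (i)--(iii) say precisely that $r_i$, $r_{ij}$ (which coincides with $r'_{ij}$ on $\la\qlog\ra_o$, since $r''_{ij}$ vanishes there by the computation above) and $\br_{ij}$ preserve these classes; for $|I|\le 2$ one checks this from the explicit descriptions; and for $\Lcal og_K$ one combines it with Proposition \ref{prop:graphical1}, on whose generators $\omega^{(I)}(\Delta_\G)$ the operator $r_{ij}$ acts as edge-contraction. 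Step (c): the relations $r_{ij}=-r_{ji}$, $s_{ij}=s_{ji}$ and $r_{[ij]}r_{ij}=[r_i,r_j]$ are Lemma \ref{lemma:commutator} (with its companion for $s$), and $r_{ij}r_{kl}=r_{kl}r_{ij}$, $r_kr_{ij}=r_{ij}r_k$, $s_{ij}s_{kl}=s_{kl}s_{ij}$ hold because the two sides act on disjoint slots and therefore commute. The one substantive point — and the step I expect to be the main obstacle — is the pair of three-slot relations, the Jacobi identity $r_{[ij],k}r_{ij}+r_{[jk],i}r_{jk}+r_{[ki],j}r_{ki}=0$ and the invariance relation $s_{[ij],k}r_{ij}+s_{[ik],j}r_{ik}=0$, whose verification comes down to keeping track of orientations in iterated diagonal residues. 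I would reduce both to three slots (all operators involved ignore the other slots) and then to a finite check on generators: via Proposition \ref{prop:graphical1} for $\Lcal og_K$ and via the surjection $\bigoplus_\G\omega^{(I)}\la\qlog\ra_\G\to\omega^{(I)}\la\qlog\ra_o$ for $\qLog_K$; on the chain- and polygon-forms $\eta_\sigma$ the two identities become exactly the partial-fraction (``triangle'') relations $\tfrac1{ab}+\tfrac1{bc}+\tfrac1{ca}=0$ with $a+b+c=0$ already exploited in Examples \ref{example:triangle}--\ref{example:notallowed} and in the graphical discussion, supplemented in the $s$-case by the $N=3$ instance of \eqref{eqn:inversion}. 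The orientation convention $r_{ij}=-r_{ji}$ is exactly what makes these signs consistent without the $\sign$-twists that appeared on the Betti side in Example \ref{example:surface}.
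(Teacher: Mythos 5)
Your proposal is correct and takes essentially the same route as the paper's (very terse) proof: the commutator formula is reduced, after pulling out the spectator factors $f^{(ij)}$, to the two-slot computation of Lemmas \ref{lemma:commutator} and \ref{lemma:resformula} (with condition (iii) of Definition \ref{def:} guaranteeing that the ``$g$'' there is constant), and the categorical relations are checked on the graphical generators, the Jacobi identity coming from $\sum_{i\in\ZZ/3}(t_{i-1}-t_i)^{-1}(t_i-t_{i+1})^{-1}=0$ and the invariant-form relation from the cyclic symmetry of the triangle form of Example \ref{example:triangle}. One small imprecision in your gloss: the term $\res(f_jdf_i)\,s_{ij}(\alpha)$ does not literally arise as $r''_{ij}(f\alpha)$ in the sense of the paper's definition (the symmetric projection followed by residues gives $\tfrac12\res\,d(f_if_j\,g|_\Delta)=0$); it is produced by $r_{[ij]}r_{ij}$ applied to the polar-degree-two part, equivalently by the antisymmetric, polar-degree-one piece created when one multiplies by $f_if_j$ --- but since you defer the actual two-slot computation to Lemma \ref{lemma:resformula}, this does not affect the argument.
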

\begin{proof}
If $i,j,l\in I$ are distinct and $\alpha\in \omega^{(I)}(\Delta_\G)_o$, then $s_{[ij],l}r_{ij}(\alpha)=0$ unless 
$J:=\{i,j,l\}$ is a  triangle  component of $\G$, so  that  Example \ref{example:triangle} appears as a factor. 
This shows that then $s_{[ij],l}r_{ij}$ is as an element of $\omega^{(I)}(\Delta_{\G\ssm J})_o$ invariant under 
cyclic permutations. In particular, the contractions $s_{ij}$ satisfy the invariant form properties.

The remaining  identities only need to be verified in case $I=\{1,2\}$. This we already did (see 
Lemma \ref{lemma:resformula}), except for  the Lie property (a Jacobi identity). 
But this is well-known and follows from 
$ \sum_{i\in \ZZ/3} (t_{i-1}-t_i)^{-1}(t_i-t_{i+1})^{-1}=0$.
\end{proof}

\subsection*{A homomorphism of $\Rscr^s$-modules}
Suppose $\glie$ is a Lie algebra over $k$ endowed with a symmetric bilinear form $s:\glie\otimes\glie\to k$ invariant 
under the adjoint representation. We then have defined
the $\Rscr^s$-module $\Lcal^{s}_{\gfrak}$.  Here is an example of a homomorphism of 
$\Rscr^s$-modules $\Lcal^{s}_{\gfrak}\to \qLog_K$ that we will later meet again, 
but that will depend on the choice of  a uniformizer $t$ for $\Ocal$.  
For such a choice  we define $\alpha_N\in \omega^{(N)}\la \qlog\ra $ and $s_N\in (\glie^{\otimes N})^*$ by  
\[
\alpha_N:= \frac {dt_1dt_2\cdots dt_N}{(t_N-t_{N-1})\cdots (t_2-t_1)(t_1-t_N)}, 
\quad  s_N(X_N\otimes\cdots \otimes X_1):=s([[\cdots [X_N, X_{N-1}]\cdots , X_2], X_1).
\]
So $\alpha_2=-(t_2-t_1)^{-2}dt_1dt_2\in \omega^{(2)}\la \qlog\ra_o$ and $s_2(X_2\otimes X_1)=s(X_2,X_1)$.
In fact,  it it easy to check that $\alpha_N\in \omega^{(N)}\la \qlog\ra_o$ so that 
$s_N\otimes \alpha_N$ defines an element of $\Hom (\glie^{\otimes N},\omega^{(N)}\la \qlog\ra_o)$.

Note that both $s_N$ and $\alpha_N$  are multiplied by $-1$ under the transposition $(N,N-1)$ and by  $(-1)^N$  
under the permutation $\sigma_o$ which reverses the order (defined by  $\sigma_o(i)=N+1-i$).   
\begin{lemma}\label{lemma:etares}
For $N=1,2,\dots$, let 
\[
\textstyle \eta_N:=-\sum_{\sigma\in \Scal_N} \sigma_*(s_N\otimes \alpha_N)\in 
\Hom (\glie^{\otimes N},\omega^{(N)}\la \qlog\ra). 
\]
Then for 
$r_{N, N-1}\eta_N( X_N\otimes\cdots \otimes X_1)$ is zero for $N\le 2$ and is for $N\ge 3$ equal to 
$(\eta_{N-1} ([X_N,X_{N-1}]\otimes X_{N-2}\otimes\cdots \otimes X_1)$.
On the other hand,  $\br_{N-1,N} \eta_N$ is zero unless unless $N=2$ in which case  it is equal to $s$.  
\end{lemma}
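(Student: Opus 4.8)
The plan is to prove both assertions by evaluating $\eta_N$ on a decomposable tensor $X_N\otimes\cdots\otimes X_1$ and expanding the defining symmetrization term by term over $\Scal_N$. For $\sigma\in\Scal_N$ put $\alpha^\sigma:=\sigma_*\alpha_N=\prod_{i\in\ZZ/N}\frac{dt_{\sigma(i)}}{t_{\sigma(i+1)}-t_{\sigma(i)}}$; this is the cyclic polydifferential attached to the oriented $N$-cycle $C_\sigma=(\sigma(1)\to\cdots\to\sigma(N)\to\sigma(1))$, and its polar divisor is the \emph{reduced} union of the diagonals $\Delta_{\sigma(i),\sigma(i+1)}$, one for each edge of $C_\sigma$. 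Since $\sigma_*(s_N\otimes\alpha_N)=(\sigma_* s_N)\otimes\alpha^\sigma$, the form part of this term evaluated on $X_N\otimes\cdots\otimes X_1$ is a scalar — an iterated-bracket contraction of $s$ against the $X$'s read off from the cyclic word $C_\sigma$ — times $\alpha^\sigma$. Note also that $\eta_N$ is $\Scal_N$-invariant by construction, so it suffices to analyse the residue and biresidue along the single diagonal $\Delta_{N-1,N}$.

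The biresidue assertion is the easy half. When $N\ge 3$ the linear forms $t_{\sigma(i+1)}-t_{\sigma(i)}$, $i\in\ZZ/N$, are pairwise non-proportional, so each $\alpha^\sigma$ has at most a simple pole along every diagonal; hence $\eta_N$ in fact lies in the logarithmic submodule $\omega^{(N)}\la\llog\ra$, and its double-pole part along $\Delta_{N-1,N}$ — which is what $\br_{N-1,N}$ extracts — vanishes term by term, so $\br_{N-1,N}\eta_N=0$. For $N=2$ one computes directly from $\alpha_2=-(t_2-t_1)^{-2}dt_1dt_2$: the biresidue of $\alpha_2$ is $-1$, and summing $-\sigma_*(s_2\otimes\alpha_2)$ over $\Scal_2$ gives $\br_{1,2}\eta_2=s$, the sign in front of the symmetrization being exactly what normalises the coefficient to $1$. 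The same formula shows $r_{2,1}\eta_2=0$ (the residue of $(t_2-t_1)^{-2}dt_1dt_2$ along $\Delta_{1,2}$ vanishes), and for $N=1$ there is nothing to prove.

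For the residue assertion with $N\ge 3$, observe first that $r_{N,N-1}(\alpha^\sigma)$ vanishes unless $\{N-1,N\}$ is an edge of $C_\sigma$, i.e.\ unless $N-1$ and $N$ are cyclically adjacent in the word $\sigma(1)\cdots\sigma(N)$; when they are, taking the residue along $\Delta_{N-1,N}$ relative to the projection forgetting the $N$-th coordinate contracts the two edges of $C_\sigma$ meeting at the vertex $N$ into a single edge, so that $r_{N,N-1}(\alpha^\sigma)=\pm\,\alpha^{\sigma'}$, where $\sigma'$ is the $(N-1)$-cycle obtained by deleting $N$ from $C_\sigma$ and the sign records whether $N$ follows or precedes $N-1$ along $C_\sigma$. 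One then regroups the surviving terms: the contributing $\sigma$ are parametrised by a cyclic word $w$ on $\{1,\dots,N-1\}$ (namely $C_{\sigma'}$), by one of the two ways of reinserting the vertex $N$ next to $N-1$, and by a choice of cut turning a cyclic word into a linear one (the form $\alpha^\sigma$ depends only on the cyclic word, whereas the $s$-scalar depends on the cut). The two reinsertions carry opposite residue signs, and the corresponding $s$-scalars, being iterated brackets in which the slots $N-1$ and $N$ occur adjacently, recombine through the antisymmetry of the bracket into scalars in which $X_{N-1}$ and $X_N$ enter only via $[X_N,X_{N-1}]$; summing over $w$ and over the cuts then reassembles $-\sum_{\tau\in\Scal_{N-1}}\tau_*(s_{N-1}\otimes\alpha_{N-1})$ evaluated at $[X_N,X_{N-1}]\otimes X_{N-2}\otimes\cdots\otimes X_1$, which is $\eta_{N-1}([X_N,X_{N-1}]\otimes X_{N-2}\otimes\cdots\otimes X_1)$.

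The main obstacle is this last recombination: one must verify that, after the residue glues the $N$-th and $(N-1)$-st tensor slots, the scalars $s_N(\cdots)$ attached to the various linear orders collapse — using only antisymmetry of the bracket, the Jacobi identity and the $\ad$-invariance of $s$, i.e.\ precisely the \textbf{Lie} and \textbf{invariant form} relations of Subsection~\ref{subsect:lierep} — onto the scalars $s_{N-1}(\cdots)$ attached to the $(N-1)$-cycles with $[X_N,X_{N-1}]$ inserted, and with the correct multiplicities. A convenient way to organise this is to note that both $r_{N,N-1}\eta_N$ and $\eta_{N-1}\circ r_{N,N-1}$ are $\Scal(\{1,\dots,N-2\})$-invariant and supported on logarithmic polar divisors, so that comparing them reduces, after applying further iterated residues down to $\hom_{\Rscr^s}(\,\cdot\,,\emptyset)$, to the Klyachko-type cyclic identity \eqref{eqn:cyclic} and the identity \eqref{eqn:inversion} already recorded in Example~\ref{rem:cohenalg}; alternatively one proves the recombination directly by induction on $N$, the base case $N=3$ being the Jacobi identity combined with Lemma~\ref{lemma:resformula}.
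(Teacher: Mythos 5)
Your argument is correct and is essentially the paper's own proof: the paper likewise expands the symmetrization term by term, observes that $r_{N,N-1}\sigma_*\alpha_N$ vanishes unless $N-1$ and $N$ are cyclically adjacent, contracts the polygon, and pairs the two orientations of the adjacent pair so that the difference of the corresponding $s_N$-values becomes $s_{N-1}$ with $[X_N,X_{N-1}]$ inserted --- exactly the recombination you isolate as the main point (the paper treats the biresidue clause as obvious, consistent with your simple-pole observation for $N\ge 3$ and the direct computation for $N=2$). The only real difference is that where you propose to verify the recombination by reduction to the Klyachko/inversion identities or by induction on $N$, the paper simply records the one-line identity $s_N(\cdots\otimes A\otimes B\otimes\cdots)-s_N(\cdots\otimes B\otimes A\otimes\cdots)=s_{N-1}(\cdots\otimes[A,B]\otimes\cdots)$ for adjacent slots, which is the same combination of antisymmetry, Jacobi and $\ad$-invariance of $s$ that you invoke.
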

\begin{proof}
We compute $r_{N,N-1}\sigma_*(s_N(X_N\otimes\cdots\otimes X_1)\sigma_*\alpha_N$. This is nonzero only if
$\{\sigma^{-1}N, \sigma^{-1}(N-1)\}$ is  a neighboring pair (we here regard the indexing  as by  $\ZZ/N$). 
It is then clear that  if we sum over all $\sigma\in\Scal_N$, we obtain  an element of  
$\Hom(\glie^{\otimes (N-1)}, \omega^{(N-1)}\la \qlog\ra)^{\Scal_{N-1}}$. The terms in which $\alpha_{N-1}$ 
appears are of the type
\[
s_N(X_{N-2}\otimes X_{N-3}\otimes\cdots X_{i}\otimes X_N\otimes X_{N-1}\otimes 
X_{i-1}\otimes\cdots\otimes X_1)-s_N(X_{N-2}\otimes X_{N-3}\otimes\cdots X_{i}\otimes X_{N-1}\otimes X_{N}\otimes 
X_{i-1}\otimes\cdots\otimes X_1)
\]
with $i\in\ \ZZ/N$, where for $i\equiv N\equiv 0$ this expression must be interpreted  as 
$f(X_{N-1}\otimes\cdots \otimes X_1\otimes X_N)- (X_{N}\otimes\cdots \otimes X_1\otimes X_{N-1})$. 
But now note that this difference is 
\[
f_{N-1}(X_{N-2}\otimes X_{N-3}\otimes\cdots X_{i}\otimes [X_N,X_{N-1}]\otimes X_{i-1}\otimes\cdots\otimes X_1),
\]
even for $i=N$. We thus find that  $r_{N,N-1}\eta_N(X_N\otimes\cdots \otimes X_1)=\eta_{N-1} ([X_N,X_{N-1}]
\otimes X_{N-2}\otimes\cdots \otimes X_1)$.

The last clause is obvious.
\end{proof}

We define $\eta_I\in \Hom(\glie^{\otimes I},\omega^{(I)}\la \qlog\ra)$  
for an arbitrary finite set $I$ via a bijection $I\cong\{1, \dots, N\}$, the result being 
independent of this bijection because  $\eta_N$ is $\Scal_N$-invariant.
If $I'\subset I$ is a subset with complement $I''$, then interpret  $\eta_{I'}\otimes \eta_{I''}$ as an element of 
$\Hom(\glie^{\otimes I},\omega^{(I)}\la \qlog\ra)$.
More generally this makes sense  for any partition $\Pcal$ of $I$: the tensor  $\otimes_{P\in\Pcal} \eta_P$ can be 
regarded as an element of $\Hom(\glie^{\otimes I},\omega^{(I)}\la \qlog\ra)$.

\begin{corollary}\label{cor:etahat}
A homomorphism of $\Rscr^s$-modules $\Lcal^{s}_{\gfrak}\to \qLog_K$ is defined by assigning to a finite set 
$I$ the homomorphism
\[
\textstyle \hat\eta_I:=\sum_{\Pcal | I} \otimes_{P\in\Pcal} \eta_P\in \Hom(\glie^{\otimes I},\omega^{(I)}\la \qlog\ra),
\]
where the sum is over all partitions of $I$.
\end{corollary}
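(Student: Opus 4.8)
The plan is to verify that the assignment $I \mapsto \hat\eta_I$ is compatible with the three families of contraction operators generating $\Rscr^s$: the Lie contractions $r_{ij}$, the representation-type contractions (absent here since there is no representation $V$), and the invariant-form contractions $s_{ij}$. Since $\Lcal^s_\gfrak$ is universal for the \textbf{Lie} and \textbf{invariant form} properties, it suffices to check that $\hat\eta$ intertwines $r_{ij}$ on $\Lcal^s_\gfrak$ (i.e.\ the bracket map $\gfrak^{\otimes I} \to \gfrak^{\otimes I_{ij}}$) with $r_{ij}$ on $\qLog_K$, and likewise for $s_{ij}$; naturality under bijections is automatic because $\eta_N$ is $\Scal_N$-invariant, so the partition sum defining $\hat\eta_I$ is $\Scal(I)$-equivariant. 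By Proposition \ref{prop:F_K} we already know $\qLog_K$ is a well-defined $\Rscr^s_*$-module, so the only thing at stake is that $\hat\eta$ is a morphism.

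First I would fix a distinct pair $(i,j)$ in $I$ and compute $r_{ij}\circ \hat\eta_I$ by sorting the partitions $\Pcal$ of $I$ according to how $i$ and $j$ sit relative to the blocks of $\Pcal$. The residue $r_{ij} = \res_{i\to j}$ acts on a single tensor factor $\omega^{(P)}\la\qlog\ra$, namely the block(s) containing $i$ and $j$, and kills any term where $i$ and $j$ are not adjacent along a polygon or chain inside their block — this is exactly the vanishing statement recorded before Proposition \ref{prop:graphical1} and reused in Lemma \ref{lemma:etares}. There are two cases: (a) $i$ and $j$ lie in the same block $P$, where Lemma \ref{lemma:etares} gives $r_{ij}\,\eta_P = \eta_{P_{ij}}\circ(\text{bracket in slots }i,j)$ when $|P|\ge 2$, and $r_{ij}\,\eta_P = 0$ when $|P|=2$ is harmless because then $P_{ij}$ is a singleton and the corresponding term on the $\qLog_K$ side is the $F^0\omega$-valued $\eta$ of a one-element set, which matches; (b) $i$ and $j$ lie in different blocks $P \ni i$, $Q \ni j$, where $r_{ij}$ on a product $\eta_P \otimes \eta_Q$ vanishes because neither factor has a pole along $\Delta_{ij}$ — exactly as in Example \ref{example:triangle}'s mechanics. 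So $r_{ij}\hat\eta_I = \sum_{\Pcal'} \otimes_{P'\in\Pcal'}\eta_{P'}$, the sum over partitions $\Pcal'$ of $I_{ij}$, which is $\hat\eta_{I_{ij}}\circ r_{ij}$ since the bracket map $\Lcal^s_\gfrak(r_{ij})$ precisely contracts the two slots before forming $\hat\eta_{I_{ij}}$. The bijection between "partitions of $I$ with $i,j$ in a common block" and "partitions of $I_{ij}$" is the combinatorial heart of this step.

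Next I would check the $s_{ij}$ compatibility. The biresidue/$s$-operator $\br_{ij}$ on $\eta_P$ is, by the last clause of Lemma \ref{lemma:etares}, zero unless the block $P = \{i,j\}$ has exactly two elements, in which case it returns the form $s$; and again $\br_{ij}$ of a cross-block product $\eta_P\otimes\eta_Q$ with $i\in P$, $j\in Q$ vanishes for lack of a second-order pole along $\Delta_{ij}$. Thus only partitions $\Pcal$ in which $\{i,j\}$ is a block survive, and for those $s_{ij}\hat\eta_I = s \otimes \big(\sum_{\Pcal''}\otimes_{P''}\eta_{P''}\big)$ with $\Pcal''$ ranging over partitions of $I\ssm\{i,j\}$, which equals $\hat\eta_{I\ssm\{i,j\}}\circ s_{ij}$ since $\Lcal^s_\gfrak(s_{ij})$ applies $s$ to slots $i,j$. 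One should also remark that the sign conventions match: $\eta_N$ and $s_N$ both transform by $\sign_N$ under the relevant transpositions (noted right before Lemma \ref{lemma:etares}), so the $\sign(I)$-twists implicit in $\omega^{(I)}$ versus the skew-symmetry of $r_{ij}$ on $\Lcal^s_\gfrak$ are consistent, giving $r_{ij} = -r_{ji}$ and $s_{ij}=s_{ji}$ on both sides simultaneously.

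The main obstacle I anticipate is bookkeeping rather than conceptual: making the case analysis in the $r_{ij}$ computation airtight when $i$ and $j$ lie in a common block of size $\ge 3$, because inside such a block $\eta_P = -\sum_{\sigma}\sigma_*(s_{|P|}\otimes\alpha_{|P|})$ is itself a sum over permutations and one must see that only the terms where $\sigma$ makes $i,j$ neighbors contribute, with the leftover sum reorganizing into $\eta_{P_{ij}}$ composed with the bracket — this is precisely what Lemma \ref{lemma:etares} supplies, so the work is in invoking it correctly for each block rather than in any new estimate. Everything else reduces to the local computations of Lemma \ref{lemma:resformula} and Proposition \ref{prop:F_K}, together with the observation that $\hat\eta$ genuinely lands in $\omega^{(I)}\la\qlog\ra$ (not merely $\omega^{(I)}(2)$), which follows because each $\eta_P$ does by construction and the $\qlog$ condition is preserved under exterior products of blocks.
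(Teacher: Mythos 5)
Your proposal is correct and follows essentially the same route as the paper, whose proof of Corollary \ref{cor:etahat} is just the blockwise application of Lemma \ref{lemma:etares} together with the vanishing of $r_{ij}$ and $\br_{ij}$ on cross-block products, exactly as you spell out. The one point you should make explicit is the case of a block equal to $\{i,j\}$: there the corresponding term of $\hat\eta_{I_{ij}}$ involves $\eta$ of a singleton, which must be taken to be zero (the defining formula for $\eta_N$ degenerates at $N=1$), so that both sides vanish -- your phrase ``which matches'' should be replaced by this observation.
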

\begin{proof}
This in indeed a straightforward consequence of  Lemma \ref{lemma:etares}.
\end{proof}

Let $V$ be a $\glie$-representation. A  consequence of the preceding is that 
$ \hom_{\Rscr^s_*}(\Lcal^{s}_{\gfrak,V},\qLog_K)$ is in a sense a deformation of 
$\hom_{\Rscr_*}(\Lcal_{\gfrak,V},\Lcal og_K)$:

\begin{corollary}\label{cor:etadef}
Let $\xi\in \hom_{\Rscr_*}(\Lcal_{\gfrak,V},\Lcal og_K)$ (so this assigns to every finite set $I$ a linear map 
$\xi_I: \glie^{\otimes I}\otimes V\to \omega^{(I)}\la\llog\ra$ subject to the usual conditions involving the 
contraction operators $r_i$ and $r_{ij}$). Then a  $\Rscr_*^s$-homomorphism  
$\hat\xi: \Lcal^{s}_{\gfrak,V}\to \qLog_K$ is defined by assigning 
to every finite set $I$,  the  sum 
$\sum_{J\subset I} \xi_{I\ssm J}\otimes \hat\eta_J:  \glie^{\otimes I}\otimes V\to \omega^{(I)}\la\qlog\ra$. 
The resulting map 
\[
\xi\in \hom_{\Rscr_*}(\Lcal_{\gfrak,V},\Lcal og_{K})\mapsto \hat\xi \in \hom_{\Rscr_*^s}(\Lcal^{s}_{\gfrak,V},\qLog_K)
\]
is an isomorphism. 
\end{corollary}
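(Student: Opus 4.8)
The plan is to show that the assignment $\xi\mapsto\hat\xi$ is a bijection by exhibiting an explicit inverse, using the filtration of $\qLog_K$ by "biresidue order" (i.e.\ by how many diagonal hypersurfaces carry a double pole). First I would verify that $\hat\xi$ as defined is indeed an $\Rscr_*^s$-homomorphism: this amounts to checking compatibility with $r_i$, $r_{ij}$ and $s_{ij}$. For $r_i$ and $r_{ij}$ one uses Corollary~\ref{cor:etahat} (so that $\hat\eta_J$ behaves correctly under these contractions) together with the hypothesis that $\xi$ is an $\Rscr_*$-homomorphism into $\Lcal og_K$; the new point is the compatibility with $s_{ij}$, which by Lemma~\ref{lemma:etares} ($\br_{N-1,N}\eta_N=0$ for $N\ge 3$ and $=s$ for $N=2$) forces $s_{ij}\circ\hat\xi_I$ to land on precisely the term $\xi_{I\ssm\{i,j\}}\otimes\hat\eta_{(\text{rest})}$ coming from pairing $i$ and $j$ in a two-element block. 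Concretely, $s_{ij}\bigl(\sum_{J\subset I}\xi_{I\ssm J}\otimes\hat\eta_J\bigr) = \sum_{J\ni i,j}\xi_{I\ssm J}\otimes\br_{ij}(\hat\eta_J)$, and $\br_{ij}(\hat\eta_J)$ is nonzero only for the partition of $J$ that isolates $\{i,j\}$, giving exactly $\sum_{J'\subset I\ssm\{i,j\}}\xi_{(I\ssm\{i,j\})\ssm J'}\otimes\hat\eta_{J'} = \hat\xi_{I\ssm\{i,j\}}$, as required by the invariant-form property.

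Next I would construct the inverse. Given an $\Rscr_*^s$-homomorphism $\zeta:\Lcal^s_{\gfrak,V}\to\qLog_K$, define $\xi_I$ as the "leading term" of $\zeta_I$, namely the composite of $\zeta_I$ with the projection $\omega^{(I)}\la\qlog\ra\twoheadrightarrow\omega^{(I)}\la\llog\ra$ obtained by killing all double-pole contributions (equivalently, by retaining only the part with trivial biresidue along every diagonal). One must check that this projection exists and is $\Rscr_*$-equivariant on the nose — this follows from the inductive structure of Definition~\ref{def:} and from Proposition~\ref{prop:graphical1}, which identifies $\omega^{(I)}\la\llog\ra$ as spanned by the chain-graph submodules, i.e.\ exactly the square-free part. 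Then $\xi:=(\xi_I)_I$ is an $\Rscr_*$-homomorphism into $\Lcal og_K$, and one verifies $\widehat{\zeta^{\,\mathrm{lin}}}=\zeta$ and $\hat\xi^{\,\mathrm{lin}}=\xi$ by induction on $|I|$: the correction terms $\xi_{I\ssm J}\otimes\hat\eta_J$ with $J\neq\emptyset$ all have strictly positive biresidue order, so they are precisely what the projection discards, while Lemma~\ref{lemma:etares} and Corollary~\ref{cor:etahat} guarantee that adding them back reconstructs $\zeta$ in the next biresidue stratum from its contractions, which by induction are already determined.

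The main obstacle I expect is the bookkeeping in the inductive reconstruction step: one needs to know that the higher-biresidue part of $\zeta_I$ is rigidly determined by the lower-$|I|$ data $\{\zeta_{I'}:|I'|<|I|\}$ via the operators $r_i$, $r_{ij}$, $\br_{ij}$, and that the combination $\sum_{J\subset I}\xi_{I\ssm J}\otimes\hat\eta_J$ reproduces it without overcounting. The potential trap is the non-uniqueness of representing quasi-logarithmic polydifferentials as sums over polygon-graphs (the relation among $\Scal_n$-transforms of $\alpha_n$ noted after Proposition~\ref{prop:graphical1}, e.g.\ the vanishing combination of $\alpha_4$'s); however, the paper's remark that any such relation must be annihilated by the operators $s_{21}r_{32}\cdots r_{n,n-1}$ and their $\Scal_n$-transforms is exactly the statement that these relations are invisible to the system of contractions, so the induced map on $\Rscr_*^s$-homomorphisms is still well-defined. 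I would therefore phrase the inductive step entirely in terms of the values of the contraction operators rather than in terms of chosen polygon-graph expansions, which sidesteps the non-uniqueness and makes both composites manifestly identities.
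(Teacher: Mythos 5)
Your first step (checking that $\hat\xi$ is an $\Rscr^s_*$-homomorphism via Lemma \ref{lemma:etares} and Corollary \ref{cor:etahat}) is essentially the paper's argument and is fine. The gap is in your inverse. You propose to recover $\xi_I$ from $\zeta_I$ by composing with a projection of $\omega^{(I)}\la\qlog\ra$ onto $\omega^{(I)}\la\llog\ra$ that kills the double-pole contributions, ``equivalently'' retains the part with trivial biresidue along every diagonal. Two things go wrong. First, no such canonical projection exists in this local setting: $\omega^{(I)}\la\llog\ra$ is the kernel of the biresidue maps, but a complement to it inside $\omega^{(I)}\la\qlog\ra$ requires choices --- the splitting $(r'_{ij},r''_{ij})$ of Subsection \ref{subsect:LiemoduleDVR} lives on $\omega^{(I)}(2\Delta_{ij})/\omega^{(I)}$, so it is canonical only modulo regular polydifferentials (the canonical form $\zeta$ of Section \ref{sect:simpleproduct} is a global, Hodge-theoretic object with no analogue over $K$). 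Second, and fatally, even after fixing such a projection it does not invert the map, because the correction terms are not all of ``positive biresidue order'': for every part $P$ with $|P|\ge 3$, $\eta_P$ takes values in $\omega^{(P)}\la\llog\ra$. For instance $\eta_3$ is a nonzero multiple of $s([X_3,X_2],X_1)\,dt_1dt_2dt_3/\bigl((t_3-t_2)(t_2-t_1)(t_1-t_3)\bigr)$, which has only first-order poles and vanishing biresidue along every diagonal (Lemma \ref{lemma:etares}). So already $\hat\xi_3=\xi_3+\sum\xi_1\otimes\eta_2+\xi_0\otimes\eta_3$, and the term $\xi_0\otimes\eta_3$ is an honest logarithmic polydifferential that survives any projection onto $\omega^{(3)}\la\llog\ra$; hence the projection of $\hat\xi_3$ is not $\xi_3$, and neither of your two composites is the identity.

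The correct repair is to invert triangularly rather than by a single projection: the correction terms in degree $N+1$ involve only $\xi_{I'}$ with $|I'|\le N$, so one builds the inverse by induction on degree. This is what the paper does: given $\tilde\xi$ and $\xi_0,\dots,\xi_N$ already constructed with $\hat\xi_i=\tilde\xi_i$ for $i\le N$, the polar part of $\xi_{N+1}$ is forced by the residue conditions in terms of the lower-degree data; one chooses any $\xi'_{N+1}$ with that polar part, observes that $\xi''_{N+1}:=\tilde\xi_{N+1}-\hat\xi'_{N+1}$ takes values in regular polydifferentials, and sets $\xi_{N+1}:=\xi'_{N+1}+\xi''_{N+1}$, which is logarithmic with the required residues and satisfies $\hat\xi_{N+1}=\tilde\xi_{N+1}$. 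Equivalently one may simply define $\xi_I:=\tilde\xi_I-\sum_{\emptyset\ne J\subset I}\xi_{I\ssm J}\otimes\hat\eta_J$ recursively and verify that this lands in $\omega^{(I)}\la\llog\ra$ and obeys the $\Rscr_*$-relations; either way the inverse is a recursion through the degrees, not a projection applied degreewise.
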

\begin{proof}We compute the image of $\hat\xi_I=\sum_{J\subset I} \xi_{I\ssm J}\otimes \hat\eta_J$ under the 
operators $r_i, r_{ij}$ and $s_{ij}$, where $i,j\in I$ are distinct.
Since  $r_i$ is zero on $\hat\eta$, it follows from the defining formula  that $r_i(\hat\xi)=\widehat{r_i (\xi)}$.
If we apply $r_{ij}$ to $\hat\xi_I$, then the image of the term $\xi_{I\ssm J}\otimes \hat\eta_J$ is zero unless $\{i,j\}$ 
is contained in either $I\ssm J$ or $J$ and this then produces $r_{ij}(\xi_{I\ssm J})\otimes \hat\eta_J= 
\xi_{(I\ssm J)_{ij}}\otimes \hat\eta_J$ resp.\  $\xi_{I\ssm J}\otimes r_{ij}(\hat\eta_J)=
\xi_{I\ssm J}\otimes  \hat\eta_{J_{ij}}$. Hence  $r_{ij}(\hat\xi_I)=\widehat{\xi_{I_{ij}}}$. 
The polydifferential $s_{ij}( \xi_{I\ssm J}\otimes \hat\eta_J)$
is nonzero only when $\{i,j\}\subset J$ and is then equal to $\xi_{I\ssm J}\otimes s_{ij}(\hat\eta_J)$. 
With the help of this identity, one finds that 
the $s_{ij}$ interact with $r_{ij}$ as to satisfy the invariant form properties. 
We conclude that $\hat\xi \in \qLog_K$ is as asserted.

The map $\xi\mapsto \hat\xi$ is evidently $k$-linear. We construct its inverse  with induction, that is, we assume given 
$\tilde\xi\in hom_{\Rscr_*^s}(\Lcal^{s}_{\gfrak,V},\qLog_K)$ and that we for an integer $N\ge 0$, we succeeded in finding 
$\xi_0, \dots , \xi_N$ which in this range define an element of $\hom_{\Rscr_*}(\Lcal_{\gfrak,V},\Lcal og_{K})$  for which  
$\hat\xi_i=\tilde\xi_i$ for $i=0, \dots,  N$.  
We can take  $\xi_0=\tilde\xi_0$ and $\xi_1=\tilde\xi_1$  and so we assume that $N\ge 1$. 
The map $\xi_{N+1}$ that we  must construct has its polar part prescribed in terms of the 
$\xi_0, \dots, \xi_{N-1}$. We choose a  map $\xi'_{N+1}$ with this polar part, leaving us the freedom to add to 
$\xi'_{N+1}$ a map which takes its values in the regular polydifferentials of degree $N+1$. 
One such map is $\xi''_{N+1}:=\tilde \xi_{N+1}-\hat\xi'_{N+1}$, which indeed has no polar part. 
So $\xi_{N+1}=\xi'_{N+1}+\xi''_{N+1}$ has the same polar part as $\xi'_{N+1}$, but is in addition such that 
$\hat\xi_{N+1}=\tilde\xi_{N+1}$. This completes the induction step.
\end{proof}

The preceding extends to the case where we have a product of discrete valuation rings whose factors are indexed by 
a nonempty finite set $P$:
$\Ocal_P=\prod_{p\in P} \Ocal_p$. We let $K_p$ denote the quotient field of $\Ocal_p$ so that  
$K_P:=\prod_{p\in P} K_p$ is the fraction ring of $\Ocal_P$. We also put  $\omega_P:=\oplus_{p\in P}\omega_p$. 
The (perfect) residue pairings $\omega_p\times K_p\to k$ of topological vector spaces  combine to give pairing
\[
\textstyle (\alpha,f)\in \omega_P\times K_P\mapsto \sum_{p\in P}\res_p f\alpha\in k
\]
that is topologically perfect as well. We let $\omega_P^{(I)}\la\qlog\ra:=\oplus_{p\in P}
\omega_p^{(I)}\la\qlog\ra $ and similar for the other modules introduced above. 
We then get for every $p\in P$ and $i\in I$ a residue
map $r^p_i$ and we find the following $P$-fold variant of the last clause of Proposition \ref{prop:F_K}.

\begin{corollary}\label{prop:F_KP}
With these contractions $r_{ij}$, $r^p_i$ (and $s_{ij}$) we obtain an $\Rscr_P$-module $\Lcal_{K_P}: \Rscr_P\to \Vect_k$, $I\mapsto \omega_P^{(I)}\la\llog\ra$ and an $\Rscr^s_P$-module
$\qLog_{K_P}: \Rscr^s_P\to \Vect_k$, $I\mapsto \omega_P^{(I)}\la\qlog\ra_o$, both
in the category of $k$-vector spaces $\Vect_k$.
\end{corollary}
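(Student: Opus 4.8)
The plan is to reduce Corollary \ref{prop:F_KP} to the single-factor results already established, namely Proposition \ref{prop:F_K} and Corollary \ref{prop:F_K}'s assertion that $I\mapsto \omega^{(I)}\la\llog\ra$ and $I\mapsto \omega^{(I)}\la\qlog\ra_o$ define an $\Rscr_*$-module $\Lcal og_K$ and an $\Rscr_*^s$-module $\qLog_K$. Since $\Ocal_P=\prod_{p\in P}\Ocal_p$ and the formation of completed $I$-fold tensor products, of the modules $\omega^{(I)}\la\qlog\ra_o$, $\omega^{(I)}\la\llog\ra$, and of the residue operators $r_i$, $r_{ij}$, $\br_{ij}$ are all carried out factor-by-factor in $p$, the whole structure is just the direct sum over $p\in P$ of the corresponding single-factor structure on $\Ocal_p$. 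So first I would make precise the decomposition $\omega_P^{(I)}\la\qlog\ra_o=\bigoplus_{p\in P}\omega_p^{(I)}\la\qlog\ra_o$ (and similarly for $\la\llog\ra$), noting that each summand is visibly preserved by $r_{ij}$ and $s_{ij}$ because those operators do not mix the $P$-factors.

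The one point requiring a genuine (if small) argument is the definition and compatibility of the new contractions $r^p_i$. For $i\in I$ and $p\in P$ the operator $r^p_i$ is the composite of the projection $\omega_P^{(I)}\la\qlog\ra_o=\bigoplus_{q\in P}\omega_q^{(I)}\la\qlog\ra_o\to \omega_p^{(I)}\la\qlog\ra_o$ onto the $p$-summand, followed by the single-factor residue $r_i:\omega_p^{(I)}\la\qlog\ra_o\to\omega_p^{(I\ssm\{i\})}\la\qlog\ra_o$, followed by the inclusion back into $\omega_P^{(I\ssm\{i\})}\la\qlog\ra_o$. I would then check the four families of identities of Definition \ref{def:liestructure} in turn. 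The Lie properties and the invariant-form properties involve only $r_{ij}$ and $s_{ij}$, which preserve each $\omega_p$-summand and act there exactly as in the single-factor case, so they follow immediately from Proposition \ref{prop:F_K}. The representation properties $r^p_{[ij]}r_{ij}=r^p_ir^p_j-r^p_jr^p_i$ and $r^p_kr_{ij}=r_{ij}r^p_k$ likewise reduce to their single-factor counterparts on the $p$-summand, since all operators involved carry the $p$-summand to the $p$-summand and annihilate the others — here one uses that $r^p_i$ kills the $q$-summands for $q\ne p$, so $r^p_ir^q_j=0=r^q_jr^p_i$ when restricted to a fixed summand, which also gives the commutativity relations $r^p_ir^q_j=r^q_jr^p_i$ for $p\ne q$, $i\ne j$ (both sides being zero). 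For the co-invariance remark there is nothing to add, as it is not claimed here.

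The main (indeed only) obstacle is purely bookkeeping: one must be careful that the residue operators introduced on $\omega^{(I)}(2)$ in the single-factor setting are well-defined on the submodules $\omega^{(I)}\la\qlog\ra_o$ and $\omega^{(I)}\la\llog\ra$, but this is exactly what Definition \ref{def:} and Proposition \ref{prop:F_K} already guarantee, so nothing new is needed. I would therefore write the proof as: observe the direct-sum decomposition; define $r^p_i$ as above; remark that $r_{ij}$, $s_{ij}$ act diagonally with respect to the decomposition and $r^p_i$ projects onto the $p$-summand and then applies the single-factor $r_i$; deduce every required identity from the $|P|=1$ case of Proposition \ref{prop:F_K}, the commutativity relations holding trivially because distinct-index cross terms vanish. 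The functoriality on bijections is inherited summand-wise from $\Lcal og_K$ and $\qLog_K$. This is short enough that the proof is essentially one paragraph referring back to Proposition \ref{prop:F_K}.

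\begin{proof}
Since $\Ocal_P=\prod_{p\in P}\Ocal_p$, the completed $I$-fold tensor product decomposes as a direct sum of the completed $I$-fold tensor products of the factors; passing to the modules of (special) quasi-logarithmic and of logarithmic polydifferentials, which are formed factor by factor, gives
\[
\omega_P^{(I)}\la\qlog\ra_o=\bigoplus_{p\in P}\omega_p^{(I)}\la\qlog\ra_o,\qquad
\omega_P^{(I)}\la\llog\ra=\bigoplus_{p\in P}\omega_p^{(I)}\la\llog\ra,
\]
and these decompositions are natural in $I$ for bijections. The contractions $r_{ij}$, $\br_{ij}$ and $s_{ij}$ only involve the diagonal divisors $\Delta_{ij}$ and hence respect each summand, acting on it exactly as in the single-factor situation treated in Proposition \ref{prop:F_K}. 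For $p\in P$ and $i\in I$ we define $r^p_i$ to be the projection of $\omega_P^{(I)}\la\qlog\ra_o$ onto its $p$-th summand followed by the single-factor residue $r_i\colon\omega_p^{(I)}\la\qlog\ra_o\to\omega_p^{(I\ssm\{i\})}\la\qlog\ra_o$ and then the inclusion into $\omega_P^{(I\ssm\{i\})}\la\qlog\ra_o$; the same recipe defines $r^p_i$ on $\omega_P^{(I)}\la\llog\ra$.

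It remains to verify the identities of the \textbf{representation}, \textbf{commutativity} and \textbf{invariant form} properties (the \textbf{Lie} properties involve only the $r_{ij}$ and hold summand-wise by Proposition \ref{prop:F_K}). Fix $p\in P$. Since $r^p_i$, $r^p_j$, $r_{ij}$ and $r_{kl}$ all send the $p$-th summand into the $p$-th summand and (for $r^p_i$) annihilate the other summands, the identities $r^p_{[ij]}r_{ij}=r^p_ir^p_j-r^p_jr^p_i$ and $r^p_kr_{ij}=r_{ij}r^p_k$ reduce to the corresponding single-factor statements of Proposition \ref{prop:F_K} on $\omega_p^{(I)}\la\qlog\ra_o$. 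For $p\ne q$ and $i\ne j$ the operators $r^p_i$ and $r^q_j$ are zero on each other's image (a polydifferential supported on one summand is killed by the residue attached to another summand), so $r^p_ir^q_j=0=r^q_jr^p_i$, which is the commutativity relation. The invariant form properties involve only $r_{ij}$ and $s_{ij}$ and therefore again follow summand-wise from Proposition \ref{prop:F_K}. Hence $I\mapsto\omega_P^{(I)}\la\llog\ra$ with the contractions $r_{ij}$, $r^p_i$ is an $\Rscr_P$-module $\Lcal_{K_P}$ and $I\mapsto\omega_P^{(I)}\la\qlog\ra_o$ with the contractions $r_{ij}$, $r^p_i$, $s_{ij}$ is an $\Rscr^s_P$-module $\qLog_{K_P}$, both valued in $\Vect_k$.
\end{proof}
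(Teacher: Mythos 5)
Your proposal is correct and follows the same route the paper intends: the paper states this corollary without proof as the evident $P$-fold variant of the last clause of Proposition \ref{prop:F_K}, using precisely the direct-sum decomposition $\omega_P^{(I)}\la\qlog\ra=\oplus_{p\in P}\omega_p^{(I)}\la\qlog\ra$ (which the paper takes as the definition rather than deriving it from a tensor-product decomposition, a point where your justifying sentence is slightly loose but harmless) and the summand-wise reduction to the single-DVR case, with the cross-commutativity relations holding trivially because $r^p_i$ kills the $q$-summands for $q\ne p$.
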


\begin{remark}\label{rem:}
It is likely that  the induced functor $k\otimes \Rscr_P\to \Vect_k$ is essentially faithful.
\end{remark}

\subsection{Lie  module structures attached a pointed curve}\label{subsect:Liepointedcurve}
Now let $C$ be a nonsingular projective irreducible curve of genus $g$ and  $P\subset C$ a  finite (possibly empty) subset.

We define inductively the $\Ocal_{C^I}$-submodule $\Omega_C( P)^{(I)}\la\qlog\ra\subset \Omega_C( P)^{(I)}(2)$ 
as in the local case: it is the largest submodule for which each residue taken along $\pi_i^{-1}(p)$ lands in
$\Omega_C( P)^{(I\ssm\{i\})}\la\qlog\ra$. There is no need to introduce a sheaf 
$\Omega_C( P)^{(I\ssm\{i\})}\la\qlog\ra_o$ for the following reason.   
The biresidue of a \emph{global} section of $\Omega_C( P)^{(I)}\la\qlog\ra$ along the diagonal $\Delta_{ij}$ will be 
regular along the fibers of the projection 
$C^{I_{ij}}\to C^{I\ssm\{i,j\}}$ in the sense that it will be a global section of the coherent 
pull-back of 
$\Omega_C( P)^{(I\ssm\{i,j\})}\la\qlog\ra$. 
Since that projection 
is a trivial bundle with connected proper fiber (namely $C$), such a global section can be regarded as  
an element of $ H^0(C^{I\ssm\{i,j\}},\Omega_C( P)^{(I\ssm\{i,j\})}\la\qlog\ra)$. So we have defined a linear map
\[
s_{ij}: H^0(C^I,\Omega_C( P)^{(I\ssm\{i\})}\la\qlog\ra)\to H^0(C^{I\ssm\{i,j\}},\Omega_C( P)^{(I\ssm\{i,j\})}\la\qlog\ra).
\]
The definition of  the subsheaf $\Omega_C( P)^{(I)}\la\llog\ra\subset \Omega_C( P)^{(I)}(\Delta_I)$ of logarithmic 
 polydifferentials is similar. The other contraction operators that we introduced for polydifferentials associated to a 
 complete DVR also have an evident global counterpart.
\begin{gather*}
r^p_i: H^0(C^I, \Omega_C( P)^{(I)}\la\qlog\ra)\to H^0(C^{I\ssm\{i\}}, 
\Omega_C( P)^{(I\ssm \{i\})}\la\qlog\ra) \quad (i\in I, p\in P),\\ 
r_{ij}: H^0(C^I, \Omega_C( P)^{(I)}\la\qlog\ra)\to 
H^0(C^{I_{ij}}, \Omega_C( P)^{(I_{ij})}\la\qlog\ra)\quad (i,j\in I \text{ distinct}).
\end{gather*}
These are not independent: given $i\in I$, then for every $j\in I\ssm\{i\}$, 
the  projection $C^I\to C^{I\ssm\{i\}}$ maps each 
$C^{I_{ij}}$ isomorphically onto $C^{I\ssm\{i\}}$. 
Via this identification, the residue theorem (taken along the fibers of this projection) then
yields:
\[
\textstyle \sum_{j\not= i}  r_{ij} +\sum_{p\in P} r^p_i=0.
\]
The following is now clear.

\begin{proposition}\label{prop:}
The operations just defined give rise to a co-invariant $\Rscr_P$-module
$\Lcal og_{C,P}:  I\mapsto H^0(C^I, \Omega_C( P)^{(I)}\la\llog\ra)$ 
and a co-invariant $\Rscr^s_P$-module
$\qLog_{C,P}:  I\mapsto H^0(C^I, \Omega_C( P)^{(I)}\la\qlog\ra)$, 
both taking values in  the category of finite dimensional $k$-vector spaces. $\square$
\end{proposition}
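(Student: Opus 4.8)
The plan is to check, in turn, that the contraction operators $r^p_i$, $r_{ij}$ and $s_{ij}$ are well defined between the indicated spaces of global sections, that they obey the \textbf{Lie}, \textbf{representation}, \textbf{commutativity} and \textbf{invariant form} relations (so that, by the universal property defining $\Rscr_P$ and $\Rscr^s_P$, these data extend to functors), that they are $\Scal(I)$-equivariant, that the co-invariance identity holds, and finally that the target spaces are finite dimensional. Most of this is a translation of the local statements of Subsection \ref{subsect:LiemoduleDVR} to the global setting, so I would dispatch the relation-checking quickly: every one of the relations is an identity among compositions of residue operators, each of which involves only finitely many coordinate factors and only the completed local ring of $C$ at a single point --- a point of $P$ or a point lying on one of the diagonals. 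Pulling back a given global section to the formal completion of $C^I$ along the relevant locus reduces each such identity to the corresponding purely local assertion already established: Lemma \ref{lemma:commutator} for $[r_i,r_j]=r_{[ij]}r_{ij}$ together with $r_{ij}=-r_{ji}$, and Proposition \ref{prop:F_K} resp.\ Corollary \ref{prop:F_KP} for the remaining Lie, representation, commutativity and invariant-form identities, with the operator $r_i$ of the DVR picture now playing the role of $r^p_i$ for the relevant $p\in P$. The $\Scal(I)$-equivariance is built into the construction of $\Omega_C(P)^{(I)}$ as an exterior tensor power and into the fact that the inductively defined subsheaves $\Omega_C(P)^{(I)}\la\qlog\ra$ and $\Omega_C(P)^{(I)}\la\llog\ra$ are $\Scal(I)$-stable.

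Next, the well-definedness of the operators on global sections. The sheaves $\Omega_C(P)^{(I)}\la\qlog\ra$ and $\Omega_C(P)^{(I)}\la\llog\ra$ are coherent $\Ocal_{C^I}$-modules, defined inductively precisely so that the single-factor residues $r^p_i$ and the diagonal residues $r_{ij}$ carry them into the corresponding sheaves on the smaller configuration spaces; hence these operators send global sections to global sections. For $s_{ij}$ the relevant point, already sketched in the text preceding the proposition, is the only step that is not a routine transcription: the biresidue of a global section $\alpha$ of $\Omega_C(P)^{(I)}\la\qlog\ra$ along $\Delta_{ij}$ is a priori a meromorphic polydifferential on $C^{I_{ij}}\cong C\times C^{I\ssm\{i,j\}}$ that is regular along the fibres of the projection to $C^{I\ssm\{i,j\}}$, and one must see that it is pulled back from the base; this holds because that projection is a product with proper connected fibre $C$, so a section regular along the fibres descends, yielding a well-defined element of $H^0(C^{I\ssm\{i,j\}},\Omega_C(P)^{(I\ssm\{i,j\})}\la\qlog\ra)$.

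The co-invariance identity is where the compactness of $C$ enters. Fix $i\in I$ and take a global section $\alpha$ of $\Omega_C(P)^{(I)}\la\qlog\ra$; restricted to a fibre $C$ of the projection $C^I\to C^{I\ssm\{i\}}$ it is a meromorphic $1$-form in the $i$-th variable whose poles lie only on the diagonals $\Delta_{ij}$ ($j\neq i$) and on the hyperplanes $\pi_i^{-1}(p)$ ($p\in P$). By the residue theorem on the smooth projective curve $C$ the sum of its residues vanishes, and, identifying each $C^{I_{ij}}$ with $C^{I\ssm\{i\}}$ via that projection, this sum equals $\sum_{j\neq i} r_{ij}(\alpha)+\sum_{p\in P} r^p_i(\alpha)$. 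Thus $\Lcal og_{C,P}$ is a co-invariant $\Rscr_P$-module and $\qLog_{C,P}$ a co-invariant $\Rscr^s_P$-module in the sense of Definition \ref{def:liestructure}. Finally, $C^I$ is smooth and projective over $k$ and the sheaves in question are coherent, so their spaces of global sections are finite dimensional, which finishes the proof. The one genuinely non-formal ingredient is the descent argument of the previous paragraph that makes $s_{ij}$ a global operator landing inside the quasi-logarithmic sheaf; everything else is bookkeeping together with a reduction to the formal-local computations of Subsection \ref{subsect:LiemoduleDVR}.
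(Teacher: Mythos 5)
Your argument is correct and is essentially the proof the paper intends: the paper leaves the proposition as "now clear" after exactly the ingredients you spell out — the descent of the biresidue along the projection with proper connected fibre $C$ to make $s_{ij}$ global, the reduction of all relations to the formal-local computations of Subsection \ref{subsect:LiemoduleDVR}, the residue theorem on $C$ for co-invariance, and coherence on the projective $C^I$ for finite dimensionality. No substantive difference in approach.
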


\section{Cohomology of the configuration space of a curve}
We derive (and partly recall) some results regarding  the cohomology  of the configuration space  of a 
smooth connected complex  curve.
We also address its mixed Hodge structure.

\subsection{The basic spectral sequence}\label{subsect:basicss}
Here $C$ is a smooth connected (but not necessarily complete) curve over $\CC$ of genus $g$ and 
$P$ a finite (but possibly empty) subset of $C$; the inclusion will be denoted $i_P:P\subset C$. 
\subsubsection*{A stratification and its M\"obius functor}
Observe that $\Delta_{I,P}:=C^I\ssm\inj_I(C\ssm P)$
is  a union of smooth divisors which intersect locally like linear hyperplanes in a complex vector space 
(we called this elsewhere \emph{arrangementlike}). 
These divisors are $H^p_i:=\pi^*_i(p)$ where $i\in I$ and $p\in P$ and $\Delta_{ij}:=(\pi_{i}, \pi_{j})^*\Delta_C$, 
where $\{i, j\}$ a 2-element subset of $I$
(here $\pi_i: C^I\to C$ is evaluation in $i$). The local (transversal) type of these intersections is like that of a 
complexified Coxeter hyperplane arrangement  with components of type $A_*$ or $B_*$. 

These divisors define a stratification of $C^I$. A stratum is given by a
 partition $\Pcal$ of $I\sqcup P$ with the property that its restriction to $P$ is the partition into singletons, i.e., 
 each member of $\Pcal$ contains at most one element of $P$.  So if we regard $\Pcal$ as an equivalence relation 
 on $I\sqcup P$, then it is given by a quotient of $I\sqcup P$ of the form $I_\Pcal\sqcup P$, 
 where a point of $I_\Pcal$ represents a member of $\Pcal$ that is disjoint with $P$.
The associated stratum consists of the set of maps $z: I\to C$ for which
$z\sqcup i_P: I\sqcup P\to C$ factors through an injection 
of $I_\Pcal\sqcup P$. We identify it with  $\inj_{I_\Pcal}(C\ssm P)$ so that its  closure is identified with 
$C^{I_\Pcal}$ and write $\Delta_\Pcal : C^{I_\Pcal}\hookrightarrow C^I$ for the associated closed embedding. 
Its codimension equals $c(\Pcal):=|I|-|I_\Pcal|$.

We denote the collection of decompositions $\Pcal$ of $I\sqcup P$ with the above property by $\Pscr (I, P)$.
This set  is partially ordered, where   by convention $\Pcal\le \Pcal'$ means that $\Pcal$ is equal or finer than 
$\Pcal'$: every member of $\Pcal$ is contained in one of $\Pcal'$
(in other words, the formation of the $\Pcal'$-quotient factors through the $\Pcal$-quotient). 
This definition is opposite to the one  suggested by the incidence property of  the associated strata, 
as this is equivalent to the $\Pcal$-stratum having the  
$\Pcal'$-stratum in its closure.
In particular, $c(\Pcal)=0$ implies that $\Pcal$ is the partition $\Pcal_{\min}$ of $I\sqcup P$ 
into singletons (the least element of $\Pscr(I,P)$). 
If $P\not=\emptyset$, then the maximal elements of $\Pscr (I, P)$ are the $\Pcal$ for which 
$I_\Pcal=\emptyset$ (so that $c(\Pcal)=|I|$), the associated strata being the singletons contained in $P^I$. 
When $P=\emptyset$, there is a greatest element $\Pcal_{\max}$: it has $I$ as a single equivalence class 
(so that $c(\Pcal)=|I|-1$) and the associated stratum is  the main diagonal.

\begin{definition}\label{def:}
The \emph{M\"obius functor} is the contravariant functor
$E$ from $(\Pscr (I,P), \le)$ to the category of free abelian groups of finite rank that is (inductively) characterized by 
the following two properties:
\begin{enumerate}
\item [(i)] when $c(\Pcal)=0$, i.e., when $\Pcal=\Pcal_{\min}$, then $E_\Pcal=\ZZ$,
\item [(ii)] for $c(\Pcal)\ge  1$, the following sequence is exact
\[
0\to E_\Pcal\to \oplus_{\Pcal'\in \Pcal (1)} E_{\Pcal'}\to \oplus_{\Pcal''\in \Pcal (2)} E_{\Pcal''},
\]
\end{enumerate}
where $\Pcal (n)$ denotes the set of  $\Pcal'\in\Pscr$ with $\Pcal'\le \Pcal$ and $c(\Pcal')=c(\Pcal)-n$. 
\end{definition} 

Note that when $c(\Pcal)=1$, then 
$\Pcal (2)=\emptyset$ and so property (ii) then  just tells us that  $E_\Pcal\to E_{\Pcal_{\min}}=\ZZ$ is an isomorphism. 
Such a $\Pcal$ is in fact 
given by a $2$-element subset of $I\sqcup P$ that is either of the form $\{i, j\}\subset I$ (defining the divisor $\Delta_{ij}$) 
or of the form $\{ i, p\}$ with $i\in I$ and $p\in P$ (defining the divisor $H^p_i$), the other members of $\Pcal$ 
being singletons in $I$. 

\subsubsection*{An injective resolution} Given an injective resolution $\Ical^\pt$ of the constant sheaf $\ZZ$ on 
$C^I$, then $\Delta_\Pcal^*\Ical^\pt=\Delta_\Pcal^!\Ical^\pt$
is an injective resolution of the constant sheaf on $C^{I_\Pcal}$ and as observed in \cite{looij1991}
\[
\cdots \to\oplus_{c(\Pcal)=p} \Delta_{\Pcal !} \Delta_{\Pcal}^*\Ical^\pt \otimes 
E_\Pcal \to\cdots \to\oplus_{c(\Pcal)=0} \Delta_{\Pcal !} \Delta_{\Pcal}^*\Ical^\pt \otimes E_\Pcal\to 0
\]
is then an injective resolution of  $j_*j^*\Ical$, where $j: \inj_I(C\ssm P)\subset C^I$ is the inclusion 
(note that the last term $\oplus_{c(\Pcal)=0} \Delta_{\Pcal !} \Delta_{\Pcal}^*\Ical^\pt \otimes E_\Pcal$ 
is just a ridiculously complicated way to write $\Ical^\pt$). 
A Thom isomorphism establishes a quasi-isomorphism of $\Delta_{\Pcal !} \Delta_{\Pcal}^*\Ical^\pt $
with $\Delta_{\Pcal *}\ZZ_{C^{I_\Pcal}}(-c(\Pcal))[-2c(\Pcal)]$, where $(-c(\Pcal))$ indicates a Tate twist
(this makes it a Hodge structure of  rank one of Hodge bidegree $(c(\Pcal), c(\Pcal))$) and 
$[-2c(\Pcal)]$ tells us that we  put this in degree $2c(\Pcal)$. This gives rise to a spectral sequence of mixed Hodge structures with
\begin{equation}\label{display:basicss}
E_1^{-r,s}=\oplus_{c(\Pcal)=r} H^{s-2r}(C^{I_\Pcal})\otimes E_\Pcal(-r)
\Rightarrow H^{s-r}(\inj_I(C);\ZZ).
\end{equation}
We simplify notation a bit by putting 
\[
A_\Pcal:= E_\Pcal(-c(\Pcal))\; \text{  and   }\;  A ^r_{I,P}:=\oplus_{c(\Pcal)=r} A_\Pcal,
\]
 often writing $A^r_I$ for $A^r_{I,\emptyset}$ and 
$A ^r_{N,P}$ resp.\  $A ^r_{N}$ if $I=\{1, \dots, N\}$. The choice of a bijection $I\cong\{1, \dots, N\}$ 
identifies $E_1^{-r,s}$ with $H^{s-2r}(C^{N-r})\otimes A^r_{N,P}$  and hence $E_2^{-r,s}$ with the cohomology of a complex
\begin{equation}\label{display:exact}
H^{s-2r-2}(C^{N-r-1})\otimes A^{r+1}_{N,P}\to H^{s-2r}(C^{N-r})\otimes A^r_{N,P}\to\\ 
H^{s-2r+2}(C^{N-r+1})\otimes A^{r-1}_{N,P}
\end{equation}
of which the differentials are defined by Gysin maps. We make this more explicit below.

\subsubsection*{The Arnol'd algebra} We consider for a moment the case when $C=\CC$ (this is why we did not 
want to insist  that $C$ be complete). Since  $C$ has then the cohomology of a singleton, the spectral sequence
degenerates on the first page. This gives a useful interpretation of $A^r_{I, P}$,   identifying it with $H^r(\inj_I(\CC\ssm P))$. 
In particular, $A^\pt_{I, P}$ is in a natural manner a graded algebra. For $P=\emptyset$, this algebra was introduced 
by Arnol'd and so we shall refer to it as the \emph{Arnol'd algebra}. 
A  presentation of this as a graded-commutative algebra is known: the generators are 
the $g_{ij}=g_{ji}=\frac{d(z_i-z_j)}{z_i-z_j}$ and the $h^p_i:=\frac{dz_i}{z_i-p}$ (which are all in degree $1$ and 
hence anticommute amongst each other) and a complete set of relations is in degree $2$: for $i,j,k$ in $I$ we have 
\[
g_{ij}g_{jk}+ g_{jk}g_{ki}+g_{ki}g_{ij}=0, \quad (h^p_{i}-h^p_{j})g_{ij}=h^p_{i}h^p_{j}.
\]

A set of additive generators of $A^\pt_{I}$ is obtained as follows. Let $r$ be a positive integer. 
With a sequence $Y=(y_1, \dots , y_r)$ in $I$ of pairwise distinct elements, we associate  
\[
g(Y)=g_{y_1y_2}g_{y_2y_3}\cdots g_{y_{r-1}y_r}\in A^{r-1}_{I},
\]
assuming here $r\ge 2$, while for $r=1$, we stipulate this to be $1$. 
Such $g(Y)$ are not linearly independent, because for $r\ge 3$ they sum up to 
zero after cyclic permutation  of the  $(y_1, \dots , y_r)$, provided we tensor with the sign character 
(which can be dropped if we regard them as polydifferentials, see the discussion in Remark \ref{rem:graphical}). 
A set of additive generators of  $A^{N-\ell}_{I}$ then consists of the products $g_{Y_1}g_{Y_2}\cdots g_{Y_\ell}$, 
where $(Y_1, \dots , Y_\ell )$ is a decomposition of $I$ into totally ordered subsets $Y_i$. In particular,
the elements defined by the total orders on  $I$ span $A^{N-1}_I$.  A somewhat more intrinsic way to state this is that  
for every partition $\Pcal$ of $I$ into $\ell$ parts, we have $A_\Pcal\cong \otimes_{J\in \Pcal} A^{|J|-1}_J$ 
(a canonical isomorphism  involves a sign  character, which we omit). The choice of  a total order  on each part $J$ of 
$\Pcal$  determines a partial order  on $I$ that makes it a disjoint union of chains,  which in turn determines  a 
primitive rank one submodule   of $A_\Pcal$. The space  $A_\Pcal$ is spanned by such rank one submodules 
and $A^\pt_I=\oplus_{\Pcal} A_\Pcal$.

The  generalization to $A^\pt_{I,P}$ is straightforward:  we have a decomposition of $A^\pt_{I,P}$ indexed by 
$\Pscr(I,P)$:  $A^\pt_{I,P}=\oplus_{\Pcal\in \Pscr(I,P)}A^\pt_\Pcal$ 
with each summand  $A^\pt_\Pcal$ decomposing as a tensor product according to the members of 
$\Pcal$: $A^\pt_\Pcal\cong \otimes_{J\in \Pcal} A^\pt_J$. 
For $J\in \Pcal$, a total order on $J\cap I$ determines a rank one sublattice of $A^\pt_J$ and such 
sublattices generate $A^\pt_J$.
Concretely: if for a sequence $Y=(y_1, \dots, y_r)$ of pairwise distinct elements in $I$ and   $p\in P$,  we put
\[
h^p(Y):=h^p_{y_1}g(Y)=h^p_{y_1}g_{y_1y_2}g_{y_2y_3}\cdots g_{y_{r-1}y_r}\in A^r_{I,P}, 
\]
and convene that $g(\emptyset)=0$ and $h^p(\emptyset)=1$, then products of such elements with disjoint support 
will generate $A^\pt_{I,P}$. 

These explicit descriptions are useful in view of following link with our Lie module categories.
For this we return to Example \ref{example:surface} with $M=\CC$ and $\mathring M=\CC\ssm P$. We put
\begin{gather*}
\Acal(I):=A^{|I|-1}_{I}\otimes \sign(I)= H^{|I|-1}(\inj_I(\CC))\otimes \sign(I), \\
\Acal_P(I):=A^{|I|}_{I,P}\otimes \sign(I)= H^{|I|}(\inj_I(\CC\ssm P))\otimes \sign(I)\text{ when } P\not=\emptyset.
\end{gather*}
We saw that the usual residue maps make $\Acal$ an $\Rscr$-module and $ \Acal_P$ an $\Rscr_P$-module.
Given the presentation of these algebras, the following is straightforward:

\begin{proposition}\label{prop:}These are principal modules in the sense that 
the additive functors $\Rscr\to \Acal$ and $\Rscr_P\to \Acal_P$ are equivalences of pre-additive categories.
\end{proposition}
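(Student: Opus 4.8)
The plan is to show that the functors $\Rscr \to \Acal$ and $\Rscr_P \to \Acal_P$ are full and faithful (they are surjective on objects essentially by construction, since both categories have finite sets as objects and the functors are the identity on objects). The key input is the presentation of the Arnol'd algebra $A^\pt_{I,P}$ recalled just above, together with the explicit additive generators $g(Y)$ and $h^p(Y)$ and the decomposition $A^\pt_{I,P}=\oplus_{\Pcal\in\Pscr(I,P)} A^\pt_\Pcal$ with $A^\pt_\Pcal\cong\otimes_{J\in\Pcal} A^\pt_J$. Since $\Rscr$ and $\Rscr_P$ are \emph{defined} to be universal for the Lie, representation, commutativity and invariant-form relations, a morphism in $\hom_{\Rscr_P}(I,J)$ is by construction a $\ZZ$-linear combination of composites of bijections and contraction operators $r_{ij}, r^p_i$, modulo exactly those relations; so fullness is automatic once we know the contraction operators on $\Acal_P$ generate all of $\hom_{\Acal_P}(I,J)$, and faithfulness is the statement that $\Acal_P$ satisfies \emph{no} relations beyond the defining ones.

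First I would observe that both $\hom_{\Rscr_P}(I,J)$ and $\hom_{\Acal_P}(I,J)$ vanish unless $|J|\le|I|$, and that a contraction lowering cardinality by one either merges two indices of $I$ (an $r_{ij}$) or kills one index (an $r^p_i$, when $P\ne\emptyset$); iterating, every morphism $I\to J$ factors through a surjection of the appropriate combinatorial type, i.e.\ is indexed (up to the bijection part) by a partition-with-marking in $\Pscr(I, P)$ whose non-singleton blocks and $P$-blocks record which indices were merged or absorbed. On the $\Acal_P$ side the decomposition $A^\pt_\Pcal\cong\otimes_{J\in\Pcal}A^\pt_J$ shows that the image of such a composite of contractions is exactly the corresponding primitive rank-one summand $A_\Pcal$ (twisted by $\sign$), and the $g(Y)$, $h^p(Y)$ description shows these summands additively generate; hence the functor is full. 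For faithfulness, I would argue that the \emph{only} linear relations among the composites $r_{y_1y_2}r_{y_2y_3}\cdots$ (and the $h^p$-decorated analogues) landing in a fixed block-target are the cyclic relations of Example \ref{rem:cohenalg} (equation \eqref{eqn:cyclic}) together with the degree-two Arnol'd relations $g_{ij}g_{jk}+g_{jk}g_{ki}+g_{ki}g_{ij}=0$ and $(h^p_i-h^p_j)g_{ij}=h^p_ih^p_j$ — but these are precisely (the images of) the Jacobi/antisymmetry and representation relations that already hold in $\Rscr_P$. Thus the kernel of $\hom_{\Rscr_P}(I,J)\to\hom_{\Acal_P}(I,J)$ is zero.

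Concretely, the argument reduces — using the tensor decompositions over blocks of $\Pcal$ on both sides, which are manifestly compatible with the functor — to the case where $J=\emptyset$ and $I$ is a single block, i.e.\ to comparing $\hom_{\Rscr}(I,\emptyset)$ with $\Acal(I)=A^{|I|-1}_I\otimes\sign(I)$ (and the $P$-marked analogue with one $h^p$ factor). Here the discussion in Example \ref{rem:cohenalg} already identifies $\hom_\Rscr(I,\emptyset)$ with $\Lie_k[N]$ and hence, via Cohen's theorem, with $H^{N-1}(\inj_N(\CC);k(N-1))\otimes\sign_N = \Acal(I)$ on the nose, the contraction operators corresponding exactly to the iterated residue maps; the $P$-marked case is the analogous statement with the free Lie algebra replaced by its module over $k^P$, matching $H^N(\inj_N(\CC\ssm P))$. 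So the functor is an isomorphism on each such hom-group, and assembling over $\Pscr(I,J\sqcup\text{stuff})$ gives the equivalence.

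The main obstacle I anticipate is bookkeeping rather than conceptual: one must check that the sign twists $\sign(I)$ — which enter because $\Omega^{(I)}_C\cong\Omega^{|I|}_{C^I}\otimes\sign(I)$ and because the coboundary maps $r_{ij}$ pick up a sign under exchanging $i,j$ — are threaded consistently through the block decomposition $A^\pt_\Pcal\cong\otimes_{J\in\Pcal}A^\pt_J$ and through the cyclic relations, so that the bijection part of $\Rscr_P$ (the group ring $\ZZ[\Scal(I)]$ inside $\hom(I,I)$) acts on $\Acal_P(I)$ exactly as the permutation action on $H^{|I|}(\inj_I(\CC\ssm P))\otimes\sign(I)$ and not off by a character. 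Once the conventions are pinned down (as the paper has done in the \emph{Conventions} section), this is routine; the cited references \cite{cohenf95}, \cite{klyachko} supply the one nontrivial input, namely that Cohen's duality is perfect and that the cyclic identity \eqref{eqn:cyclic} generates all relations.
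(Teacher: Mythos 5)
Your proposal is correct and follows essentially the route the paper itself takes: the paper dismisses the proof as straightforward from the explicit presentation of the Arnol'd algebras (block decomposition $A^\pt_{I,P}=\oplus_\Pcal A^\pt_\Pcal$ with the generators $g(Y)$, $h^p(Y)$), noting that for $P=\emptyset$ the statement is just a reformulation of Cohen's theorem, with the cyclic relations of Example \ref{rem:cohenalg} accounting for all relations. Your reduction to single blocks and invocation of Cohen/Klyachko is exactly this argument, spelled out in more detail than the paper gives.
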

For $P=\emptyset$ this is merely a reformulation of the classical theorem of F.~Cohen \cite{cohenf95}.

\subsection{The projective case}\label{subsect:projcase}
From now on we assume that $C$ is projective. 

\subsubsection*{Review of the K\"unneth decomposition} What follows is (very) well-known, but since we 
must be careful with signs, we believe it is worthwhile to spell this out in our setting. We let $\Scal_2$ act on 
$C^2$ by transposition of factors. If we take its action on the cohomology into account, then the K\"unneth decomposition 
reads as follows
\[
H^2(C^2)\cong \big(H^2(C)\!\otimes\! 1 \oplus 1\!\otimes\! 
H^2(C)\big)\oplus \big(H^1(C)\otimes  H^1(C)\otimes\sign_2\big).
\]
Indeed, if $\mu\in H^2(C)$ is the natural generator, then the transposition exchanges 
$\mu\otimes 1$ and $1\otimes\mu$, but if $\alpha, \beta\in H^1(C)$, then it exchanges  
$\alpha\otimes\beta$ and $-\beta\otimes\alpha$. If $\Delta: C\to C^2$ is the diagonal, then the Gysin map 
$\Delta_!: H^\pt (C)\to H^{\pt+2}(C^2)$ is $H^\pt(C^2)$-linear.  
The class of the diagonal is $\Delta_{!}(1)\in H^2(C^2)$  is of course fixed under $\Scal_2$ and is 
under the K\"unneth decomposition equal to 
$\mu\times 1+1\times \mu +\delta$, where 
$\delta\in H^1(C)\otimes H^1(C)$ is defined by the intersection pairing on $H_1(C)$. 
To be precise,  if $\alpha_{\pm1}, \dots, \alpha_{\pm g}$ is a symplectic basis
of $H^1(C)$ (i.e.,  if $i>0$, then $\alpha_i\cdot\alpha_j$ is $1$ when $j=-i$ and is zero otherwise), then 
$\delta=\sum_{i=1}^g (-\alpha_i\times\alpha_{-i}+\alpha_{-i}\times\alpha_i)$. 
Note that $\delta$ has the property that for $\alpha\in H^1(C)$,  
\begin{gather*}
(\alpha\times 1)\cup\delta =\mu\times\alpha, \quad (1\times\alpha)\cup\delta=\alpha\times \mu \text{  and so}\\
\Delta_{!}(\alpha)=\mu\times\alpha+\alpha\times\mu.
\end{gather*} 
Both $\mu$ and $\delta$ are of Hodge type $(1,1)$. 

As we explained under the Conventions heading of the introduction, if $I$ is a nonempty set, 
then we have a K\"unneth  summand
$H^1(C)^{\otimes I}\otimes\sign (I)$ in $H^\pt (C^I)$. 
The full K\"unneth decomposition of $H^\pt (C^I)$ is then as follows:
\[
H^\pt (C^I)=\oplus_{J\cap K=\emptyset}\;  (\otimes_{j\in J}\pi_j^*\mu)\otimes H^1(C)^{\otimes K}\otimes\sign(K),
\]
where the sum is over pairs $(J,K)$ of disjoint subsets of $I$, and where we stipulate that for $K=\emptyset$, 
the tensor factor $H^1(C)^{\otimes K}\otimes\sign(K)$ this is just $\ZZ$ placed in degree zero.

The diagonal embedding $\Delta_{ij}: C^{I_{ij}}\hookrightarrow C^I$ defines an algebra  homomorphism
$\Delta_{ij}^*: H^\pt (C^I)\to H^\pt (C^{I_{ij}})$ which in terms of the K\"unneth decompositions is on the 
$(J,K)$-component given as follows.
When $i, j$  both lie in $J\cup K$ with one of them lying in $J$, then it is the zero map.  
If one of them does not lie in $J\cup K$, then $J$ and $K$ have faithful images $\bar J$ and $\bar K$ in $I_{ij}$ and 
$\Delta_{ij}^*$ then maps the $(J,K)$-component in an evident manner of the 
$(\bar J, \bar K)$-component. Finally, if both $i,j\in K$, then $\Delta_{ij}^*$ is the cup product on 
the corresponding factors:
\[
H^1(C)^{\otimes K}\otimes \sign K\to \mu_{[ij]}\otimes H^1(C)^{\otimes (K\ssm\{i,j\})}\otimes \sign (K\ssm\{i,j\}),
\]
where $\mu_{[ij]}$ is to be understood as $\mu$ put in the slot $[ij]$ of $I_{ij}$. The ordering of the pair $(i,j)$ is 
important here, but when an ordering is given, we indeed have an identification $\sign (K)\cong \sign (K\ssm\{i,j\})$ 
(a total order on $K$ must begin with $i<j$). 

The associated Gysin map $\Delta_{ij!}: H^\pt (C^{I_{ij}})\to H^{2+\pt}(C^I)$ is $H^\pt (C^I)$-linear and we have
\[
\Delta_{ij!}(1)=\mu_i +\mu_j +\pi_{ij}^*\delta,
\]
where we regard $\pi_{ij}^*\delta$ as an element of the K\"unneth summand defined by $J=\emptyset$ and  $K=\{i,j\}$. The 
restriction of $\Delta_{ij!}$ to a K\"unneth summand  of $H^\pt (C^{I_{ij}})$ indexed by  a pair $(\bar J, \bar K)$ is 
then easy to describe:
it is zero if $[ij]\in \bar J$. In case $[ij]\in \bar K$, we get 
two identical such summands in $H^\pt (C^{I})$, indexed by $(\bar J\cup \{i\}, K_j)$ and $(\bar J\cup \{j\}, K_i)$ 
(where $K_i$ maps bijectively to $\bar K$ and contains $i$ and similarly for $K_j$), and $\Delta_{ij!}$ maps 
this summand diagonally to these two.
In the remaining case, where $[ij]$ is neither in $\bar J$ nor in $\bar K$, we find that cupping with 
$\pi_{ij}^*\delta$ naturally maps
$ H^1(C)^{\otimes \bar K}\otimes\sign (\bar K)$
to $H^1(C)^{\otimes (\{i,j\}\cup\bar K) }\otimes\sign (\{i,j\}\cup\bar K)$, where we let the ordered pair $(i,j)$ 
precede $\bar K$ when identifying
$\sign (\bar K)$ with $\sign (\{i,j\})\cup\bar K$.

\subsubsection*{A differential graded algebra}
Returning to our basic spectral sequence \ref{display:basicss}, we imitate Totaro \cite{totaro} 
(who considered the case $P=\emptyset$, but allowed $C$ to be any topological manifold) by 
identifying  its first page with its differential as a differential graded algebra:
consider the  bigraded-commutative $H^\pt(C^I)$-algebra  $A^{\pt,\pt}_{I, P}(C)$ that is obtained as a 
quotient of $H^\pt(C^I)\otimes A^\pt_{I,P}$ by dividing out by the ideal $\Ical$ generated by  
$(\pi_i^*-\pi_j^*)(x)\otimes g_{ij}$ and $\pi_i^*x\otimes h^p_i$ with $x\in H^\pt(C)$
 of positive degree and  $i, j\in I$. Here $\pi_i: C^I\to C$ takes the value at $i$ (the $i$-th component).  So
$E_1^{-r,s}=A^{s-2r,r}_{I, P}$.
It is clear that the dependence of $A^{\pt,\pt}_{I, P}(C)$ on $C$ is only via the augmented graded algebra  $H^\pt(C)$.
Each summand $H^\pt(C^{I_\Pcal})\otimes A^{c(\Pcal)}_{\Pcal}$ decomposes as a tensor product whose factors are 
indexed by the members $J$ of
$\Pcal$: if $J\subset I$, then we have a tensor factor $H^\pt (C)\otimes A^{|J|-1}_J$ with $C$ here being 
identified with the main diagonal of $C^J$ (the constant maps $J\to C$) and  if $J$ contains $p\in P$, 
the tensor factor is just $A_{J\ssm\{p\}, \{p\}}^{|J|-1}$. Somewhat more concretely,  
if we enumerate the elements of $P$ by means of a bijection $\nu\in \{1, \dots, n\}\mapsto p_\nu\in P$, 
then an element of $A^{s-2r,r}_{I, P}(C)$ is a sum of elements of the following type
\[
x\otimes g(Y_{1})\cdots g(Y_{|I|-r})h^{p_1}(Y^1)\cdots h^{p_n}(Y^n)
\]
with  $x\in H^{s-2r}(C^{|I|-r})$, where $Y_1, \dots , Y_{|I|-r}, \{p_1\}\cup Y^1, \dots , \{p_n\}\cup  Y^n$ 
decompose $I\sqcup P$
 (this copy of $C^{|I|-r}$ is better thought of as the stratum closure defined by $z_i=z_j$ whenever $i,j\in Y_t$ for some $t$
and $z_i=p_\nu$ when $i\in Y^\nu$).  So this can only be nonzero when $s-2r\le 2(|I|-r)$, i.e., when $s\le |I|$.

We make $A^\pt_{I, P}(C)$ a differential graded algebra over $H^\pt(C^I)$ by putting 
\[
d(g_{ij})=\pi_{ij}^*\Delta_{!}(1)=\mu_i+\delta_{ij} +\mu_j \text{  and  } d(h^p_i)=\mu_i,
\] 
where $\mu_i=\pi_i^*(\mu)$ and $\delta_{ij}:=\pi_{ij}^*\delta$. This is well-defined, for one checks that $d$ maps both 
$g_{ij}g_{jk}+ g_{jk}g_{ki}+g_{ki}g_{ij}$ and $(h^p_{i}-h^p_{j})g_{ij}-h^p_{i}h^p_{j}$ to $\Ical$.
This is in fact a differential graded algebra in the category of mixed Hodge structures.  Note that then 
\begin{equation*}\label{eqn:diff1}
\textstyle d g(Y)=\sum_{Y=Y'Y''} (-1)^{|Y'|-1}(\mu_{[Y']} +\delta_{[Y'],[Y'']} +\mu_{[Y'']})\otimes g(Y') g(Y''), 
\end{equation*}
where the notation should be self-explanatory: 
$Y$ is written as a juxtaposition $Y'Y''$ with $Y'$ and $Y''$ nonempty and if  
$Y'=(y_1\cdots y_k)$ and $Y''=(y_{k+1}\cdots y_r)$ for some
$k\in \{1, \dots , r-1\}$, then  the defining relations justify writing $g(Y')(\mu_{[Y']} +\delta_{[Y'],[Y'']} +\mu_{[Y'']})g(Y'')$
 for $ g(Y')(\mu_{y_k} +\delta_{y_k,y_{k+1}} +\mu_{y_{k+1}}) g(Y'')$.
Similarly we have
\begin{equation*}\label{eqn:diff2}
\textstyle dh^p(Y)=\sum_{Y=Y'Y''} (-1)^{|Y'|}\mu_{[Y'']}\otimes  h^p(Y')g(Y''), 
\end{equation*}
The exact sequence \ref{display:exact} is now written more intrinsically as
\begin{equation}\label{eqn:dga}
A^{s-2r-2,r+1}_{I, P}(C)\xrightarrow{d}  A^{s-2r,r}_{I, P}(C)\xrightarrow{d} A^{s-2r+2,r-1}_{I, P}(C). 
\end{equation}
So it gives the first page of our spectral sequence with its differential, but also takes into account its multiplicative structure. 

\begin{corollary}\label{cor:configDGA}
If  $C$ projective,  then  the cohomology of 
\begin{equation*}
H^{l-r-2}(C^{|I|-r-1})\otimes A^{r+1}_{I,P}\to H^{l-r}(C^{|I|-r})\otimes A^r_{I,P}\to\\ 
H^{l-r+2}(C^{|I|-r+1})\otimes A^{r-1}_{I,P}.
\end{equation*}
is a subquotient of $H^{l}(\inj_I(C\ssm P))$ which, when tensored with $\QQ$, equals 
$\Gr^W_{l+r} H^{l}(\inj_I(C\ssm P); \QQ)$.
\end{corollary}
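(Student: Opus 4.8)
The proof will identify the displayed complex with the three‑term stretch of the $E_1$‑page of the spectral sequence \eqref{display:basicss} whose middle cohomology is $E_2^{-r,s}$, and then show that this spectral sequence degenerates at $E_2$ after tensoring with $\QQ$, purely for weight reasons.

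First I would carry out the relabeling. Fix a bijection $I\cong\{1,\dots,N\}$ and set $s:=l+r$, so that $s-2r=l-r$, $s-2r-2=l-r-2$ and $s-2r+2=l-r+2$. Under the identifications set up in Subsection~\ref{subsect:basicss}, and in the differential graded algebra guise of \eqref{eqn:dga}, the displayed complex is exactly \eqref{display:exact} for this value of $s$, i.e.\ the segment
\[
E_1^{-(r+1),s}\xrightarrow{\,d_1\,} E_1^{-r,s}\xrightarrow{\,d_1\,} E_1^{-(r-1),s},
\]
whose cohomology at the middle term is by definition $E_2^{-r,s}=E_2^{-r,l+r}$. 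Since \eqref{display:basicss} abuts to $H^{s-r}(\inj_I(C\ssm P))=H^l(\inj_I(C\ssm P))$, the term $E_\infty^{-r,l+r}$ is a graded piece, hence a subquotient, of $H^l(\inj_I(C\ssm P))$ for the abutment filtration; it thus remains to see that $E_2^{-r,l+r}\otimes\QQ$ equals $E_\infty^{-r,l+r}\otimes\QQ$ and to identify the latter with $\Gr^W_{l+r}$.

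The one substantive point is $E_2$‑degeneration. By construction \eqref{display:basicss} is a spectral sequence of mixed Hodge structures, and $E_1^{-r,s}=H^{s-2r}(C^{N-r})\otimes A^r_{N,P}$ is pure of weight $s$: indeed $C^{N-r}$ is smooth projective, so $H^{s-2r}(C^{N-r})$ is pure of weight $s-2r$, while $A^r_{N,P}$ is of Hodge–Tate type, pure of weight $2r$, because of the Tate twist built into $A_\Pcal=E_\Pcal(-c(\Pcal))$. Hence for every $r$ the complex $E_1^{\pt,s}$ consists of pure Hodge structures of weight $s$ and $d_1$ is a morphism of Hodge structures, so $E_2^{-r,s}$ is again pure of weight $s$. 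For $\rho\ge 2$ the differential $d_\rho\colon E_\rho^{-r,s}\to E_\rho^{-r+\rho,\,s-\rho+1}$ is a morphism of Hodge structures from weight $s$ to weight $s-\rho+1\ne s$; by strictness of morphisms of mixed Hodge structures it vanishes (and, by induction, $E_\rho^{\pt,\pt}$ stays pure of weight equal to its second index). Therefore $E_2^{-r,s}\otimes\QQ=E_\infty^{-r,s}\otimes\QQ$.

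Finally, as recalled in Subsection~\ref{subsect:basicss}, the spectral sequence \eqref{display:basicss} is Deligne's, so the filtration it induces on $H^l(\inj_I(C\ssm P);\QQ)$ is the weight filtration, with column $-r$ contributing in weight $l+r$ (the shift being absorbed into the Tate twists already present in $A^r_{I,P}$). Combining this with the preceding paragraph gives $E_2^{-r,l+r}\otimes\QQ=E_\infty^{-r,l+r}\otimes\QQ=\Gr^W_{l+r}H^l(\inj_I(C\ssm P);\QQ)$, which is the assertion. Beyond bookkeeping the indices and Tate twists, the only thing that does any work here is the purity of the $E_1$‑page, which forces the degeneration, together with the cited fact that this spectral sequence computes the weight filtration on the cohomology of the smooth variety $\inj_I(C\ssm P)$; I do not anticipate a genuine obstacle.
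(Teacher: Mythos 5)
Your proof is correct and follows essentially the same route as the paper: identify the displayed complex with the $E_1$-segment computing $E_2^{-r,l+r}$, use purity of $E_1^{-r,s}$ (weight $s$) to force degeneration over $\QQ$ at the second page, and identify the abutment filtration with the weight filtration. You merely spell out the strictness argument and the index bookkeeping that the paper leaves implicit.
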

\begin{proof}
Under our assumption $E_1^{-r,s}$ is pure of weight $s$ and so the spectral sequence degenerates over 
$\QQ$ at the second page 
and  the Leray filtration is essentially the weight  filtration. This is equivalent to the assertion of the corollary.
\end{proof}

\begin{example}[The case of genus zero]\label{example:g=0}
Consider the special case when $C=\PP^1$ and $\infty\in P$.  We 
put $P':=P\ssm\{\infty\}$, so that $\PP^1\ssm P=\AA^1\ssm P'$.  
For each side of the identity $\inj_I(\AA^1\ssm P')=\inj_I(\PP^1\ssm P)$ we have a method
for computing the cohomology and we wish to compare them. 
We know that the cohomology algebra of $\inj_I(\AA^1\ssm P')$ is $A^\pt_{I, P'}$. 
In particular,  its cohomology in degree $r$  is pure of type $(r,r)$.
On the other hand, $A_{I,P}(\PP^1)^\pt$ is linearly spanned by the tensors of the form 
\[
\mu_{Y_{i_1}}\mu_{Y_{i_2}}\cdots \mu_{Y_{i_k}}\otimes g(Y_{1})\cdots g(Y_s)h^{p_1}(Y_{s+1})\cdots h^{p_t}(Y_{s+t})
\]
where $1\le i_1<\cdots i_k\le s$. The Hodge weight of such a tensor is twice its degree if and only if 
$k=0$, that is, if and only if this element
lies in $A_{I,P}^\pt$. Let us here note that the differential of $A_{I,P}(\PP^1)^\pt$ takes 
$g_{ij}$ to $\mu_i+\mu_j$ and $h^p_i$ to $\mu_i$.
In particular, any coboundary of $A_{I,P}(\PP^1)^\pt$ is in the submodule of $A_{I,P}(\PP^1)^\pt$ 
generated by the $\mu_i$'s and so 
$A_{I,P}^\pt$ will not contain nonzero coboundaries of  $A_{I,P}(\PP^1)^\pt$. It follows that
\[
K^\pt_{I, P}:=\ker (A_{I,P}^\pt \xrightarrow{d}A_{I,P}(\PP^1)^\pt)
\]
can be identified with $H^\pt(\inj_I(\AA^1\ssm P'))$ and hence with $A^\pt_{I, P'}$ (with its graded Hodge structure). The 
isomorphism $f: A^\pt_{I, P'}\cong K^\pt_{I,P}$ thus obtained is in terms of rational differential forms the identity,
the point  is merely that these forms are written differently in the source and target of $f$: the rational form $z/(z-p)$ on 
$\AA^1$ with $p\in P'$ 
has on $\PP^1$ polar divisor $(p)+(\infty)$ and  the rational form
$d(z_1-z_2)/(z_1-z_2)$ on $\AA^2$ on  $\PP^1\times\PP^1$ polar divisor $(z_1=z_2)+(z_1=\infty) +(z_2=\infty)$,  
where in both cases the residues at infinity are $-1$.  This translates into $f(h^p_i)=h^p_i-h^\infty_i$ and 
$f(g_{ij})=g_{ij}-h^\infty_i-h^\infty_j$.
So phrased in a manner  that does not involve the choice of $\infty$:  $K^\pt_{I,P}$ is as  a subalgebra of 
$A^\pt_{I,P}$ generated by  $\{g_{ij}-h^p_i-h^p_j\}_{i\not=j, p\in P}\cup \{h^p_i-h^q_i\}_{p\not=q, i\in I}$. 

Let us see how this works out in the top  degree $N:=|I|$.  We know that  $A^N_{I,P'}$ is as a group generated by  
the expressions $h^p(Y)$, where $p\in P'$ and $Y$ is
defined by a total order on $I$ (so for this to be nonzero, we  need $P'$ to be nonempty as well). 
If we use the total order to identify $I$ with $\{1, \dots, N\}$, so that 
$h^p(Y)=h^p_1g_{1,2}g_{2,3}\cdots g_{N-1,N}$, then the associated rational $N$-form on $\CC^N$ is 
$\frac{dz_1}{z_1}\wedge\frac{d(z_1-z_2)}{z_1-z_2}\wedge\cdots \wedge \frac{d(z_{N-1}-z_N)}{z_{N-1}-z_N}$.  
To interpret this as an element of 
$K^N_{I,P}$, we need to regard this as a rational $N$-form on $(\PP^1)^N$. Its polar divisor is then 
$(z_1=p)+ (z_1=z_2)+\cdots +(z_{N-1}=z_N)+(z_N=\infty)$ (successive terms have opposite residues). 
The associated element of $K^N_{I,P}$ is obtained from the sequence 
$(h^p_1, g_{1,2}, g_{2,3},\cdots ,g_{N-1,N}, h^\infty_N)$,  or rather the underlying  cyclic order on this set of 
$N+1$ of elements,  by taking
the sum of the $N+1$ products that can formed from $N$ successive terms. 
For example, for $N=2$, we get $h^p_1g_{1,2}+g_{1,2}h^\infty_2+h^\infty_2h^p_1\in K^2_{2, P}$.
\end{example}

\subsubsection*{The Hodge filtration} As to the Hodge filtration, the constructions and theorems of Deligne in the 
normal crossing case in \S 3 of \cite{deligne:hodge2} carry over to the present situation with little change. 
By an argument similar as in the normal crossing case, we find:

\begin{proposition}\label{prop:}
Let  $\Omega_{C^I}^\pt(\log \Delta_{I,P})$ be  the complex of holomorphic forms on $C^I$ with logarithmic poles along 
$\Delta_{I,P}$, i.e.,  characterized by the property of being the largest subcomplex of the direct image of the holomorphic 
De Rham complex of $\inj_I(C)$ consisting of forms that are meromorphic on $C^I$ and have poles of order $1$ at most. 
Then  this complex represents $Rj_*j^*\CC_{C^I}$, where $j: \inj_I(C)\subset C^I$ and the associated spectral sequence 
\[
E^{p,q}_1=H^{q}(C^I, \Omega_{C^I}^p(\log \Delta_{I,P}))\Rightarrow H^{p+q}(\inj_I(C); \CC)
\]
degenerates on the first page and establishes an isomorphism
\begin{equation}\label{display:hodgefil}
\Gr^p_F H^r(\inj_I(C\ssm P); \CC)=H^{r-p}(C^I, \Omega_{C^I}^p(\log \Delta_{I,P})). 
\end{equation}
\hfill$\square$
\end{proposition}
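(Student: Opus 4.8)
The plan is to adapt the argument of \cite{deligne:hodge2}, \S3, to the smooth projective variety $C^I$ and the divisor $\Delta_{I,P}$, the one substantive novelty being that $\Delta_{I,P}$ is arrangementlike and not a normal crossing divisor. The first task is to see that $\Omega_{C^I}^\pt(\log\Delta_{I,P})$ nonetheless represents $Rj_*j^*\CC_{C^I}$. I would prove this by induction on $|I|$ using the fibrations obtained by forgetting a point, $\inj_I(C\ssm P)\to\inj_{I\ssm\{i\}}(C\ssm P)$, whose fibres are once-more-punctured curves, together with a relative logarithmic De Rham complex on $C^I$ over $C^{I\ssm\{i\}}$; equivalently one may invoke that analytically locally $(C^I,\Delta_{I,P})$ looks like a product of Coxeter arrangements of type $A_*$ and $B_*$ (the local transversal type recorded in Subsection~\ref{subsect:basicss}), for which the logarithmic comparison theorem holds since these arrangements are fibre-type, or else pass to a De Concini--Procesi-type resolution $\rho\colon\tX\to C^I$ which is an isomorphism over $\inj_I(C\ssm P)$ and turns the total transform of $\Delta_{I,P}$ into a normal crossing divisor $\tilde\Delta$, apply Deligne's theorem on $\tX$, and check that $R\rho_*\Omega_{\tX}^\pt(\log\tilde\Delta)=\Omega_{C^I}^\pt(\log\Delta_{I,P})$.

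Granting the quasi-isomorphism $\Omega_{C^I}^\pt(\log\Delta_{I,P})\simeq Rj_*j^*\CC_{C^I}$, I would equip this complex with the stupid filtration $F$ (by $F^p=\Omega_{C^I}^{\ge p}(\log\Delta_{I,P})$) and a weight filtration $W$, where $W_m$ consists of the forms with at most $m$ logarithmic poles at every point; $W_m$ is a subcomplex because $d$ annihilates the local $d\log$-generators. Taking residues along the stratum closures $\Delta_\Pcal$ by means of the operators $r_{ij}$ and $r^p_i$ of Section~\ref{sect:polydiff}, one obtains an identification
\[
\Gr^W_m\,\Omega_{C^I}^\pt(\log\Delta_{I,P})\;\cong\;\bigoplus_{c(\Pcal)=m}\Delta_{\Pcal *}\,\Omega_{C^{I_\Pcal}}^\pt\otimes E_\Pcal(-m)[-m],
\]
where the summands $E_\Pcal(-m)$ enter precisely because of the cyclic and triangle relations among the $d\log$-forms discussed in Remark~\ref{rem:graphical}. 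Consequently the weight spectral sequence of $\Omega_{C^I}^\pt(\log\Delta_{I,P})$ is canonically the basic spectral sequence \eqref{display:basicss}, with its mixed Hodge structures, and it degenerates over $\QQ$ at $E_2$ as in Corollary~\ref{cor:configDGA}.

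It then suffices to record that $(\Omega_{C^I}^\pt(\log\Delta_{I,P}),W,F)$, together with the $\QQ$-structure furnished by \eqref{display:basicss} (equivalently by Totaro's differential graded algebra $A^{\pt,\pt}_{I,P}(C)$), forms a cohomological mixed Hodge complex: each summand of the displayed $\Gr^W_m$ is the De Rham complex of the smooth projective variety $C^{I_\Pcal}$, tensored with the one-dimensional pure Tate structure $E_\Pcal(-m)$ and shifted, so that its Hodge-to-De Rham spectral sequence degenerates at $E_1$ and its degree-$n$ hypercohomology is pure of weight $n+m$ — exactly the weight recorded in Corollary~\ref{cor:configDGA}. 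By the formalism of (cohomological) mixed Hodge complexes \cite{deligne:hodge2}, the filtration $F$ is then strict on hypercohomology; this strictness is precisely the degeneration at the first page of the spectral sequence $E^{p,q}_1=H^q(C^I,\Omega_{C^I}^p(\log\Delta_{I,P}))\Rightarrow H^{p+q}(\inj_I(C\ssm P);\CC)$, which gives the asserted isomorphism $\Gr^p_F H^r(\inj_I(C\ssm P);\CC)\cong H^{r-p}(C^I,\Omega_{C^I}^p(\log\Delta_{I,P}))$ of \eqref{display:hodgefil}, and the same formalism identifies the resulting $F$ with the Hodge filtration of the canonical mixed Hodge structure (alternatively this last point can be read off from the resolution $\rho$, where it reduces to the normal crossing case).

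I expect the main obstacle to be the first step — establishing that the logarithmic De Rham complex of the non-normal-crossing divisor $\Delta_{I,P}$ computes $Rj_*j^*\CC_{C^I}$ and is compatible with the combinatorics of the strata (or with the resolution $\rho$). Once that is in hand, everything downstream is a transcription of \cite{deligne:hodge2}, \S3, with the residue operators of Section~\ref{sect:polydiff} playing the role of the Poincaré residues along the components of a normal crossing divisor and with the weight-side bookkeeping already done earlier in this section.
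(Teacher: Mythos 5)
Your proposal is correct and follows essentially the same route as the paper, which itself disposes of this proposition by asserting that Deligne's constructions in \S 3 of \cite{deligne:hodge2} carry over to the arrangementlike divisor $\Delta_{I,P}$ ``with little change'': the weight filtration by vanishing of iterated residues, the identification $\Gr^W_{p+q}\Omega^p_{C^I}(\log\Delta_{I,P})\cong\oplus_{c(\Pcal)=q}\Delta_{\Pcal*}\Omega^{p-q}_{C^{I_\Pcal}}\otimes E_\Pcal$, and the mixed-Hodge-complex formalism yielding strictness of $F$ and $E_1$-degeneration are exactly the steps recorded in the surrounding text. Your elaboration of the comparison quasi-isomorphism $\Omega^\pt_{C^I}(\log\Delta_{I,P})\simeq Rj_*j^*\CC$ (via the local product/fibre-type structure or a normal-crossing resolution) supplies detail the paper leaves implicit, but it is the same Deligne-adaptation argument, not a different method.
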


The complex of $\Ocal_{C^I}$-modules $\Omega_{C^I}^\pt(\log \Delta_{I,P})$ comes with a weight filtration:
the subcomplex of 
forms for which each $(q+1)$-fold residue vanishes defines $W_{p+q}\Omega_{C^I}^p(\log \Delta_{I,P})$.  Then 
\[
\Gr^W_{p+q}\Omega_{C^I}^p(\log \Delta_{I,P})=
\oplus_{c(I_\Pcal)=q}\Delta_{\Pcal *}\Omega^{p-q}_{C^{I_\Pcal}}\otimes E_\Pcal.
\] 
This is compatible with our basic spectral sequence, so that $\Gr^W_{r+q}\Gr^p_F H^r(\inj_I(C\ssm P);\CC)$ appears as the 
cohomology of 
\begin{multline*}
\oplus_{c(I_\Pcal)=q+1} H^{r-p-1}(C^{I_\Pcal}, \Omega^{p-q-1}_{C^{I_\Pcal}})\otimes E_\Pcal \to 
\oplus_{c(I_\Pcal)=q} H^{r-p}(C^{I_\Pcal}, \Omega^{p-q}_{C^{I_\Pcal}})\otimes E_\Pcal\to\\ \to 
\oplus_{c(I_\Pcal)=q-1} H^{r-p+1}(C^{I_\Pcal}, \Omega^{p-q+1}_{C^{I_\Pcal}})\otimes E_\Pcal
\end{multline*}
For $p=r=|I|$ this identifies $\Gr^W_{|I|+q} F^{|I|}H^{|I|}(\inj_I(C\ssm P); \CC)$ 
with the kernel of the  differential
\begin{equation}\label{display:hodgefil2}
d: \oplus_{c(I_\Pcal)=q} H^0(C^{I_\Pcal}, \Omega^{|I_\Pcal|}_{C^{I_\Pcal}})\otimes E_\Pcal\to
\oplus_{c(I_{\Pcal'})=q-1} H^{1}(C^{I_{\Pcal'}}, \Omega^{|I_{\Pcal'}|}_{C^{I_{\Pcal'}}})\otimes E_{\Pcal'}.
\end{equation}
The `matrix entries' of this map are Gysin maps. Since for any nonempty finite set $L$, 
\[
H^{1}(C^L, \Omega^{|L|}_{C^L})=\oplus_{j\in  L} 
H^0(C^{L\ssm \{j\}},\Omega^{|L\ssm \{j\}|}_{C^{L\ssm \{j\}}})\otimes \pi_j^*(\mu)
\]
(as $\mu$ has even degree there is no sign issue) and recalling that  for $\alpha\in H^1(C)$ we have 
$\Delta_!(\alpha)=\alpha\times \mu +\mu\times\alpha=(\alpha\times 1+1\times\alpha)\cup (\mu\times 1+1\times\mu)$, 
we see that $d$ no longer involves the classes $\delta_{ij}$. This shows that the formation of
$\Gr^W_{|I|+q} F^{|I|}H^{|I|}(\inj_I(C\ssm P);\CC)$ is functorial in $H^0(C, \Omega_C)$ in the sense that 
there is a $\CC[\Scal_{|I|-q}]$-module  $W_q(|I|,P)$  (independent of the genus of $C$ and with trivial Hodge structure) such that
\[
\Gr^W_{|I|+q} F^{|I|}H^{|I|}(\inj_I(C\ssm P);\CC)=
H^0(C, \Omega_C)^{\otimes (|I|-q)}\otimes_{\CC[\Scal_{|I|-q}]} W_q(|I|,P)(-q).
\]
This is related to the following. The residue formula implies that for every $i\in I$  the residue  sum 
\[
\textstyle R_i=\sum_{j\in I\ssm \{ i\} } \res_{\Delta_{ij}} +\sum_{p\in P} \res_{H_i^p}
\] 
takes the value zero on  $H^0(C^I,\Omega_C^{|I|}(\log \Delta_{I,P}))$. So the
residues of an element of $H^0(C^I,\Omega_C^{|I|}))(\log \Delta_{I,P})$ cannot be arbitrary. 
However, in view of the following proposition this is the only restriction.

\begin{proposition}\label{prop:residuechar}
Let $D$ be an effective divisor on $C^N$. 
Then a section  $\xi\in \Omega_{C^N}^{N}(D)/\Omega_{C^N}^{N}$ is the restriction of a section 
$\tilde\xi$ of $\Omega_{C^N}^{N}(D)$  if and only if  for every $i\in I$ the sum of the residues of 
$\xi$ along the projection $\pi^i: C^N\to  C^{N-1}$  which forgets the $i$th component is zero.

In case $D$ is $\Scal_I$-invariant and $\xi$ transforms under  the permutation group 
$\Scal_I$ of $I$ according to an  irreducible  character $\chi$ of $\Scal_N$, 
then we can arrange  for $\tilde\xi$ to have the same property.  
\end{proposition}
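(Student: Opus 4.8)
The plan is to study the connecting homomorphism of the short exact sequence of coherent sheaves on $C^N$
\[
0\to \Omega^{N}_{C^N}\to \Omega^{N}_{C^N}(D)\to Q\to 0,\qquad Q:=\Omega^{N}_{C^N}(D)/\Omega^{N}_{C^N}.
\]
Its cohomology sequence reads $H^0(C^N,\Omega^{N}_{C^N}(D))\to H^0(C^N,Q)\xrightarrow{\ \delta\ }H^1(C^N,\Omega^{N}_{C^N})$, so $\xi$ is the restriction of a section $\tilde\xi$ of $\Omega^{N}_{C^N}(D)$ if and only if $\delta(\xi)=0$. By the K\"unneth identity recalled just before the proposition, $H^1(C^N,\Omega^{N}_{C^N})=\bigoplus_{i}H^0(C^{N-1},\Omega^{N-1}_{C^{N-1}})\otimes\pi_i^{*}\mu$, where $C^{N-1}$ stands for $C^N$ with the $i$-th factor deleted; hence $\delta(\xi)=0$ exactly when each component $\mathrm{pr}_i\,\delta(\xi)\in H^0(C^{N-1},\Omega^{N-1}_{C^{N-1}})$ vanishes. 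The heart of the matter is therefore to identify, for each $i$, the map $\mathrm{pr}_i\circ\delta$ with a fixed nonzero multiple of the operator ``sum of residues along $\pi^i$''.

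To that end I would work relative to the product projection $\pi^i\colon C^N=C\times C^{N-1}\to C^{N-1}$, so that $\Omega^{N}_{C^N}=\mathrm{pr}_1^{*}\Omega^1_C\otimes\pi^{i*}\Omega^{N-1}_{C^{N-1}}$. Flat base change together with the trace isomorphism for a smooth family of curves gives $R^1\pi^i_{*}\Omega^{N}_{C^N}=H^1(C,\Omega^1_C)\otimes\Omega^{N-1}_{C^{N-1}}\cong\Omega^{N-1}_{C^{N-1}}$, and K\"unneth identifies the Leray edge inclusion $H^0(C^{N-1},R^1\pi^i_{*}\Omega^{N}_{C^N})\hookrightarrow H^1(C^N,\Omega^{N}_{C^N})$ with the inclusion of the $i$-th summand above, the edge projection being $\mathrm{pr}_i$. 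Pushing the short exact sequence down by $R\pi^i_{*}$ and taking the long exact sequence of $\pi^i_{*}$, the component $\mathrm{pr}_i\,\delta(\xi)$ is computed by the relative connecting map $\pi^i_{*}Q\to R^1\pi^i_{*}\Omega^{N}_{C^N}$. Restricting to a general fibre $C_y\cong C$ of $\pi^i$ turns this into the connecting map of $0\to\Omega^1_{C_y}\to\Omega^1_{C_y}(D_y)\to\Omega^1_{C_y}(D_y)/\Omega^1_{C_y}\to0$ composed with $H^1(C_y,\Omega^1_{C_y})\cong\CC$, which is classically the sum of residues over the compact curve $C_y$. Thus $\mathrm{pr}_i\,\delta(\xi)$ is precisely the fibrewise residue sum of $\xi$ along $\pi^i$, and $\delta(\xi)=0$ if and only if all $N$ of these residue sums vanish. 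This is nothing but the $N$-parameter family version of the residue theorem underlying the usual Mittag-Leffler argument; for $N=1$ it is that statement verbatim.

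I expect the main obstacle to be the bookkeeping in this relative residue computation: verifying that the relative connecting map $\pi^i_{*}Q\to R^1\pi^i_{*}\Omega^{N}_{C^N}$ is indeed fibrewise ``sum of residues'' — with the Tate twist and orientation conventions kept straight, and with components of $D$ that are vertical for $\pi^i$ handled (they meet the general fibre $C_y$ in no points, hence contribute nothing) — and that the K\"unneth edge map is the one we call $\mathrm{pr}_i$ up to the expected constant. None of this is deep, but it is where one has to be attentive.

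For the equivariance addendum: when $D$ is $\Scal_I$-invariant, the whole short exact sequence and hence $\delta$ is $\Scal_I$-equivariant, so the space of liftable sections $\ker\delta\subseteq H^0(C^N,Q)$ is an $\Scal_I$-subrepresentation and the surjection $p\colon H^0(C^N,\Omega^{N}_{C^N}(D))\twoheadrightarrow\ker\delta$ is $\Scal_I$-equivariant. Given $\xi$ lying in the $\chi$-isotypic part of $\ker\delta$, choose any $\tilde\xi_0$ with $p(\tilde\xi_0)=\xi$ and put $\tilde\xi:=e_\chi\tilde\xi_0$, where $e_\chi=\frac{\dim\chi}{|\Scal_I|}\sum_{\sigma\in\Scal_I}\overline{\chi(\sigma)}\,\sigma$ is the idempotent projecting onto the $\chi$-isotypic summand; then $p(\tilde\xi)=e_\chi\,p(\tilde\xi_0)=e_\chi\xi=\xi$ and $\tilde\xi$ transforms according to $\chi$. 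When $\chi$ is a linear character, e.g.\ $\chi=\sign_N$, this is just the average $\tfrac{1}{|\Scal_I|}\sum_{\sigma}\chi(\sigma)^{-1}\sigma\tilde\xi_0$.
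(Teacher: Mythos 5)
Your argument is essentially the paper's proof: the same short exact sequence of sheaves, the coboundary $\delta$ into $H^1(C^N,\Omega^N_{C^N})\cong\oplus_i \mu_i\otimes H^0(C^{N-1},\Omega^{N-1}_{C^{N-1}})$, the identification of the $i$-th component of $\delta$ with the residue sum along $\pi^i$ (which the paper merely asserts and you justify via the relative/Leray computation), and the equivariance claim handled by the $\chi$-isotypic projector, which is exactly the paper's remark that taking the $\chi$-isotypical part is an exact functor. No gap; the extra relative residue bookkeeping you flag is the only point the paper leaves implicit, and your treatment of it is sound.
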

\begin{proof}
Consider the exact  sequence 
$0\to \Omega^N_{C^N}\to \Omega^N_{C^N}(D)\to \Omega^N_{C^N}(D)/\Omega^N_{C^N}\to 0$.  
The long exact sequence for cohomology begins as
\[
0\to H^0(C^N, \Omega^N_{C^N})\to 
H^0(C^N,\Omega^N_{C^N}(D)\to H^0(C^N,\Omega^N_{C^N}(D)/\Omega^N_{C^N})
\xrightarrow{\delta} H^1(C^N,\Omega^N_{C^N})
\]
We have a natural isomorphism $H^1(C^N,\Omega^N_{C^N})\cong \oplus_{i =1}^{N} 
\mu_i\otimes H^0(C^{N-1}, \Omega^{N-1}_{C^{N-1}})$ and via this isomorphism the $i$th component of the 
coboundary $\delta$
is given by taking residue sum  along  the $i$th coordinate.  The first assertion follows.

The second is a consequence of the fact that taking the $\chi$-isotypical part is an exact functor. 
\end{proof}

\begin{example}\label{example:}
Let us carry this out  for the case that plays  a central  role in \cite{looij:2021}: 
where $I$ a $3$-element set $\{a,b,c\}$ and $P=\emptyset$. 
So $\Pscr(I)$ then consists of $\Pcal_{\min}$ (the partition into atoms defining the open stratum of $C^I$), 
$\Pcal_{\max}$ (defining the main diagonal) and the partitions $ab|c$, $bc|a$, $ca|b$
defining the diagonal  hypersurfaces. Let us determine $\Gr^W_4\Gr^3_F H^3(\inj_I(C))$ using 
\ref{display:hodgefil2}. In this case  
both $E_\Pcal$ and $E_{\Pcal'}$ are just $\ZZ$ and hence can be omitted. 
Let us abbreviate $\Omega:=H^0(C, \Omega_C)$. 
We will deal with tensor powers of $\Omega$ with exponent a $2$-element set. 
We make sure that these $2$-element sets are ordered so that they can be identified with each other.  
Let $\alpha\mapsto \alpha^*$ denote the exchange 
map that is defined by changing the order. 
The component associated to $\Pcal=ab|c$ and $\Pcal'=\Pcal_{\min}$ then becomes
\[
\Omega^{\otimes(ab, c)}\to \Omega^{\otimes (b,c)}\times\mu_a + \Omega^{\otimes(c,a)}\times\mu_b, \quad 
\alpha\mapsto \alpha\times \mu_a+\alpha^*\times\mu_b
\]
and the components associated to the other diagonal  hypersurfaces are obtained by means of a cyclic permutation. 
It follows that the kernel of the map \ref{display:hodgefil2} is identified with the triples $(\alpha, \beta, \gamma)$ in 
$\Omega \otimes \Omega$ for which
$(\gamma+\beta^*, \alpha+\gamma^*, \beta+\alpha^*)=(0,0,0)$. 
This amounts to: $\alpha=\beta=\gamma=-\alpha^*$. Hence this kernel is identified
with $\sym^2\Omega$. 

A similar argument shows that the map 
\[
\Omega^{\otimes(a, b, c)}\otimes E_{\Pcal_{\min}}\to \Omega^{\otimes (ab)}\times\mu_c \oplus \Omega^{\otimes(bc)}\times\mu_a\oplus\Omega^{\otimes(ca)}\times\mu_b
\]
is injective, so that $\Gr^W_5\Gr^3_F H^3(\inj_I(C))=0$ (this also follows from the fact that the residue theorem 
precludes the existence of
a meromorphic $2$-form on a diagonal divisor with polar divisor minus the main diagonal).
\end{example}

This remains so if we include a sign twist, allowing us to work with polydifferentials instead of differential forms. 
The displayed identity \ref{display:hodgefil} shows that
\[
F^{|I|}H^{|I|}(\inj_I(C\ssm P);\CC)\otimes\sign(I) =H^0(C^I, \Omega_C(P)^{(I)}\la \llog\ra).
\]
Its weight filtration is characterized in terms of residues as above, but beware that these are then residues 
taken in the (anticommutative) sense of Subsection \ref{subsect:Liepointedcurve}. We record this follows.

\begin{corollary}\label{cor:recordpolydiffs}
Let $N:=|I|$. We have a natural  isomorphism 
\[
H^0(C^I, \Omega_C(P)^{(I)}\la \llog\ra)\cong F^NH^{N}(\inj_I(C\ssm P);\CC)\otimes\sign(I).
\] 
Via this identification,
$W_{N+q}F^{N}H^{N}(\inj_I(C\ssm P))\otimes\sign(I)$ corresponds to the space logarithmic forms in 
$H^0(C^I, \Omega_C(P)^{(I)}\la \llog\ra)$ for which every $(q+1)$-fold residue is zero. 
There exists a $\CC[\Scal_{N-q}]$-module $W_q(N,P)$ with trivial Hodge structure such that this induces an isomorphism of  pure Hodge structures
\[
\Gr^W_{N+q}F^{N}H^{N}(\inj_I(C\ssm P);\CC)\otimes\sign (I)\cong 
H^0(C, \Omega_C)^{\otimes (N-q)}\otimes_{\CC[\Scal_{N-q}]} W_q(N,P)(-q).
\]

Furthermore, if we are given on every stratum of $\Delta_{I,P}$ a logarithmic polydifferential of maximal degree such that 
for every codimension one incidence of such strata one is a residue of the other, then these are the residues of a 
single logarithmic polydifferential on $C^I$ if and only if 
if  for every $i\in I$,  the polydifferentials on the open strata of $\Delta_{I,P}$ that involve the index $i$ add up to zero.
\end{corollary}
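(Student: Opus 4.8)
The plan is to obtain the first two assertions by reassembling material already in place just above the statement, and the third from Proposition~\ref{prop:residuechar}.

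For the opening isomorphism: $\inj_I(C\ssm P)$ is affine of dimension $N$, so $F^{N+1}H^N=0$ and hence $F^NH^N=\Gr^N_FH^N$, which by \ref{display:hodgefil} with $p=r=N$ equals $H^0(C^I,\Omega^N_{C^I}(\log\Delta_{I,P}))$; twisting by $\sign(I)$ and using the $\Scal(I)$-equivariant isomorphism $\Omega^{(I)}_C\cong\Omega^{|I|}_{C^I}\otimes\sign(I)$ of Subsection~\ref{subsect:polydiff} --- which carries $\Omega^N_{C^I}(\log\Delta_{I,P})\otimes\sign(I)$ onto $\Omega_C(P)^{(I)}\la\llog\ra$ --- gives the claim, naturality being visible from the construction. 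For the weight filtration: $W_{N+q}\Omega^N_{C^I}(\log\Delta_{I,P})$ is the subsheaf of forms all of whose $(q+1)$-fold residues vanish, and after the $\sign(I)$-twist these become the anticommutative residues of Subsection~\ref{subsect:Liepointedcurve}; combining this with the description of $\Gr^W$ as the cohomology of \ref{display:hodgefil2} (extended in both directions) and the observation, made before the statement, that those differentials involve only the $\mu_i$ and not the $\delta_{ij}$, one reads off the $\CC[\Scal_{N-q}]$-module $W_q(N,P)$ (with trivial Hodge structure) and the displayed formula for $\Gr^W_{N+q}F^NH^N$; purity of both sides is the degeneracy at $E_2$ over $\QQ$ of the weight spectral sequence (Corollary~\ref{cor:configDGA}), which the $\sign(I)$-twist does not disturb.

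For the gluing assertion the ``only if'' is immediate: if $\tilde\xi\in H^0(C^I,\Omega_C(P)^{(I)}\la\llog\ra)$ has the prescribed residues $\xi_\Pcal$ along the strata of $\Delta_{I,P}$, then the residue formula recalled before the statement --- equivalently, the co-invariance of the $\Rscr_P$-module $\Lcal og_{C,P}$ --- gives $\sum_{j\neq i}r_{ij}\tilde\xi+\sum_{p\in P}r^p_i\tilde\xi=0$ for each $i$, and since the projection $\pi^i\colon C^I\to C^{I\ssm\{i\}}$ forgetting the $i$th coordinate carries each $\Delta_{ij}$ and each $H^p_i$ isomorphically onto $C^{I\ssm\{i\}}$, this is exactly the vanishing of the sum of the polydifferentials on the open strata of $\Delta_{I,P}$ that involve $i$. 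For the converse I would first note that the codimension-one incidence compatibility is precisely what makes the family $\{\xi_\Pcal\}$ (with the M\"obius coefficients $E_\Pcal$ understood on the strata of codimension $\ge2$) the residue datum of a single global section $\bar\xi$ of the quotient sheaf $\Omega_C(P)^{(I)}\la\llog\ra/\Omega_C^{(I)}$; this is the sign-twisted form of Deligne's local description of the associated graded of the logarithmic complex, which carries over to the arrangementlike divisor $\Delta_{I,P}$. Proposition~\ref{prop:residuechar}, applied with $D=\Delta_{I,P}$ (so that $\Omega^N_{C^I}(D)=\Omega^N_{C^I}(\log\Delta_{I,P})$ in top degree), then says $\bar\xi$ lifts to a global section of $\Omega_C(P)^{(I)}\la\llog\ra$ if and only if for each $i$ the residue sum of $\bar\xi$ along $\pi^i$ vanishes; via the isomorphism $H^1(C^L,\Omega^{|L|}_{C^L})\cong\bigoplus_{j\in L}H^0(C^{L\ssm\{j\}},\Omega^{|L\ssm\{j\}|}_{C^{L\ssm\{j\}}})\otimes\pi_j^*\mu$ used in the proof of that proposition, this residue sum is again $\sum_{j\neq i}\xi_{ij}+\sum_p\xi^p_i$, so the lift exists exactly under the stated hypothesis.

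The delicate point is the converse, and within it the identification of ``a compatible family of maximal-degree logarithmic polydifferentials on the strata'' with ``a global section of $\Omega_C(P)^{(I)}\la\llog\ra/\Omega_C^{(I)}$'': one must track the M\"obius groups $E_\Pcal$ on the strata of codimension $\ge2$ --- where the several iterated residues are bound together by the circuit relations of Remark~\ref{rem:graphical} rather than being free --- and check that the codimension-one incidence conditions are exactly the relations cutting out the image of $E_\Pcal$ in $\bigoplus_{\Pcal'<\Pcal,\ c(\Pcal')=1}E_{\Pcal'}$. Everything else is direct citation or bookkeeping already carried out above.
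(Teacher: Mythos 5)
Your assembly is essentially the paper's own (implicit) proof: the corollary is recorded there as a direct consequence of \ref{display:hodgefil}, the residue description of the weight filtration on the logarithmic complex, the display \ref{display:hodgefil2} together with the observation that its Gysin differentials involve only the $\mu_i$ and not the $\delta_{ij}$, and Proposition \ref{prop:residuechar} combined with the fiberwise residue theorem --- exactly the ingredients you cite, used in the same roles. The step you flag as delicate (that a residue-compatible family on the strata of $\Delta_{I,P}$ defines a section of $\Omega^N_{C^I}(\Delta_{I,P})/\Omega^N_{C^I}$, so that Proposition \ref{prop:residuechar} applies) is left equally implicit in the paper, so this is not a divergence; only note that your parenthetical claim that $\Omega^N_{C^I}(\Delta_{I,P})$ equals $\Omega^N_{C^I}(\log \Delta_{I,P})$ in top degree is not literally true for an arrangementlike divisor, but this is harmless, since the lift produced by Proposition \ref{prop:residuechar} has pole order one and its residues are the prescribed logarithmic polydifferentials, hence it is logarithmic by the inductive definition of $\Omega_C(P)^{(I)}\la\llog\ra$.
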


\begin{remark}\label{rem:}
According to Deligne (\cite{deligne:hodge2} Lemme 1.2.8), a mixed Hodge structure admits a natural splitting over 
$\CC$ (not in general over $\RR)$ and so yields an isomorphism
\[
H^0(C^I, \Omega_C(P)^{(I)}\la \llog\ra)\cong 
\oplus_q H^0(C, \Omega_C)^{\otimes (N-q)}\otimes_{\CC[\Scal_{N-q}]} W_q(N,P)(-q).
\]
\end{remark}

\begin{remark}\label{rem:tateinsertion}
Lefschetz duality yields an isomorphism $H^{N}(\inj_I(C\ssm P);\ZZ)\cong H_{N}(C^I, \Delta_{I,P};\ZZ)$. The latter is same as the homology with closed support $H^{cl}_{N}(\inj_I(C\ssm P);\ZZ)$, also known as  \emph{Borel-Moore} homology. But in order to take the mixed Hodge structure into account, a Tate twist with $\ZZ(N)$ is mandatory:
\[
H^{cl}_{N}(\inj_I(C\ssm P);\ZZ)\cong H^{N}(\inj_I(C\ssm P);\ZZ(N)).
\]
Corollary \ref{cor:recordpolydiffs} then identifies $H^0(C^I, \Omega_C(P)^{(I)}\la \llog\ra)$ with $
F^{0}H_{N}^{cl}(\inj_I(C\ssm P);\CC)\otimes\sign(I)$ and this give rise to isomorphisms
\[
\Gr^W_{-s}F^{0}H^{cl}_{N}(\inj_I(C\ssm P);\CC)\otimes\sign (I)\cong \big(H^0(C, \Omega_C)(1)\big)^{\otimes s }\otimes_{\CC[\Scal_{s}]} W_{N-s}(N,P).
\]
One advantage of this Tate twist is that it converts the omnipresent residue maps into boundary maps that involve no Tate twist. This implies for instance that $I\mapsto H^{N}(\inj_I(C\ssm P);\ZZ(|I|))$ is a $\Rscr_P$-module in the category of mixed Hodge structures. 
There are probably more conceptual reasons for inserting it as well. In view of Beilinson's motivic description  of the fundamental group (as explained in  Deligne-Goncharov \cite{del-gon}), the Hodge structure  on $H^{cl}_{N}(\inj_I(C\ssm P);\ZZ)$ is likely to be related to the Hodge structure on a truncation of the  fundamental groupoid of $C_s$ restricted to $P_s$. In any case, for our envisaged application this seems to be the more natural group to consider and this is why we shall later insert it.
\end{remark}

\section{Canonical quasi-logarithmic polydifferentials}\label{sect:simpleproduct} 
In  this section we take $k=\CC$ and  $C$ stands for a nonsingular projective irreducible complex curve (equivalently, 
a compact connected Riemann surface) of genus $g$. We let  $P\subset C$ be a nonempty finite subset, 
so that $C\ssm P$ is a nonsingular affine complex curve.

\subsection{A canonical bidifferential on $C^2$}\label{subsect:canbidiff}
 We first note  that the biresidue defines an  equivariant trivialization
\[
\Omega_{C^2}^2(2\Delta_{12})\otimes \Ocal_{\Delta_{12}}\cong \Ocal_{\Delta_{12}}\otimes\sign_2.
\]
According to Biswas-Raina (Prop.\ 2.10 of \cite{br1}) there exists a 
$\zeta\in H^0(C^2,\Omega^2(2\Delta_{12}))$ whose restriction to
${\Delta_{12}}$ yields this trivialization. We may, of course, take this generator to be anti-invariant under 
$\sigma$: $\sigma^*\zeta=-\zeta$. 
It is then unique up an element of $H^0(C^2, \Omega^2)^{-\sigma}\cong \sym^2H^0(C,\Omega_C)$. 
Note that this space is of Hodge type $(2,0)$. It defines an anti-invariant class in $H^2(\inj_2(C); \CC)$.  
As is clear from the K\"unneth decomposition (see the discussion in Subsection \ref{subsect:projcase}), 
this space of anti-invariant classes  can be identified with the direct sum of  $\sym^2 H^1(C; \CC)$ and the span of 
$-\mu\times 1+1\times\mu$. We will identify a unique  choice of $\zeta$ of pure type $(1,1)$ and 
determine its class in terms of this decomposition.

In order to find the coefficient of $[\zeta]$ on $-\mu\times 1+1\times\mu$ we proceed as follows.
Choose $p\in C$ and consider $Z:=-C\times\{p\}+\{ p\}\times C$ as a $2$-cycle. 
It is clear that with respect to the K\"unneth decomposition  of $H^2(C)$,  
its class is annihilated by $H^1(C)\times H^1(C)$, whereas $\mu\times 1$ and $1\times\mu$ take on it 
the value $-1$ resp.\ $1$. So the coefficient in question is then computed by integrating $\zeta$ over any 
$2$-cycle on $C^2\ssm \Delta_{12}$ that is homologous to $Z$ in $H_2(C^2)$. 
We construct such a cycle   by modifying $Z$ a bit near $(p,p)$. To this end we choose a chart $(U;z)$ centered at $p$,
so  that $\zeta$  has on $U^2$ the form
\[
\zeta =(z_1-z_2)^{-2} dz_1 \wedge dz_2 +\text{a form regular at  $(p,p)$.} \tag{$\dagger$}
\]
For any  $\eps>0$  such that $U$  contains a  closed disk $D_{2\eps}$ mapping onto the closed disk of radius 
$2\eps$ in $\CC$, we embed the cylinder $[-\eps, \eps]\times S^1$ in $U^2$ by 
\[
u_\eps: [-\eps, \eps]\times [0, 2\pi]\to U^2; \quad   u^*z_1=
(\eps-s)e^{\sqrt{-1}\phi}\,  , \, u^*z_2=-(\eps +s)e^{\sqrt{-1}\phi}
\]
Considered as a $2$-chain, its  boundary is $\{p\} \times\partial D_\eps-\partial D_\eps\times \{p\}$ 
(we use the  complex orientation), so that if we add to this the $2$-chain 
$-(C\ssm D_\eps)\times\{p\} +\{p\} \times (C\ssm D_\eps)$ we obtain a $2$-cycle $Z_\eps$ in $C^2\ssm \Delta_{12}$.
It is clear that $Z_\eps$ is homologous to $Z$ in $\inj_2(C)$. 

\begin{lemma}\label{lemma:integral}
The value of the cohomology class $[\zeta]$ on $Z_\eps$ 
(and hence the coefficient of $[\zeta]$ on $-\mu\times 1+1\times\mu$) equals $2\pi\sqrt{-1}$.
\end{lemma}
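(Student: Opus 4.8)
The plan is to represent $[\zeta]$ by a closed de~Rham form and evaluate the pairing $\langle[\zeta],Z_\eps\rangle$ as an honest integral $\int_{Z_\eps}\zeta$. Since $\zeta$ is a holomorphic (hence $d$-closed) $2$-form on $\inj_2(C)=C^2\ssm\Delta_{12}$, it is such a representative, and $Z_\eps$ is a genuine $2$-cycle there. Now $Z_\eps$ is the sum of the two ``curve pieces'' $-(C\ssm D_\eps)\times\{p\}$ and $\{p\}\times(C\ssm D_\eps)$ together with the cylinder $u_\eps([-\eps,\eps]\times[0,2\pi])$; on each curve piece one coordinate of $C^2$ is constant, so the restriction of the $2$-form $\zeta$ to it is identically zero for dimension reasons. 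Hence $\int_{Z_\eps}\zeta=\int_{[-\eps,\eps]\times[0,2\pi]}u_\eps^{\,*}\zeta$, and everything reduces to the cylinder.

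Next I would compute $u_\eps^{\,*}\zeta$ from the local expansion $(\dagger)$. From $u^*z_1=(\eps-s)e^{\sqrt{-1}\phi}$ and $u^*z_2=-(\eps+s)e^{\sqrt{-1}\phi}$ one gets $u^*(z_1-z_2)=2\eps\,e^{\sqrt{-1}\phi}$ and, after a short wedge computation, $u_\eps^{\,*}(dz_1\wedge dz_2)=2\sqrt{-1}\,\eps\,e^{2\sqrt{-1}\phi}\,ds\wedge d\phi$. Therefore the polar term $(z_1-z_2)^{-2}dz_1\wedge dz_2$ of $(\dagger)$ pulls back to $\tfrac{\sqrt{-1}}{2\eps}\,ds\wedge d\phi$, whose integral over $[-\eps,\eps]\times[0,2\pi]$ is precisely $2\pi\sqrt{-1}$. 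The remaining, ``regular at $(p,p)$'', term of $(\dagger)$ is of the form $h(z_1,z_2)\,dz_1\wedge dz_2$ with $h$ holomorphic near $(p,p)$, so its pullback equals $h(u^*z_1,u^*z_2)\cdot 2\sqrt{-1}\,\eps\,e^{2\sqrt{-1}\phi}\,ds\wedge d\phi$; since $u_\eps$ maps into an $O(\eps)$-neighbourhood of $(p,p)$, this integrand is $O(\eps)$ pointwise over a domain of area $O(\eps)$, so it contributes $O(\eps^2)$ to $\int_{Z_\eps}\zeta$.

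To turn ``up to $O(\eps^2)$'' into an exact equality valid for every small $\eps$, I would observe that $\eps\mapsto\int_{Z_\eps}\zeta$ is constant: the cycles $Z_\eps$ are mutually homologous inside $\inj_2(C)$ (the $3$-chain swept out as $\eps$ varies stays off $\Delta_{12}$) and $\zeta$ is closed there. Comparing the constant value with $2\pi\sqrt{-1}+O(\eps^2)$ forces it to equal $2\pi\sqrt{-1}$ (and incidentally shows the regular term contributes nothing at all). The parenthetical assertion about the coefficient of $[\zeta]$ on $-\mu\times 1+1\times\mu$ is then immediate from the reduction carried out just before the statement, where it is explained that this coefficient is computed by integrating $\zeta$ over any $2$-cycle homologous to $Z$ in $H_2(C^2)$, and $Z_\eps$ is such a cycle.

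The computation itself is routine; the only points that need genuine care are the orientation bookkeeping — the sign in $u_\eps^{\,*}(dz_1\wedge dz_2)$, and the verification that the cylinder's boundary really cancels the boundaries of the truncated curve pieces so that $Z_\eps$ is a cycle oriented as claimed — and the small analytic step that the holomorphic remainder term in $(\dagger)$ does not contribute, which is most cleanly handled by the homological invariance of the integral rather than by a direct estimate. I would therefore expect a careful write-up to spend most of its length on these sign conventions, the actual evaluation being the short computation above.
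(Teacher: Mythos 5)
Your proposal is correct and follows essentially the same route as the paper: both reduce the pairing to the integral of $u_\eps^*\zeta$ over the cylinder (the curve pieces contributing nothing for dimension reasons), compute the polar term's pullback to get $2\pi\sqrt{-1}$ up to an $\eps$-small error, and then use the fact that $\la[\zeta],Z_\eps\ra$ is independent of $\eps$ to conclude exact equality. The only cosmetic difference is that you estimate the remainder as $O(\eps^2)$ while the paper records it as $o(\eps)$; the logic is identical.
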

\begin{proof} The form pull-back of $\zeta$ to $Z_\eps$ is nonzero only the cylindrical part of $Z_\eps$  and so 
\[
\textstyle \la [\zeta],Z_\eps \ra= \int_{Z_\eps} \zeta=\int_{[-\eps, \eps]\times [0, 2\pi]} u_\eps^*\zeta
\]
In order to compute the latter, we  note that 
\[
u_\eps^*dz_1= e^{\sqrt{-1}\phi} (-ds +\sqrt{-1}(\eps-s)d\phi) , \quad u_\eps^*dz_2= 
e^{\sqrt{-1}\phi} (-ds -\sqrt{-1}(\eps+s)d\phi)
\]
so that $u_\eps^*(dz_1\wedge dz_2)=2\sqrt{-1}\eps e^{2\sqrt{-1}\phi}ds\wedge d\phi$. 
On the other hand, $z_1-z_2= 2\eps e^{2\sqrt{-1}\phi}$ and so it follows that
\[
u_\eps^*\zeta =\big(\frac{\sqrt{-1}}{2\eps} + O(\eps)\big)ds\wedge d\phi\; \text{ and hence  }\;
 \int_{[-\eps, \eps]\times [0,2\pi]} u^*\zeta= 2\pi\sqrt{-1} +o(\eps).
\]
The left hand side of the integral  is independent of $\eps$, and so $\la [\zeta],Z_\eps\ra= 2\pi\sqrt{-1}$.
\end{proof}

Any $K=\sum \alpha_i\otimes_\CC\beta_i\in H^1(C)\otimes H^1(C)$  determines endomorphisms 
$E_K$ and $ {}_KE$ of  $H^1(C)$ defined by the formulae 
\[
\textstyle \sum_i\alpha_i \times (\beta_i\cup x)=E_K(x)\cup 
\mu,\quad \sum_i(\alpha_i\cup x) \times\beta_i=\mu\times {}_KE(x).
\]
Note that $E_{\sigma^*K}=-{}_KE$. In particular, if $\sigma^*K=-K$ 
(which means that $\sum_i \alpha_i\times\beta_i\in H^2(C^2)$ is $\sigma$-invariant), then  $E_K={}_KE$.
Our main example will be the following: let $\omega_1,\dots , \omega_g$  be an orthonormal basis of 
$H^0(C, \Omega_C)$ in the sense that $\int_C \omega_i\wedge \overline\omega_j=\delta_{ij}$, so that
\[
\textstyle W:=\omega_i\otimes \bar\omega_i+\bar\omega_i\otimes \omega_i.
\] 
represents the `inverse' hermitian form on the dual of $H^0(C, \Omega_C)$. Note that this is a real $(1,1)$-form
which satisfies $\sigma^*W=-W$ so that $E_W={}_W E$. We also find that 
\[
E_W(\omega_j)=\sum_i \omega_i(\int_C \bar\omega_i\wedge \omega_j)=-\omega_j\quad ;\quad 
E_W(\bar\omega_j)=\sum_i\bar\omega_i(\int_C \omega_i\wedge \bar\omega_j)=\bar\omega_j
\]
and so $E_W$ acts as multiplication with $-1$ resp.\  $1$ on
$H^{1,0}(C)$ resp.\ $H^{0,1}(C)$. In other words (at least  for the convention of sign used nowadays), 
$\sqrt{-1}E_W$ is the Weil operator $J_C$ acting in $H^1(C; \RR)$. Thus, $W$ describes in  fact the full 
Hodge decomposition of $H^1(C; \CC)$.  

The $2$-form $\zeta$ that appears in the proposition below was found by Colombo-Frediani-Ghigi in 
\cite{CFG} in their study of the local geometry of the period map. 
We here obtain this form in a somewhat different manner and also determine its periods.

\begin{proposition}\label{prop:uchar}
There exists a $\zeta$  whose class lies in $H^2(C^2\ssm \Delta_{12}; \RR(1))$  and is given by
\[
2\pi\sqrt{-1}(-\mu\times 1+1\times\mu +W),
\]
where  $W=\sum_{i=1}^g (\omega_i\otimes \bar\omega_i+\bar\omega_i\otimes \omega_i)$ 
(so that $\sqrt{-1}W$ represents $2\pi$ times the Weil operator). It  is the unique antisymmetric meromorphic 2-form on $C^2$ with polar divisor  $-2\Delta_{12}$ with biresidue $1$ along $\Delta_{12}$ and whose class is of type $(1,1)$.
\end{proposition}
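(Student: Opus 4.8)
The plan is to start from a Biswas--Raina representative and then combine the Hodge-theoretic description of $H^2(\inj_2(C))$ obtained in Section \ref{subsect:projcase} with two cheap cup-product identities in order to pin the class down.

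First I would recall that there is a $\zeta_0\in H^0(C^2,\Omega^2(2\Delta_{12}))$ with $\sigma^*\zeta_0=-\zeta_0$ and biresidue $1$ along $\Delta_{12}$ (as stated just before the proposition), unique up to addition of an element of $H^0(C^2,\Omega^2)^{-\sigma}=\sym^2 H^0(C,\Omega_C)$, a space of Hodge type $(2,0)$. Anti-invariance forces the order-one (logarithmic) residue of $\zeta_0$ along $\Delta_{12}$ to vanish, so $\zeta_0$ defines a class $[\zeta_0]\in H^2(\inj_2(C);\CC)^{-\sigma}$. Next I would check that $H^2(\inj_2(C);\CC)$ is pure of weight $2$: in the Gysin sequence for $\inj_2(C)\subset C^2$ the relevant Gysin map $\Delta_{12!}\colon H^1(C)(-1)\to H^3(C^2)$ is $\alpha\mapsto\alpha\times\mu+\mu\times\alpha$ (by the K\"unneth review of Section \ref{subsect:projcase}), which is injective, so $\Gr^W_3H^2(\inj_2(C))=0$ and $H^2(\inj_2(C);\CC)=H^2(C^2;\CC)/\CC[\Delta_{12}]$. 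On anti-invariants the ambiguity $\CC[\Delta_{12}]$ disappears, so $[\zeta_0]$ has a canonical lift $\overline\zeta_0\in H^2(C^2;\CC)^{-\sigma}=\CC\langle-\mu\times1+1\times\mu\rangle\oplus\sym^2H^1(C;\CC)$, and I would split the second summand into its $(2,0)$, $(1,1)$, $(0,2)$ pieces, noting that $\CC\langle-\mu\times1+1\times\mu\rangle$ is of type $(1,1)$.

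Then I would dispose of the $(0,2)$-part of $[\zeta_0]$: it is detected by cup product with $H^{2,0}(C^2)=\sym^2H^0(C,\Omega_C)$, and it vanishes either because $[\zeta_0]\in F^1H^2(\inj_2(C))$ by the pole-order bound on the Hodge filtration (Deligne; cf.\ the discussion of the Hodge filtration in Section \ref{subsect:projcase}), or directly because for any holomorphic $2$-form $\eta$ on the surface $C^2$ the product $\zeta_0\wedge\eta$ is a meromorphic $(4,0)$-form, hence identically zero, and a short limiting Stokes argument then gives $\langle\overline\zeta_0,\eta\rangle=0$. Since the remaining freedom $\sym^2H^0(C,\Omega_C)$ maps isomorphically onto the $(2,0)$-summand, there is a unique $\zeta$ among these forms whose class is of type $(1,1)$; write $[\zeta]=\lambda(-\mu\times1+1\times\mu)+b$ with $b\in H^{1,0}\!\cdot\! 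H^{0,1}$. By Lemma \ref{lemma:integral} (where the biresidue-$1$ normalization is used), $\lambda=2\pi\sqrt{-1}$. To compute $b$ I would cup $[\zeta]$ with the classes $1\times\omega_j$, $\omega_j$ running over an orthonormal basis of $H^0(C,\Omega_C)$: on $\inj_2(C)$, of complex dimension $2$, the form $\zeta\wedge\pi_2^*\omega_j$ is of type $(3,0)$, hence zero, so $\overline\zeta_0\cup(1\times\omega_j)$ lies in the subspace $\{\alpha\times\mu+\mu\times\alpha\}\subset H^3(C^2)$ that is killed by restriction to $H^3(\inj_2(C))$; comparing this with $\overline\zeta_0\cup(1\times\omega_j)=-2\pi\sqrt{-1}\,\mu\times\omega_j+E_b(\omega_j)\times\mu$ (the $(2,0)$-part contributes nothing since $\int_C\omega_k\wedge\omega_j=0$) yields $E_b(\omega_j)=-2\pi\sqrt{-1}\,\omega_j$, i.e.\ $E_b=-2\pi\sqrt{-1}$ on $H^{1,0}(C)$. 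Feeding this into the first Riemann bilinear relation upgrades it to $E_b=+2\pi\sqrt{-1}$ on $H^{0,1}(C)$; hence $E_b=2\pi\sqrt{-1}\,E_W$, and since $K\mapsto E_K$ is injective this gives $b=2\pi\sqrt{-1}\,W$, so $[\zeta]=2\pi\sqrt{-1}(-\mu\times1+1\times\mu+W)$, which lies in $H^2(C^2\ssm\Delta_{12};\RR(1))$ because $-\mu\times1+1\times\mu$ and $W$ are real.

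For uniqueness: if $\zeta'$ is another antisymmetric meromorphic $2$-form with polar divisor at most $2\Delta_{12}$, biresidue $1$ and class of type $(1,1)$, then $\zeta-\zeta'$ has biresidue $0$, and since antisymmetry also kills its logarithmic residue it is in fact holomorphic on $C^2$; thus $\zeta-\zeta'\in\sym^2H^0(C,\Omega_C)$ has class of type $(2,0)$, while $[\zeta]-[\zeta']$ is of type $(1,1)$, forcing $[\zeta-\zeta']=0$ and hence $\zeta=\zeta'$, a holomorphic $2$-form on the compact K\"ahler manifold $C^2$ being determined by its cohomology class. The main obstacle is the determination of $b$: the cup-product identity $\zeta\wedge\pi_2^*\omega_j=0$ is essentially free, but extracting from it --- via the $\RR$-linear K\"unneth bookkeeping and the Riemann bilinear relations, with every sign under control --- that $E_b$ is exactly $2\pi\sqrt{-1}$ times the Weil operator is the delicate heart of the argument; an alternative route would be a direct evaluation of the mixed periods $\int_{\gamma_1\times\gamma_2}\zeta$ in the classical style of the normalized bidifferential of the second kind.
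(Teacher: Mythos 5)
Your proposal is correct in substance and arrives at the right class, but it determines the crucial $H^1\otimes H^1$-component by a different mechanism than the paper. The paper restricts $\zeta$ to $U\times C$, writes it as $dz_1\wedge\pi_2^*\eta$ with $\eta$ a relative differential having a double pole at $z_2=z_1$, and evaluates $E_{[\zeta]}$ on $H^0(C,\Omega_C)$ directly via the classical Dolbeault/residue identity $\int_C[\eta]\wedge\omega=2\pi\sqrt{-1}\,\res_p f\omega$; this at once gives the $\omega_i\otimes\bar\omega_j$-coefficients and kills the $\bar\omega_i\otimes\bar\omega_j$-part, after which the $(2,0)$-ambiguity is normalized away and Lemma \ref{lemma:integral} supplies the $\mu$-coefficient. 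You instead stay on the level of cup products on $C^2$: you first justify the canonical lift of the anti-invariant class to $H^2(C^2)$ by purity of $H^2(\inj_2(C))$ (injectivity of $\Delta_{12!}$ on $H^1$ in the Gysin sequence), kill the $(0,2)$-part by pairing with $H^{2,0}(C^2)$ using that $\zeta\wedge\eta$ vanishes for type reasons, and then extract the $(1,1)$-part from $\overline\zeta\cup(1\times\omega_j)$: the form-level product $\zeta\wedge\pi_2^*\omega_j$ is of type $(3,0)$ on a surface, so the cup product lies in $\ker\big(H^3(C^2)\to H^3(\inj_2(C))\big)=\{\alpha\times\mu+\mu\times\alpha\}$, and matching the two K\"unneth components forces $E_b=-2\pi\sqrt{-1}$ on $H^{1,0}$, hence $b=2\pi\sqrt{-1}W$. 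What your route buys is that the period computation is replaced by soft type and exactness arguments (only Lemma \ref{lemma:integral} remains analytic); what the paper's route buys is a self-contained evaluation that does not need the lift to $H^2(C^2)$ or the identification of the Gysin kernel, and which exhibits the answer as a genuine period computation.

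Two soft spots, neither fatal: the ``short limiting Stokes argument'' for $\langle\overline\zeta_0,\eta\rangle=0$ is only gestured at (it does work, because near $\Delta_{12}$ one can take a primitive of $\zeta_0$ with meromorphic $(1,0)$ singular part $-dz_2/(z_1-z_2)$, whose wedge with a holomorphic $2$-form vanishes for type reasons, after which the boundary contribution over $\partial T_\eps$ tends to zero; alternatively your appeal to the pole-order bound on the Hodge filtration is the standard fix). And the ``first Riemann bilinear relation'' is not really what upgrades $E_b$ from $H^{1,0}$ to $H^{0,1}$: what you actually use is the $\sigma$-anti-invariance of the form (giving $b_{ij}=s_{ji}$ in the paper's notation) together with the type-$(1,1)$ normalization; with that said, the conclusion $b=2\pi\sqrt{-1}W$ is forced exactly as you claim, and your uniqueness argument (biresidue zero plus antisymmetry forces holomorphy, then type $(2,0)$ versus type $(1,1)$ forces vanishing) matches the intended one.
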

\begin{proof}
If we are given a $p\in C$ and a meromorphic differential $\eta$ that is regular on $C\ssm\{p\}$, then it defines
an class  $[\eta]\in H^1(C\ssm\{p\}; \CC)=H^1(C; \CC)$. Its image in $H^1(C, \Ocal_C)$ is obtained as follows:
write in a punctured neighborhood of $p$ the form $\eta$ as $df$ (which is possible since the residue of 
$\eta$ at $p$ has to be zero) and then take the image of $f$ in the above quotient.
The  theorem of Stokes combined with the Cauchy residue formula implies the (well-known) identity
\begin{equation*}
\int_C [\eta]\wedge\omega =2\pi\sqrt{-1}\res_p f\omega.
\end{equation*}
We first show that  for $\zeta$ be as in the lemma,  $E_{[\zeta]}$ is the identity on $H^0(C,\Omega_C)$. 
Let $p$ and a chart $(U;z)$ centered at $p$ be as above so that $\zeta$ is on $U^2$  as above ($\dagger$).
It is also clear that $\zeta|U\times C$ is of the form  $dz_1\wedge \pi_2^*\eta$, 
where $\eta$  is a meromorphic differential relative to the projection $U\times C\to U$ with 
$\eta|U^2=(z_1-z_2)^{-2}dz_2$ and $\eta$ regular elsewhere.
 Integration of $\eta$ on $U^2$ with respect to  $z_2$ yields $-(z_1-z_2)^{-1}$ plus a holomorphic  function on $U^2$.
Let $\omega$  be an abelian differential on $C$. So $\omega|U=f(z)dz$ for some holomorphic $f$.
The residue pairing relative to the second coordinate (with $z_1$ as parameter) yields
\begin{multline*}
\int_C [\eta]\wedge \omega  =2\pi\sqrt{-1}\res_{z_2\to z_1} -(z_1-z_2)^{-1} \omega = \\
=2\pi\sqrt{-1}\res_{z_2\to z_1} (z_2-z_1)^{-1} f (z_2)dz_2= 2\pi\sqrt{-1}f(z_1)
\end{multline*}
If we apply $dz_1\wedge$ to both sides, we find that $E_{[\zeta]}$ takes $\omega$ to 
$2\pi\sqrt{-1}\omega=-2\pi \sqrt{-1}E_W(\omega)$.
We now write the component of $\zeta$ in $H^1(C;\CC)\otimes H^1(C; \CC)$ as
\[
\textstyle \sum_{i,j} \big(a_{ij}\omega_i\otimes \omega_j +b_{ij} \omega_i\otimes \bar\omega_j
+s_{ij}\bar\omega_i\otimes \omega_j+d_{ij}\bar\omega_i\otimes \bar\omega_j\big).
\]
The assumption that  $\sigma^*\zeta=-\zeta$ implies that $a_{ij}=a_{ji}$, $b_{ij}=s_{ji}$ and $d_{ij}=d_{ji}$.
From what we proved, it follows that $b_{ij}$ is $2\pi\sqrt{-1}$ times the identity matrix and that $d_{ij}=0$.
Since we have the freedom of  choosing  the $a_{ij}$'s arbitrary, the unique representative in question is then obtained
by taking each $a_{ij}=0$. If we combine this Lemma \ref{lemma:integral}, the assertion follows. 
\end{proof}

\begin{remark}\label{rem:}
We prefer to think of $\zeta$ as a polydifferential. It  is  then $\sigma$-invariant and  this  makes it more canonical, 
as unlike the $2$-form interpretation, it does not depend  on how we ordered the factors. 
Note that for  $C=\PP^1$, $\zeta$ is simply given by $(z_1-z_2)^{-2}dz_1 dz_2$ (so that its divisor is minus twice  
the diagonal). 
 
\end{remark}

\begin{remark}\label{rem:}
At each point $x\in C$ we can  find a local holomorphic coordinate $z$ such that $\zeta$ takes at $(x,x)$ the simple form 
$(z_1-z_2)^{-2}dz_1dz_2$. 
Such a coordinate is not unique, but any other coordinate with that property is necessarily a fractional linear 
(M\"obius) transform of $z$. In other words,
$\zeta$ determines a projective structure at $x$. It is clearly invariant under the automorphism group of $C$. 
This group acts transitively when $C$ has genus 
$0$ or $1$, and so this projective structure must then be the standard one.  
When the genus $g$ of $C$ is $>1$, then the universal cover of $C$ is realized by the upper half plane $\HH$   
with covering group contained in $\PSL_2(\RR)$. So this defines another  projective structure on $C$. 
But Biswas-Colombo-Frediani-Pirola \cite{bcfp} recently showed that these two projective structures differ for higher genus.
The  difference between two projective structures is a quadratic differential on $C$, and  a quadratic differential on 
$C$ defines a covector to type $(1,0)$
at the base point of its universal deformation. As this  is canonically defined, we thus find a form of type $(1,0)$ 
on the moduli stack  of compact Riemann genus $g$-surfaces. It seems worthwhile to explore its properties.
\end{remark}

The biresidue along the diagonal defines the short exact sequence
\[
0\to \Omega_C( P)^{(2)}\la\llog\ra\to \Omega_C( P)^{(2)}\la\qlog\ra\xrightarrow{\br} \Delta_{12*}\Ocal_{C}\to 0.
\]
It follows from the preceding that
the sequence of global sections
\[
0\to H^0(C^2,\Omega_C( P)^{(2)}\la\llog\ra)\to H^0(C^2,\Omega_C( P)^{(2)}\la\qlog\ra)\to \CC\to 0,
\]
is still  exact with a splitting represented by $1\in \CC\mapsto \zeta\in H^0(C^2,\Omega_C( P)^{(2)}\la\qlog\ra)$.

\begin{corollary}\label{cor:}
We have natural $\sigma$-equivariant isomorphisms 
\begin{align*}
H^0(C^2, \Omega_C( P)^{(2)}\la\llog\ra)\cong &\, F^2H^2(\inj_2(C\ssm P))\otimes \sign_2,\\
H^0(C^2, \Omega_C( P)^{(2)}\la\qlog\ra)\cong &\, \CC \zeta \oplus F^2H^2(\inj_2(C\ssm P))\otimes\sign_2,
\end{align*}
with $\zeta$ regarded as an element of $F^1H^2(\inj_2(C\ssm P))\otimes\sign$. 
The projection on the first resp.\ second summand is a global version of the local operator 
$r''_{12}$ resp.\ $r'_{12}$ introduced earlier. 
\end{corollary}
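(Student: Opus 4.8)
The plan is to deduce both isomorphisms from results already established in the excerpt, so that the only genuine work is bookkeeping with the Hodge filtration. The first isomorphism is nothing but Corollary~\ref{cor:recordpolydiffs} in the case $I=\{1,2\}$, $N=2$: that corollary supplies a natural — hence $\Scal_2$-, and in particular $\sigma$-equivariant — identification $H^0(C^2,\Omega_C(P)^{(2)}\la\llog\ra)\cong F^2H^2(\inj_2(C\ssm P);\CC)\otimes\sign_2$, so there is nothing further to check for it.

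For the second isomorphism I would feed the short exact sequence of sheaves $0\to\Omega_C(P)^{(2)}\la\llog\ra\to\Omega_C(P)^{(2)}\la\qlog\ra\xrightarrow{\br}\Delta_{12*}\Ocal_C\to0$ into global sections, obtaining $0\to H^0(C^2,\Omega_C(P)^{(2)}\la\llog\ra)\to H^0(C^2,\Omega_C(P)^{(2)}\la\qlog\ra)\xrightarrow{\br}\CC\to0$, exact on the right because (Biswas--Raina, Proposition~\ref{prop:uchar}) the Colombo--Frediani--Ghigi form $\zeta$ is a global section of $\Omega_C(P)^{(2)}\la\qlog\ra$ with $\br(\zeta)=1$. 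Thus $\br$ is a split surjection and $\zeta$ a distinguished lift of $1$, whence $H^0(C^2,\Omega_C(P)^{(2)}\la\qlog\ra)=\CC\zeta\oplus\ker\br=\CC\zeta\oplus H^0(C^2,\Omega_C(P)^{(2)}\la\llog\ra)$; substituting the first isomorphism into the second summand gives the claimed decomposition.

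To realize this decomposition inside $F^1H^2(\inj_2(C\ssm P))\otimes\sign_2$ I would invoke the pole-order filtration on the de Rham cohomology of $\inj_2(C)$: a closed top-degree meromorphic form on $C^2$ with poles of order $\le2$ along $\Delta_{12}$ represents a class in $F^{2-2+1}H^2=F^1H^2$, and on logarithmic forms (pole order $\le1$) the class map takes values in $F^2H^2\otimes\sign_2$, where it coincides with the isomorphism of Corollary~\ref{cor:recordpolydiffs}. The resulting natural map $H^0(C^2,\Omega_C(P)^{(2)}\la\qlog\ra)\to F^1H^2\otimes\sign_2$ sends $\zeta$ to its class $[\zeta]=2\pi\sqrt{-1}(-\mu\times1+1\times\mu+W)$, which by Proposition~\ref{prop:uchar} is of pure Hodge type $(1,1)$ and nonzero, hence lies in $F^1$ but not in $F^2$. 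Therefore $\CC[\zeta]$ and $F^2H^2\otimes\sign_2$ meet trivially there, the map is injective, and the abstract direct sum $\CC\zeta\oplus F^2H^2\otimes\sign_2$ is genuinely realized as a subspace of $F^1H^2\otimes\sign_2$ with $\zeta$ corresponding to $[\zeta]$.

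Finally, the identification of the two projections with global incarnations of $r''_{12}$ and $r'_{12}$ is a purely local matter, checked against Example~\ref{example:basic2}: near $\Delta_{12}$ the form $\zeta$ equals $(z_1-z_2)^{-2}dz_1dz_2$ plus a regular form, so (modulo regular forms) it spans the $\sigma$-invariant polar-order-two part of the germ of a quasi-logarithmic bidifferential out of which $r''_{12}$ is built, while the logarithmic summand accounts for the polar-order-one part out of which $r'_{12}$ is built (and on which $r'_{12}$ agrees with $r_{12}$); the projections onto the $\CC\zeta$-summand and onto the logarithmic summand then restrict near the diagonal to precisely these two reductions. I do not expect a substantive obstacle anywhere: every input — Corollary~\ref{cor:recordpolydiffs}, the two exact sequences preceding the statement, and the existence and Hodge type of $\zeta$ from Proposition~\ref{prop:uchar} — is already in place, and the one step that needs care is the Hodge-filtration bookkeeping of the third paragraph, where the $(1,1)$-type and non-vanishing of $[\zeta]$ supplied by Proposition~\ref{prop:uchar} are exactly what make the direct sum sit correctly inside $F^1H^2$.
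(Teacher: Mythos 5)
Your argument is correct and follows the paper's own route: the first isomorphism is exactly Corollary \ref{cor:recordpolydiffs} for $N=2$, and the second comes from taking global sections of the biresidue short exact sequence, with surjectivity and the splitting supplied by the form $\zeta$ of Proposition \ref{prop:uchar} having biresidue $1$. Your extra Hodge-filtration bookkeeping (that $[\zeta]$ is of type $(1,1)$, hence in $F^1$ but not $F^2$, so the sum sits inside $F^1H^2\otimes\sign_2$) and the local check against Example \ref{example:basic2} are consistent with what the paper leaves implicit.
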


\subsection{Canonical quasi-logarithmic polydifferentials}\label{subsect:canqlog} 
We will here find a higher order generalization of the canonical bidifferential that we introduced in the previous section. 
With the help of this we can express every quasi-logarithmic polydifferential canonically in terms of logarithmic ones.

Consider for $n\ge 2$ the divisors 
\[
D_n:=\Delta_{12}+\Delta_{23}+\cdots +\Delta_{n-1,n},\quad 
E_n:=D_n+\Delta_{n,1}=\Delta_{12}+\Delta_{23}+\cdots +\Delta_{n-1,n}+\Delta_{n,1}
\]
(so that $E_2=2\Delta_{12}$). Note that $D_n$ is a normal crossing divisor, and that $E_n$ is nearly so: 
it has this property away from the main diagonal. According to Deligne \cite{deligne:hodge2}  
this implies  that  $H^q(C^n, \Omega^n_{C^n}(D_n))$ can for $q\ge 0$  be identified with a  term
of the Hodge filtration on the cohomology of $C^n\ssm D_n$, namely $F^nH^{n+q}(C^n\ssm D_n)$. 
The second assertion of the following lemma shows that this mixed Hodge structure is in fact pure  
(and therefore implies formally its first assertion).

We recall that for $i,j\in \{1,\dots, n\}$ distinct, the class $\Delta_{ij!}(1)$ of 
$\Delta_I{ij}$ in $H^\pt(C^n)=H^\pt(C)^{\otimes n} $ equals 
$\pi_{ij}^*(\mu\times 1+\delta +1\times\delta)=\pi_i^*\mu +\pi_{ij}^*\delta+\pi_j^*\mu$.

\begin{lemma}\label{lemma:string}
The inclusion $\Omega^n_{C^n}\subset\Omega^n_{C^n}(D_n)$ defines an isomorphism on global sections. 
The inclusion $C^n\ssm D_n\subset C^n$ identifies $H^\pt (C^n\ssm D_n)$ with the quotient of 
$H^\pt(C^n)$ by the ideal generated
by the classes $\Delta_{i, i+1!}(1)$ of the irreducible components of $D_n$,  ($i=1, \dots, n-1$). 
Concretely,  in the K\"unneth decomposition $H^\pt(C^n)\cong H^\pt(C)^{\otimes n}$ the subsum of the tensor products
not involving $\pi_i^*(\mu)$, for $i=2,3, \dots, n$ maps isomorphically onto $H^\pt (C_n\ssm D_n)$.
\end{lemma}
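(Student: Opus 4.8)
\emph{The first assertion} I would prove directly by a residue–peeling argument. Let $\omega\in H^0(C^n,\Omega^n_{C^n}(D_n))$, so $\omega$ has at most a simple pole along each $\Delta_{i,i+1}$. Among the components of $D_n$ only $\Delta_{n-1,n}$ is met by a general fibre of the projection $C^n\to C^{n-1}$ forgetting $z_n$; such a fibre is a copy of $C$, and along it $\omega$ restricts to a meromorphic $1$-form with a single simple pole, whose residue must vanish by the residue theorem on $C$. Hence $\res_{\Delta_{n-1,n}}\omega=0$ and $\omega$ is regular along $\Delta_{n-1,n}$. Repeating with $\Delta_{n-2,n-1}$ (now the only remaining component met by a general fibre of the projection forgetting $z_{n-1}$), then $\Delta_{n-3,n-2}$ (forgetting $z_{n-2}$), and so on down to $\Delta_{12}$, shows $\omega$ has no poles at all, i.e.\ $\omega\in H^0(C^n,\Omega^n_{C^n})$. (Alternatively this follows from the third assertion via Deligne's identification $H^0(C^n,\Omega^n_{C^n}(D_n))=F^nH^n(C^n\ssm D_n)$.) The real content is the second and third assertions.

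\emph{The second and third assertions} I would deduce from the single statement that the restriction map $\rho\colon H^\pt(C^n)\to H^\pt(C^n\ssm D_n)$ carries the K\"unneth subsum $B_n:=H^\pt(C)\o H^{<2}(C)^{\o(n-1)}$, where $H^{<2}(C):=H^0(C)\oplus H^1(C)$, isomorphically onto $H^\pt(C^n\ssm D_n)$. To obtain this I would assemble three facts: (i) $\rho$ is surjective; (ii) writing $\Jcal_n\subset H^\pt(C^n)$ for the ideal generated by the classes $\Delta_{i,i+1!}(1)=\mu_i+\delta_{i,i+1}+\mu_{i+1}$ ($1\le i\le n-1$), one has $H^\pt(C^n)=B_n+\Jcal_n$ as vector spaces; (iii) $\dim B_n=\dim H^\pt(C^n\ssm D_n)$. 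Since the fundamental class of a divisor restricts to zero on its complement, $\Jcal_n\subseteq\ker\rho$; then (i) and (ii) give $\rho(B_n)=\rho(B_n+\Jcal_n)=\rho(H^\pt(C^n))=H^\pt(C^n\ssm D_n)$, and (iii) forces $\rho|_{B_n}$ to be an isomorphism. Comparing dimensions one gets $B_n\cap\Jcal_n=0$ and $\dim\Jcal_n=\dim\ker\rho$, hence $\Jcal_n=\ker\rho$; this is the second assertion, and the third is the stated isomorphism together with the fact that $B_n$ is precisely the indicated subsum.

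For (i) and (iii) I would use the Fadell--Neuwirth fibration $\pi\colon C^n\ssm D_n\to C^{n-1}\ssm D_{n-1}$ forgetting $z_n$, with fibre $C\ssm\{z_{n-1}\}$, together with induction on $n$. Its Leray spectral sequence has $R^0\pi_*\CC=\CC$ and $R^1\pi_*\CC$ the local system with fibre $H^1(C\ssm\{z_{n-1}\})$; since $C$ is fixed and the restriction $H^1(C)\to H^1(C\ssm\{z_{n-1}\})$ is a natural isomorphism, this local system is the constant system $H^1(C)$, while $R^q\pi_*\CC=0$ for $q\ge2$. Comparing, via the embedding of $C^n\ssm D_n$ as the complement of the diagonal section $z_n=z_{n-1}$ inside the trivial bundle $(C^{n-1}\ssm D_{n-1})\times C$, the induced map of Leray spectral sequences is an isomorphism on $E_2^{p,1}$ (namely $\mathrm{id}$ tensored with the isomorphism $H^1(C)\to H^1(C\ssm\{z_{n-1}\})$), so degeneration of the Leray sequence of the trivial bundle forces degeneration of that of $\pi$. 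Induction then gives $H^\pt(C^n\ssm D_n)\cong H^\pt(C)\o H^\pt(C\ssm\{z_{n-1}\})^{\o(n-1)}$ as graded vector spaces; as $H^\pt(C\ssm\{z_{n-1}\})$ has the same graded dimensions as $H^{<2}(C)$, this yields (iii). For (i): the Gysin pushforward of the diagonal section $z_n=z_{n-1}$ inside $(C^{n-1}\ssm D_{n-1})\times C$ sends a class $\alpha$ to something with K\"unneth component $\alpha\o\mu$, hence is injective, so the restriction to its complement is surjective; combining this with the inductive surjectivity of $H^\pt(C^{n-1})\to H^\pt(C^{n-1}\ssm D_{n-1})$ tensored with $H^\pt(C)$ gives (i).

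For (ii) I would induct on $n$ using $H^\pt(C)=H^{<2}(C)\oplus\CC\mu$, so that $H^\pt(C^n)=\big(H^\pt(C^{n-1})\o H^{<2}(C)\big)\oplus\big(H^\pt(C^{n-1})\o\CC\mu_n\big)$. On the first summand the hypothesis $H^\pt(C^{n-1})=B_{n-1}+\Jcal_{n-1}$ gives $H^\pt(C^{n-1})\o H^{<2}(C)=B_n+\big(\Jcal_{n-1}\o H^{<2}(C)\big)\subseteq B_n+\Jcal_n$. On the second, $\mu_{n-1}+\delta_{n-1,n}+\mu_n\in\Jcal_n$ gives $y\o\mu_n\equiv -y\,\mu_{n-1}-y\,\delta_{n-1,n}\pmod{\Jcal_n}$ for $y\in H^\pt(C^{n-1})$, and both $y\,\mu_{n-1}\in H^\pt(C^{n-1})\o H^0(C)$ and $y\,\delta_{n-1,n}\in H^\pt(C^{n-1})\o H^1(C)$ (by the shape of $\delta$) lie in the first summand, hence in $B_n+\Jcal_n$; so $H^\pt(C^n)=B_n+\Jcal_n$. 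The one delicate point is the degeneration of the Leray spectral sequence of $\pi$, which is exactly why I route it through the comparison with the trivial product bundle rather than argue with it head-on; the remaining bookkeeping of K\"unneth bidegrees and of the signs in $\delta$ is routine.
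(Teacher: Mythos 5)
Your proof is correct, and its engine is in fact the same as the paper's: for the first assertion you use the residue theorem along the fibres of the forgetful projections (the paper does exactly this, phrased as an isomorphism of direct images under $\pi^n$ and then implicitly iterated), and for the cohomological statement you use induction on $n$ together with the Gysin sequence of the section $z_n=z_{n-1}$ inside $(C^{n-1}\ssm D_{n-1})\times C$, with injectivity of the Gysin map read off from the $\alpha\otimes\mu_n$ K\"unneth component -- which is precisely the paper's key step. Where you diverge is in how you extract the conclusion. The paper simply identifies $H^\pt(C^n\ssm D_n)$ with the cokernel of the (injective) Gysin map, using that this map is $H^\pt(C^{n-1}\ssm D_{n-1})$-linear and sends $1$ to $\mu_{n-1}+\delta_{n-1,n}+\mu_n$, and reads off both the ideal description and the K\"unneth subsum from that. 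You instead split the conclusion into three facts: surjectivity of restriction, the spanning statement $H^\pt(C^n)=B_n+\Jcal_n$, and a graded dimension count, the last obtained via degeneration of the Leray spectral sequence of the Fadell--Neuwirth-type bundle (your comparison-with-the-product argument for degeneration is sound). Two remarks: the Leray detour is redundant, since the very Gysin short exact sequence you establish for surjectivity already gives $\dim H^m(C^n\ssm D_n)=\dim H^m(\text{base}\times C)-\dim H^{m-2}(\text{base})$ and hence your (iii) by induction; on the other hand, your explicit spanning argument (ii), using $\mu_n\equiv-\mu_{n-1}-\delta_{n-1,n}$ modulo the ideal, is a pleasantly concrete substitute for the paper's module-theoretic identification of the cokernel and makes the ``subsum not involving $\mu_2,\dots,\mu_n$'' description transparent.
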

\begin{proof}
For the first statement it suffices to show that the inclusion in question defines an isomorphism when taking the direct image 
under the projection $\pi^n: C^n\to C^{n-1}$ which omits the last factor. This is clear: 
this defines a curve over the generic point of 
$C^{n-1}$ with the divisor $D_n$ defining a single section and the assertion thus becomes a consequence of the 
residue theorem.

For the second assertion, we note that for $n=2$ this is a straightforward consequence of the fact that the Gysin 
sequence for the diagonal embedding splits up in short exacts sequences.
So we assume $n>2$ and, proceeding with induction, that $H^\pt (C^{n-1})\to H^{\pt}(C^{n-1}\ssm D_{n-1})$ is 
onto with kernel generated by the classes of irreducible components of $D_{n-1}$. 
Consider the Gysin sequence associated to the closed embedding 
$C^{n-1}\ssm D_{n-1}\hookrightarrow C^n\ssm D_n$ induced by $\Delta_{n-1, n}$:
\[
\cdots \to H^{\pt-2}(C^{n-1}\ssm D_{n-1})(-1)\xrightarrow{\Delta_{n-1, n!}} 
H^{\pt}((C^{n-1}\ssm D_{n-1})\times C)\to H^\pt(C^n\ssm D_n)\to\cdots
\]
The coboundary in this sequence is the homomorphism of $H^\pt(C^{n-1}\ssm D_{n-1})$-modules 
\[
H^{\pt-2}(C^{n-1}\ssm D_{n-1})(-1)\to H^{\pt}(C^{n-1}\ssm D_{n-1})\otimes H^\pt(C)
\]
which sends $1$ to 
the image of $\Delta_{n-1, n!}(1)=\pi_{n-1}^*\mu +\pi_{n-1,n}^*\delta +\pi_n^*\mu$.
Our inductive description of $H^{\pt}(C^{n-1}\ssm D_{n-1})$ shows that this map is injective,  so that its cokernel 
gives $H^{\pt}(C^{n}\ssm D_{n})$. This proves that $H^\pt(C^n\ssm D_n)$ is as asserted.
\end{proof}

The following proposition may be regarded as a generalization of  the construction of our canonical bidifferential on 
$\inj_2(C)$.

\begin{proposition}\label{prop:cyclicclass}
For $n\ge 2$, the space  $H^0(C^n,\Omega^n_{C^n}(E_n))$ embeds in 
$H^n(C^n\ssm E_n; \CC)$ and lands in $F^{n-1}H^n(C^n\ssm E_n)$.
Its subspace $H^0(C^n,\Omega^n_{C^n})$ is of codimension one and maps isomorphically onto $F^nH^n(C^n\ssm E_n)$. 
There is a unique supplementary line of $H^0(C^n,\Omega^n_{C^n})$ in 
$H^0(C^n,\Omega^n_{C^n}(E_n))$ that maps to a line in $H^n(C^n\ssm E_n; \CC)$  of Hodge type $(n-1, n-1)$ 
and that is spanned by a real class.  The $\Scal_n$-stabilizer of $E_n$ (a dihedral group of order $2n$) acts on this line 
with the character of the expression $(z_1-z_n)(z_2-z_1)\cdots (z_n-z_{n-1})dz_1\wedge\cdots \wedge dz_n$.
For $n\ge 3$, it has a unique generator $\zeta_n$ with the property that the residue along $\Delta_{n,n-1}$ is 
equal to $\zeta_{n-1}$, where $\zeta_2=\zeta$ is the $2$-form defined earlier.
\end{proposition}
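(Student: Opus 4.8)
The plan is to induct on $n$. The case $n=2$ is contained in Subsection~\ref{subsect:canbidiff}: there $E_2=2\Delta_{12}$, the corollary following Proposition~\ref{prop:uchar} gives $H^0(C^2,\Omega^2_{C^2}(2\Delta_{12}))=H^0(C^2,\Omega^2_{C^2})\oplus\CC\zeta$ with $[\zeta]$ of type $(1,1)$, real and of biresidue $1$ along $\Delta_{12}$, and $\zeta$ transforms under the transposition in $\Scal_2$ by the sign character, which is the character of $(z_1-z_2)(z_2-z_1)\,dz_1\wedge dz_2$. For $n\ge3$ I would set $D:=E_n-\Delta_{n-1,n}=\Delta_{12}+\cdots+\Delta_{n-2,n-1}+\Delta_{n,1}$. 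A direct check shows that $D$ is a \emph{chain} of diagonals (the path $n-1,n-2,\dots,1,n$ on $\{1,\dots,n\}$), hence a normal crossing divisor that after relabelling is of the form $D_n$ of Lemma~\ref{lemma:string}, and that, identifying $\Delta_{n-1,n}\cong C^{n-1}$ by the first $n-1$ coordinates, $E_n|_{\Delta_{n-1,n}}=E_{n-1}$. The Poincaré residue along $\Delta_{n-1,n}$ then gives the exact sequence of sheaves on $C^n$
\[
0\to\Omega^n_{C^n}(D)\to\Omega^n_{C^n}(E_n)\xrightarrow{\ \res_{\Delta_{n-1,n}}\ }(\iota_{n-1,n})_*\Omega^{n-1}_{C^{n-1}}(E_{n-1})\to0 .
\]
By Lemma~\ref{lemma:string} (after relabelling) $H^0(C^n,\Omega^n_{C^n}(D))=H^0(C^n,\Omega^n_{C^n})$, of dimension $g^n$, and by Deligne's theory \cite{deligne:hodge2} together with the Künneth description of Lemma~\ref{lemma:string} one identifies $H^1(C^n,\Omega^n_{C^n}(D))$ with $\pi_n^*H^2(C)\otimes H^0(C^{\{1,\dots,n-1\}},\Omega^{n-1})$, of dimension $g^{n-1}$. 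Using the inductive splitting $H^0(C^{n-1},\Omega^{n-1}_{C^{n-1}}(E_{n-1}))=H^0(C^{n-1},\Omega^{n-1}_{C^{n-1}})\oplus\CC\zeta_{n-1}$, the long exact sequence becomes
\[
0\to H^0(C^n,\Omega^n_{C^n})\to H^0(C^n,\Omega^n_{C^n}(E_n))\xrightarrow{\res_{\Delta_{n-1,n}}}H^0(C^{n-1},\Omega^{n-1}_{C^{n-1}})\oplus\CC\zeta_{n-1}\xrightarrow{\ \partial\ }H^1(C^n,\Omega^n_{C^n}(D)).
\]

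The crux is to show that $\res_{\Delta_{n-1,n}}$ has image exactly $\CC\zeta_{n-1}$, i.e.\ that $\partial$ is surjective and $\partial(\zeta_{n-1})=0$. For $\partial(\zeta_{n-1})=0$ it suffices to produce some $\alpha\in H^0(C^n,\Omega^n_{C^n}(E_n))$ with $\res_{\Delta_{n-1,n}}\alpha=\zeta_{n-1}$, and I would do this with Proposition~\ref{prop:residuechar}: prescribe the residues of $\alpha$ along the $n$ cyclically consecutive diagonals of $E_n$ by propagating $\zeta_{n-1}$ around the cycle via the residue theorem in each coordinate direction; the residue-sum conditions of Proposition~\ref{prop:residuechar} then amount to the propagation rule itself together with the single requirement that going once around the cycle recovers $\zeta_{n-1}$ — which is precisely the dihedral equivariance of $\zeta_{n-1}$ furnished by the inductive hypothesis. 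The isotypical refinement of Proposition~\ref{prop:residuechar} lets one choose $\alpha$ so that the dihedral stabilizer of $E_n$ acts on it through the character of $(z_1-z_n)(z_2-z_1)\cdots(z_n-z_{n-1})\,dz_1\wedge\cdots\wedge dz_n$; as every one-dimensional character of a dihedral group is $\pm1$-valued, this character is read off by the evident permutation computation on that model expression. Surjectivity of $\partial$ is equivalent to the bound $\dim H^0(C^n,\Omega^n_{C^n}(E_n))\le g^n+1$, which I would extract from the sequence above by showing that $\partial$ restricted to $H^0(C^{n-1},\Omega^{n-1}_{C^{n-1}})$ is an isomorphism onto $H^1(C^n,\Omega^n_{C^n}(D))$; since both spaces have dimension $g^{n-1}$ it suffices to see that this restriction — which by the Gysin description of $\partial$ is cup product with the fundamental class $\Delta_{n-1,n!}(1)=\pi_{n-1}^*\mu+\pi_{(n-1)n}^*\delta+\pi_n^*\mu$ followed by the projection dictated by Lemma~\ref{lemma:string} — is injective. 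I expect this injectivity, equivalently the inequality $\dim H^0(\Omega^n_{C^n}(E_n))\le g^n+1$, to be the main obstacle of the proof; it is where the Künneth bookkeeping of Subsection~\ref{subsect:basicss} is genuinely needed.

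Granting this, $\res_{\Delta_{n-1,n}}$ maps $H^0(C^n,\Omega^n_{C^n}(E_n))$ onto $\CC\zeta_{n-1}$ with kernel $H^0(C^n,\Omega^n_{C^n})$, so $\dim H^0(\Omega^n_{C^n}(E_n))=g^n+1$ and $H^0(C^n,\Omega^n_{C^n})$ has codimension one. Every form in $H^0(\Omega^n_{C^n}(E_n))$ is closed; on a resolution of $E_n$ its pull-back has poles of order $\le2$ along the exceptional divisors over the main diagonal and order $\le1$ elsewhere, so its class lies in $F^{n-1}H^n(C^n\ssm E_n)$, while forms in $H^0(\Omega^n_{C^n})$ map, through the pure part $W_nH^n(C^n\ssm E_n)$, to classes of type $(n,0)$ which (by the $F^n$-graded piece of the logarithmic De Rham complex on the resolution) exhaust $F^nH^n(C^n\ssm E_n)$. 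Now I would pick a representative $\alpha$ of $\zeta_{n-1}$ under $\res_{\Delta_{n-1,n}}$; it is determined modulo $H^0(\Omega^n_{C^n})$, and since on cohomology $\res_{\Delta_{n-1,n}}$ is a strict morphism of mixed Hodge structures carrying $[\alpha]$ to a Tate twist of $[\zeta_{n-1}]$, which is of type $(n-2,n-2)$ and real by induction, strictness allows $\alpha$ to be modified within its coset by a regular form so that $[\alpha]$ is of type $(n-1,n-1)$ and real; since $(n-1,n-1)\ne(n,0)$ this representative is unique. This is the asserted supplementary line; it injects into $H^n(C^n\ssm E_n)$ (a vanishing combination $c[\zeta_n]+[\omega]$ with $[\omega]$ of type $(n,0)$ and weight $n$ forces $c=0$ because a type $(n-1,n-1)$ class has weight $2n-2>n$, whence $[\omega]=0$ and $\omega=0$), and it carries the stated dihedral character. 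Finally, rescaling this unique representative so that its residue along $\Delta_{n,n-1}=\Delta_{n-1,n}$ equals $\zeta_{n-1}$ exactly — possible and unique because $\res_{\Delta_{n-1,n}}$ sends the line onto $\CC\zeta_{n-1}$ and $\res_{\Delta_{n-1,n}}\zeta_n\ne0$ (otherwise $\zeta_n\in H^0(\Omega^n_{C^n})$ would be of type $(n,0)$) — defines $\zeta_n$ and completes the induction.
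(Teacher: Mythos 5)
Your overall strategy (induct via the residue sequence $0\to\Omega^n_{C^n}(D)\to\Omega^n_{C^n}(E_n)\to\iota_*\Omega^{n-1}_{C^{n-1}}(E_{n-1})\to0$, with $D$ a chain so that Lemma \ref{lemma:string} applies) is the same as the paper's up to a cyclic relabelling, but at the crux you diverge, and there the proposal has a genuine gap that you yourself flag: you never prove that $\partial$ restricted to $H^0(C^{n-1},\Omega^{n-1}_{C^{n-1}})$ is injective (equivalently the bound $\dim H^0(\Omega^n_{C^n}(E_n))\le g^n+1$), and without it the codimension-one assertion, the identification of $F^nH^n(C^n\ssm E_n)$ with $H^0(\Omega^n_{C^n})$, and even the injectivity of $H^0(\Omega^n_{C^n}(E_n))\to H^n(C^n\ssm E_n;\CC)$ are all unestablished. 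This is not a routine Künneth verification to be deferred: it is essentially the content of the proposition. The paper does not prove it by a dimension count at all; instead it maps the sheaf long exact sequence into the Gysin sequence of mixed Hodge structures for $C^{n-1}\ssm E_{n-1}\hookrightarrow C^n\ssm D_n$, uses that the outer vertical arrows land isomorphically in $F^n$ of the (pure, by Lemma \ref{lemma:string}) cohomology of $C^n\ssm D_n$ and that, by induction, $H^0(\Omega^{n-1}_{C^{n-1}}(E_{n-1}))$ embeds in $F^{n-1}$ with one-dimensional image in $\Gr_F^{n-1}$, and then exactness/strictness of the Hodge filtration does the whole chase: it gives simultaneously the embedding into $F^{n-1}H^n(C^n\ssm E_n)$, the isomorphism $H^0(\Omega^n(E_n))/H^0(\Omega^n)\cong H^0(\Omega^{n-1}(E_{n-1}))/H^0(\Omega^{n-1})$ (hence codimension one and liftability of $\zeta_{n-1}$), and the Tate type $(n-1,n-1)$ of the quotient. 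For instance $\partial(\zeta_{n-1})=0$ falls out because $\partial(\zeta_{n-1})$ would be a class of type $(n-1,n-1)$ sitting inside $F^nH^{n+1}(C^n\ssm D_n)$, which is pure of weight $n+1$, so it must vanish — no explicit preimage construction is needed.

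Your alternative route for $\partial(\zeta_{n-1})=0$ via Proposition \ref{prop:residuechar} is also under-justified as written: that proposition takes as input a global section of the quotient sheaf $\Omega^n_{C^n}(E_n)/\Omega^n_{C^n}$, whereas you only prescribe residues along the $n$ components. Since $E_n$ fails to be normal crossing exactly along the main diagonal, and your prescribed residues (cyclic relabellings of $\zeta_{n-1}$, with alternating signs) acquire their double poles exactly there, the assertion that this residue data glues to a section of the quotient sheaf — and that the only consistency requirement is the dihedral equivariance of $\zeta_{n-1}$ — needs an argument (locally near the main diagonal one essentially has to exhibit the model $f\,dz_1\cdots dz_n/\prod(z_{i+1}-z_i)$, which is close to what is being proven). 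So the plan could be completed, but as it stands both the decisive dimension bound and the existence of the lift are asserted rather than proved; the paper's Hodge-theoretic diagram argument is precisely the device that makes both unnecessary, and I would encourage you to either carry out the Künneth/cup-product injectivity in detail or adopt that comparison-of-sequences argument.
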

\begin{proof}
We already established this for $n=2$. We therefore assume $n>2$ and the proposition verified for $n-1$.
Taking the residue along $\Delta_{n,1}$ gives for $n\ge 3$ the exact sequence
\[
0\to \Omega^n_{C^n}(D_n)\to\Omega^n_{C^n}(E_n)\to \Delta_{n,1 *}\Omega^{n-1}_{C^{n-1}}(D_{n-1})\to 0
\]
whose associated  long exact sequence begins with 
\[
0\to H^0(\Omega^n_{C^n}(D_n))\to H^0(\Omega^n_{C^n}(E_n))\to 
H^0(\Omega^{n-1}_{C^{n-1}}(E_{n-1}))\to H^1(\Omega^n_{C^n}(D_n))
\to\dots 
\]
This sequence  is compatible with the Gysin sequence for the closed embedding 
$\Delta_{n,1}: C^{n-1}\ssm E_{n-1}\subset C^n\ssm D_n$ in the sense that we have morphism of exact sequences
\begin{small}
\begin{center}
\begin{tikzcd}[column sep=tiny]
{}\arrow[r] &H^n(C^n\ssm D_n;\CC)\arrow[r]& 
H^n(C^n\ssm E_n; \CC)\arrow [r]& H^{n-1}(C^{n-1}\ssm E_{n-1}; \CC)(-1)\arrow[r] & 
H^{n+1}(C^n\ssm D_n; \CC)\arrow[r]&{}\\
0\arrow[r] &H^0(\Omega^n_{C^n}(D_n))\arrow[r]\arrow[u] & H^0(\Omega^n_{C^n}(E_n))\arrow [r]\arrow[u] 
& H^0(\Omega^n_{C^{n-1}}(E_{n-1}))\arrow[r]\arrow[u] & H^1(\Omega^n_{C^n}(D_n))\arrow[r]\arrow[u] &{}
\end{tikzcd}
\end{center}
\end{small}
\noindent
The top sequence is one of mixed Hodge structures. Since $D_n$ is a normal crossing divisor, the first and the fourth up 
arrow are embeddings with image 
$F^nH^n(C^n\ssm D_n)$ resp.\ $F^nH^{n+1}(C^n\ssm D_n)$. According to our induction hypothesis, 
the third up arrow  embeds 
$H^0(\Omega^n_{C^{n-1}}(E_{n-1}))$ in $F^{n-2}(H^{n-1}(C^{n-1}\ssm E_{n-1}))(-1)=
F^{n-1}(H^{n-1}(C^{n-1}\ssm E_{n-1})(-1))$ and its 
image in $\Gr_F^{n-1}(H^{n-1}(C^{n-1}\ssm E_{n-1})(-1))$ is one dimensional. 
Since the (Hodge) functor $F_{n-1}$ is exact, it follows that the second up arrow is an embedding whose  
image lies  in $F^{n-1}H^n(C^n\ssm E_n)$ and  contains $F^nH^n(C^n\ssm E_n)$ as a subspace of codimension one. 
We also see that the residue map defines an isomorphism  
\[
H^0(\Omega^n_{C^n}(E_n))/H^0(\Omega^n_{C^n})\cong 
H^0(\Omega^n_{C^{n-1}}(E_{n-1}))/H^0(\Omega^{n-1}_{C^{n-1}}).
\]
Since the latter is of Tate type $(n-2,n-2)$, the former is of Tate type $(n-1, n-1)$.
The uniqueness of the line follows from the fact that it is spanned by preimage of the real cohomology 
$H^n(C^n\ssm E_n; \RR)$.

It remains to determine the character of the $\Scal_n$-stabilizer of $E_n$  on the corresponding  line of polydifferentials. 
Near the main diagonal, any generator  of that line  has in terms of a local coordinate $z$ on $C$ the form
\[
\frac{f(z_1,\dots, z_n)dz_n \wedge dz_{n-1}\cdots \wedge dz_1}{(z_1-z_n)(z_2-z_1)\cdots (z_n-z_{n-1})}.
\]
with $f$ holomorphic and invariant under cyclic permutation and  $f(z,z,\dots, z)$ constant nonzero. 
 We let $\zeta_n$ be the  generator such that this constant is $1$. Then it is clear that
the residue of $\zeta_n$ along $\Delta_{n-1, n}$ is $\zeta_{n-1}$. The fact that it transforms under the dihedral group as indicated is clear.
\end{proof}

\begin{question}\label{quest:}
It follows from Proposition \ref{prop:uchar} that the Hodge  structure on $H^1(C)$ can be read off from
the cohomology class of $\zeta$ in $H^2(C^2\ssm \Delta_{12}; \RR(1))$. So changing the complex structure
on $C$ will change this cohomology class. The same applies to the cohomology class of $\zeta_n$ ($n\ge 2$), which lies in $H^2(C^n\ssm E_n; \RR(n))$. What is the span of these classes? The answer should preferably be one in topological terms.
\end{question}

We state the preceding proposition in terms of  polydifferentials alone. We denote by $\zeta_{(n\, n-1\, \dots 1)}$ the 
polydifferential defined by $\zeta_n$: this is the polydifferential which along the main diagonal of $C^n$ is locally given by
\[
\frac{f(z_1,\dots, z_n)dz_n dz_{n-1}\cdots dz_1}{(z_1-z_n)(z_2-z_1)\cdots (z_n-z_{n-1})}.
\]
with $f$ holomorphic and invariant under cyclic permutation and  $f(z,z,\dots, z)$ constant $1$. So then we have 
\[
r_{n,n-1}\zeta_{(n\, n-1\, \dots 1)}= \zeta_{(n-1\, \dots 1)}.
\]

It is clear that $\zeta_{(n\, n-1\, \dots 1)}$ is invariant under cyclic permutation. 
A reflection $\tau$ (for instance the one given by 
$(1\,n)(2\, n-1)(3\, n-2)\cdots $)  will change the sign in every factor of the denominator and so 
$\tau^*\zeta_{(n\, n-1\, \dots 1)}=(-1)^n\zeta_{(n\, n-1\, \dots 1)}$.  
Equivalently,  if $\G$ is an polygon with vertex set $J$, then an orientation of $\G$  
determines a quasi-logarithmic polydifferential 
in $H^0(C^J, \Omega^{(J)}\la \qlog\ra)$ which gets multiplied by $(-1)^{|J|}$  under reversal of the orientation. 
In other words, we have a well-defined element 
\[
\zeta_\G\in H^0(C^J, \Omega^{(J)}\la \qlog\ra)\otimes\sign(\edge(\G)).
\]
This generalizes immediately to the case when $\G$ is disjoint union of polygons: we let $\zeta_\G$ 
simply be the (exterior) product of the (sign twisted) polydifferentials we just attached to its connected components. 
We thus find:

\begin{corollary}\label{cor:}
Let $\G$ be a disjoint union  of oriented polygons  with (finite) vertex set $I$.
If  $\Delta_\G$ stands for the sum of the diagonal divisors indexed by the edges of $\G$, then there is associated to 
$\G$ a unique polydifferential
\[
\textstyle \zeta_\G \in H^0(C^I, \Omega_C^{(I)}(\Delta_\G))\otimes\sign(\edge(\G)), 
\]
such that 
\begin{enumerate}
\item[(i)] the  line it spans in $H^{|I|}(C^I\ssm |\Delta_\G|; \CC)\otimes\sign(\edge(\G))$ is of Hodge bidegree 
$(|I|-b_0(\G),|I|-b_0(\G))$ (where $b_0(\G)$  is the number of connected components of $\G$), 
\item [(ii)] if $i,j\in I$ are distinct,  then  $s_{ij}(\zeta_\G)$ is zero unless $\{i, j\}$ supports a connected component of 
$\G$, in which case we get
$\zeta_{\G\ssm\{i,j\}}$,
 \item [(iii)] if $i,j\in I$ are distinct, then  $r_{ij}(\zeta_\G)$ is zero unless $(ij)$ is a simple edge of $\G$, 
 in which case it equals $\zeta_{\G_{ij}}$, where 
 $\G_{ij}$ stands for the graph obtained from $\G$ by contracting the  edge $(ij)$ (so with vertex set  $I_{ij}$).
\end{enumerate}

The map which assigns to the finite set $I$ the subspace of quasi-logarithmic polydifferentials on $C^I$ spanned by such 
$\zeta_\G$ (referred to as its  \emph{space of strictly quasi-logarithmic polydifferentials}) defines an $\Rscr^s$-module in 
the category of finite dimensional complex vector spaces, i.e., a functor $\qLog_C:\Rscr^s\to \Vect_\CC$.
\end{corollary}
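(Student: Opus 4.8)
The plan is to build $\zeta_\G$ out of the single-polygon forms of Proposition~\ref{prop:cyclicclass} (for polygons with $\ge 3$ vertices) and Proposition~\ref{prop:uchar} (for the digon, where $\zeta_\G=\zeta$) by taking exterior products over the connected components, and then to deduce uniqueness from a count of Hodge types and the $\Rscr^s$-module structure by recognizing $\qLog_C$ as a subfunctor of the $\Rscr^s$-module $I\mapsto H^0(C^I,\Omega_C^{(I)}\la\qlog\ra)$ furnished by the final proposition of Subsection~\ref{subsect:Liepointedcurve} (taken with empty puncture set).

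First I would record that, for a single oriented polygon $\G$ on a vertex set $J$ with $n:=|J|\ge 2$, the discussion following Proposition~\ref{prop:cyclicclass} already supplies $\zeta_\G\in H^0(C^J,\Omega_C^{(J)}(\Delta_\G))\otimes\sign(\edge(\G))$ which is cyclically invariant, changes by $(-1)^n$ under orientation reversal, has only simple poles along $\Delta_\G$ when $n\ge 3$ (and equals $\zeta$ when $n=2$), spans a line of Hodge bidegree $(n-1,n-1)$, and satisfies $r_{ij}(\zeta_\G)=\zeta_{\G_{ij}}$ along any edge $(ij)$. From the inductive definition of the module of quasi-logarithmic polydifferentials one checks $\zeta_\G\in H^0(C^J,\Omega_C^{(J)}\la\qlog\ra)$ — in fact in $\Omega_C^{(J)}\la\llog\ra$ for $n\ge 3$ — since its diagonal residues are the quasi-logarithmic $\zeta_{\G_{ij}}$ (induction on $n$) and its biresidues are $0$ for $n\ge 3$ and the constant $1$ for $n=2$. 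For a disjoint union $\G=\bigsqcup_c\G_c$ with vertex sets $J_c$ I set $\zeta_\G:=\boxtimes_c\zeta_{\G_c}$; since $C^I\ssm|\Delta_\G|=\prod_c(C^{J_c}\ssm|\Delta_{\G_c}|)$ and Künneth respects mixed Hodge structures, the class of $\zeta_\G$ is $\boxtimes_c[\zeta_{\G_c}]$, of Hodge bidegree $(\sum_c(|J_c|-1),\ \sum_c(|J_c|-1))=(|I|-b_0(\G),\,|I|-b_0(\G))$, which is (i); and exterior products of quasi-logarithmic polydifferentials are again such. Properties (ii) and (iii) are then one-factor computations: a pair $i,j$ in different components sees $\zeta_\G$ regular along $\Delta_{ij}$, so $r_{ij}$ and $s_{ij}$ kill it; a simple edge $(ij)$ of a component $\G_c$ with $|J_c|\ge 3$ gives $r_{ij}(\zeta_\G)=\zeta_{(\G_c)_{ij}}\boxtimes(\boxtimes_{c'\ne c}\zeta_{\G_{c'}})=\zeta_{\G_{ij}}$ and $\br_{ij}(\zeta_\G)=0$; a digon component $\{i,j\}$ has $\zeta_{\G_c}=\zeta$, whose ordinary residue vanishes (its logarithmic component is $0$ by construction, its biresidue the constant $1$), so $r_{ij}(\zeta_\G)=0$ while $s_{ij}(\zeta_\G)=\br_{12}(\zeta)\cdot\boxtimes_{c'\ne c}\zeta_{\G_{c'}}=\zeta_{\G\ssm\{i,j\}}$; and a non-adjacent pair inside one component kills both.

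For uniqueness I would use that global sections split as $H^0(C^I,\Omega_C^{(I)}(\Delta_\G))=\bigotimes_c H^0(C^{J_c},\Omega_C^{(J_c)}(\Delta_{\G_c}))$, and that by Proposition~\ref{prop:cyclicclass} each factor embeds in $H^{|J_c|}(C^{J_c}\ssm|\Delta_{\G_c}|;\CC)$ with the codimension-one piece $H^0(\Omega^{|J_c|}_{C^{J_c}})$ of pure Hodge type $(|J_c|,0)$ and the complementary line $\CC[\zeta_{\G_c}]$ of pure type $(|J_c|-1,|J_c|-1)$. A decomposable tensor using the $\zeta$-line in exactly $a$ factors has class whose second Hodge index is $\sum_{\text{those }c}(|J_c|-1)$, and since each $|J_c|-1\ge 1$ this equals $\sum_c(|J_c|-1)$ only when $a=b_0(\G)$; hence the bidegree-$(|I|-b_0,|I|-b_0)$ part of the image of $H^0(C^I,\Omega_C^{(I)}(\Delta_\G))$ in cohomology is the single line $\CC[\zeta_\G]$, and, the map to cohomology being injective, $\zeta_\G$ is the unique polydifferential with property (i) up to a scalar — the scalar being pinned down by (iii) on a simple edge, or by (ii) on a digon component, or equal to $1$ when $\G=\emptyset$ (an exhaustive list since every polygon carries an edge). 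Finally, defining $\qLog_C(I)$ as the span of all $\zeta_\G$ inside $H^0(C^I,\Omega_C^{(I)}\la\qlog\ra)$, the computations above show it is stable under the contractions $r_{ij}$, $s_{ij}$ and under $\Scal(I)$, so $\qLog_C$ is a subfunctor of the $\Rscr^s$-module $I\mapsto H^0(C^I,\Omega_C^{(I)}\la\qlog\ra)$ and is therefore itself an $\Rscr^s$-module. I expect the only genuinely delicate point to be the sign bookkeeping for the twist by $\sign(\edge\G)$: one must verify that it is exactly the twist making the contraction $r_{ij}$ of a simple edge produce $\zeta_{\G_{ij}}$ on the nose rather than up to sign — this is forced by the dihedral-character statement of Proposition~\ref{prop:cyclicclass}, but needs care once contractions are iterated and components are permuted.
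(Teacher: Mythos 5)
Your proposal is correct and follows essentially the same route as the paper, which presents this corollary as an immediate consequence of Proposition \ref{prop:cyclicclass} by taking exterior products of the single-polygon forms $\zeta_{(n\,n-1\,\dots 1)}$ over the connected components. Your K\"unneth-based count of Hodge types for the uniqueness and your explicit one-factor verifications of (ii) and (iii) (including the vanishing of the ordinary residue of $\zeta$ on a digon and of the biresidue on a simple edge) are exactly the checks the paper leaves implicit, and your flag about the $\sign(\edge\G)$ bookkeeping is the right point to be careful about.
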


We do not claim that the polydifferentials $\zeta_\G$ are linearly independent (indeed, they aren't in general).

We now have also a global counterpart of Corollary \ref{cor:etahat}. 
Let $\glie$ be a $k$-Lie algebra endowed with a symmetric bilinear form $s$ invariant under the adjoint action.
Recall that $s_N$ is a linear form on $\glie^{\otimes N}$ defined by  
$s_N(X_N\otimes\cdots \otimes X_1):=s([\cdots [X_N, X_{N-1}]\cdots , X_2], X_1)$. We regard
$\zeta_{C,N}:=-\sum_{\sigma\in \Scal_N} \sigma_*(s_N\zeta_{(N,N-1,\dots, 1)})$ as an element of 
$\Hom(\glie^{\otimes N}, H^0(C^N, \Omega^{(N)}\la\qlog\ra))$. This makes
also sense if we replace by a finite set $I$ and then the same argument for Corollary \ref{cor:etahat} establishes:

\begin{corollary}\label{cor:zetares}
A homomorphism of $\Rscr^s$-modules $\Lcal^{s}_{\gfrak}\to \qLog_C$ is defined by assigning to a finite set 
$I$ the homomorphism
\[
\textstyle \hat\zeta_I:=\sum_{\Pcal | I} \bigotimes_{P\in\Pcal} \zeta_{C,P}\in 
 \Hom(\glie^{\otimes I},H^0(C^N, \Omega^{(N)}\la\qlog\ra)),
\]
where the sum is over all partitions of $I$.
\end{corollary}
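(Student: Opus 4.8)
The plan is to imitate the proof of Corollary \ref{cor:etahat} essentially line by line, with the local residue computation of Lemma \ref{lemma:etares} replaced by the global residue properties of the strictly quasi-logarithmic polydifferentials $\zeta_\G$ recorded in Corollary \ref{cor:}. First I would isolate the single-cycle statement, the exact analogue of Lemma \ref{lemma:etares}: for $N\ge 1$ and $\zeta_{C,N}=-\sum_{\sigma\in\Scal_N}\sigma_*(s_N\,\zeta_{(N,N-1,\dots,1)})$ one has $r_{N,N-1}\zeta_{C,N}=0$ for $N\le 2$, $r_{N,N-1}\zeta_{C,N}(X_N\otimes\cdots\otimes X_1)=\zeta_{C,N-1}([X_N,X_{N-1}]\otimes X_{N-2}\otimes\cdots\otimes X_1)$ for $N\ge 3$, and $\br_{N-1,N}\zeta_{C,N}$ is zero unless $N=2$, in which case it equals $s$. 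To check this, note that by Corollary \ref{cor:}(iii) the contraction $r_{N,N-1}$ annihilates $\sigma_*\zeta_{(N,N-1,\dots,1)}$ unless $\{\sigma^{-1}N,\sigma^{-1}(N-1)\}$ is a cyclically adjacent pair, and on the surviving terms it produces $\zeta_{(N-1,\dots,1)}$ with the two adjacent slots merged; summing over $\Scal_N$ and collecting the contributions exactly as in Lemma \ref{lemma:etares}, the coefficient of each $\zeta_{(N-1,\dots,1)}$-term becomes a difference $s_N(\cdots X_N\,X_{N-1}\cdots)-s_N(\cdots X_{N-1}\,X_N\cdots)$, which equals $s_{N-1}(\cdots[X_N,X_{N-1}]\cdots)$ by the definition of $s_N$ as an iterated bracket followed by $s$ together with bilinearity of the bracket. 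The biresidue clause is immediate from Corollary \ref{cor:}(ii): $\br_{N-1,N}=s_{N-1,N}$ kills a single $N$-cycle unless $N=2$, where $s_{12}\zeta_{(2,1)}=\zeta_\emptyset=1$ and $s_2(X_2\otimes X_1)=s(X_2,X_1)$.

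Next I would record that $\zeta_{C,I}$ makes sense for an arbitrary finite set $I$: one picks a bijection $I\cong\{1,\dots,N\}$, and the result is independent of it because $s_N$ and $\zeta_{(N,\dots,1)}$ transform by the same sign character of $\Scal_N$, so $\zeta_{C,N}$ is $\Scal_N$-invariant; moreover for a partition $\Pcal$ of $I$ the exterior product $\bigotimes_{P\in\Pcal}\zeta_{C,P}$ lands in the strictly quasi-logarithmic polydifferentials, so $\hat\zeta_I$ is well defined with values in $\qLog_C(I)$.

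Finally I would compute $r_{ij}\hat\zeta_I$ and $s_{ij}\hat\zeta_I$ for distinct $i,j\in I$. In the sum $\hat\zeta_I=\sum_{\Pcal}\bigotimes_{P\in\Pcal}\zeta_{C,P}$, a term survives $r_{ij}$ only when $i$ and $j$ lie in a common block $P_0\in\Pcal$ and $(ij)$ is a simple edge of the associated polygon (Corollary \ref{cor:}(iii)); by the single-cycle statement applied to $P_0$, the image is $\bigotimes_{P\ne P_0}\zeta_{C,P}\otimes\zeta_{C,(P_0)_{ij}}$ precomposed with the bracket contraction $r_{ij}$ of $\Lcal^s_\gfrak$, and since partitions of $I$ with $i,j$ in a common block correspond bijectively to partitions of $I_{ij}$ this yields $r_{ij}\hat\zeta_I=\hat\zeta_{I_{ij}}\circ r_{ij}$. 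Similarly a term survives $s_{ij}$ only when $\{i,j\}$ is itself a block of $\Pcal$ (Corollary \ref{cor:}(ii)), in which case $s_{ij}$ replaces the factor $\zeta_{C,\{i,j\}}$ by the scalar $s$ on the two corresponding tensor slots; summing over partitions of $I$ having $\{i,j\}$ as a block, which correspond to partitions of $I\ssm\{i,j\}$, gives $s_{ij}\hat\zeta_I=\hat\zeta_{I\ssm\{i,j\}}\circ s_{ij}$. The remaining identities — symmetry $s_{ij}=s_{ji}$, commutation of the $s_{kl}$ among themselves and with the $r_{ij}$, and the cyclic relation $s_{[ij],k}r_{ij}+s_{[jk],i}r_{jk}+s_{[ki],j}r_{ki}=0$ — are verified as in the proof of Corollary \ref{cor:etahat}: $s_{[ij],k}r_{ij}$ applied to a product $\bigotimes_P\zeta_{C,P}$ is nonzero only when $\{i,j,k\}$ spans a triangle block of $\Pcal$, and on such a factor the transformation behaviour under permutations of $(i,j,k)$ is forced by the antisymmetry of the bracket and the invariance of $s$, exactly as in Example \ref{example:triangle}. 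This shows that $\hat\zeta\colon\Lcal^s_\gfrak\to\qLog_C$ is an $\Rscr^s$-homomorphism.

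The main obstacle, such as it is, is the single-cycle residue identity in the first paragraph: one must match the combinatorics of cyclically adjacent transpositions with the telescoping of iterated brackets, precisely as in Lemma \ref{lemma:etares}. But once the construction of $\zeta_\G$ and its residue and biresidue formulae (Proposition \ref{prop:cyclicclass}, Corollary \ref{cor:}) are granted, this is purely formal, and everything else is the bookkeeping with partitions that already appears in Corollary \ref{cor:etahat}.
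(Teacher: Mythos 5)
Your proposal is correct and follows essentially the paper's own route: the paper derives Corollary \ref{cor:zetares} by observing that the argument of Corollary \ref{cor:etahat} (the residue computation of Lemma \ref{lemma:etares} plus the bookkeeping over partitions) goes through verbatim once the residue and biresidue properties of the canonical polydifferentials $\zeta_\G$ are granted, and your single-cycle identity for $\zeta_{C,N}$ together with the partition computation is exactly that argument. The only remark is that your final verifications (symmetry of $s_{ij}$, commutation relations, the cyclic identity) are superfluous, since $\qLog_C$ is already an $\Rscr^s$-module and naturality with respect to bijections, $r_{ij}$ and $s_{ij}$ suffices, but including them does no harm.
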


This also comes  with a global counterpart of Corollary  \ref{cor:etadef}:

\begin{corollary}\label{cor:zetadef}
Let $V$ be a $\glie^P$-representation and let $\xi\in \Hom_{\Rscr_P}(\Lcal_{\gfrak,P,V},\Lcal og_{C,P})$ 
(so this assigns to every finite set $I$ a linear map $\xi_I: \glie^{\otimes I}\otimes V\to 
H^0(C^I, \Omega_C(P)^{(I)}\la\llog\ra)$ subject to the usual conditions). Then a  $\Rscr_P^s$-homomorphism  
$\hat\xi: \Lcal^{s}_{\gfrak,P,V}\to \qLog_{C,P}$ is defined by assigning 
to every finite set $I$,  the  sum $\sum_{J\subset I} \xi_{I\ssm J}\otimes \hat\zeta_J:  
\glie^{\otimes I}\otimes V\to H^0(C^I, \Omega_C(P)^{(I)}\la\qlog\ra)$. The resulting map 
\[
\xi\in \hom_{\Rscr_P}(\Lcal_{\gfrak,P,V},\Lcal og_{C,P})\mapsto 
\hat\xi \in \hom_{\Rscr_P^s}(\Lcal^{s}_{\gfrak,P},\qLog_{C,P})
\]
is an isomorphism. 
\end{corollary}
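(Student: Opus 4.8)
The plan is to mimic the proof of Corollary~\ref{cor:etadef}, replacing the complete discrete valuation ring by the projective curve $C$, the modules $\Lcal og_K,\qLog_K$ by $\Lcal og_{C,P},\qLog_{C,P}$, and the local polydifferentials $\hat\eta_J$ by the canonical quasi-logarithmic polydifferentials $\hat\zeta_J$ of Corollary~\ref{cor:zetares}. First I would check that the prescription
\[
\textstyle\hat\xi_I:=\sum_{J\subset I}\xi_{I\ssm J}\otimes\hat\zeta_J:\glie^{\otimes I}\otimes V\to H^0(C^I,\Omega_C(P)^{(I)}\la\qlog\ra)
\]
does define an $\Rscr_P^s$-homomorphism $\Lcal^{s}_{\gfrak,P,V}\to\qLog_{C,P}$. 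This is a formal computation of the images of $\hat\xi_I$ under the contractions $r^p_i$, $r_{ij}$ and $s_{ij}$, exactly parallel to the local case: each $\zeta_n$ has no pole along any hyperplane $\pi_i^{-1}(p)$, so $r^p_i$ annihilates $\hat\zeta$ and hence $r^p_i(\hat\xi_I)=\widehat{r^p_i(\xi)}$; the residue rule $r_{ij}(\zeta_\G)=\zeta_{\G_{ij}}$ (and its vanishing when $(ij)$ is not a simple edge) gives $r_{ij}(\hat\xi_I)=\widehat{\xi_{I_{ij}}}$, since a summand $\xi_{I\ssm J}\otimes\hat\zeta_J$ survives $r_{ij}$ only when $\{i,j\}$ lies in $I\ssm J$ or in $J$; and the biresidue rule $s_{ij}(\zeta_\G)=\zeta_{\G\ssm\{i,j\}}$ shows $s_{ij}(\xi_{I\ssm J}\otimes\hat\zeta_J)=\xi_{I\ssm J}\otimes s_{ij}(\hat\zeta_J)$, which feeds into the invariant-form identities just as before.

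The substance is the construction of the inverse, by induction on $N=|I|$, following Corollary~\ref{cor:etadef}. Given $\tilde\xi\in\hom_{\Rscr_P^s}(\Lcal^{s}_{\gfrak,P,V},\qLog_{C,P})$, suppose we have produced maps $\xi_0,\dots,\xi_N$ that restrict to an $\Rscr_P$-homomorphism on finite sets of size $\le N$ and satisfy $\hat\xi_i=\tilde\xi_i$ there. Naturality forces the residues $r_{ij}(\xi_{N+1})$ and $r^p_i(\xi_{N+1})$ of the map $\xi_{N+1}$ still to be built, and for logarithmic polydifferentials these residues prescribe the entire polar part of $\xi_{N+1}(x)$ along every $\Delta_{ij}$ and every $\pi_i^{-1}(p)$. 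The prescribed residue data are automatically compatible along codimension-one incidences of strata (this is what the \textbf{Lie}, \textbf{representation}, \textbf{commutativity} and \textbf{invariant form} relations encode), so the one nontrivial point before applying Corollary~\ref{cor:recordpolydiffs} is its residue-sum condition. For $i\in I$ with $|I|=N+1$, the sum of the prescribed residues involving the index $i$ equals $\xi_{I\ssm\{i\}}\bigl(\sum_{j\ne i}r_{ij}x+\sum_{p\in P}r^p_i x\bigr)$; since $\Lcal og_{C,P}$ is a co-invariant $\Rscr_P$-module, the map $\xi_{I\ssm\{i\}}$ --- being part of a partial $\Rscr_P$-homomorphism into it --- kills $\sum_{j\ne i}r_{ij}+\sum_{p\in P}r^p_i$, so this sum vanishes. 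Corollary~\ref{cor:recordpolydiffs} then supplies a logarithmic polydifferential $\xi'_{N+1}$ with exactly this polar part, and passing to $\Scal_{N+1}$-isotypical components we may take $\xi'_{N+1}$ equivariant.

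To complete the step, correct $\xi'_{N+1}$ by $\xi''_{N+1}:=\tilde\xi_{N+1}-\hat\xi'_{N+1}$. Because $\tilde\xi$ takes values in $\qLog_{C,P}$, its residues and biresidues along the diagonals agree with those of $\hat\xi'_{N+1}$ (in particular the order-two parts cancel), so $\xi''_{N+1}$ has no diagonal poles and lies in $H^0(C^I,\Omega_C(P)^{(I)}\la\llog\ra)$; it may therefore be added to $\xi'_{N+1}$ without disturbing the prescribed polar part. Then $\xi_{N+1}:=\xi'_{N+1}+\xi''_{N+1}$ is logarithmic, has the required residues, and $\hat\xi_{N+1}=\tilde\xi_{N+1}$ because the remaining contributions to $\hat\xi_{N+1}$ come only from $\xi_0,\dots,\xi_{N-1}$ and are unchanged. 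This shows $\xi\mapsto\hat\xi$ is onto; it is also injective because $\hat\xi_{N+1}$ determines $\xi_{N+1}$ given the lower-order data, so it is an isomorphism, and $k$-linearity is clear. I expect the crux of the write-up --- the only place where the local argument of Corollary~\ref{cor:etadef} does not transfer verbatim --- to be precisely the realizability of the prescribed residues on the projective curve, i.e.\ the appeal to Corollary~\ref{cor:recordpolydiffs} together with the verification that its residue-sum obstruction vanishes.
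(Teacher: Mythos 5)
Your overall strategy is the one the paper intends (it offers no separate argument for this corollary, regarding it as the global analogue of Corollary \ref{cor:etadef} via Corollary \ref{cor:zetares}), and you correctly isolate the one genuinely new point: on a projective curve a logarithmic polydifferential with prescribed polar part need not exist, so the inductive step must go through Corollary \ref{cor:recordpolydiffs}. But your verification of its residue-sum hypothesis has a gap. You assert that $\xi_{I\ssm\{i\}}$ kills $\sum_{j\neq i}r_{ij}+\sum_{p\in P}r^p_i$ ``because it is part of a partial $\Rscr_P$-homomorphism into the co-invariant module $\Lcal og_{C,P}$''. Co-invariance is an extra property of a module, not a relation in the category $\Rscr_P$ (Definition \ref{def:liestructure}), so the only way it could force this vanishing is through the naturality square attached to the morphism $I\to I\ssm\{i\}$ with $|I|=N+1$ --- and that square involves precisely the component $\xi_I$ you are still trying to construct. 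Concretely, the condition you need is that $\xi_N$ kills the diagonal $\glie$-action on $\glie^{\otimes N}\otimes V$, and this is not automatic for a partial homomorphism: already for $N=0$ it says that $\xi_0\in V^*$ is $\glie$-invariant, whereas any linear form whatsoever is a ``partial homomorphism up to level $0$''.

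The needed invariance is true, but it must be extracted from $\tilde\xi$: since $\tilde\xi$ is a full homomorphism into the co-invariant module $\qLog_{C,P}$, each $\tilde\xi_n$ kills the diagonal action, and the correction terms $\xi_{I\ssm J}\otimes\hat\zeta_J$ kill it as well (by induction for the $\xi$-factor, and because the ad-invariance of $s$ makes each $\hat\zeta_J$ invariant under the diagonal action); hence $\xi_N=\tilde\xi_N-\sum_{J\neq\emptyset}\xi_{I\ssm J}\otimes\hat\zeta_J$ is $\glie$-invariant, and you should carry this as part of the induction hypothesis. Alternatively, you can bypass the realizability question altogether: define $\xi_I:=\tilde\xi_I-\sum_{\emptyset\neq J\subset I}\xi_{I\ssm J}\otimes\hat\zeta_J$ by this formula (a difference of existing global quasi-logarithmic sections, so nothing needs to be constructed), and use Corollary \ref{cor:zetares} together with the $s_{ij}$- and $r_{ij}$-compatibilities of $\tilde\xi$ to check that every biresidue of $\xi_I$ vanishes and that its residues are the lower $\xi$'s, whence $\xi_I$ is logarithmic and the family is an $\Rscr_P$-homomorphism; the $\glie$-invariance then comes out as a consequence of the residue theorem rather than being needed as an input. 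With either repair the remainder of your argument (the correction step, equivariance, and injectivity) goes through as written.
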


\section{A loop algebra context}\label{sect:loopalg}
\subsection{Extended loop algebras}\label{subsect:affineLie}
 Here and in the remainder of the paper we fix a simple Lie algebra $\glie$ over $k$. 
 As is well-known, such a Lie algebra admits a nondegenerate symmetric bilinear form that is invariant under the 
 adjoint representation (for example, the Killing form). This is in fact a generator of the space of $\glie$-invariant 
 bilinear forms on $\glie$ and then similar statement holds of course for
the dual of $\glie$. We denote that 1-dimensional space of invariants by $\cfrak:=(\glie\otimes\glie)^{\glie}$. 
So if $\hlie\subset\glie$ is a Cartan subalgebra, then there is unique generator 
$c\in (\glie\otimes\glie)^{\glie}$ which takes  the value $2$ on the 
short roots (regarded as elements of $\hlie^*\subset \glie^*$). Since  $c$ induces an isomorphism 
$\glie^*\cong\glie$  that takes  $2c(\alpha, \alpha)^{-1}\alpha$ to the coroot $H_\alpha$, it follows that 
the `inverse form' $\check{c}$ has the property that $\check{c}(H_\alpha, H_\alpha)=2$ when $H_\alpha$ is a long root.
This generator  is independent of the choice of $\hlie$ because  the adjoint group acts transitively on the 
Cartan subalgebras of $\glie$. 

\begin{example}\label{example:}\label{ex:slin2}
For  $\gfrak=\slin (2)$ with its standard basis
$X_+=\left( \begin{smallmatrix}0 & 1\\ 0 & 0\end{smallmatrix}\right)$,
$X_-=\left(\begin{smallmatrix}0 &0\\1 &0\end{smallmatrix}\right)$, and
$H=\left(\begin{smallmatrix}1 &0\\0 &-1\end{smallmatrix}\right)$, the form $\check{c}$ is the Killing form: 
$\check{c}(X_\pm,X_\mp)=\tr_k(X_\pm X_\mp)=1$, $\check{c}(H,H)=\tr (H^2)=2$
and $\check{c}$ takes on all other basis pairs the value zero so that 
$c=X_+\otimes X_-+X_-\otimes X_++\tfrac{1}{2}H\otimes H$. 
\end{example}
 
Let $\Ocal$ be a DVR with residue field $k$ and denote its maximal ideal by $\mfrak$ and its field of fractions by 
$K$ as before. But we do not assume that $\Ocal$ is complete for the $\mfrak$-adic topology.
We put $\glie K:=\glie\otimes_k K$ and endow the direct sum of $\cfrak$ and $\glie K$ as $k$-vector spaces 
with a Lie bracket that makes it a central extension 
\[
0\to \cfrak\to \widehat{\glie K}\to \gfrak K\to 0
\]
by putting
\[
[Xf +\cfrak, Yg+\cfrak]:= [X,Y]fg + \check{c}(X,Y)\res (gd\! f) c.
\]
The $\mfrak$-adic topology makes $\widehat{\glie K}$ a topological Lie algebra and we shall regard it as such.
It comes with a Cartan subalgebra $\widehat{\hlie}:=\cfrak\times \hlie$. 
Let us fix a  system of simple roots $\alpha_1,\dots, \alpha_r$  in $\hlie^*$ and $(H_{\alpha_1}, \dots, H_{\alpha_r})$ the associated coroots in $\hlie$. 
 Let $X_{\pm \alpha_i}\in \glie$  be a nonzero $\hlie$-eigenvector of weight $\pm \alpha_i$ such that 
 $[X_{\alpha_i}, X_{-\alpha_i}]=H_{\alpha_i}$,  so that  $\glie$ has a standard (Serre)  presentation as a Lie algebra with 
 $\{X_{\alpha_i}, X_{-\alpha_i}\}_{i=1}^r$ as generators. If we choose a uniformizer $t$ of  $\Ocal$, 
 then this extends to a presentation of $\widehat{\glie K}$ with 
$\widehat\hlie$ playing the role of a Cartan subalgebra: if  
$\theta\in \hlie^*$  denotes the maximal root with respect to $\alpha_1,\dots, \alpha_r$, 
then we add to these generators the pair $\{X_{\alpha_0}:=tX_{-\theta},X_{-\alpha_0}:=t^{-1}X_\theta\}$. 
Since $[X_\theta, X_{-\theta}]$ equals the coroot $H_\theta$  associated to $\theta$ and 
$\theta$ is a long root, it follows that  $\check{c}(X_\theta,X_{-\theta})=\frac{1}{2}\check{c}(H_\theta, H_\theta)=1$ 
and so  $[X_{\alpha_0}, X_{-\alpha_0}]=-H_\theta + c$; we therefore  denote this last  element  by 
$H_{\alpha_0}$, although we have not defined a root $\alpha_0\in \widehat \hlie^*$ 
(this  makes better sense after enlarging  $\widehat{\hlie}$; see Remark \ref{rem:affinelie}).  If we write 
$H_{\theta}=\check n_1H_{\alpha_1}+\cdots +\check n_rH_{\alpha_r}$, then each coefficient 
$\check n_i$ will be a positive integer and if we put $\check n_0:=1$, then of course 
$c=\check n_0H_{\alpha_0}+\check n_1H_{\alpha_1}+\cdots +\check n_rH_{\alpha_r}$. 

 A linear form $\hat\lambda$ on $\hat\hlie$ is called an \emph{integral dominant 
weight} if  $\lambda_i:=\hat\lambda(H_{\alpha_i})$ is a nonnegative integer
for all $i=0, \dots , r$.
Let us  agree that if a $\glie$-module $V$ has a  highest weight $\mu$ (which is of course always the case when 
$V$ is finite dimensional) then to write $V^+$  for the $\glie$-submodule of $V$ generated by that weight space. This is then subspace that is killed by the root spaces 
 associated to the positive roots (the nilradical of the Borel subalgebra defined by our root data). 
 For example, is we regard $\glie$ as a $\glie$-module, then its highest weight is $\theta$ and 
 $X_\theta$ spans $\glie^+=\glie_\theta$.

The restriction of $\hat\lambda$ to $\hlie$, which we shall denote by $\lambda$ and which is given by 
$(\lambda_1,\dots, \lambda_r)$,  defines
an irreducible highest weight module $V_\lambda$ of $\gfrak$  with highest weight $\lambda$ whose highest weight space 
$V_\lambda^+=V_\lambda(\lambda)$ is a one-dimensional subspace of $V_\lambda$; 
we often denote a generator of $V_\lambda^+$ by $1_\lambda$.
In case we extend the action of $\hlie$ on $V_\lambda^+$ to one of $\widehat \hlie$ by letting it act with character 
$\hat\lambda$, then we write 
$V_{\hat\lambda}$ for $V_\lambda$. This means that $c$ then acts on this line  as multiplication with 
$\hat\lambda (c)=\lambda_0+\sum_{i=1}^r \check n_i\lambda_i$. We make $V_{\hat\lambda}$ a representation of 
$\cfrak\times\glie$ by letting $c$ act on all of $V_\lambda$ as scalar multiplication with this scalar $\hat\lambda (c)$.
We now note that the  central extension is trivial over $\glie\Ocal$, so that the reduction mod $\mfrak$ defines a 
Lie homomorphism $\widehat{\glie\Ocal}\to \cfrak\times\gfrak$. This turns $V_{\hat\lambda}$ into a 
representation of $\widehat{\glie\Ocal}$ that we continue to denote denote by $V_{\hat\lambda}$.  
We then form the induced representation
\[
\tilde \Vcal_{\hat\lambda}(K):=\U\!\widehat{\glie K}\otimes_{\U\!\widehat{\glie\Ocal}}V_{\hat\lambda}.
\]
We have fixed $\glie$, which is why we allowed ourself to suppress its presence in our notation. 
But the dependence on the discretely valued field $K$ will be important here and this explains why it appears instead.
The nonnegative integer $\hat\lambda(c)$ is called the \emph{level} of this representation; 
we may also denote it $\ell(\hat\lambda)$.  It is clear that to specify $\hat\lambda$ is equivalent to specifying a level $\ell$ 
and the dominant weight $\lambda:=\hat\lambda|\hlie$ of $\glie$ (given by $(\lambda_1, \dots, \lambda_r)$), 
the only requirement being that $\ell\ge \sum_{i=1}^r \check n_i\lambda_i$ (then $\lambda_0= 
\ell-\sum_i \check n_i\lambda_i$).

Note that $1_\lambda\in V_\lambda^+$  generates $V_\lambda$ as a $\glie$-module and 
$\tilde \Vcal_{\hat\lambda}(K)$ as a $\widehat{\glie K}$-representation.
In fact, if $\glie_-$ is the (standard) Lie  nilpotent subalgebra of $\glie$ generated by 
$X_{-\alpha_1}, \dots , X_{-\alpha_r}$, then $1_\lambda$  generates $V_\lambda$ as 
$\U\!\glie_-$-module and likewise, if $\widehat{\glie K}_-$  is the Lie subalgebra of 
$\widehat{\glie K}$ generated by  $X_{-\alpha_0}=t^{-1}X_{\theta}, X_{-\alpha_1}, \dots , X_{-\alpha_r}$, then
$\tilde\Vcal_{\hat\lambda} (K)$ is generated by $1_\lambda$ as a  $\U\!\widehat{\glie K}_-$-module. 
One may check that $\widehat{\glie K}_-=\glie [t^{-1}]t^{-1}+\glie_-$.  

When $\widehat{\glie K}$ or $\tilde\Vcal_{\hat\lambda}$ is considered as a $\glie$-module, then each finite 
subset generates a finite dimensional 
$\glie$-submodule. This implies that the Lie subalgebra $\nfrak_-\subset\glie$ (generated by the negative coroots 
$X_{-\alpha_1},\dots , X_{-\alpha_r}$) acts  in a locally nilpotent manner in the sense that for every  
$X\in  \nfrak_-$, the powers of $X$ eventually kill every element of that $\glie$-module.

\begin{remark}\label{rem:affinelie}
In order to make this an \emph{affine} Lie algebra $\widetilde{\glie K}$ of Kac-Moody type, one must enlarge 
$\widehat{\glie K}$ by taking the semi-direct product
with the $k$-span of a derivation $D_0$ of $\Ocal$. This derivation however requires the choice of a uniformizer  
(at least up to scalar) $t$ of $\Ocal$:
one then takes $D_0=t\frac{d}{dt}$. This makes $\widetilde\hlie:=\hlie\times  \cfrak\times kD_o$ a 
Cartan subalgebra of $\widetilde{\glie K}$ and then a root $\alpha_0$ is defined. 
Since this depends on the choice of $t$, this  is not canonical. But the Segal-Sugawara 
construction that we will recall below provides a natural coordinate invariant substitute 
(it will then be semi-direct product with a Virasoro algebra).
\end{remark}

\subsubsection*{The irreducible representation $\Vcal_{\hat\lambda} (K)$}
The $\widehat{\glie K}$-representation $\tilde\Vcal_{\hat\lambda} (K)$ has an irreducible quotient with highest 
$\hat\hlie$-weight $\hat\lambda$. According to \cite{kac} it is obtained by dividing out 
by the subrepresentation generated by $X_{-\alpha_0}^{1+ \lambda_0}1_\lambda=
(X_{\theta}t^{-1})^{1+ \lambda_0}1_{\lambda}$:
\[
\Vcal_{\hat\lambda} (K)= \tilde \Vcal_{\hat\lambda}(K)/
U\!\widehat{\glie K} (X_{\theta}t^{-1})^{1+ \lambda_0}1_\lambda. 
\]
(Since $\glie_{\theta}\Ocal$ already kills $1_\lambda$, this can be somewhat more invariantly 
be defined by dividing out by the left ideal generated by 
$(\glie_{\theta}\mfrak^{-1})^{1+\lambda_0}\otimes V_\lambda^+\in \tilde \Vcal_{\hat\lambda}(K)$.) 
This forces all of $\widehat{\glie K}_-$  to act locally nilpotently on $\Vcal_{\hat\lambda}(K)$. 
This has the following useful consequence.

\begin{lemma}\label{lemma:finiteAgen}
Let $A\subset K$ be a $k$-subalgebra such that $K/(A+\Ocal)$ is of finite $k$-dimension. 
Then $\Vcal_{\hat\lambda} (K)$ is a finitely generated 
$\U\!\widehat{\glie A}$-module.
\end{lemma}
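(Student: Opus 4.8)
The plan is to exploit the fact, recalled just before the statement, that $\Vcal_{\hat\lambda}(K)$ is generated over $\U\!\widehat{\glie K}_-$ by the single vector $1_\lambda$, combined with the local nilpotency of the action of $\widehat{\glie K}_-$, and to compare the filtration of $\widehat{\glie K}$ coming from $\U\!\widehat{\glie A}$ with one coming from pole order. First I would observe that, since $K/(A+\Ocal)$ is finite-dimensional over $k$, we may choose finitely many elements $f_1,\dots,f_m\in K$ whose images together with those of $\Ocal$ span $K$ over $k$; then $\glie K = \glie A + \glie\Ocal + \sum_j \glie f_j$ as $k$-vector spaces, and, using the central term, $\widehat{\glie K} = \widehat{\glie A} + \widehat{\glie\Ocal} + (\text{finite-dim.})$ as well (the cocycle $\check c(X,Y)\res(g\,df)c$ is $k$-bilinear, so adds nothing to the span beyond the central line, which lies in $\widehat{\glie A}$ as soon as $A\ni 1$). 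Thus $\widehat{\glie K}$ is, as a $k$-space, the sum of $\widehat{\glie A}$, the pro-nilpotent-ish piece $\widehat{\glie\Ocal}$, and a finite-dimensional complement.

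Next I would reduce the problem to controlling the $\U\!\widehat{\glie\Ocal}$-part. Because $V_{\hat\lambda}$ is by construction $\U\!\widehat{\glie\Ocal}\cdot 1_\lambda$ and is finite-dimensional (it is the irreducible $\glie$-module $V_\lambda$ with $c$ acting by a scalar), the subspace $\U\!\widehat{\glie A}\cdot V_{\hat\lambda}$ is a $\U\!\widehat{\glie A}$-submodule that is finitely generated over $\U\!\widehat{\glie A}$ (finitely many generators: a $k$-basis of $V_\lambda$). So it suffices to show that $\U\!\widehat{\glie A}\cdot V_{\hat\lambda}$ is all of $\Vcal_{\hat\lambda}(K)$. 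Since $\Vcal_{\hat\lambda}(K)=\U\!\widehat{\glie K}\cdot 1_\lambda = \U(\widehat{\glie K}_-)\cdot 1_\lambda$, and using a PBW basis for $\U(\widehat{\glie K})$ subordinate to the $k$-space decomposition $\widehat{\glie K} = \widehat{\glie A} \oplus (\text{complement inside } \widehat{\glie\Ocal}) \oplus (\text{finite-dim.})$, it is enough to check that each PBW monomial acting on $1_\lambda$ lands in $\U\!\widehat{\glie A}\cdot V_{\hat\lambda}$; and here I would push all the $\widehat{\glie\Ocal}$-factors (which include all but finitely many directions) to the right past the $\widehat{\glie A}$- and complement-factors, at the cost of lower-order commutator terms, until they hit $1_\lambda$ and produce an element of $V_{\hat\lambda}$.

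For this last commutation step the key input is local nilpotency: every element of $\widehat{\glie K}_-$ acts locally nilpotently on $\Vcal_{\hat\lambda}(K)$, which bounds the length of the monomials that can act nontrivially on any fixed vector and so makes the rewriting terminate. Concretely I would argue by induction on (total PBW degree, then on the number of complement-factors): a monomial $u = a\cdot x \cdot b$ with $a$ a product of $\widehat{\glie A}$-generators, $x$ one complement generator, $b$ the rest; write $x = y + z$ with $y\in\widehat{\glie\Ocal}$ absorbed into the $\widehat{\glie\Ocal}$-block and the genuinely new content pushed to act on $1_\lambda$, generating an element of the finite-dimensional $V_{\hat\lambda}=\U\!\widehat{\glie\Ocal}1_\lambda$, while the commutators $[a\text{-part}, y]$ and $[y,b]$ have strictly smaller complexity by the invariance of $A$ under the relevant brackets. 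The main obstacle I anticipate is precisely making this bookkeeping clean: choosing the $k$-space splitting of $\widehat{\glie K}$ so that the PBW rewriting genuinely terminates — i.e. ensuring the commutators stay inside $\widehat{\glie A}+\widehat{\glie\Ocal}+(\text{finite-dim.})$ and that the induction measure decreases — and invoking local nilpotency exactly where the pole-order (hence PBW-degree) could otherwise be unbounded. Once that is set up the conclusion is immediate: $\Vcal_{\hat\lambda}(K)=\U\!\widehat{\glie A}\cdot V_{\hat\lambda}$ is generated by a $k$-basis of $V_\lambda$ as a $\U\!\widehat{\glie A}$-module.
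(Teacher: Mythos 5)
Your overall strategy is the paper's: pick finitely many $f_1,\dots,f_N\in K$ spanning $K/(A+\Ocal)$, decompose $\widehat{\glie K}=\widehat{\glie A}+\glie\Ocal+\sum_i\glie f_i$, reorder by a PBW-type argument so that the $\glie\Ocal$-part acts last on $1_\lambda$ (where it only reproduces $V_\lambda$), and invoke local nilpotency of $\widehat{\glie K}_-$ on the irreducible quotient. But the middle of your argument has a genuine gap. You reduce to the claim that $\Vcal_{\hat\lambda}(K)=\U\!\widehat{\glie A}\cdot V_{\hat\lambda}$, and this is both stronger than what is needed and not what your rewriting can deliver. After pushing the $\glie\Ocal$-factors to the right, every PBW monomial still contains an arbitrarily long block of factors from the complement $\sum_i\glie f_i$, and nothing in your commutator bookkeeping removes them: brackets of $\widehat{\glie A}$- or $\glie\Ocal$-elements with $\glie f_i$-elements simply land back in $\glie K$ and re-expand with $\glie f_i$-components. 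Your final device, writing a complement generator $x=y+z$ with $y\in\widehat{\glie\Ocal}$ and the ``genuinely new content'' $z$ absorbed into $V_{\hat\lambda}$, is impossible: the $f_i$ were chosen precisely to represent nonzero classes of $K/(A+\Ocal)$, so no such splitting of $x$ inside $\widehat{\glie A}+\widehat{\glie\Ocal}$ exists.

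The paper's proof avoids this by not shrinking the generating set to $V_\lambda$: from the decomposition $\U\!\widehat{\glie K}=\U\!\widehat{\glie A}\,\big(\sum_{\underline m}(\glie f_N)^{m_N}\cdots(\glie f_1)^{m_1}\big)\,\U(\glie\mfrak)$ one only concludes that $\Vcal_{\hat\lambda}(K)$ is generated over $\U\!\widehat{\glie A}$ by the vectors $(\glie f_N)^{m_N}\cdots(\glie f_1)^{m_1}V_\lambda^+$, and local nilpotency is then used for exactly one purpose: to show that this collection spans a \emph{finite-dimensional} subspace, which is the finite generating set. Your use of local nilpotency (``it bounds the length of monomials that can act nontrivially, so the rewriting terminates'') is not correct as stated: nilpotency of individual elements does not bound the length of mixed words, and it is not a termination device for the PBW reordering. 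A secondary point: local nilpotency is only available for elements of $\widehat{\glie K}_-$, which is why the paper takes the $f_i$ to be negative powers of a uniformizer; your arbitrary coset representatives could have regular parts (e.g.\ $H(1+t^{-1})$ with $H$ semisimple), destroying nilpotency — this is fixable by adjusting the $f_i$ modulo $\Ocal$, but the reduction to generation by $V_\lambda$ alone is the step that actually fails.
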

\begin{proof}
Let $f_1, \dots , f_N$ be a finite collection of negative powers of $t$  whose images in   $K/(A+\Ocal)$ 
generate that space over $k$. Then $\widehat{\glie K}=\widehat{\glie A} + \glie\mfrak +\sum_{i=1}^N \glie f_i$, 
where $\widehat{\glie A}$ and $\glie\mfrak$ are subalgebras. A standard Poincar\'e-Birkhoff-Witt type of argument 
shows that
\[
\textstyle \U\!\widehat{\glie K}=\U\!\widehat{\glie A} \Big(\sum_{(m_1, \dots, m_N)\in 
\ZZ_{\ge 0}^N} (\glie f_N)^{m_N}\cdots (\glie f_1)^{m_1} \Big)\U (\glie\mfrak). 
\]
Since $\U (\glie\mfrak)$ annihilates  $1_\lambda$, the space  $\Vcal_{\hat\lambda} (K)$ is as a 
$\U\!\widehat{\glie A}$-module generated by the collection
$\{\glie f_N)^{m_N}\cdots (\glie f_1)^{m_1}V_\lambda^+\}_{\underline m}$. 
But each subspace $\glie f_i$ acts  locally nilpotently on $\Vcal_{\hat\lambda} (K)$, and so it follows that
this collection spans a finite dimensional $k$-vector space.
\end{proof}

Note  that after forming this irreducible quotient, the notion of level subsists. 
An alternative characterization of $\Vcal_{\hat\lambda} (K)$ is that it is the irreducible representation of 
$\widehat{\glie K}$ with highest weight $\hat\lambda$, which means that the $\hat\lambda$-weight space of 
$\hat\hlie$  is of dimension one  (in the above situation we can  take this to be  $V_\lambda^+$) and 
is killed by $\widehat{\glie K}_+=\glie_++\glie\Ocal$. Then every other weight of  $\hat\hlie$ lies in 
$\hat\lambda+\ZZ_{\le 0}\{\alpha_0, \alpha_1, \dots, \alpha_r\}$. In that case, the $\glie$-submodule of  
$\Vcal_{\hat\lambda} (K)$ generated by  $V_{\hat\lambda}^+$ is an irreducible  $\glie$-module  
with highest weight  $\lambda$, in other words, a copy of $V_\lambda$.

Given a topological  Lie algebra $\klie$, denote by $\PBW_\pt\klie$ the standard filtration of the universal 
enveloping algebra $\U\!\klie$ (so $\PBW_N\klie$ is the image 
of $\klie^{\otimes N}$). It is of course exhaustive. We give each term $\PBW_N\U\!\klie$ the product topology, and regard
$\U\!\klie$  as an inductive limit of topological vector spaces. The Poincar\'e-Birkhoff-Witt  theorem asserts that 
the associated graded algebra is the symmetric algebra  of $\klie$ regarded as a topological $k$-vector space. 
For $\widehat{\glie K}$ this yields:

\begin{lemma}\label{lemma:PBW}
Let $\PBW_\pt\tilde\Vcal_{\hat\lambda}(K)$ be the filtration on $\tilde\Vcal_{\hat\lambda}(K)$ be defined by 
$\PBW_N\tilde\Vcal_{\hat\lambda}(K)$ letting be the image of 
$\PBW_N\otimes V_\lambda\to  \Vcal_{\hat\lambda}(K)$. Then the natural map 
\[
\sym_\pt (\glie K/\glie\Ocal)\otimes_k V_\lambda\to \Gr^{\PBW}_\pt \tilde\Vcal_{\hat\lambda}(K)
\]
is an isomorphism of topological $k$-vector spaces.
\end{lemma}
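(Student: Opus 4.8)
The plan is to deduce this from the Poincaré–Birkhoff–Witt theorem applied to the topological Lie algebra $\widehat{\glie K}$, combined with the construction of $\tilde\Vcal_{\hat\lambda}(K)$ as an induced module. First I would recall that, as a $k$-vector space, $\widehat{\glie K}=\cfrak\oplus\glie K$ and that $\glie K=\glie\Ocal\oplus(\glie K/\glie\Ocal)$, where the complement can be realized concretely (once a uniformizer $t$ is chosen) as $\glie\otimes t^{-1}k[t^{-1}]$. Since $\widehat{\glie\Ocal}=\cfrak\oplus\glie\Ocal$ is a sub–Lie algebra (the central extension splits over $\glie\Ocal$), PBW for the pair gives a topological $k$-linear isomorphism
\[
\U\!\widehat{\glie K}\;\cong\;\sym_\pt(\glie K/\glie\Ocal)\otimes_k \U\!\widehat{\glie\Ocal}
\]
at the level of associated graded modules for the standard filtration, where on the left one uses the filtration by the total number of factors lying outside $\widehat{\glie\Ocal}$. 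More precisely, ordering a PBW basis so that the ``negative'' generators (a topological basis of $\glie K/\glie\Ocal$) come first, every element of $\U\!\widehat{\glie K}$ is uniquely an $\U\!\widehat{\glie\Ocal}$-combination of ordered monomials in the negative generators, and the symbol of such a monomial in $\Gr^{\PBW}$ is its image in $\sym_\pt(\glie K/\glie\Ocal)$.

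Next I would tensor this decomposition with $V_\lambda$ over $\U\!\widehat{\glie\Ocal}$. By definition $\tilde\Vcal_{\hat\lambda}(K)=\U\!\widehat{\glie K}\otimes_{\U\!\widehat{\glie\Ocal}}V_{\hat\lambda}$, and the filtration $\PBW_\pt\tilde\Vcal_{\hat\lambda}(K)$ is the image filtration under $\PBW_\pt\U\!\widehat{\glie K}\otimes V_\lambda\to\tilde\Vcal_{\hat\lambda}(K)$. Applying $-\otimes_{\U\!\widehat{\glie\Ocal}}V_\lambda$ to the PBW decomposition above — which is exact because $\U\!\widehat{\glie K}$ is free as a right $\U\!\widehat{\glie\Ocal}$-module on the ordered monomials in the negative generators (again PBW) — yields that $\tilde\Vcal_{\hat\lambda}(K)$ is freely spanned over $k$ by (ordered negative monomial)$\otimes v$ with $v$ running over a basis of $V_\lambda$, and that the associated graded identifies with $\sym_\pt(\glie K/\glie\Ocal)\otimes_k V_\lambda$. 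The map in the statement is exactly the one sending the symbol of such a monomial times $v$ to the corresponding element of $\sym_\pt(\glie K/\glie\Ocal)\otimes_k V_\lambda$; surjectivity is clear from the PBW spanning, and injectivity is the freeness just noted, i.e.\ the absence of relations among ordered monomials.

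Finally I would check the topological statement: each $\PBW_N$ is given the product topology (as in the paragraph preceding the lemma), and one must verify that the bijection above is a homeomorphism in each filtration degree, i.e.\ that $\Gr^{\PBW}_N\tilde\Vcal_{\hat\lambda}(K)\cong\sym_N(\glie K/\glie\Ocal)\otimes_k V_\lambda$ as topological $k$-vector spaces. This follows because the isomorphism is induced by a continuous multilinear map $(\glie K/\glie\Ocal)^{\times N}\times V_\lambda\to\tilde\Vcal_{\hat\lambda}(K)$ whose symbol factors through the symmetric power, and the quotient topology on the symmetric power of the topological vector space $\glie K/\glie\Ocal$ (which is just a countable-dimensional $k$-space with the finest locally convex / inductive-limit topology, since $t^{-1}k[t^{-1}]$ is) matches the product topology on $\PBW_N$. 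The main obstacle — really the only point requiring care — is bookkeeping: making the splitting $\glie K=\glie\Ocal\oplus(\glie K/\glie\Ocal)$ and the induced ordering of the PBW basis precise enough that ``the natural map'' is unambiguous, and confirming that $\U\!\widehat{\glie K}$ is free (not merely filtered) over $\U\!\widehat{\glie\Ocal}$ so that $\otimes_{\U\!\widehat{\glie\Ocal}}V_\lambda$ preserves the direct-sum/graded decomposition; once that is in hand, everything is formal from PBW.
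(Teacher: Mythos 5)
Your proposal is correct and follows essentially the same route as the paper, whose proof is a one-line appeal to the fact that $\tilde\Vcal_{\hat\lambda}(K)$ is induced from $\widehat{\glie\Ocal}$ together with $\widehat{\glie K}/\widehat{\glie\Ocal}=\glie K/\glie\Ocal$ and the Poincar\'e--Birkhoff--Witt theorem. You simply spell out the freeness of $\U\!\widehat{\glie K}$ over $\U\!\widehat{\glie\Ocal}$ and the topological bookkeeping, which the paper leaves implicit.
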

\begin{proof} 
Since $\tilde\Vcal_{\hat\lambda}(K)$  is an induced
representation of $\widehat{\glie\Ocal}$ on $V_\lambda$ and 
$\widehat{\glie K}/\widehat{\glie\Ocal}=\glie K/\glie \Ocal$, this follows from the Poincar\'e-Birkhoff-Witt  theorem.
\end{proof}

\begin{corollary}\label{cor:}
If $K\to \bar K$ is the $\mfrak$-adic completion, then the natural maps 
$\tilde \Vcal_{\hat\lambda}(K)\to \tilde \Vcal_{\hat\lambda}(\bar K)$ and 
$\Vcal_{\hat\lambda}(K)\to \Vcal_{\hat\lambda}(\bar K)$  are  isomorphisms as  topological 
$\widehat{\glie K}$-representations (via $\widehat{\glie K}\to \widehat{\glie \bar K}$).
\end{corollary}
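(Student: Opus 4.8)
The plan is to reduce everything to the level of graded pieces via the Poincaré–Birkhoff–Witt filtration established in Lemma~\ref{lemma:PBW}, and then observe that completion acts as an isomorphism on those graded pieces. First I would note that the $\mfrak$-adic completion $K\to\bar K$ induces $\glie K/\glie\Ocal\xrightarrow{\cong}\glie\bar K/\glie\bar\Ocal$: indeed $\glie K/\glie\Ocal\cong\glie\otimes_k(K/\Ocal)$ and $K/\Ocal\cong\bar K/\bar\Ocal$ since $\Ocal$ and $\bar\Ocal$ have the same residue field $k$ and the same value group, so the quotient by the maximal ideal (more precisely the colimit $\varinjlim_n \mfrak^{-n}/\Ocal$) is unchanged by completion. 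The central extension is trivial over $\glie\Ocal$ and over $\glie\bar\Ocal$ in a compatible way, and $V_\lambda$ is the same $\widehat{\glie\Ocal}$- and $\widehat{\glie\bar\Ocal}$-module (both act through the reduction mod $\mfrak$, which is insensitive to completion). Hence the map $\tilde\Vcal_{\hat\lambda}(K)\to\tilde\Vcal_{\hat\lambda}(\bar K)$ is a morphism of $\widehat{\glie K}$-representations (via $\widehat{\glie K}\to\widehat{\glie\bar K}$), filtered for the $\PBW_\pt$-filtrations on both sides.

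Next I would invoke Lemma~\ref{lemma:PBW} on each side: the associated graded of $\tilde\Vcal_{\hat\lambda}(K)$ is $\sym_\pt(\glie K/\glie\Ocal)\otimes_k V_\lambda$ and likewise for $\bar K$. The induced map on associated gradeds is $\sym_\pt$ applied to the isomorphism $\glie K/\glie\Ocal\xrightarrow{\cong}\glie\bar K/\glie\bar\Ocal$, tensored with the identity on $V_\lambda$; this is an isomorphism of topological $k$-vector spaces. Since both filtrations are exhaustive and the map is filtered, an isomorphism on graded pieces forces an isomorphism on the total spaces, and one checks (from the product topologies on the $\PBW_N$-layers) that it is a homeomorphism, hence an isomorphism of topological $\widehat{\glie K}$-representations. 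This handles $\tilde\Vcal$.

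For the irreducible quotient $\Vcal_{\hat\lambda}$, I would use the description $\Vcal_{\hat\lambda}(K)=\tilde\Vcal_{\hat\lambda}(K)/\U\widehat{\glie K}\,(X_\theta t^{-1})^{1+\lambda_0}1_\lambda$ recalled just above. The element $(X_\theta t^{-1})^{1+\lambda_0}1_\lambda$ and the corresponding element over $\bar K$ correspond under the isomorphism $\tilde\Vcal_{\hat\lambda}(K)\xrightarrow{\cong}\tilde\Vcal_{\hat\lambda}(\bar K)$, because $t^{-1}$ has the same image in $K/\Ocal\cong\bar K/\bar\Ocal$ and the map is $\widehat{\glie K}$-equivariant; therefore the submodule it generates maps onto the submodule generated by its image, and the quotient $\Vcal_{\hat\lambda}(K)\to\Vcal_{\hat\lambda}(\bar K)$ is an isomorphism of $\widehat{\glie K}$-representations. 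Compatibility of topologies is inherited as a quotient topology.

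The only genuine subtlety — and the step I would be most careful about — is the interplay of topologies: one must confirm that ``filtered map, isomorphism on $\Gr^{\PBW}_\pt$'' really yields a \emph{homeomorphism} and not merely an algebraic isomorphism, using that each $\PBW_N$ layer carries the product topology coming from $(\glie K)^{\otimes N}$ and that completion is a homeomorphism $\glie K/\glie\Ocal\to\glie\bar K/\glie\bar\Ocal$ for the subspace/quotient topologies (here the discrete-valuation structure is essential, as it gives these spaces their natural filtration by $\mfrak^{-n}/\Ocal$). Everything else is bookkeeping with induced representations and the PBW theorem.
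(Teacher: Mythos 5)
Your argument is correct and is essentially the paper's own proof: the paper also deduces the statement from Lemma \ref{lemma:PBW} together with the observation that $K/\Ocal\to\bar K/\bar\Ocal$ is a $k$-linear isomorphism, so that the filtered map is an isomorphism on $\Gr^{\PBW}_\pt$ and hence on the whole space. Your extra paragraph tracking the generator $(X_\theta t^{-1})^{1+\lambda_0}1_\lambda$ to pass to the irreducible quotient is a detail the paper leaves implicit, and it is handled correctly.
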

\begin{proof}
This indeed follows from Lemma \ref{lemma:PBW} above and the fact that $K/\Ocal\to \bar K/\bar \Ocal$ is a 
$k$-linear isomorphism.
\end{proof}

\begin{remark}\label{rem:}
The action of $c$ on $V_\lambda$ should not be confused with that of the associated Casimir element, that is, 
the image $\bar c$ of $c$  in the enveloping algebra $\U\!\glie$. This element acts on every irreducible representation of 
$\gfrak$ with highest weight $\lambda$ as a scalar 
\[
c_\lambda=c(\lambda , \lambda+2\rho),
\]
where $\rho\in \hlie^*$ is the half sum of the positive roots, also characterized by $\rho(H_{\alpha_i})=1$ for 
$i=1, \dots, r$.
(This formula is well-known for the standard Casimir element of $\glie$; it then evidently also holds for any scalar multiple 
of it, such as $\bar c$.)
In particular,  $c_\lambda$ is a positive rational number (the denominator is in fact at most 3). 
For example, the adjoint representation $\gfrak$ has highest weight $\theta$. 
Since $\theta$ is a long root, we have $c(\theta, \chi)=\chi(H_\theta)$ for $\chi\in \hlie^*$. 
So $\bar c$ then acts on $\glie$ as scalar multiplication with 
\[
c(\theta , \theta+2\rho)= 2+ 2\rho(H_{\theta})=2(\check n_0 +\check n_1+\cdots  +\check n_r)=2\check h,
\]
where $\check h:=\check n_0 +\check n_1+\cdots  +\check n_r$ is called the \emph{dual Coxeter number} 
of $\glie$ (in case  all simple roots have the same length,  this is the order of a Coxeter transformation in the 
associated Weyl group).
If  $\{ E_\kappa\}_\kappa$ is a basis of $\glie$ that is orthonormal with respect to $\check{c}$, then 
$c=\sum_\kappa E_\kappa\otimes_k E_\kappa$. Hence 
$\bar c=\sum_\kappa E_\kappa\circ E_\kappa$ and so we have $\sum_\kappa [E_\kappa , [E_\kappa, Y]]=2\check{h}Y$.   
\end{remark}

\subsection{The dual of a highest weight representation}\label{subsect:dualrep}
We assume in this subsection that $\Ocal$ is complete. We denote by $\tilde\Vcal^{\hat\lambda}(K)$ resp.\ 
$\Vcal^{\hat\lambda}(K)$  the topological dual of
$\tilde\Vcal_{\hat\lambda}(K)$ resp.\ $\Vcal_{\hat\lambda}(K)$. We also put 
\[
\tilde\Vcal_\qlog^{\hat\lambda}(K):=\hom_{\Rscr^s_*}(\Lcal^{\ell\check{c}}_{\glie, V_\lambda}, \qLog_K).
\] 
The notation will be justified once we identify this with  $\tilde\Vcal^{\hat\lambda}(K)$.
We could refer to Subsection \ref{subsect:lierep} for what  a homomorphism $\Lcal^{\ell\check{c}}_{\glie, V_\lambda}\to \qLog_K$ is, 
but let us here unpack the definition: it 
assigns to every nonempty finite set $I$ a linear map 
$\xi_I: \gfrak^{\otimes I}\otimes V_\lambda\to \omega^{(I)}\la\qlog\ra$ subject
to certain conditions. Spelled out, this amounts  to giving  a graded linear map
\[
\xi=\big(\xi_N: \gfrak^{\otimes N}\otimes V_\lambda\to\omega^{(N)}\la\qlog\ra_o)\big)_{N=0}^\infty
\] 
such that $\xi_N$ is  $\Scal_N$-equivariant and
\begin{gather*}\tag{$i$}
r_1(\xi_{N+1}(X_{N+1}\otimes\dots \otimes X_1\otimes v))=\xi_{N}(X_{N+1}\otimes\dots \otimes X_2\otimes X_1v),\\
\tag{$ii$}
r_{N+1,N+2} (X_{N+2}\otimes X_{N+1}\otimes\dots \otimes X_1\otimes v)=
\xi_{N+1}([X_{N+2},X_{N+1}]\otimes\cdots\otimes X_1\otimes v),\\
\tag{$iii$} s_{N+2,N+1} (\xi_{N+2}(X_{N+2}\otimes X_{N+1}\otimes \dots \otimes X_1\otimes v))=
\ell\check{c}(X_{N+2},X_{N+1})\xi_{N}(X_{N}\otimes\dots\otimes X_1\otimes v).
\end{gather*}

Since the pairs $X\otimes Y\in \gfrak\otimes \gfrak$ with $[X,Y]\not=0$ span 
$\gfrak\otimes \gfrak$, we see that $\xi_N$ determines $\xi_{N-1}$ and hence (with induction) all its predecessors 
$\xi_0,\dots ,\xi_{N-1}$. 
In particular, if $\xi_N$ is nonzero, then all of its successors are. On the other hand, the residues of $\xi_N$ are given by 
$\xi_1, \dots, \xi_{N-1}$, so that these determine $\xi_N$ up to a regular polydifferential, i.e., an element of 
$(F^1\omega)^{(N)}$.

Notice that property ($i$) implies that if $X_1v=0$, then $\xi_N$ has no pole at $t_1=0$. 
Because of the $\Scal_N$-equivariance, this
implies that more generally, $\xi_N$ has no pole at $t_i=0$ if $X_i(v)=0$. Of particular interest will be the value of 
$\xi$ on a tensor of the form $Z\otimes X_\theta^{\otimes s}\otimes I_\lambda$, where $Z$ is in  the tensor algebra of 
$\glie$: 
$\xi(Z\otimes X_\theta^{\otimes s}\otimes v)$ will have no pole in the generic point of $t_1=\cdots =t_s=0$ and 
hence has a well-defined restriction to that locus.

\begin{example}\label{ex:}
We work this out for $N\le 2$. Clearly, $\xi_0\in V^*$. If $t$ is a uniformizer for $\Ocal$, then
\[
\xi_1(X\otimes v )=\xi_0 (Xv ) \frac{dt}{t}+\eta_1(X\otimes v )
\]
for some $\eta_1(X\otimes v )\in F^1\omega$.
The polar part of $\xi_2(X_2\otimes X_1\otimes v )$ is 
\begin{multline*}
\ell \check c(X_2\otimes X_1)\xi_0(v )\frac{dt_1dt_2}{(t_1-t_2)^2} +
\pi_1^*\xi_1 ([X_2,X_1]\otimes v ) \frac{dt_2}{t_1-t_2}+ \\
+\pi_2^*\xi_1 (X_2\otimes X_1v )\frac{dt_1}{t_1} +\pi_1^*\xi_1 (X_1\otimes X_2v )\frac{dt_2}{t_2}\equiv\\ \equiv
\ell \check c(X_2\otimes X_1)\xi_0(v )\frac{dt_1dt_2}{(t_1-t_2)^2} +\xi_0 ([X_2,X_1]v ) \frac{dt_1dt_2}{(t_1-t_2)t_1} 
+\xi_0((X_2X_1+X_1X_2)v )\frac{dt_1 dt_2}{t_1t_2} +\\
+\pi_1^*\eta_1 ([X_2,X_1]\otimes v ) \frac{dt_2}{t_1-t_2}
+\pi_2^*\eta_1 (X_2\otimes X_1v )\frac{dt_1}{t_1} +\pi_1^*\eta_1 (X_1\otimes X_2v )\frac{dt_2}{t_2}.
\end{multline*}
We may define $\eta_2(X_2\otimes X_1\otimes v )\in F_1\omega_2$ as the difference between 
$\xi_2(X_2\otimes X_1\otimes v )$ and the last expression and continue in this manner.
This makes it  clear that to give the sequence  $(\xi_i)_{i=0}^N$ is equivalent to giving a sequence 
\[
(\eta_i\in \sym_i(\gfrak^*\otimes F^1\omega))_{i=0}^N,
\]
with  $\eta_0=\xi_0$. Since the topological dual of $F^1\omega$ is $K/\Ocal$, we may also think of these as elements of 
$k[\gfrak K/\gfrak\Ocal]$. But beware that the $\eta_i$ depend on our choice of the uniformizer $t$.
\end{example}

\begin{example}\label{ex:}
Consider the Example \ref{ex:slin2}, so when $\gfrak=\slin (2)$ with its standard basis
$X_+=\left( \begin{smallmatrix}0 & 1\\ 0 & 0\end{smallmatrix}\right)$,
$X_-=\left(\begin{smallmatrix}0 &0\\1 &0\end{smallmatrix}\right)$, and
$H=\left(\begin{smallmatrix}1 &0\\0 &-1\end{smallmatrix}\right)$.
If we take $\lambda=(\lambda_1)=0$, then along the diagonal $\Delta$, 
$\xi_2(X_\pm\otimes X_\mp)$ and $\xi_2(H\otimes H)$ have a pole of order 
$\le 2$, $\xi_2(X_\pm\otimes H)$,  $\xi_2(H\otimes X_\pm)$  and $\xi_2(H\otimes Y)$ 
have there  a pole of order $\le 1$, whereas $\alpha (X_\pm\otimes X_\pm)$ and $\alpha (H\otimes H)$ are there regular.
This corresponds to the decomposition of $\gfrak\otimes\gfrak$ as a  $\gfrak$-representation 
(via the adjoint representation) into irreducibles: 
$\gfrak\otimes\gfrak\cong \sym^2_o\gfrak\oplus \wedge^2\gfrak \oplus k c$.
Then $\xi_2$ maps  $\sym^2_o\gfrak$ to $(F^1\omega)^{(2)}$, $\wedge^2\gfrak$ to
 $(F^0\omega)^{(2)}(\Delta)$ and $c$ to $(F^1\omega)^{(2)}(2\Delta)$.
 \end{example}

The functor $\hat\eta$ that we introduced in Corollary \ref{cor:etahat} (with $s$ being given by $\ell\check{c}$) 
can be regarded as an element of $\tilde\Vcal_\qlog^{\ell}(K)$, where $\Vcal_\qlog^{\ell}$ stands for the trivial 
representation $k$ of $\glie$ on which $c$ acts as multiplication by $\ell$.  We deduce from Corollary \ref{cor:etadef}:

\begin{corollary}\label{cor:} 
The construction of Corollary \ref{cor:etadef} produces an isomorphism
\[
\hom_{\Rscr_*}(\Lcal_{\glie, V_\lambda}, \Lcal og_K)\xrightarrow{\cong} \tilde\Vcal_\qlog^{\hat\lambda}(K)
\]
\end{corollary}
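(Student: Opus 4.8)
The plan is to reduce the claimed isomorphism $\hom_{\Rscr_*}(\Lcal_{\glie,V_\lambda},\Lcal og_K)\xrightarrow{\cong}\tilde\Vcal_\qlog^{\hat\lambda}(K)$ to an instance of Corollary~\ref{cor:etadef}, once we have pinned down how the symmetric invariant form enters. Recall that $\tilde\Vcal_\qlog^{\hat\lambda}(K)$ was \emph{defined} as $\hom_{\Rscr^s_*}(\Lcal^{\ell\check c}_{\glie,V_\lambda},\qLog_K)$, where $\ell=\ell(\hat\lambda)=\hat\lambda(c)$ is the level. So the assertion is precisely that the map
\[
\xi\in\hom_{\Rscr_*}(\Lcal_{\glie,V_\lambda},\Lcal og_K)\mapsto\hat\xi\in\hom_{\Rscr^s_*}(\Lcal^{\ell\check c}_{\glie,V_\lambda},\qLog_K)
\]
of Corollary~\ref{cor:etadef}, \emph{specialized to the invariant form $s=\ell\check c$ on $\glie$}, is an isomorphism. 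First I would observe that $\ell\check c$ is indeed a symmetric bilinear form on $\glie$ invariant under the adjoint action (it is a scalar multiple of the canonical generator $\check c$ of $\cfrak$), so the structure $\Lcal^{\ell\check c}_{\glie,V_\lambda}$ is well defined and the hypotheses of Corollary~\ref{cor:etadef} with this choice of $s$ are met verbatim.

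The single substantive point to check, then, is \emph{compatibility of levels}: that the functor $\qLog_K$ appearing in the target is the one built from the \emph{same} invariant form $\ell\check c$. But $\qLog_K$ as constructed in Proposition~\ref{prop:F_K} is a fixed $\Rscr^s_*$-module — the space $\omega^{(I)}\la\qlog\ra_o$ with its intrinsic biresidue operators $s_{ij}$, \emph{independent of $\glie$} — while a homomorphism $\Lcal^{s}_{\glie,V_\lambda}\to\qLog_K$ is by definition required to intertwine the $\glie$-side contraction $s_{[ij],k}r_{ij}$, which carries the factor $s=\ell\check c$, with the $\qLog_K$-side biresidue. Thus the "$\ell$" lives entirely on the source, and Corollary~\ref{cor:etadef} applies directly to this value of $s$; there is nothing further to verify about $K$ or about $\ell$. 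Concretely: the bijection of Corollary~\ref{cor:etadef} sends $\xi$ to $\hat\xi_I:=\sum_{J\subset I}\xi_{I\ssm J}\otimes\hat\eta_J$, where $\hat\eta_J$ is the universal $\Rscr^s_*$-homomorphism $\Lcal^{s}_{\glie}\to\qLog_K$ of Corollary~\ref{cor:etahat} (which itself only uses that $\glie$ carries \emph{some} invariant $s$, here $\ell\check c$), and it has an explicit inverse constructed by induction on the tensor degree $N$, peeling off the polar part of $\xi_{N+1}$ prescribed by $\xi_0,\dots,\xi_{N-1}$ and correcting by $\tilde\xi_{N+1}-\hat\xi'_{N+1}$, which has no pole.

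Therefore the proof reduces to: (1) note $\ell\check c$ is an admissible invariant form; (2) invoke Corollary~\ref{cor:etadef} with $V:=V_\lambda$ and $s:=\ell\check c$; (3) rewrite its target $\hom_{\Rscr^s_*}(\Lcal^{\ell\check c}_{\glie,V_\lambda},\qLog_K)$ using the definition of $\tilde\Vcal_\qlog^{\hat\lambda}(K)$ given at the start of Subsection~\ref{subsect:dualrep}. The only place any real content could hide is step~(2)'s hypothesis that we are in the setting "$\Rscr_*$" — one representation, one marked point — and that $\Ocal$ is complete (assumed at the start of the subsection), but completeness is not even needed for the formal bijection of Corollary~\ref{cor:etadef}, which is a statement about $\omega^{(I)}\la\qlog\ra_o$ and $\omega^{(I)}\la\llog\ra$ as $k$-vector spaces; it will matter only later when one identifies $\tilde\Vcal_\qlog^{\hat\lambda}(K)$ with the topological dual $\tilde\Vcal^{\hat\lambda}(K)$. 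I expect no genuine obstacle here: this corollary is essentially a bookkeeping specialization, and the delicate analysis was already carried out once and for all in the proofs of Corollaries~\ref{cor:etahat} and~\ref{cor:etadef}. If anything requires a sentence of care, it is spelling out that the conditions $(i)$, $(ii)$, $(iii)$ imposed on a sequence $(\xi_N)$ in the definition of $\tilde\Vcal_\qlog^{\hat\lambda}(K)$ — in particular the factor $\ell\check c(X_{N+2},X_{N+1})$ in $(iii)$ — are exactly the $\Rscr^s_*$-morphism conditions for the pair $(\Lcal^{\ell\check c}_{\glie,V_\lambda},\qLog_K)$, which is immediate from the construction of $\Lcal^s_{\glie,V}$ in Subsection~\ref{subsect:lierep}.
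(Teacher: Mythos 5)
Your proposal is correct and matches the paper's own treatment: the corollary is stated there as an immediate deduction from Corollary~\ref{cor:etadef}, specialized to $V=V_\lambda$ and $s=\ell\check c$, with $\tilde\Vcal_\qlog^{\hat\lambda}(K)$ being by definition $\hom_{\Rscr^s_*}(\Lcal^{\ell\check c}_{\glie,V_\lambda},\qLog_K)$. Your extra checks (that $\ell\check c$ is an admissible invariant form and that conditions (i)--(iii) are exactly the $\Rscr^s_*$-morphism conditions) are just the bookkeeping the paper leaves implicit.
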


We denote the left hand side by $\tilde\Vcal_\llog^{\hat\lambda}(K)$. 
Note that this only depends on $V_\lambda$ and not on $\ell$
and so the same is true for the topological vector space that underlies $\tilde\Vcal^{\hat\lambda}(K)$.

Let $\xi\in \tilde\Vcal_\qlog^{\hat\lambda}(K)$. We extend $\xi_N$ to a $k$-linear map:
\[
\xi_N: (\glie K)^{\otimes_k N}\otimes V_\lambda\to \omega^{(N)}(\infty\Delta)
\]
by 
$\xi_N (X_Nf_N\otimes\cdots \otimes X_1f_1\otimes v):=
\pi_1^*f_1\cdots \pi_N^*f_N\xi_N(X_N\otimes\cdots \otimes X_1\otimes v)$.
The following proposition has its origin in a theorem of  Beilinson-Drinfeld \cite{bd}. 
Since we assumed $\Ocal$ to be complete, the 
residue pairing $(\alpha, f)\in \omega\times K\mapsto \res (f\alpha)\in k$ is topologically perfect. 
It identifies  $F^1\omega$ with the
$k$-dual of $K/\Ocal$ and hence $(F^1\omega)^{(N)}$, 
which we recall, was defined to be the completion of $(F_1\omega)^{\otimes_k N}$, 
with the topological $k$-dual of $(K/\Ocal)^{\otimes_kN}$. It of course does so $\Scal_N$-equivariantly.

\begin{proposition}\label{prop:localduality1}
The pairing $\tilde\Vcal_\qlog^{\hat\lambda}(K)\times (\oplus_{N\ge 0}(\glie K)^{\otimes_k N}\otimes V_\lambda)\to 
k$ given by 
\[
\big\la \xi\vert X_Nf_N\otimes\cdots \otimes X_1f_1\otimes v\big\ra := 
r_N\cdots r_1(\pi_1^*f_1\cdots \pi_N^*f_N\xi_N(X_N\otimes\cdots \otimes X_1\otimes v)),
\]
drops to a pairing 
\[
 \tilde\Vcal_\qlog^{\hat\lambda}(K)\times \tilde\Vcal_{\hat\lambda} (K)\xrightarrow{\la\;|\;\ra }  k
\] 
which identifies $\tilde\Vcal_\qlog^{\hat\lambda}(K)$ with the topological dual of the representation 
$\tilde\Vcal_{\hat\lambda} (K)$.
\end{proposition}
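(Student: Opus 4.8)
The plan is to exhibit the pairing on the chain level, check it is well-defined (i.e. factors through the defining relations of $\tilde\Vcal_{\hat\lambda}(K)$ as an induced module), and then show the resulting map $\tilde\Vcal_\qlog^{\hat\lambda}(K)\to \tilde\Vcal^{\hat\lambda}(K)$ is a continuous isomorphism by comparing the two sides via their $\PBW$/polar-order filtrations. First I would observe that the formula $\la\xi\vert X_Nf_N\otimes\cdots\otimes X_1f_1\otimes v\ra$ is manifestly $k$-multilinear in the $X_if_i$ and linear in $v$, hence descends to a pairing on $\bigoplus_N(\glie K)^{\otimes_k N}\otimes V_\lambda$; the point is to see it kills the kernel of the surjection $\bigoplus_N(\glie K)^{\otimes_k N}\otimes V_\lambda \twoheadrightarrow \tilde\Vcal_{\hat\lambda}(K)$. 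That kernel is generated by two families of relations: (a) the commutator relations in $\U\widehat{\glie K}$ — replacing $\cdots X_{i+1}f_{i+1}\otimes X_if_i\cdots$ by $\cdots X_if_i\otimes X_{i+1}f_{i+1}\cdots + \cdots[X_{i+1},X_i](f_{i+1}f_i)\cdots + \check c(X_{i+1},X_i)\res(f_i df_{i+1})\cdots$ (the central term), and (b) the identification $\tilde\Vcal_{\hat\lambda}(K)=\U\widehat{\glie K}\otimes_{\U\widehat{\glie\Ocal}}V_\lambda$, i.e. that a factor $X_if_i$ with $f_i\in\Ocal$ may be absorbed into $v$ via the $\widehat{\glie\Ocal}$-action.

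The verification of (a) is exactly Proposition \ref{prop:F_K} (equivalently Lemma \ref{lemma:resformula} and Lemma \ref{lemma:commutator}): the commutator $[r_i,r_j]$ applied to $f\alpha$ produces precisely $f^{(ij)}(r_{[ij]}(\pi_{[ij]}^*(f_if_j)r'_{ij}(\alpha))+\res(f_jdf_i)s_{ij}(\alpha))$, and conditions $(ii)$ and $(iii)$ in the definition of $\tilde\Vcal_\qlog^{\hat\lambda}(K)$ say that $r'_{ij}$ of $\xi_N$ (the residue along the diagonal) reproduces $\xi_{N-1}$ at $[X_{i+1},X_i]$ and that $s_{ij}$ of $\xi_N$ reproduces $\ell\check c(X_{i+1},X_i)\xi_{N-2}$; reindexing via $\Scal_N$-equivariance this matches the commutator relation in $\widehat{\glie K}$ with central term $\check c(X_{i+1},X_i)\res(f_idf_{i+1})c$, where $c$ acts on $V_\lambda$ by $\hat\lambda(c)=\ell$. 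For (b), if some $f_i\in\Ocal$ then the corresponding residue $r_i$ in the iterated residue $r_N\cdots r_1$ is computed against a polydifferential whose $i$-th slot is $\pi_i^*f_i$ times a quasi-logarithmic form; pushing the residue through using condition $(i)$ shows this equals the evaluation of $\xi_{N-1}$ at the tensor with $X_if_i$ removed and acting on $v$. (Here I would use that the $\mfrak$-adic completeness makes the residue pairing topologically perfect and that $\xi_N$ is continuous, so the iterated residue converges.)

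With well-definedness in hand, the induced map $\tilde\Vcal_\qlog^{\hat\lambda}(K)\to \tilde\Vcal^{\hat\lambda}(K)$ is clearly $k$-linear and continuous. To see it is an isomorphism I would filter both sides compatibly. On the representation side, Lemma \ref{lemma:PBW} identifies $\Gr^{\PBW}_\pt\tilde\Vcal_{\hat\lambda}(K)$ with $\sym_\pt(\glie K/\glie\Ocal)\otimes_k V_\lambda$. On the functor side, filter $\xi=(\xi_N)$ by the polar order of $\xi_N$ (equivalently, by the length of the quasi-logarithmic ``leading term'', using Example \ref{ex:} which shows that giving $(\xi_i)_{i=0}^N$ amounts to giving the sequence $(\eta_i\in\sym_i(\glie^*\otimes F^1\omega))_{i=0}^N$ of regular pieces). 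The associated graded of the pairing then becomes the tautological perfect pairing between $\sym_\pt(\glie\otimes_k K/\glie\otimes_k\Ocal)\otimes V_\lambda$ and $\sym_\pt(\glie^*\otimes F^1\omega)\otimes V_\lambda^*$ induced by the topologically perfect residue pairing $F^1\omega\times K/\Ocal\to k$ (tensored with the perfect pairing $V_\lambda\times V_\lambda^*\to k$), where the key compatibility is that $r'_{ij}$-chains of $\zeta$-type forms are dual to the PBW monomials. Since a filtered map inducing an isomorphism on a complete exhaustive associated graded is itself an isomorphism (of topological vector spaces, both sides being inductive limits of finite-dimensional pieces), the proof concludes.

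I expect the main obstacle to be the bookkeeping in step (b)/the absorption of $\Ocal$-factors together with the precise matching of the $\PBW$ associated graded with the ``regular part'' filtration of Example \ref{ex:} — in particular checking that the leading polar term of $\xi_N$ is exactly the piece dual to the degree-$N$ symmetric power $\sym_N(\glie K/\glie\Ocal)$ and that the $\eta_i$-data of Example \ref{ex:} really does give a complete and independent set of coordinates on $\tilde\Vcal_\qlog^{\hat\lambda}(K)$. The commutator computation (a), while notationally heavy, is essentially a direct citation of Proposition \ref{prop:F_K} and should present no conceptual difficulty.
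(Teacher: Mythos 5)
Your proposal follows essentially the same route as the paper: well-definedness is checked on exactly the two families of relations you name (absorption of $\Ocal$-factors via the ordinary residue theorem, and the commutator/central term via Lemma \ref{lemma:resformula} together with properties (ii)--(iii) defining $\tilde\Vcal_\qlog^{\hat\lambda}(K)$), and perfectness is obtained by comparing the $\PBW$ filtration of Lemma \ref{lemma:PBW} with the filtration by vanishing of $\xi_{N-1}$ (your polar-order filtration), whose associated graded pairing is the topologically perfect residue pairing between $\sym_N(\glie\otimes_k K/\Ocal)\otimes V_\lambda$ and its continuous dual. The obstacle you flag at the end is handled in the paper by noting that only the polar part of $\xi_{N+1}$ is prescribed by its predecessors, so any choice of regular part, averaged over $\Scal_{N+1}$, completes the data and shows the leading term can be arbitrary.
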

\begin{proof}
We first verify that the pairing is well-defined. Let $\xi\in \tilde\Vcal_\qlog^{\hat\lambda}(K)$.
The PBW theorem and the  symmetry properties of  $\xi$ tell us that it suffices to check that for 
$v\in V$, $X, Y\in\gfrak$,  and $W\in  (\glie K)^{\otimes_k N}$, then 
\[
r_{N+1}\cdots r_{1}\xi_{N+1}(W\otimes Xf\otimes v) =f(o)r_1r_{2}\cdots r_{N+1}(W\otimes Xv), \text{ when $f\in \Ocal$},
\]
and 
\begin{multline*}
r_{N+2}\cdots r_{1}\xi_{N+2}((Xf\otimes Yg-Yg\otimes Xf)\otimes W\otimes v)=\\
=r_{N+1}\cdots r_{1}\xi_{N+1}([X,Y]fg\otimes W \otimes v)
+\ell\check{c}(X,Y)\res (gdf).r_{N}\cdots r_1\xi_N(W\otimes v)
\end{multline*}
when $f,g\in K$. The first assertion is immediate from the ordinary residue theorem. 

As to the second identity, we note that  since $\xi_{N+2}\in \omega^{(N+2)}\la\qlog\ra_o$, 
we can write it as a sum $\xi'_{N+2}+\xi''_{N+1}$, such that  $\xi''_{N+1}$ takes its values in  
$(t_{N+2}-t_{N+1})^{-2}dt_{N+2}dt_{N+1}\omega^{(N)}\la\qlog\ra$ and $\xi'_{N+2}$ takes its values 
in the quasi-logarithmic polydifferentials which have along $\Delta_{N+1, N+2}$ a pole of order one at most.
So $\alpha:=r_{N}\cdots r_{1}\xi_{N+2}((X\otimes Y-Y\otimes X)\otimes W\otimes v)$ 
can be written accordingly as $\alpha'+\alpha''$ with $\alpha''$ a constant times $(t_{2}-t_{1})^{-2}dt_{2}dt_{1}$
and with $\alpha'$ having a pole along the diagonal of order one at most. 
In other words,  $\alpha$ satisfies the assumptions of Lemma \ref{lemma:resformula}.  We then find
\begin{multline*}
r_{N+2}\cdots r_{1}\xi_{N+2}((Xf\otimes Yg-Yg\otimes Xf)\otimes W\otimes v)
=r_2r_1 \big (\pi_2^*f \pi_1^*g.\alpha-\sigma^*\pi_2^*f \pi_1^*g.\alpha\big)=\\
=[r_2,r_1] \pi_2^*f \pi_1^*g.\alpha=r_{[21]}\pi_{[21]}^*(fg)r_{2,1} \alpha)+
\res (gdf) s_{2,1}\alpha, 
\end{multline*}
where in the first line $\sigma$ denotes the transposition of the factors indexed by $1$ and $2$ and where we used 
the permutation equivariance of $\xi_{N+2}$.  But the last expression of that display is by the defining properties 
(ii) and (iii) of $\tilde\Vcal_\qlog^{\hat\lambda}(K)$  equal to 
$\xi_{N+1}([X,Y]fg\otimes W\otimes v)+\ell \check{c}(X,Y)\res gdf . \xi_N(W\otimes v)$.

Next we show that the pairing is topologically perfect. Recall from Lemma \ref{lemma:PBW} that we have an 
increasing exhaustive filtration 
$\PBW_\pt\tilde\Vcal_{\hat\lambda} (K)$ on $\tilde\Vcal_{\hat\lambda} (K)$ for which 
\[
\PBW_N\tilde\Vcal_{\hat\lambda} (K)/\PBW_{N-1}\tilde\Vcal_{\hat\lambda} (K)\cong 
\sym_N{(\glie\otimes (K/\Ocal))}\otimes V_\lambda.
\]
Let  $W_{-N}\tilde\Vcal_\qlog^{\hat\lambda}(K)$ be the space of  $\xi$ with $\xi_{N-1}=0$, 
or equivalently, with $\xi_N$ taking its values in $(F^1\omega)^{(N)}$. Since $\xi_N$ determines 
its predecessors as residues, we will then have $\xi_r=0$ for all $r<N$, so that this defines an increasing filtration 
$W_\pt\tilde\Vcal_\qlog^{\hat\lambda}(K)$ on $\tilde\Vcal_\qlog^{\hat\lambda}(K)$.  
If $\xi\in W_{-N}\tilde\Vcal_\qlog^{\hat\lambda}(K)$, then the fact that $\xi_N$ takes its values in polydifferentials 
without diagonal residues implies that $\xi_N$  factors through $\sym ^N\glie\otimes V_\lambda$ and takes its 
values in the $\Scal_N$-invariant part 
of $(F^1\omega)^{(N)}$.

It remains to see that every symmetric $k$-linear map $\glie^{\otimes N}\otimes V_\lambda\to (F^1\omega)^{(N)}$ 
with this property so arises. 
Or rather, that if $\xi_0, \dots , \xi_N$ are such that they obey the defining properties of a 
$\Rscr^s_*$-homomorphism $\Lcal^{\ell\check{c}}_{\glie, V_\lambda}\to \qLog_K$ insofar 
they involve these elements, we can find
a $\xi_{N+1}$ such that the same is true for $\xi_0, \dots , \xi_{N+1}$. This amounts to saying that for any 
$X_{N+1}\otimes \cdots \otimes X_1\otimes v\in \glie^{\otimes(N+1}\otimes V$ 
only the polar part of $\xi_{N+1}(X_{N+1}\otimes \cdots \otimes X_1\otimes v)$ must be specified. 
There is then no issue if we let each $X_i$  run over a basis of $\glie$ and $v$ over a basis of $V$; 
we only must make sure that the resulting lift is $\Scal_{N+1}$-invariant (which we can always do by 
averaging over $\Scal_{N+1}$).
As we noted, the residue pairing identifies $(F^1\omega)^{(N)}$ with the topological $k$-dual of $K/\Ocal$
in a $\Scal_N$-equivariant manner. This establishes an isomorphism 
\[
W_{-N}\tilde\Vcal_{\hat\lambda} (K)/W_{-N-1}\tilde\Vcal_\qlog^{\hat\lambda} (K)\cong 
\Hom^{\ct}_k(\sym_N(\glie\otimes_k K/\Ocal)\otimes V_\lambda,k).
\]
This proves that the residue pairing induces a topologically perfect pairing between the graded pieces
between $\PBW_N\tilde\Vcal_{\hat\lambda} (K)/\PBW_{N-1}\tilde\Vcal_{\hat\lambda} (K)$ and 
$W_{-N}/W_{-N-1}$. So  the pairing between  
$\tilde\Vcal_\qlog^{\hat\lambda}(K)$ and $\tilde\Vcal_{\hat\lambda} (K)$ is topologically perfect as well 
(with $W_{-N-1}$ and $\PBW_{N}$ being each others annihilator).
\end{proof}

Before we discuss the  topological dual of the irreducible representation 
$\Vcal_{\hat\lambda} (K)$, let us recall that there is also a notion of a \emph{level}
for an irreducible $\glie$-module: 

\begin{definition}\label{def:highestweightsub}
The \emph{level} of a dominant integral weight $\lambda$ is $\lambda(H_\theta)$ and the  
\emph{level} of a finite dimensional  irreducible $\glie$-module
is the level of its highest weight. 

Given a nonnegative integer $\ell$ and  a finite dimensional $\glie$-module $V$, then the  \emph{level  $\ell$ subspace} 
$V^{\ell}$ of $V$ is the sum of the irreducible subrepresentations  of $V$ with a highest weight of level $\ell$; 
the notations $V^{>\ell}, V^{\le \ell}, \dots$ have a similar meaning.
\end{definition}

So  $V^{>\ell}$ is the $\glie$-submodule of $V$ generated by
$X_\theta^{\ell +1}V$ so that $V^{\le \ell}$ can be regarded as the largest $\glie$-submodule 
(or quotient $\glie$-module) of $V$ killed by $X_\theta^{\ell +1}$.

Note that the $\slin_2$-copy $\slin_2(\theta)\subset\glie $ spanned  by
$X_\theta,H_{\theta}, X_{-\theta}$ has the property that the $\slin_2(\theta)$-submodule generated in 
$V_\lambda$ by $1_\lambda$ is 
irreducible of dimension $\lambda(H_{\theta})+1$ and that this is supplemented in $V_\lambda$  by 
irreducible $\slin_2$-submodules of smaller dimension.
So $X_{\pm \theta}^{\ell(\lambda)}V_\lambda$ is of dimension one, but 
$X_{\pm \theta}^{\ell(\lambda)+1}V_\lambda=0$. The level of an  finite dimensional  irreducible 
$\glie$-module is evidently canonically defined  (i.e., is independent of the choice of a 
Borel subalgebra and a Cartan subalgebra contained in it); indeed the $X_\theta$   
that so arise make up a single orbit  under the action of the adjoint group of $\glie$. (In fact, we can define
this  more directly as the maximal $\ell$ for which there exists a Lie embedding $\slin (2)\hookrightarrow \glie$ and
an irreducible  $\slin (2)$-subrepresentation of $V$ equivalent to the $\ell$th symmetric power of the 
tautological representation.)
Since $\glie$ has highest weight $\theta$, its level is $\theta(H_\theta)=2$.

\begin{corollary}\label{cor:topduallocal}
The  topological dual of $\Vcal_{\hat\lambda} (K)$ is the subspace 
$\Vcal_\qlog^{\hat\lambda} (K)$ of  $\tilde\Vcal_\qlog^{\hat\lambda} (K)$ consisting of the $\xi$ for which
$\glie^{\otimes N}\otimes X_\theta^{(\lambda_0+1)}\otimes  1_\lambda$ (or equivalently, but in more invariant terms, 
$\glie^{\otimes N}\otimes (\glie^{(\lambda_0+1)}\otimes V_\lambda)^+$)  vanishes in the generic point of the locus 
$t_1=\cdots =t_{\lambda_0+1}=0$.

The construction of Corollary \ref{cor:etadef} identifies this with the subspace 
$\Vcal_\llog^{\hat\lambda} (K)\subset \tilde\Vcal_\llog^{\hat\lambda} (K)$  defined by the same property.
\end{corollary}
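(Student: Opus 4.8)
The plan is to deduce this corollary from Proposition~\ref{prop:localduality1} by dualizing the defining surjection $\tilde\Vcal_{\hat\lambda}(K)\twoheadrightarrow \Vcal_{\hat\lambda}(K)$. Recall from Subsection~\ref{subsect:affineLie} that $\Vcal_{\hat\lambda}(K)$ is the quotient of $\tilde\Vcal_{\hat\lambda}(K)$ by the sub-$\widehat{\glie K}$-representation generated by $(X_\theta t^{-1})^{1+\lambda_0}1_\lambda$, equivalently by the image of $(\glie_\theta\mfrak^{-1})^{1+\lambda_0}\otimes V_\lambda^+$. Taking topological duals, $\Vcal^{\hat\lambda}(K)=\Vcal_\qlog^{\hat\lambda}(K)$ is the annihilator of that subrepresentation inside $\tilde\Vcal^{\hat\lambda}(K)=\tilde\Vcal_\qlog^{\hat\lambda}(K)$. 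So the first step is: a $\xi\in\tilde\Vcal_\qlog^{\hat\lambda}(K)$ lies in $\Vcal_\qlog^{\hat\lambda}(K)$ if and only if $\la\xi\,|\,w\ra=0$ for every $w$ in the $\U\!\widehat{\glie K}$-submodule generated by $(X_\theta t^{-1})^{1+\lambda_0}1_\lambda$.

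The second step is to translate that vanishing condition, via the explicit pairing formula of Proposition~\ref{prop:localduality1}, into the stated condition on $\xi_N$. Because $\xi$ is a $\Rscr^s_*$-homomorphism, $\la\xi\,|\,-\ra$ is compatible with the $\widehat{\glie K}$-action in the sense already used in the proof of Proposition~\ref{prop:localduality1} (the defining properties $(i)$--$(iii)$ are precisely the statement that contracting with extra Lie-algebra elements corresponds to the adjoint of the representation action). Hence $\la\xi\,|\,-\ra$ kills the whole submodule generated by $(X_\theta t^{-1})^{1+\lambda_0}1_\lambda$ as soon as it kills the generator itself together with all its images under the contraction operators — but the defining properties propagate this, so it suffices that $\la\xi\,|\,Y_Ng_N\otimes\cdots\otimes Y_1g_1\otimes (X_\theta t^{-1})^{1+\lambda_0}1_\lambda\ra=0$ for all $Y_i\in\glie$, $g_i\in K$ and all $N$. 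Unravelling the pairing, $(X_\theta t^{-1})^{1+\lambda_0}1_\lambda$ corresponds to inserting $1+\lambda_0$ tensor slots each carrying $X_\theta t^{-1}$ and then taking residues; by property $(i)$ and the remark preceding Example~\ref{ex:} (which is exactly the observation that $\xi$ has no pole at $t_i=0$ on a slot carrying an element annihilating the relevant vector, and more to the point that $\xi(Z\otimes X_\theta^{\otimes s}\otimes 1_\lambda)$ has a well-defined restriction to $t_1=\cdots=t_s=0$), the vanishing of all these pairings is equivalent to the vanishing of the restriction of $\xi_N$ on $\glie^{\otimes N}\otimes X_\theta^{(\lambda_0+1)}\otimes 1_\lambda$ to the generic point of $t_1=\cdots=t_{\lambda_0+1}=0$. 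The invariant reformulation with $(\glie^{(\lambda_0+1)}\otimes V_\lambda)^+$ follows because that subspace is the $\glie$-span of $X_\theta^{\otimes(\lambda_0+1)}\otimes 1_\lambda$ and the condition is $\glie$-equivariant (it is preserved under the contractions $r_i$, which is again property $(i)$).

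The third step is the logarithmic statement. Under the isomorphism $\tilde\Vcal_\qlog^{\hat\lambda}(K)\cong\tilde\Vcal_\llog^{\hat\lambda}(K)$ of Corollary~\ref{cor:etadef} (the map $\xi\mapsto\hat\xi$, with inverse built inductively there), one has $\hat\xi_N=\sum_{J}\xi_{N\ssm J}\otimes\hat\eta_J$; since $\hat\eta_J$ is regular in each $t_i=0$, the value of $\hat\xi_N$ on a tensor ending in $X_\theta^{(\lambda_0+1)}\otimes 1_\lambda$ restricted to $t_1=\cdots=t_{\lambda_0+1}=0$ is computed termwise, and the terms with $J\cap\{1,\dots,\lambda_0+1\}\neq\emptyset$ contribute a residue of something regular, hence zero, while the term $J=\emptyset$ is just $\xi_N$ on the same tensor. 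So the defining condition is literally the same on both sides, giving $\Vcal_\qlog^{\hat\lambda}(K)\cong\Vcal_\llog^{\hat\lambda}(K)$. The main obstacle I anticipate is the bookkeeping in the second step: making precise that ``$\la\xi\,|\,-\ra$ kills the submodule iff it kills the generators'' — one must check that the left ideal description really reduces to finitely many slot-insertions and that property $(iii)$ (the $s_{ij}$ relation carrying the level $\ell\check c$) does not produce extra poles at $t_1=\cdots=t_{\lambda_0+1}=0$ that would spoil the ``restriction'' being well-defined; this is exactly the content of the remark before Example~\ref{ex:}, applied with $s=\lambda_0+1$, and should be invoked carefully rather than re-proved.
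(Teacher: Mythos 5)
Your treatment of the first (quasi-logarithmic) assertion follows the paper's own route and is essentially correct: you dualize the presentation of $\Vcal_{\hat\lambda}(K)$ as the quotient by the submodule generated by $(X_\theta t^{-1})^{1+\lambda_0}1_\lambda$, note that this submodule is spanned by elements $X_Nf_N\circ\cdots\circ X_1f_1\circ (X_\theta t^{-1})^{\circ(\lambda_0+1)}\circ 1_\lambda$, unwind the pairing of Proposition \ref{prop:localduality1} as an iterated residue, and use the regularity of $\xi(Z\otimes X_\theta^{\otimes(\lambda_0+1)}\otimes 1_\lambda)$ at the generic point of $t_1=\cdots=t_{\lambda_0+1}=0$ to convert the inner residues against $t_i^{-1}$ into restriction to that locus, the outer residues against arbitrary $f_i$ giving the equivalence. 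One repair is needed in your invariant reformulation: property (i) only relates residues along $t_i=0$ to the action on the $V_\lambda$-slot, so ``the condition is $\glie$-equivariant because it is preserved under the contractions $r_i$'' does not follow; the relevant equivariance concerns the diagonal (adjoint) action on the block $\glie^{\otimes(\lambda_0+1)}\otimes V_\lambda$. The paper argues instead that the characterization of the dual of the irreducible quotient is independent of the choice of Borel (invariance under the automorphism group of $\glie$), and that $(\glie^{\otimes(\lambda_0+1)}\otimes V_\lambda)^+$ is spanned by the transforms of $X_\theta^{\otimes(\lambda_0+1)}\otimes 1_\lambda$; alternatively, prefix the generator by constant elements $Y\in\glie\subset\glie\Ocal$ and commute them through, which realizes the diagonal action inside the submodule being annihilated. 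This is a short fix, but as written your justification is not one.

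The genuine gap is in your third step. The claim that all terms of $\hat\xi_I=\sum_{J}\xi_{I\ssm J}\otimes\hat\eta_J$ with $J\cap\{1,\dots,\lambda_0+1\}\neq\emptyset$ ``contribute a residue of something regular, hence zero'' on the locus is not correct. Take $J$ consisting of one slot from the $X_\theta$-block and one $Z$-slot carrying $Y$ with $\check{c}(Y,X_\theta)\neq 0$ (for instance $Y=X_{-\theta}$): then $\hat\eta_J(Y\otimes X_\theta)$ is a nonzero multiple of $(t_i-t_j)^{-2}dt_i\,dt_j$, whose pole lies along a diagonal that does not contain the generic point of $t_1=\cdots=t_{\lambda_0+1}=0$, so its restriction there is nonzero; and the accompanying factor $\xi_{I\ssm J}$ is evaluated on a tensor containing only $\lambda_0$ copies of $X_\theta$, to which the vanishing hypothesis does not apply. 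So the restriction of $\hat\xi_N$ to the locus is not just the restriction of $\xi_N$: there are nonvanishing cross terms, and in addition you have overlooked the terms with $J$ nonempty but contained in the $Z$-slots, which also survive (those at least are controlled by the hypothesis together with an induction on $N$, since they involve $\xi_{I\ssm J}$ with the full block of $X_\theta$'s). Consequently your argument does not establish that the logarithmic and quasi-logarithmic vanishing conditions correspond under the map of Corollary \ref{cor:etadef}; a correct treatment has to account for these cross terms (for example by an inductive or filtration argument, or by working with the invariant form of the condition), and you cannot lean on the paper here either, since it disposes of this clause with ``the last assertion is clear.''
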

\begin{proof}
The  topological dual of $\Vcal_{\hat\lambda} (K)$  consists of  the $\xi\in \tilde\Vcal_\qlog^{\hat\lambda} (K)$ for 
which for all  $N\ge 0$, $X_i\in \glie$ and $f_i\in K$  we have
$\la \xi | X_Nf_N\circ \cdots \circ X_1f_1\circ (X_\theta t^{-1})^{\circ (\lambda_0+1)}\circ 1_\lambda\ra=0$. 
This means that for every 
$Z\in \glie^{\otimes N}$, and $f_1, \dots , f_N\in K$, 
\[
r_{N+\lambda_0+1} \cdots r_1 \big(f_N(t_{N+\lambda_0+1})\cdots 
f_1(t_{\lambda_0+1})t_{\lambda_0+1}^{-1}\cdots t_1^{-1} 
\xi(Z\otimes X_\theta^{\otimes(\lambda_0+1)}\otimes v)\big)
\]
vanishes.
Now $\xi(Z\otimes X_\theta^{\otimes (\lambda_0+1)}\otimes v)$ is 
regular in the generic point of  $t_1=\cdots =t_{\lambda_0+1}=0$. It follows that 
$r_{\lambda_0+1}\cdots r_1\big(t_{\lambda_0+1}^{-1}\cdots t_1^{-1} 
\xi(Z\otimes X_\theta^{\otimes (\lambda_0+1)}\otimes v)\big)$ 
is the contraction of  $\xi(Z\otimes X_\theta^{\otimes (\lambda_0+1)}\otimes v)$ 
with the tensor $\p /\p t_1\otimes\cdots \otimes  \p /\p t_{\lambda_0+1}$ followed by restriction to 
 $t_1=\cdots =t_{\lambda_0+1}=0$. The vanishing of latter  is equivalent to the  vanishing of 
 $\la \xi | X_Nf_N\circ \cdots X_1f_1\circ (X_\theta t^{-1})^{\circ (\lambda_0+1)}\circ 1_\lambda\ra$ for all 
 $(f_1, \dots, f_N)\in K^N$.
This characterization  must of course be invariant under the automorphism group of $\glie$. Since 
$\glie^{(\lambda_0+1)}\otimes V_\lambda$, when considered as a  $\glie$-module, has  highest weight  
$(\lambda_0+1)\theta +\lambda$ with highest weight space  spanned by 
$(X_\theta)^{(\lambda_0+1)}\otimes 1_\lambda$, this amounts to the corresponding vanishing property of 
$\xi|\oplus_N \glie^{\otimes N}\otimes (\glie^{(\lambda_0+1)}\otimes V_\lambda)^+$.

The last assertion is clear.
\end{proof}

\begin{remark}\label{rem:}
Since any $\xi$ in $\Vcal_\qlog^{\hat\lambda} (K)$ or $\Vcal_\llog^{\hat\lambda} (K)$ is permutation invariant, 
it follows that if we substitute $1_\lambda$ in the $V_\lambda$-slot  and $X_\theta$ in  $\lambda_0+1$ of the 
$N$ $\glie$-slots of  $\xi_N$, then $\xi_N$ vanishes on the locus defined by putting the corresponding 
$\lambda_0+1$ coordinates equal to zero.
\end{remark}

\subsection{The case of a multirepresentation}\label{subsect:multo}
Let $C$ be a nonsingular irreducible  projective curve of genus $g$ over $k$ and  
$P\subset C$ a \emph{nonempty} finite subset of closed points. For every $p\in P$, we write $\Ocal_p$ for the 
completed local ring $\varinjlim_{r}\Ocal_{C,p}/\mfrak_{C,p}^r$.  We put   $\Ocal_P:=\prod_{p\in P} \Ocal_p$ 
and $K_P:=\prod_{p\in P} K_p$, where $K_p$ is the field of fractions of $\Ocal_p$. 
This brings us in the setting of Proposition \ref{prop:F_KP}.  If we write $\hat C_P$ for the formal completion of $C$ at 
$P$, then it makes sense to think of an element of $\omega _{K_P}^{(I)}\la\qlog\ra$ as a section of 
$\Omega_C(P)^{(I)}\la\qlog\ra$ over $\hat C_P$ so that we 
sometimes write   $\omega_{\hat C_P}(P)^{(I)}\la\qlog\ra$ for $\omega _{K_P}^{(I)}\la\qlog\ra$.

A central extension $\widehat{\glie K_P}$ of  $\glie K_P:=\glie\otimes K_P=\prod_{p\in P} \glie K_p$  by 
$\cfrak$  is defined by 
letting this  on each summand $\glie K_p$ to the central extension $\widehat{\glie K_p}$ introduced before. 
Suppose we are now also given a nonnegative integer $\ell$ and for each $p\in P$ an integral dominant weight 
$\lambda(p)\in\hlie^*$ of level $\ell$.  We put 
$V_{\lambda}:=\otimes_{p\in P} V_{\lambda (p)}$ and  regard this first as a representation of  
$\glie^P\times \cfrak$, where $c\in \cfrak$ acts as multiplication by $\ell$.
We subsequently  regard this as one of the Lie subalgebra $\widehat{\glie \Ocal_P}:=
\glie\Ocal_P\times \cfrak$  of $\widehat{\glie K_P}$ via its reduction modulo $\mfrak_P$ and then denote it 
by $V_{\hat\lambda}$. Let $\tilde\Vcal_{\hat\lambda}(K_P)$ be obtained by inducing it up to $\widehat{\glie K_P}$.  
This is easily seen to be naturally isomorphic to $\otimes_{p\in P} \tilde\Vcal_{\hat\lambda(p)}(K_p)$.
Its  irreducible quotient $\Vcal_{\hat\lambda}(K_P)$ is therefore identified with 
$\otimes_{p\in P}\Vcal_{\hat\lambda (p)}(K_p)$. In other words, we need to divide out 
$\tilde\Vcal_{\hat\lambda}(K_P)$ by the submodule generated by 
$\sum_{p\in P} (X_\theta \mfrak_p^{-1})^{\otimes (\lambda_0(p)+1)}\otimes V_{\hat\lambda}^{(p)+}$.
A minor modification of the proof of Lemma \ref{lemma:finiteAgen} yields:

\begin{lemma}\label{lemma:finiteAmultgen}
Let $A\subset K_P$ be a $k$-subalgebra such that $K_P/(A+\Ocal_P)$ is of finite $k$-dimension. 
Then $\Vcal_{\hat\lambda} (K_P)$ is a finitely generated $\U\!\widehat{\glie A}$-module. $\square$
\end{lemma}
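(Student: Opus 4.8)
The statement is Lemma \ref{lemma:finiteAmultgen}, asserting that if $A\subset K_P$ is a $k$-subalgebra with $\dim_k K_P/(A+\Ocal_P)<\infty$, then $\Vcal_{\hat\lambda}(K_P)$ is a finitely generated $\U\!\widehat{\glie A}$-module. The plan is to run the argument of Lemma \ref{lemma:finiteAgen} verbatim, substituting the product ring $K_P$ for the single field $K$ and keeping track of the one extra input we have here, namely the local nilpotence of $\widehat{\glie K_P}_-$ on the \emph{irreducible} quotient $\Vcal_{\hat\lambda}(K_P)$, which is what makes the argument terminate.

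First I would pick a finite collection $f_1,\dots,f_M\in K_P$ whose images in $K_P/(A+\Ocal_P)$ form a $k$-basis; by replacing each $f_i$ by its components supported at single points $p\in P$ and rescaling, I may assume each $f_i$ lies in a single factor $K_p$ and is a negative power of a uniformizer there, so that $\glie f_i$ acts locally nilpotently on $\Vcal_{\hat\lambda}(K_P)=\otimes_{p}\Vcal_{\hat\lambda(p)}(K_p)$ (this is exactly the local-nilpotence fact established just after the definition of $\Vcal_{\hat\lambda}(K)$, applied factor by factor, and is the sole place where we need to pass to the irreducible quotient rather than $\tilde\Vcal$). Then one has the vector-space decomposition
\[
\textstyle \widehat{\glie K_P}=\widehat{\glie A}+\glie\mfrak_P+\sum_{i=1}^M \glie f_i,
\]
where $\widehat{\glie A}$ and $\glie\mfrak_P$ are Lie subalgebras and the $\glie f_i$ span a finite-dimensional complement. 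A Poincar\'e–Birkhoff–Witt reordering argument — the same one used in Lemma \ref{lemma:finiteAgen} — then gives
\[
\textstyle \U\!\widehat{\glie K_P}=\U\!\widehat{\glie A}\,\Big(\sum_{(m_1,\dots,m_M)\in\ZZ_{\ge0}^M}(\glie f_M)^{m_M}\cdots(\glie f_1)^{m_1}\Big)\,\U(\glie\mfrak_P).
\]
Since $\U(\glie\mfrak_P)$ annihilates $V_{\hat\lambda}$ (reduction mod $\mfrak_P$), and $\Vcal_{\hat\lambda}(K_P)$ is generated over $\U\!\widehat{\glie K_P}$ by $V_\lambda$, it follows that $\Vcal_{\hat\lambda}(K_P)$ is generated as a $\U\!\widehat{\glie A}$-module by the image of $\big(\sum_{\underline m}(\glie f_M)^{m_M}\cdots(\glie f_1)^{m_1}\big)V_\lambda$. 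Because each $\glie f_i$ acts locally nilpotently and $V_\lambda$ is finite dimensional, for each $i$ there is a bound beyond which $(\glie f_i)^{m_i}$ kills everything in sight, so only finitely many monomials $(m_1,\dots,m_M)$ contribute; hence the generating set spans a finite-dimensional $k$-subspace, and $\Vcal_{\hat\lambda}(K_P)$ is finitely generated over $\U\!\widehat{\glie A}$.

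There is essentially no main obstacle distinct from the single-point case: the only delicacy is that $K_P$ is a product of fields rather than a field, so "negative power of a uniformizer" must be read componentwise and the local-nilpotence statement invoked on each tensor factor of $\Vcal_{\hat\lambda}(K_P)$ separately; once the $f_i$ are chosen with single-point support this is automatic. The PBW reordering and the finiteness of the surviving index set go through word for word, which is presumably why the excerpt says "a minor modification of the proof of Lemma \ref{lemma:finiteAgen} yields" the result.
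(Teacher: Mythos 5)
Your proposal is correct and is exactly the "minor modification" of the proof of Lemma \ref{lemma:finiteAgen} that the paper invokes: the same decomposition $\widehat{\glie K_P}=\widehat{\glie A}+\glie\mfrak_P+\sum_i\glie f_i$ with the $f_i$ chosen as negative powers of uniformizers supported at single points of $P$, the same PBW reordering, and the same appeal to local nilpotence of $\widehat{\glie K_p}_-$ on each tensor factor of the irreducible quotient. Nothing is missing.
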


We extend our notation for the topological duals to the present situation, denoting these by   
$\tilde\Vcal^{\hat\lambda}(K_P)$ and $\Vcal^{\hat\lambda}(K_P)$.
 Proposition \ref{prop:localduality1} also generalizes in an evident manner: we put
\[
\tilde\Vcal^{\hat\lambda}_\qlog(K_P):=\hom_{\Rscr^s_P} (\Lcal^{\ell\check{c}}_{\glie,P,{V_\lambda}}, \qLog_{K_P}).
\]
but think of an element $\xi$ the right hand side as one which assigns to
every finite set $I$ a linear map: $\xi_I : \glie^{\otimes I}\otimes V_\lambda\to \omega_{\hat C_P}(P)^{(I)}\la\qlog\ra$ 
subject to the familiar conditions.  

Given $p\in P$, then we write  $V_\lambda^{(p)}$  when  $V_\lambda$ is considered as a 
$\glie$-module acting through the $p$-th tensor factor  only and denote this action by 
$(X, v)\in \glie\times V\mapsto X^{(p)}v$.  So the kernel of the action of $X_\theta^{(p)}$ on $V$ is the highest weight 
space  $V_\lambda^{(p)+}$ of $V^{(p)}$.
For a restriction as in  Corollary \ref{cor:topduallocal}, we  also need to specify $p$, but are allowed to 
replace $1_\lambda\in V_\lambda$ by any $v\in V^{(p)+}$. 
We then find in the same way:

\begin{proposition}\label{prop:localduality2}
A well-defined pairing  
$\tilde\Vcal_\qlog^{\hat\lambda}(K_P)\times \tilde\Vcal_{\hat\lambda} (K_P)\xrightarrow{\la\;|\;\ra }  k$ is defined by
\[
\textstyle \big\la \xi \vert X_Nf_N\otimes\cdots \otimes X_1f_1\otimes v\big\ra := r_N r_{N-1}\cdots r_1 
\big(\pi_1^*f_1\cdots \pi_N^*f_N \xi_N(X_N\otimes \cdots \otimes X_1\otimes v)\big),
\]
where $r_i$ stands for the sum of the residues (at the points of $P^I$) along the $i$-th factor. 
This pairing is nondegenerate in the sense that this
identifies $\tilde\Vcal_\qlog^{\hat\lambda}(K_P)$ with the topological dual of the representation 
$\tilde\Vcal_{\hat\lambda} (K_P)$.

Via this duality the topological dual of $\Vcal_{\hat\lambda}(K_P)$ corresponds 
with  the subspace   $\Vcal^{\hat\lambda}_\qlog (K_P)$ resp.\  $\Vcal^{\hat\lambda}_\llog (K_P)$ 
with the property that for every $p\in P$ and every integer $N\ge 0$ and every 
$Z$ in the tensor algebra of $\glie$ and $v\in V_\lambda^{(p)+}$,  
$\xi(Z\otimes X_\theta^{(\lambda_0(p)+1)}\otimes  v)$  vanishes  in the generic point of the locus $t_1=\cdots =t_{\lambda_0(p)+1}=p$.
\end{proposition}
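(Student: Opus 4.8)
The plan is to reduce Proposition \ref{prop:localduality2} to the single-point case already established (Proposition \ref{prop:localduality1} and Corollary \ref{cor:topduallocal}) by exploiting the product decompositions. First I would record the two factorizations we have at our disposal: on the algebra side, $\widehat{\glie K_P}$ is the Baer sum of the $\widehat{\glie K_p}$ over $\cfrak$, and the inducing construction commutes with the tensor product, so $\tilde\Vcal_{\hat\lambda}(K_P)\cong\bigotimes_{p\in P}\tilde\Vcal_{\hat\lambda(p)}(K_p)$ as topological $\widehat{\glie K_P}$-modules. On the polydifferential side, the product $\Ocal_P=\prod_p\Ocal_p$ gives $\omega_{K_P}^{(I)}\la\qlog\ra=\bigoplus_{p\in P}\omega_{K_p}^{(I)}\la\qlog\ra$, and by the K\"unneth-type splitting $(A'\oplus A'')^{\otimes_R I}\cong\bigoplus_{I=I'\sqcup I''}(A')^{\otimes_R I'}\otimes(A'')^{\otimes_R I''}$ recalled in the Conventions, together with Corollary \ref{prop:F_KP}, one gets a decomposition of an $\Rscr^s_P$-homomorphism $\xi$ indexed by functions $I\to P$: writing $I=\bigsqcup_{p}I_p$, the component of $\xi_I$ landing in the piece supported at the points $p$ on the slots $I_p$ is essentially a tensor product $\bigotimes_p\xi^{(p)}_{I_p}$ of single-point data (up to the combinatorial bookkeeping of which tensor slot of $V_\lambda=\bigotimes_p V_{\lambda(p)}$ is acted on). This identifies $\hom_{\Rscr^s_P}(\Lcal^{\ell\check c}_{\glie,P,V_\lambda},\qLog_{K_P})$ with $\bigotimes_{p\in P}\hom_{\Rscr^s_*}(\Lcal^{\ell\check c}_{\glie,V_{\lambda(p)}},\qLog_{K_p})$.

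Next I would check that the pairing in the statement is, under these identifications, the tensor product of the single-point pairings of Proposition \ref{prop:localduality1}. The formula $\la\xi\mid X_Nf_N\otimes\cdots\otimes X_1f_1\otimes v\ra=r_N\cdots r_1(\pi_1^*f_1\cdots\pi_N^*f_N\,\xi_N(X_N\otimes\cdots\otimes X_1\otimes v))$ with $r_i$ the sum of residues over $P^I$ splits as a sum over which point each coordinate $t_i$ is taken to lie near; since $K_p$ and $\omega_{K_q}$ for $p\ne q$ are residue-orthogonal (the residue at $p$ of a form regular at $q$ being zero, and vice versa) only the ``block-diagonal'' terms survive, giving exactly $\prod_p\la\xi^{(p)}\mid(\cdot)^{(p)}\ra$. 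Well-definedness (the PBW relations and the central term $\ell\check c(X,Y)\res(g\,df)$) then follows factor by factor from the computation already carried out in Proposition \ref{prop:localduality1} — the cross terms between different points contribute nothing because a derivation continuous at $p$ and a function regular at $q$ pair to zero. From here, perfectness of the $K_P$-pairing is immediate: a tensor product of topologically perfect pairings between (filtered colimits of) topological vector spaces is topologically perfect, so $\tilde\Vcal^{\hat\lambda}_\qlog(K_P)$ is the topological dual of $\tilde\Vcal_{\hat\lambda}(K_P)\cong\bigotimes_p\tilde\Vcal_{\hat\lambda(p)}(K_p)$.

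For the second half — the description of the dual of the irreducible quotient $\Vcal_{\hat\lambda}(K_P)$ — I would argue as follows. By definition $\Vcal_{\hat\lambda}(K_P)$ is the quotient of $\tilde\Vcal_{\hat\lambda}(K_P)$ by the submodule generated by $\sum_{p\in P}(X_\theta\mfrak_p^{-1})^{\otimes(\lambda_0(p)+1)}\otimes V_{\hat\lambda}^{(p)+}$; equivalently, since the tensor product of quotients is the quotient by the sum of the pulled-back kernels, it is $\bigotimes_p\Vcal_{\hat\lambda(p)}(K_p)$. Dualizing, $\Vcal^{\hat\lambda}(K_P)$ is the annihilator of that submodule, i.e. the intersection over $p\in P$ of the annihilators of the $p$-th relation submodule. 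For a single $p$, Corollary \ref{cor:topduallocal} (in its $P$-fold guise, with $1_\lambda$ replaced by an arbitrary $v\in V_\lambda^{(p)+}$ exactly as in the remark following it) identifies that annihilator with the $\xi$ for which $\xi(Z\otimes X_\theta^{(\lambda_0(p)+1)}\otimes v)$ vanishes in the generic point of $t_1=\cdots=t_{\lambda_0(p)+1}=p$, for all $Z$ in the tensor algebra of $\glie$ and all $v\in V_\lambda^{(p)+}$; the proof is the same contraction-with-$\partial/\partial t_i$-then-restrict argument, applied at the point $p$ and observing that regularity of $\xi$ at $p$ on those slots is guaranteed by property $(i)$ and $\Scal_N$-equivariance since $X_\theta$ annihilates $v\in V_\lambda^{(p)+}$. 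Taking the intersection over all $p$ gives precisely $\Vcal^{\hat\lambda}_\qlog(K_P)$ resp.\ $\Vcal^{\hat\lambda}_\llog(K_P)$ as stated. The last clause, that the isomorphism of Corollary \ref{cor:etadef} (in its $P$-fold form, Corollary \ref{prop:F_KP} / \ref{cor:zetadef}) carries the $\qlog$ version to the $\llog$ version, is inherited factorwise from Corollary \ref{cor:topduallocal}.

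The main obstacle I anticipate is not any single hard estimate but the bookkeeping in the product decomposition of $\Rscr^s_P$-homomorphisms: one must be careful that the sign characters $\sign(I)$ occurring in the $\otimes_k$-conventions, the $\Scal(I)$-equivariance, and the distribution of the tensor slots of $V_\lambda=\bigotimes_p V_{\lambda(p)}$ among the blocks $I_p$ are all compatible, so that the claimed isomorphism $\hom_{\Rscr^s_P}(\Lcal^{\ell\check c}_{\glie,P,V_\lambda},\qLog_{K_P})\cong\bigotimes_p\hom_{\Rscr^s_*}(\Lcal^{\ell\check c}_{\glie,V_{\lambda(p)}},\qLog_{K_p})$ is genuinely natural and identifies the pairings on the nose rather than up to an uncontrolled scalar or sign. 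Once that is set up cleanly, every analytic input is already available from the single-point Proposition \ref{prop:localduality1} and Corollary \ref{cor:topduallocal}, and the rest is formal.
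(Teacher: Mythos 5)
Your proposal is correct in substance and reaches the statement by a route that differs from the paper's mainly in organization: the paper, having proved Proposition \ref{prop:localduality1} and Corollary \ref{cor:topduallocal} for a single point, simply reruns that argument with $(K_P,\Ocal_P)$ in place of $(K,\Ocal)$ --- the well-definedness check via Lemma \ref{lemma:resformula} localizes at each $p$, perfectness comes from the same comparison of the PBW filtration on $\tilde\Vcal_{\hat\lambda}(K_P)$ with the filtration by vanishing of the $\xi_{N-1}$, and the description of the dual of the irreducible quotient is the contraction-with-$\p/\p t_i$-and-restrict argument carried out at each $p\in P$ with $1_\lambda$ replaced by $v\in V_\lambda^{(p)+}$ --- whereas you factorize everything over $p\in P$ first and then invoke the single-point results factorwise. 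Your observations that the cocycle and commutator cross terms between distinct points vanish, and that the dual of the quotient is the intersection over $p$ of the annihilators of the per-point relation submodules, are exactly right; so is your implicit reading of $\omega^{(I)}_{K_P}\la\qlog\ra$ as formal polydifferentials along $P^I$, with components indexed by maps $I\to P$, which is what the residues ``at the points of $P^I$'' in the statement require.

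Two cautions. First, the identification you lean on, $\hom_{\Rscr^s_P}(\Lcal^{\ell\check{c}}_{\glie,P,V_\lambda},\qLog_{K_P})\cong\bigotimes_{p}\hom_{\Rscr^s_*}(\Lcal^{\ell\check{c}}_{\glie,V_{\lambda(p)}},\qLog_{K_p})$, is not correct for the algebraic tensor product: already in a fixed graded piece, the topological dual of $\bigotimes_p\sym_{N_p}(\glie\otimes K_p/\Ocal_p)\otimes V_\lambda$ is the \emph{completed} tensor product of the single-point duals, strictly larger than the algebraic one, and with the algebraic tensor product your perfectness claim would fail. So either formulate the factorization with completed tensor products or bypass it and argue gradedwise. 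Second, proving that every $\Rscr^s_P$-homomorphism decomposes in this completed sense is exactly the ``bookkeeping'' you defer, and it amounts to the same graded comparison that drives the single-point proof; the reduction is therefore legitimate but does not genuinely shortcut the work --- which is presumably why the paper is content to assert the multi-point case ``in the same way''.
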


\begin{remark}\label{rem:}
The construction of Corollary \ref{cor:etadef} identifies $\tilde\Vcal_\qlog^{\hat\lambda}(K_P)$ with 
\[
\tilde\Vcal_\llog^{\hat\lambda}(K_P):= \hom_{\Rscr_P}(\Lcal_{\glie, P, V_\lambda}, \Lcal og_K)
\]
 and $\Vcal_\qlog^{\hat\lambda}(K_P)$ with the corresponding subspace   $\Vcal_\llog^{\hat\lambda}(K_P)$.
\end{remark}

\subsection{The propagation principle}\label{subsect:propagation}
Let now $P'$ be a  finite subset of $C$ which contains $P$ and   regard $\widehat{\glie_P}$ as a 
subalgebra of $\widehat{\glie_{P'}}$.
If we assign to each element of $P'\ssm P$ the trivial representation with a specified generator, 
then we have also a corresponding inclusion $\tilde\Vcal_{\hat\lambda}(K_P)\subset\tilde\Vcal_{\hat\lambda'}(K_{P'})$. 
This induces a surjection of their topological duals, which in their incarnation of Proposition
\ref{prop:localduality2} above, amounts to restricting $\xi$ to $P$.

Since we assumed $P$ nonempty, $C\ssm P$ is affine. We then regard its algebra  $k[C\ssm P]$ of regular functions as 
being  embedded in $K_P$.  It is a classical fact that the polar part of a meromorphic differential on an irreducible 
smooth projective curve can be prescribed arbitrarily, provided that the sum of the residues is zero. 
This means that the annihilator of $k[C\ssm P]$ in $\omega_P$ is the $k[C\ssm P]$-module of meromorphic 
differentials on $C$ that are regular  on $C\ssm P$. We shall need the following more precise version, which is a 
classical formulation of Serre duality for curves:

\begin{lemma}\label{lemma:serreduality}
Let $D=\sum_{z\in P} n_z (z)$ be a divisor on $C$.  Then for every nonempty subset $P$ of $C$ containing the 
support of $D$, the cover of $C$ by $C\ssm P$ and the formal neighborhood of $P$ defines an isomorphism
of $A_P(D):=k[C\ssm P]\bs \big(\prod_{p\in P} K_p/\mfrak^{n_p}\big)$ with $H^1(C, \Ocal(-D))$. 
Via this isomorphism, the Serre duality pairing is given  by  
the residue pairing
\[
\textstyle H^0(C, \Omega_C(D))\times A(D) \to k, 
\quad \big(\alpha, (f_p\in K_p/\mfrak^{n_p})_{p\in P}\big)\mapsto \sum_{p\in P} \res_{p\in P} f_p\alpha.
\]
In  particular, the latter is topologically perfect. $\square$
\end{lemma}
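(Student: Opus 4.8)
The statement is a reformulation of Serre duality for the curve $C$ in terms of the Čech cover consisting of the affine open $C\ssm P$ and the formal (or infinitesimal) neighborhood of $P$. The plan is to identify $A_P(D) = k[C\ssm P]\backslash\big(\prod_{p\in P} K_p/\mfrak^{n_p}\big)$ with $H^1(C,\Ocal(-D))$ via this cover, and then to recognize the residue pairing as the Serre duality pairing. First I would set up the Čech complex for the sheaf $\Ocal_C(-D)$ relative to the (non-open, but cohomologically adequate) cover $\mathcal U = \{U_0, U_1\}$ with $U_0 = C\ssm P$ and $U_1$ the formal neighborhood $\hat C_P$; here one should recall that sections of $\Ocal(-D)$ over $\hat C_P$ are $\prod_{p\in P}\mfrak_p^{n_p}$ and over $U_0\cap U_1 = \hat C_P^\times$ they are $\prod_{p\in P}K_p$, while over $U_0$ they are $k[C\ssm P]$ (since $D$ is supported in $P$). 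The Čech $H^1$ is then the cokernel of $k[C\ssm P]\oplus \prod_p\mfrak_p^{n_p} \to \prod_p K_p$, $(f,g)\mapsto f - g$, which is exactly $A_P(D)$. One must justify that this two-term cover computes sheaf cohomology of $\Ocal(-D)$, i.e. that the higher cohomology of $\Ocal(-D)$ on the pieces of the cover and on their intersection vanishes; this is standard (formal schemes and affine schemes have no higher coherent cohomology, and the comparison of formal and ordinary cohomology along $P$ is the theorem on formal functions / the fact that $C$ is proper), but it is the one point requiring care since $U_1$ is not literally Zariski-open.

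Next I would identify the pairing. The pairing $H^0(C,\Omega_C(D))\times A_P(D)\to k$ sends $(\alpha, (f_p))$ to $\sum_{p\in P}\res_p(f_p\alpha)$; I must check this is well defined, i.e. independent of the representatives: replacing $(f_p)$ by $(f_p - g)$ with $g\in k[C\ssm P]$ changes the sum by $\sum_{p\in P}\res_p(g\alpha)$, which is zero by the residue theorem on the complete curve $C$ (all poles of $g\alpha$ lie in $P$, and the total residue of a global meromorphic differential vanishes); and replacing $f_p$ by $f_p + h_p$ with $h_p\in\mfrak_p^{n_p}$ changes nothing because $\alpha$ has a pole of order at most $n_p$ at $p$, so $h_p\alpha$ is regular at $p$ and has zero residue there. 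Then I would invoke the classical fact (e.g. Serre, \emph{Groupes algébriques et corps de classes}, Ch. II, or Tate's thesis on residues) that under the Čech identification $H^1(C,\Ocal(-D))\cong A_P(D)$ and the identification $H^0(C,\Omega_C(D)) = H^0(C,\Omega_C\otimes\Ocal(D))$, the Serre duality pairing $H^0(C,\Omega_C(D))\times H^1(C,\Ocal(-D))\to H^1(C,\Omega_C)\xrightarrow{\ \mathrm{tr}\ } k$ is precisely $(\alpha,(f_p))\mapsto \sum_{p\in P}\res_p(f_p\alpha)$, the trace map $H^1(C,\Omega_C)\xrightarrow{\sim} k$ being the sum of local residues. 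Granting this, topological perfectness of the pairing is immediate: Serre duality asserts that the pairing is a perfect duality of finite-dimensional $k$-vector spaces, and the topology on $A_P(D)$ is the discrete one (it is finite-dimensional), so "topologically perfect" reduces to "perfect".

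The main obstacle, as indicated, is the careful bookkeeping that the two-element cover by $C\ssm P$ and the formal neighborhood of $P$ genuinely computes $H^1(C,\Ocal(-D))$ together with its Serre-duality pairing — i.e. matching the adelic/Čech description of cohomology with the coherent sheaf-theoretic one and verifying that the trace map corresponds to the residue sum. This is entirely classical, and I would simply cite Serre's \emph{Corps de classes} (or Hartshorne III.7 combined with the residue description of the trace, or Tate's "Residues of differentials on curves") rather than reprove it; the only genuinely local computations needed — well-definedness of the residue pairing via the residue theorem and the vanishing of residues of regular differentials — are the short verifications sketched above. Everything else is formal.
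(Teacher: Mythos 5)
Your proposal is correct; the paper gives no proof of this lemma at all (it is stated with a terminal $\square$ as a ``classical formulation of Serre duality''), and your \v{C}ech/adelic identification of $H^1(C,\Ocal(-D))$ with $A_P(D)$ together with the residue description of the trace map (Serre, Tate) is exactly the standard argument being invoked. The one delicate point you flag -- that the affine open and the formal neighborhood jointly compute coherent cohomology -- is handled most cleanly via the exact sequence $0\to\Ocal(-D)\to j_*j^*\Ocal(-D)\to\oplus_{p\in P}\,K_p/\mfrak_p^{n_p}\to 0$ with $j:C\ssm P\hookrightarrow C$ affine (this is the same acyclic-covering device the paper uses in its relative setting in Subsection \ref{subsect:ks}), so your treatment is fine.
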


The  residue theorem implies  that $\glie [C\ssm P]:=\glie\otimes k[C\ssm P]$ is a 
Lie subalgebra of $\widehat{\glie K_P}$ (the central extension being trivial over it). 
The image of $k[C\ssm P]$ in $K_P$ satisfies the hypothesis of Lemma \ref{lemma:finiteAmultgen} and so 
$\Vcal_{\hat\lambda}(K_P)$ is a finitely generated 
$\glie [C\ssm P]$-module. In particular, the spaces of  $\glie [C\ssm P]$-covariants (also called the 
\emph{covacuum space} defined  by these data)
\[
\Vcal_{\hat\lambda}(C,P):=\Vcal_{\hat\lambda}(K_P)_{\glie [C\ssm P]}.
\]
is of  finite $k$-dimension. Its (continuous) dual
\[
\Vcal^{\hat\lambda}(C,P):=\Hom^{\ct}_{\glie [C\ssm P]}(\Vcal_{\hat\lambda}(K_P),k),
\]
 is  called  the \emph{vacuum} space (here $k$ is regarded as the trivial $\glie [C\ssm P]$-module). 
 We shall regard this as a subspace of the space $\Vcal^{\hat\lambda}(K_P)$ defined
in Proposition \ref{prop:localduality2}. Since $\glie[C\ssm P]$ contains $\glie$,  the corresponding 
$\xi_N$ that appear there must be invariant under the diagonal $\glie$-action.

If we assign to each point of $Q\ssm P$ a trivial representation with specified generator, then 
we obtain an embedding of $\Vcal_{\hat\lambda}(K_P)$ in $\Vcal_{\hat\lambda'}(K_{Q})$. 
This is equivariant with respect to the action of $\glie[C\ssm P]$ via the inclusion  
$\glie[C\ssm P]\subset \glie[C\ssm Q]$ so that we get a natural map of covariants
\[
\Vcal_{\hat\lambda}(K_P)_{\glie [K\ssm P]}\to \Vcal_{\hat\lambda'}(K_Q)_{\glie [K\ssm Q]}.
\]
The propagation principle asserts that this is in fact an isomorphism.
This gives the formation of these co-invariants an adelic flavor, as we can now take a direct limit over all finite subsets of $C$. 

The proper formulation in this spirit relies on a  somewhat stronger property that was exhibited by 
Beauville \cite{beauville}: we can also take $Q\subset P $  as long it is nonempty while retaining $V_{\hat\lambda}$.  
To be precise, we write $V_{\hat\lambda}=V_{\hat\lambda|Q} \otimes V_{\hat\lambda|P\ssm Q}$. Then 
$\Vcal_{\hat\lambda |Q}(K_Q)\otimes V_{\hat\lambda(p)|P\ssm Q}$ is a tensor product of 
$\glie [C\ssm Q]$-modules, where we let this Lie algebra act on the tensor factors
$ V_{\hat\lambda(p)}$ (with $p\in P\ssm Q$) by evaluation. The assertion is that for this action, the natural map
\[
\big(\Vcal_{\hat\lambda |Q}(K_Q)\otimes V_{\hat\lambda(p)|P\ssm Q} \big)_{\glie [C\ssm Q]}\to 
\Vcal_{\hat\lambda}(K_P)_{\glie [C\ssm P]}
\]
is also an isomorphism of finite dimensional $k$-vector spaces. 

The adelic formulation can then be stated as follows. Let $\Ocal_{\hat C}:=\prod_{p\in C} \Ocal_p$ 
be the `formal atomization' of $C$ and let 
$K_{\hat C}$ be the restricted product $\prod'_{p\in C} K_p$ (so $(f_p\in K_p)_{p\in C}$ lies in 
$K_{\hat C}$ if and only if $f_p\in \Ocal_p$ for  all but finitely many $p\in C$). 
This contains both $\Ocal_{\hat C}$ and the algebra of rational functions $k(C)$ as a 
subalgebra and is in fact generated by these two.
If we define 
$\widehat{\glie K_{\hat C}}$ in the usual manner as an extension of $\glie K_{\hat C}$ by $\cfrak$, then it 
contains $\widehat{\glie\Ocal_{\hat C}}$ and $\glie k(C)$ as Lie subalgebras.
We now think of  $\hat\lambda$ as a map which assigns to every  $p\in C$ a 
dominant weight $\hat\lambda(p)$ of level $\ell$ with the property that $\hat\lambda(p)=(\ell, 0,\dots, 0)$ 
for all but finitely many $p$. Let $V_{\hat\lambda}$ stand for the finite dimensional 
$\widehat{\glie\Ocal_{\hat C}}$-module $\otimes_{p\in C} V_{\lambda(p)}$ on which 
$c\in\cfrak$ acts as multiplication with $\ell$ and each factor $\glie\Ocal_p$ acts via its reduction to $\glie$ on the 
$p$th factor, so as $(Xf, v)\in \glie\Ocal_p\times V\mapsto f(p)X^{(p)}v$ (recall that for all but finitely many $p\in C$, 
$V_{\hat\lambda}^{(p)}$ is the trivial representation of $\glie$). 
We let  $\tilde\Vcal_{\hat\lambda}(\hat C)$ be the $\widehat{\glie K_{\hat C}}$-module obtained by induction via the 
inclusion $\widehat{\glie\Ocal_{\hat C}}\subset \widehat{\glie K_{\hat C}}$ and denote by 
$\Vcal_{\hat\lambda}(\hat C)$ its irreducible quotient. We identify $\ell$ with
$(\ell, 0,\dots, 0)$, so that the case in which all the representations are trivial these 
$\widehat{\glie\Ocal_{\hat C}}$-modules can be denoted  $\tilde\Vcal_\ell(\hat C)$  resp.\ $\Vcal_\ell(\hat C)$.
Then  the strong version of the propagation principle amounts to the statement that the space of its covariants with respect to 
$\glie k(C)$,  $\Vcal_{\hat\lambda}(\hat C)_{\glie k(C)}$, where $k(C)$ is the function field of $C$,
is naturally identified with $\Vcal_{\hat\lambda}(C,P)$ as defined above.

\subsection{Polydifferential incarnation of conformal blocks}\label{subsect:global}
Recall that if  $Q$ is a set and $V$ is a finite dimensional representation of  $\glie^Q$, 
then $V^{(q)}$ ($q\in Q$) stands for the $\glie$-representation defined by the $q$-th component of $\glie$,
and  in case $V^{(q)}$ has a single highest weight,  $V^{(q)+}\subset V^{(q)}$  denotes  
the subrepresentation generated by the highest weight space of $V^{(q)}$.
Here is our first main result regarding conformal blocks.

\begin{theorem}\label{thm:globaldual1}
Let $\hat\lambda$ assign to every $p\in C$ a highest weight 
$\hat\lambda (p)=(\lambda_0(p),\dots , \lambda_r(p))\in \ZZ_{\ge 0}^{r+1}$ of level $\ell$ (so with 
$\sum_i \check{n_i}\lambda_i(p)=\ell$)  such that  $\lambda(p):=(\lambda_1(p),\dots , \lambda_r(p))$ is  
zero for all but finitely many $p\in C$. Let $P$ be the support of $\lambda$  and put 
$V_\lambda:=\otimes_{p\in C} V_{\lambda(p)}=\otimes_{p\in P} V_{\lambda(p)}$ 
(a finite dimensional) representation of $\glie^C$) and denote by 
$1_\lambda:=\otimes_{p\in C} 1_{\lambda(p)}$ its highest weight vector. 
Then the vacuum space $\Vcal^{\hat\lambda}(C)$ is identified  with the subspace 
$\Vcal_\qlog^{\hat\lambda}(C)$ of the space of $\Rscr^s_P$-module homomorphisms 
$\Lcal_{\glie,P, V}^s\to \qLog_{C,P}$, i.e., with graded maps 
\[
\textstyle \xi=( \xi_N)_{N=0}^\infty\in \prod_{N=0}^\infty\Hom_k\big(\glie^{\otimes N}\otimes V_{\lambda}, H^0(C^N, \Omega_C(P)^{(N)}\la\qlog\ra)\big), 
\]
with $\xi_N$ $\Scal_N$-equivariant such that   if $(X_i\in \glie)_{i\ge 1}$, $v\in V_{\lambda}$, $p\in C$, then
\begin{enumerate}
\item[(i)]
$r_{1}^p\big(\xi_{N+1}(X_{N+1}\otimes\dots \otimes X_1\otimes v)\big)=
\xi_{N}(X_{N+1}\otimes\dots \otimes X_2\otimes X^{(p)}_{1}v)$, where 
$ X^{(p)}$ stands for the $\glie$-action of $X$ on $V_{\lambda}$ indexed by $p$
(so both sides are zero if $p\notin P$),
\item[(ii)] $r_{N+2,N+1} (\xi_{N+2}(X_{N+2}\otimes X_{N+1}\otimes\dots \otimes X_1\otimes v))=
\xi_{N+1}([X_{N+2},X_{N+1}]\otimes\cdots\otimes X_1\otimes v)$,
\item[(iii)] $s_{N+2,N+1} (\xi_{N+2}(X_{N+2}\otimes X_{N+1}\otimes \dots \otimes X_1\otimes v))=
\ell\check{c}(X_{N+2}, X_{N+1})\xi_{N}(X_{N}\otimes\dots\otimes X_1\otimes v)$,
\end{enumerate}
that satisfy in addition
\begin{enumerate}
\item[(iv)] $\xi_N$ takes $\glie^{\otimes N}\otimes  X_\theta^{\otimes(\ell +1)}\otimes  1_\lambda$  to  
rational polydifferentials that vanish on the generic point of the codimension $\ell$ diagonal $z_1=\cdots =z_{\ell+1}$.
\end{enumerate}
The duality pairing between $\Vcal_{\hat\lambda}(C)$ and  $\Vcal_\qlog^{\hat\lambda}(C)$ is  given by
\[
\la \xi | X_Nf_N\otimes \cdots \otimes X_1f_1\otimes v\ra =
\textstyle r_N\cdots  r_1 \big( \pi_1^*f_1\cdots \pi_N^*f_N\\xi_N(X_N\otimes\cdots \otimes X_1\otimes v)\big),
\]
where $f_i\in K_{\hat C}$ and $r_i=\sum_{p\in C} r^p_i$ 
stands for the total residue along the $i$th component.

Such $\xi\in \Vcal_\qlog^{\hat\lambda}(C)$ are necessarily $\glie$-invariant in the sense that each $\xi_N$ 
factors through $(\glie^{\otimes N}\otimes V_{\lambda})_\glie$ (in other words,  $\xi$ factors through 
$(\Lcal_{\glie,P, V}^s)_\glie$). 

There exists an integer $N\ge 0$ such that $\xi\in\Vcal_\qlog^{\hat\lambda}(C)\mapsto \xi_N$  is injective.
\end{theorem}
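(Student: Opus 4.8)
The plan is to show that for $N$ large enough, the map $\xi\mapsto \xi_N$ is injective on $\Vcal_\qlog^{\hat\lambda}(C)$, equivalently — after transporting along the identifications already established — on the vacuum space $\Vcal^{\hat\lambda}(C)$. The key input is that $\Vcal^{\hat\lambda}(C)$ is finite dimensional (it is the dual of the finite dimensional covacuum space $\Vcal_{\hat\lambda}(C,P)$, by Lemma \ref{lemma:finiteAmultgen} and the propagation principle of Subsection \ref{subsect:propagation}), and more precisely that $\Vcal_{\hat\lambda}(K_P)$ is a \emph{finitely generated} $\U\!\glie[C\ssm P]$-module. I would fix a finite generating set and track the ``length'' (number of tensor factors from $\glie\otimes k[C\ssm P]$) needed to reach it from the top weight vectors.

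First I would use Lemma \ref{lemma:finiteAmultgen}: there is a finite-dimensional $k$-subspace $U\subset \Vcal_{\hat\lambda}(K_P)$, spanned by images of $V_\lambda^+$ under a bounded number of actions of elements $X f$ with $f\in k[C\ssm P]$, such that $\U\!\glie[C\ssm P]\cdot U=\Vcal_{\hat\lambda}(K_P)$. Passing to $\glie[C\ssm P]$-covariants, the finite dimensional space $\Vcal_{\hat\lambda}(C,P)=\Vcal_{\hat\lambda}(K_P)_{\glie[C\ssm P]}$ is spanned by the images of a finite set of vectors of the form $X_M f_M\otimes\cdots\otimes X_1 f_1\otimes v$ with $M\le M_0$ for some fixed $M_0$, $X_i\in\glie$, $f_i\in k[C\ssm P]\subset K_{\hat C}$ and $v\in V_\lambda$. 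Second I would feed this into the duality pairing displayed in the theorem: for $\xi\in\Vcal_\qlog^{\hat\lambda}(C)$, the value $\la\xi\vert X_Mf_M\otimes\cdots\otimes X_1f_1\otimes v\ra$ is obtained by applying the total residue operators $r_1,\dots,r_M$ to $\pi_1^*f_1\cdots\pi_M^*f_M\,\xi_M(X_M\otimes\cdots\otimes X_1\otimes v)$. Hence $\xi$ is determined by the collection $\{\xi_M : M\le M_0\}$ — if all these vanish then $\xi$ annihilates a spanning set of $\Vcal_{\hat\lambda}(C,P)$, hence $\xi=0$.

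Third, to upgrade from ``$\xi$ is determined by $\xi_0,\dots,\xi_{M_0}$'' to ``$\xi$ is determined by $\xi_N$ alone'' for a single large $N$, I would invoke the recursive structure already emphasized in the text: property (ii) together with the fact that the brackets $[X,Y]$ with $X\otimes Y$ ranging over $\glie\otimes\glie$ span all of $\glie$ shows that $r_{N+1,N+2}$ recovers $\xi_{N}$ from $\xi_{N+1}$ (this is exactly the ``$\gamma_{N-1}$ is a residue of $\gamma_N$'' remark from the introduction, and is spelled out for the local model in Lemma \ref{lemma:etares} and in the three defining identities of Subsection \ref{subsect:dualrep}). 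Iterating, $\xi_N$ determines $\xi_{N-1},\dots,\xi_0$ whenever $N\ge 1$. Taking $N=\max(M_0,1)$ then gives: $\xi_N=0\Rightarrow \xi_j=0$ for all $j\le N$, in particular for all $j\le M_0$, whence $\xi=0$ by the previous step.

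The main obstacle I anticipate is \emph{making $M_0$ explicit and checking uniformity} — i.e.\ verifying that the generating degree produced by the PBW-type argument of Lemma \ref{lemma:finiteAmultgen} really is independent of $\xi$ and controls \emph{all} of $\Vcal_{\hat\lambda}(C,P)$ simultaneously, not just a single orbit. One has to be careful that after quotienting to the irreducible representation $\Vcal_{\hat\lambda}(K_P)$ and then to covariants, the bound survives; this uses that the local nilpotency of $\glie f_i$ on $\Vcal_{\hat\lambda}(K_P)$ (from condition (iv)/Corollary \ref{cor:topduallocal}) caps the exponents $m_i$, and that finitely many $f_i$ suffice by Lemma \ref{lemma:serreduality}. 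A secondary point to handle cleanly is the passage between the adelic picture ($K_{\hat C}$, $\glie k(C)$-covariants) and the picture over a fixed finite $P$; the strong propagation principle of Subsection \ref{subsect:propagation} is what licenses restricting attention to $P=\supp(\lambda)$, and I would state that reduction at the outset. The rest is the routine unwinding of the pairing formula.
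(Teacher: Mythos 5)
Your proposal only treats the last clause of the theorem (the existence of an $N$ for which $\xi\mapsto\xi_N$ is injective) and takes ``the identifications already established'' for granted. But that identification --- that the vacuum space $\Vcal^{\hat\lambda}(C)$ is precisely the space of systems $(\xi_N)_N$ of \emph{global} sections of $\Omega_C(P)^{(N)}\la\qlog\ra$ satisfying (i)--(iv), with the duality pairing as displayed and with automatic $\glie$-invariance --- is the substantive content of the theorem and is established nowhere before it; it is exactly what the paper's proof spends most of its effort on. One direction uses Lemma \ref{lemma:serreduality} to see that such a $\xi$ annihilates $\glie[C\ssm P]\,\Vcal_{\hat\lambda}(K_P)$ and hence lies in $\Vcal^{\hat\lambda}(C)$. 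The converse is an induction on $N$: for a vacuum vector $\xi$ and a point $q\in C\ssm P$, the formal polydifferential $\eta_q=\xi_{N+1}(X_{N+1}\otimes\cdots\otimes X_1\otimes v)$ is shown, again via Serre duality in the form of Lemma \ref{lemma:serreduality}, to extend to a section of $\Omega_C(P)^{(N+1)}\la\qlog\ra$ over $C$ times the formal germ at $q^N$ with the correct residues at $P$; letting $q$ run over finite subsets $Q\subset C\ssm P$ and invoking the propagation principle then produces a section over the atomization of $C^{N+1}$, i.e.\ an honest global section. The factorization through $(\glie^{\otimes N}\otimes V_\lambda)_\glie$ is an instance of the residue theorem, and condition (iv) is matched with the local null-vector condition of Corollary \ref{cor:topduallocal} by the residue computation relating the vanishing of $\xi(Z\otimes X_\theta^{\otimes(\ell+1)}\otimes v)$ on the main diagonal to the vanishing of $\xi(Z\otimes X_\theta^{\otimes(\lambda_0(p)+1)}\otimes v')$ at $p^{\lambda_0(p)+1}$ for $v'\in V_{\hat\lambda}^{(p)+}$. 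None of this appears in your proposal, so as a proof of the stated theorem it has a genuine gap.

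For the clause you do address, your argument is essentially the paper's: finite dimensionality of $\Vcal_{\hat\lambda}(C)$ gives a PBW stage $\PBW_N$ surjecting onto it, so the pairing shows $\xi$ is determined by $\xi_0,\dots,\xi_N$, and property (ii) (brackets $[X,Y]$ spanning $\glie$) shows $\xi_N$ determines all its predecessors as iterated residues. Your anticipated obstacle about making $M_0$ explicit and uniform is not a real one: since $\Vcal_{\hat\lambda}(C)$ is finite dimensional and the PBW filtration is exhaustive, some finite stage surjects onto it, and that is all the argument needs; no bookkeeping of exponents or of the local nilpotency behind Lemma \ref{lemma:finiteAmultgen} is required beyond the finite dimensionality it delivers.
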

\begin{proof}
Lemma \ref{lemma:serreduality} shows that  a $\xi$ as above  annihilates 
$\glie[C\ssm P] \Vcal_{\hat\lambda}(K_P)$ and hence defines an element of $\Vcal^{\hat\lambda}(C)$.

To prove the converse,  we first take $q\in C\ssm P$ and regard $\xi$ as a linear form on 
$\Vcal_\ell(K_q)\otimes V_{\hat\lambda}$
that is $\glie[C\ssm \{q\}]$-invariant.  We proceed with induction and assume that we have already established that for 
$i\le N$, 
$\xi_i$ is defined by a polydifferential on $C^i$ with the prescribed residue properties 
(the induction starts with $N=0$, for which there is no issue).

Let $X_1, \dots, X_{N+1}\in \glie$ and $v\in V_{\hat\lambda}$ and put
\begin{gather*}
\eta_q:=\xi_{N+1}(X_{N+1}\otimes \cdots \otimes X_1\otimes v)\in 
\omega^{(N+1)}_q\la \qlog \ra,\\
\eta^{(p)}:=\xi_N(X_{N}\otimes\dots \otimes X_1\otimes X^{(p)}_{N+1}v)\in 
H^0(C^N, \Omega^{(N)}_q\la \qlog \ra), \text{  where $p\in P$.}
\end{gather*}
These polydifferentials  determine the $k$-linear maps 
$\tilde \eta_q: K_q^{N}\to \omega_q$ resp.\ $\tilde \eta^{(p)}: K_q^{N}\to k$ by the rule
\begin{gather*}
\tilde\eta_q(f_1, \dots , f_{N}):=r^q_{N}\cdots r^q_1 \pi_{N}^*(f_{N})\cdots\pi_1^*(f_1)\eta_q\in \omega_q,\\
\tilde\eta^{(p)}(f_1, \dots , f_{N}):=r^q_{N}\cdots r^q_1 \pi_{N}^*(f_{N})\cdots\pi_1^*(f_1)\eta^{(p)}\in k.
\end{gather*}
We are given that  when $f\in k[C\ssm\{q\}]$, 
$\res_q (f\tilde\eta_q)+\sum_{p\in P} f(p)\tilde\eta^{(p)}=0$ as an identity of linear forms on 
$K_q^{N}$. By Lemma \ref{lemma:serreduality} this means
that $\tilde\eta_q$ takes its values in $H^0(C, \Omega_C(P))$  and is such that its residue  in $p\in P$ is $\tilde\eta^{(p)}$
(as  a linear form on $K_q^{N}$).
In terms of $\eta_q$, this is merely saying  that $\eta_q$ extends to a section of $\Omega_C(P)^{(N+1)}\la \qlog \ra$ over 
$C$ times the formal germ of $C^N$ at $q^N$ whose residue at  
$p\in P$  is $\eta^{(p)}$. 

In this argument we could have replaced $q$ by any nonempty finite subset $Q$ of $C\ssm P$: 
the propagation principle assures us that  $\xi_{N+1}$ can also be understood a continuous  linear form on 
$(\Vcal_\ell (K_{Q})\otimes V_{\hat\lambda})_{\glie [C\ssm Q]}$
so that via this identification $\eta$ is a formal polydifferential at $Q^{N+1}$, with the argument just given showing
that this extends to a formal section over  $C\times Q^N$. In particular, if 
$(q_1, \dots, q_{N})\in (C\ssm P)^{N}$ is arbitrary, then
$\eta$ is defined on a formal neighborhood of $(q_1, \dots, q_N)\times C$. 
This extension is then independent of $q_{N+1}$ (by Lemma \ref{lemma:serreduality}). 
We thus get a  section of $\Omega_C(P)^{(N+1)}\la \qlog\ra$ over the atomization of  $C^{N+1}$ 
that is regular in every variable.
But such a section is the same thing as an ordinary section. 
This proves that $\xi_N(X_N\otimes\cdots \otimes X_1\otimes v)\in H^0(C^N, \Omega_C(P)^{(N)}\la\qlog\ra)^{\Scal_N}$.

The $\glie[C\ssm P]$-invariance implies  $\glie$-invariance: each $\xi_{N}$ must factor through 
$(\glie^{\otimes N}\otimes V_{\hat\lambda})_\glie$. 
This  amounts to the property that $\sum_{p\in P} r^p_{N}+\sum_{i=1}^{N-1}r_{N, i}=0$, so  is 
actually an incarnation  of the residue theorem (as encountered in Proposition \ref{prop:residuechar}). 

The propagation principle and Corollary \ref{cor:topduallocal} require the vanishing of 
$\xi (Z\otimes X_\theta^{\otimes(\ell +1)}\otimes v)$
(with $Z\in \glie^{\otimes N}$) on the  diagonal locus $z_1=\cdots =z_{\ell  +1}$ for all 
$v\in V_{\hat\lambda}$. 
To see that this suffices, we show that this property  implies the vanishing of  
$\xi (Z\otimes X_\theta^{\otimes (\lambda_0(p) +1)}\otimes v')$ at  $p^{\lambda_0(p) +1}$ for all 
$p\in P$ and $v'\in V_{\hat\lambda}^{(p)+}$: indeed, 
the identity
\[
r_{\lambda(p)}^p\cdots r_{1}^p\xi (Z\otimes X_\theta^{\otimes (\ell +1)}\otimes v)=
\xi(Z\otimes X_\theta^{\otimes (\ell -\lambda(p)+1)}\otimes (X_\theta^{(p)})^{\lambda(p)}v)=
\xi(Z\otimes X_\theta^{\otimes (\lambda_0(p)+1)}\otimes (X_\theta^{(p)})^{\lambda(p)}v)
\]
shows that the right hand side vanishes at  $p^{\lambda_0(p)}$. 
Since the highest weight space 
$V^+_{\lambda(p)}$ of   $V_{\lambda(p)}$ is the image of its lowest weight space 
$V_{\lambda(p)}(-\lambda(p))$ under $X_\theta^{\lambda(p)}$,  any $v'\in V_{\hat\lambda}^{(p)+}$ 
is of the form $(X_\theta^{(p)})^{\lambda(p)}v$. 
So  $\xi_{\lambda_0(p)+1}(Z\otimes X_\theta^{\otimes (\lambda_0(p)+1)}\otimes v')$ vanishes at 
$z_1=\cdots =z_{\lambda_0(p)+1}=p$.
Apart from the assertions in the last paragraph, the rest  now follows from Proposition \ref{prop:localduality2} 
and  the propagation principle.

Since $\Vcal_{\hat\lambda}(C)$ if a finite dimensional $k$-vector space, there  exists an integer  $N\ge 0$ such that 
$\PBW_N(\Vcal_{\hat\lambda}(C))\to \Vcal_{\hat\lambda}(C)$ is onto. Then clearly $\xi$ must be determined by 
$\xi_N, \dots, \xi_0$. But $\xi_N$ determines  $\xi_0,\dots, \xi_{N-1}$, as these are obtained as (iterated) 
residues of $\xi_N$.
\end{proof}

We state the version for the logarithmic polydifferentials separately. The following theorem  is an immediate 
consequence of  the preceding Theorem \ref{thm:globaldual1} and Corollary  \ref{cor:zetadef}.

\begin{theorem}\label{thm:globaldual2}
The vacuum space $\Vcal^{\hat\lambda}(C)$ is identified  with the subspace $\Vcal_\llog^{\hat\lambda}(C)$ of the 
space of $\Rscr_P$-module homomorphisms $\Lcal_{\glie,P, V}\to \Lcal og_{C,P}$, i.e., with graded maps 
$\xi=( \xi_N)_{N=0}^\infty$ in
\[
\textstyle \prod_{N=0}^\infty\Hom_k\big(\glie^{\otimes N}\otimes V_{\lambda}, H^0(C^N, \Omega_C(P)^{(N)}\la\llog\ra)\big), 
\]
with $\xi_N$ $\Scal_N$-equivariant and possessing properties (i), (ii) of Theorem \ref{thm:globaldual1}, that in 
addition satisfy property (iv) of that theorem.
\end{theorem}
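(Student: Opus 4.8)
The plan is to deduce Theorem~\ref{thm:globaldual2} directly from Theorem~\ref{thm:globaldual1} by transporting the identification along the isomorphism of Corollary~\ref{cor:zetadef}. Recall that Corollary~\ref{cor:zetadef} produces a bijection
\[
\xi\in \hom_{\Rscr_P}(\Lcal_{\glie,P,V},\Lcal og_{C,P})\mapsto \hat\xi\in \hom_{\Rscr^s_P}(\Lcal^{s}_{\glie,P,V},\qLog_{C,P}),
\]
with $s=\ell\check c$, given explicitly by $\hat\xi_I=\sum_{J\subset I}\xi_{I\ssm J}\otimes\hat\zeta_J$, where $\hat\zeta_J$ is the $\Rscr^s$-module homomorphism of Corollary~\ref{cor:zetares} built from the canonical quasi-logarithmic polydifferentials $\zeta_\G$. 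So the skeleton of the argument is: (1) observe that $\Vcal_\qlog^{\hat\lambda}(C)$, as a subset of $\hom_{\Rscr^s_P}(\Lcal^{s}_{\glie,P,V},\qLog_{C,P})$, is cut out by conditions (i)--(iv) of Theorem~\ref{thm:globaldual1}; (2) check that under the inverse of the Corollary~\ref{cor:zetadef} isomorphism, the subset $\Vcal_\qlog^{\hat\lambda}(C)$ corresponds exactly to the subset $\Vcal_\llog^{\hat\lambda}(C)\subset\hom_{\Rscr_P}(\Lcal_{\glie,P,V},\Lcal og_{C,P})$ cut out by (i), (ii), (iv); (3) conclude that $\Vcal^{\hat\lambda}(C)\cong\Vcal_\qlog^{\hat\lambda}(C)\cong\Vcal_\llog^{\hat\lambda}(C)$.

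The one point that requires a little care is step~(2), specifically the behaviour of properties (i), (ii), (iv) under the correspondence $\xi\leftrightarrow\hat\xi$. For property (ii): the operator $r_{N+2,N+1}$ is one of the contraction operators $r_{ij}$ of the category $\Rscr_P$, and since $\hat\xi$ is by construction an $\Rscr^s_P$-module homomorphism (equivalently, since we computed in the proof of Corollary~\ref{cor:etadef} that $r_{ij}(\hat\xi_I)=\widehat{\xi_{I_{ij}}}$), property (ii) for $\hat\xi$ is \emph{automatic} once $\hat\xi$ is known to be an $\Rscr^s_P$-homomorphism, and likewise property (ii) for $\xi$; no extra condition is imposed. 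Similarly property (i), governed by the residue operators $r^p_i$ which vanish on the $\hat\zeta_J$ with $|J|\ge 2$, matches up: one has $r^p_i(\hat\xi)=\widehat{r^p_i(\xi)}$, exactly as in the proof of Corollary~\ref{cor:etadef}, so (i) for $\hat\xi$ holds iff (i) for $\xi$ holds. Property (iii), the invariant-form condition $s_{N+2,N+1}(\hat\xi_{N+2})=\ell\check c(X_{N+2},X_{N+1})\xi_N$, is precisely the defining datum of an $\Rscr^s_P$-homomorphism with form $\ell\check c$ on the $\qLog$ side, so it is absorbed into the statement ``$\hat\xi\in\hom_{\Rscr^s_P}$'' and has no counterpart condition on $\xi$ in $\hom_{\Rscr_P}$ --- this is exactly why the logarithmic description drops condition (iii). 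The remaining condition (iv), the level-$(\ell+1)$ vanishing of $\xi_N$ applied to $\glie^{\otimes N}\otimes X_\theta^{\otimes(\ell+1)}\otimes 1_\lambda$ along the codimension-$\ell$ diagonal $z_1=\cdots=z_{\ell+1}$, is a condition on the polar behaviour of the polydifferentials; since the terms $\xi_{I\ssm J}\otimes\hat\zeta_J$ with $J\ne\emptyset$ all carry extra diagonal poles (the $\zeta_\G$'s), the image $\hat\xi_N$ and the original $\xi_N$ have the same germ along that generic diagonal point up to the $J=\emptyset$ term $\xi_N$ itself, so (iv) for $\hat\xi$ is equivalent to (iv) for $\xi$.

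Thus I would argue: given $\xi\in\Vcal_\llog^{\hat\lambda}(C)$, Corollary~\ref{cor:zetadef} produces $\hat\xi\in\hom_{\Rscr^s_P}(\Lcal^{s}_{\glie,P,V},\qLog_{C,P})$ automatically satisfying (ii), (iii); the displayed identities $r^p_i(\hat\xi)=\widehat{r^p_i(\xi)}$ then give (i) from (i) for $\xi$, and the pole-comparison just described gives (iv) from (iv) for $\xi$; hence $\hat\xi\in\Vcal_\qlog^{\hat\lambda}(C)$. Conversely, given $\eta\in\Vcal_\qlog^{\hat\lambda}(C)$, Corollary~\ref{cor:zetadef} (its inverse, constructed there by the inductive polar-part argument) produces $\xi\in\hom_{\Rscr_P}(\Lcal_{\glie,P,V},\Lcal og_{C,P})$ with $\hat\xi=\eta$; running the same equivalences backwards yields (i), (ii), (iv) for $\xi$, so $\xi\in\Vcal_\llog^{\hat\lambda}(C)$. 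Since $\xi\mapsto\hat\xi$ is a $k$-linear bijection between the two ambient $\hom$-spaces and we have shown it restricts to a bijection between the two subspaces, composing with the identification $\Vcal^{\hat\lambda}(C)\cong\Vcal_\qlog^{\hat\lambda}(C)$ of Theorem~\ref{thm:globaldual1} finishes the proof. The only genuine obstacle I anticipate is making the pole-comparison for condition (iv) fully precise --- one must confirm that evaluating on $\glie^{\otimes N}\otimes X_\theta^{\otimes(\ell+1)}\otimes 1_\lambda$ and restricting to the generic point of $z_1=\cdots=z_{\ell+1}$ genuinely kills all the correction terms $\xi_{I\ssm J}\otimes\hat\zeta_J$ with $J\cap\{1,\dots,\ell+1\}\ne\emptyset$, which follows because each such $\zeta_\G$ has a strict pole along some diagonal internal to that locus, but one should check this does not interfere with the $J\subset\{\ell+2,\dots\}$ terms; I expect this to be a short computation using the residue/biresidue formulas of Corollary~\ref{cor:zetares} and Proposition~\ref{prop:cyclicclass}, but it is where the real content of the reduction sits, so I would write it out carefully rather than leave it to the reader.
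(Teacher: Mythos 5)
Your overall route is exactly the paper's: Theorem \ref{thm:globaldual2} is obtained there by transporting the identification of Theorem \ref{thm:globaldual1} through the isomorphism of Corollary \ref{cor:zetadef}, and your handling of conditions (i), (ii) and of the absorption of (iii) into the $\Rscr^s_P$-structure is fine. The gap is in the step you yourself single out, the transfer of condition (iv), and the argument you sketch for it does not work. You claim that the correction terms $\xi_{I\ssm J}\otimes\hat\zeta_J$ with $J\ne\emptyset$ are harmless because ``each such $\zeta_\G$ has a strict pole along some diagonal internal to that locus''. That is false for the terms in which $J$ meets the block of $X_\theta$-slots $\{1,\dots,\ell+1\}$ in exactly one index: there the $\zeta_\G$-factor has its poles only along diagonals joining an $X_\theta$-slot to a $Z$-slot, which do not contain the generic point of the codimension-$\ell$ diagonal, so such a term is regular there and in general nonzero. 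Concretely, take $\ell=1$, $N=1$, $Z=X_{-\theta}$: the terms $J=\{1,3\}$ and $J=\{2,3\}$ contribute a nonzero multiple of $\check{c}(X_{-\theta},X_\theta)\,\bigl(\xi_1(X_\theta\otimes 1_\lambda)\big|_{z_2=z_1}\bigr)\cdot\zeta(z_1,z_3)$ to the restriction of $\hat\xi_3(X_{-\theta}\otimes X_\theta\otimes X_\theta\otimes 1_\lambda)$ to $\Delta_{12}$; here $\check{c}(X_{-\theta},X_\theta)\ne 0$ and $\xi_1(X_\theta\otimes 1_\lambda)$ is a holomorphic differential that no instance of (i), (ii), (iv) forces to vanish (only in genus zero is it automatically zero). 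So $\hat\xi_N$ and $\xi_N$ do \emph{not} have the same germ along the generic point of the diagonal, and the equivalence ``(iv) for $\xi$ iff (iv) for $\hat\xi$'' does not follow from your pole count. Note also that even where a correction term does have a pole along a diagonal containing the locus, that would make it singular there, not zero, so the shape of your argument could not give the vanishing of the restriction in any case; at best one can argue about the sum of all corrections.

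To be fair, the paper offers no more detail than you do --- it declares the theorem an immediate consequence of Theorem \ref{thm:globaldual1} and Corollary \ref{cor:zetadef}, and the corresponding local claim in Corollary \ref{cor:topduallocal} is dismissed as ``clear'' --- but since the equivalence of the two vanishing conditions is the entire content of the deduction, your write-up would have to prove genuinely that, for a log homomorphism satisfying (i) and (ii), the total correction $\sum_{J\ne\emptyset}\xi_{I\ssm J}\otimes\hat\zeta_J$ evaluated on $\glie^{\otimes N}\otimes X_\theta^{\otimes(\ell+1)}\otimes 1_\lambda$ restricts to zero on the codimension-$\ell$ diagonal (or find a different mechanism, e.g.\ an argument using all $N$ and $Z$ at once, or a reformulation of (iv) through the duality pairing). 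As it stands, the low-degree computation above is an obstruction that your proposed ``short computation'' would run into rather than resolve, so the proposal has a genuine gap at exactly this point.
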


\begin{example}[The genus zero case]\label{example:}
We do the well known case when $C$ has genus zero (compare Prop.\ 4.1 in Beauville \cite{beauville}). 
Since $C$ has no nonzero regular differentials, a polydifferential on  a power of $C$ is completely given by its residues. 
This means that any $\xi \in \Vcal_\llog^{\hat\lambda}(C)$ is completely determined by its zeroth term 
$\xi_0\in \Hom(V_{\lambda}, k)^\glie$. We show how this is done.

If $P$ is the support of $\lambda$, then the $\glie$-invariance of  $\xi_0$ implies that  $\sum_{p\in P} \xi_0(X^{(p)}v)=0$.
We then find that if $z$ is an affine coordinate on $C$ whose domain contains $P$, then 
\[
\xi_1 (X\otimes v)=\sum_{p\in P} \frac{\xi_0(X^{(p)}v)dz}{z-p}.
\] 
The $\glie$-invariance of $\xi_0$ amounts to the property that the residue sum at $P$ is zero, so that 
(by the residue formula) this form is indeed regular at $z=\infty$. Hence, in  terms of the coordinate 
$w:=1/z$, $\xi_1$ is at $\infty$ given by
\begin{multline*}
\sum_{p\in P} \xi_0(X^{(p)}v)\frac{w^{-2}dw}{w^{-1}-p}=\sum_{p\in P} \xi_0(X^{(p)}v) \frac{dw}{w(1-pw)}=\\
=\sum_{p\in P} \xi_0(X^{(p)}v)(1+pw +p^2w^2+\cdots) \frac{dw}{w}  =\sum_{p\in P} \xi_0(X^{(p)}v) (p +p^2w+\cdots )dw.
\end{multline*}
In particular, its value in $\infty$ is  the covector $\xi_0(T(X)v)dw $, where  $T(X):=\sum_{p\in P} pX^{(p)}$.

In degree two we find that
\begin{multline*}
\xi_2 (X_2\otimes X_1\otimes v)=\sum_{p_1, p_2\in P} 
\frac{\xi_0(X_2^{(p_2)} X_1^{(p_1)}v)}{(z_1-p_1)(z_2-p_2)}dz_1 dz_2 - \sum_{p\in P} 
\frac{\xi_0([X_2,X_1]^{(p)}v)}{(z_1-z_2)(z_2-p)}dz_1 dz_2=
\\= \sum_{p_1, p_2\in P; p_1\not=p_2} \frac{\xi_0(X_2^{(p_2)} X_1^{(p_1)}v)}{(z_1-p_1)(z_2-p_2)}dz_1 dz_2
+ \sum_{p\in P}\Big(\frac{\xi_0(X_2^{(p)} X_1^{(p)} v)}{(z_2-z_1)(z_1-p)}  +
\frac{\xi_0(X_1^{(p)} X_2^{(p)} v)}{(z_1-z_2)(z_2-p)} \Big) dz_1 dz_2.
\end{multline*}
Since $X_2^{(p_2)}$ and $X_1^{(p_1)}$ commute for $p_1\not=p_2$, the last identity makes the symmetry property of 
$\xi_2$ manifest. One checks that indeed the residue of $\xi_2 (X_2\otimes X_1\otimes v)$ along $z_1-p$ resp.\ 
$z_2-p$  is $\xi_1(X_2\otimes X_1^{(p)}v)$ resp.\  $\xi_1(X_1\otimes X_2^{(p)}v)$
and that the residue taken when $z_1\to z_2$ is $-\xi_1( [X_2,X_1]\otimes v)$. Note that in particular, 
\[
\xi_2 (X\otimes X\otimes v)=\sum_{p_1, p_2\in P} \frac{\xi_0(X^{(p_2)} X^{(p_1)}v)}{(z_1-p_1)(z_2-p_2)}dz_1 dz_2.
\]
We record the special case
\[
\xi_2 (c\otimes v)=\sum_{p_1, p_2\in P} \frac{\xi_0(c^{(p_1, p_2)}v)}{(z_1-p_1)(z_2-p_2)}dz_1 dz_2,
\]
where $c^{(p_1, p_2)}$ means that $c\in\glie\otimes\glie$ acts on $V_\lambda$ via the tensor factor 
$V_{\lambda(p_1)}\otimes V_{\lambda(p_2)}$.

The description  of $\xi_N$ is for general $N$ is more conveniently given as a $\Rscr_P$-homomorphism 
$\Lcal_{\glie,P,V}\to \Lcal og_{C,P}$. We then replace  $N$ by a finite (unordered) set $I$ so that we can use a 
graphical representation  of polydifferentials on $C^I$ as in Section \ref{sect:polydiff}. 
To this end, we consider the set $\Cscr_P(I)$ of  decompositions $\G$ of $I\sqcup P$ into a chains such that each 
chain meets $P$ in a unique point with that unique member being an end point. 
We think of $\G$ as a partial order on $I\sqcup P$ that has $P$ as its set of minimal elements. 
So each $i\in I$ has a unique immediate predecessor $i^-_\G\in I\sqcup P$. We first associate to such a 
$\G \in \Cscr_P(I)$ the logarithmic polydifferential on $C^I$ defined by
\[
\alpha_\G:= \prod_{i\in I} \frac{dz_i}{(z_i-z_{i^-_\G})},
\]
where in case $i^-_\G=p\in P$, one should read $p$ for  $z_p$. In other words, if the chain $\G_p$ that contains 
$p$ is $p<i_{p,1}<i_{p,2}<\cdots$, then 
$\alpha_\G$ is simply the  product over $p\in P$ of the polydifferentials 
$(z_{i_{p,1}}-p)^{-1}dz_{i_{p,1}} (z_{i_{p,2}}-z_{i_{p,1}})^{-1}dz_{i_{p,2}}\cdots $
If we are also given a map $i\in I\mapsto X_i\in \glie $, then we can form 
$X^{(p)}_{\G_p}:=\cdots \circ X^{(p)}_{i_2}\circ X^{(p)}_{i_1}$ as an element of the universal enveloping algebra 
$\U\!\glie$ of $\glie$, or rather, as an element of the $p$-th component  of the universal enveloping algebra 
$\U (\glie^P)$ of $\glie^P$. Since the $X^{(p)}_{\G_p}$ pairwise commute, their 
product is well-defined as an element of $\U (\glie^P)$. This construction is multi-linear in the sense that 
$X_\G$ depends linearly on
$\otimes _{i\in I} X_i$. We thus obtain a linear map $u_\G: \glie^{\otimes I}\to \U (\glie^P)$.
We then define $\xi_I\in \Hom(\glie^{\otimes I}\otimes V, H^0(C^I,\Omega_C(P)^{(I)}\la\log\ra)$ by
\[
\xi_I(Z\otimes v): =\sum_{\G\in \Cscr_P(I)} \xi_0(u_\G(Z)v)\alpha_\G.
\]
It is straightforward to check that this indeed defines  a $\Rscr_P$-homomorphism 
$\Lcal_{\glie,P,V}\to \Lcal og_{C,P}$. 
It is the unique such homomorphism with prescribed $\xi_0$ (this generalizes the second expression we found for $\xi_2$). The expression we found
for $\xi_2(X\otimes X\otimes v)$ generalizes as 
\[
\xi_N (X^{\otimes N}\otimes v)=\sum_{(p_1,\dots, p_N)\in P^N} \frac{\xi_0(X^{(p_N)}\cdots X^{(p_1)}v)}{(z_1-p_1)
\cdots (z_N-p_N)}dz_1\cdots  dz_N.
\]
Its value in $\infty^N$ is $\xi_0(T^{N}(X)v)dw_1 dw_2\cdots dw_N$. 
For $X=X_\theta$ and $N=\ell +1$, this must be zero and so 
we demand that $\xi_0(T^{\ell +1}(X_\theta) v)=0$. The $\glie$-invariance of $\xi_0\in V_\lambda^*$ 
implies that this condition is independent of
$\infty$ and guarantees the vanishing of $\xi_0(T^{\ell +1}(X_\theta) v)$ on the main diagonal.
As this vanishing property is also sufficient and $\xi$ is determined by $\xi_0$,  
we thus recover the well known fact that $\Vcal^{\hat\lambda}(\PP^1)$ can be identified with the linear forms on
$V_{\lambda}$ that are invariant under  $\glie$ and killed by $T(X_\theta)^{\ell +1}$. But this construction associates 
to such an element a series of polydifferentials.
\end{example}

\section{WZW connection versus Gau\ss-Manin connection}\label{sect:confblocks}

\subsection{Conformal blocks embedded in a De Rham bundle}\label{subsect:confblocks}
In  this section  $k=\CC$ and we are given  a smooth  proper  morphism $F:\Ccal\to S$ with $S$ a smooth complex variety,  
whose geometric fibers are connected curves of genus $g$ and a finite nonempty collection $P$ of pairwise disjoint sections  $p: S\to  \Ccal$ of $F$. Although this is irrelevant at first,  we shall in addition  assume that the normal bundle 
of each of these sections has been trivialized by giving  for every $p\in P$  a generator 
(denoted $\vec p$) of $p^*\theta_{\Ccal/S}$. We then write $\vec P$ for $\{\vec p\}_{p\in P}$. 

Let  $j: {\mathring\Ccal}\subset \Ccal$ stand for the complement of the union of these sections and  put  $\mathring F=Fj$.  We  extend our earlier notation involving $P$ in an obvious manner: for every $p\in P$ we put $\Ocal_{p}:=\varprojlim_n 
\Ocal_\Ccal/\Ocal_\Ccal(-n p)$, $\Kcal_{p}:=\varinjlim_n\Ocal_p(np)$ and $\Kcal_P:=\oplus_{p\in P}\Kcal_{p}$. 
Note that $\Ocal_{p}$ contains the ideal 
$\mfrak_p:=\varprojlim_n \Ocal_S(-p)/\Ocal_S(-n p)$ for which $F^*$ induces an isomorphism 
$\Ocal_p/\mfrak_p\cong \Ocal_S$.   

Since the issues we discuss here are local on $S$, we shall assume that $S$ is affine and that there exists for each $p\in P$ an  $\Ocal_S$-algebra isomorphism  
$\phi_p: \Ocal_{p}\cong \Ocal_S[[t]]$ which lifts the given isomorphism $\Ocal_p/\mfrak_p^2\cong \Ocal_S[[t]]/(t^2)$ 
(equivalently, which takes $\vec p$ to $d/dt$). So any two such isomorphisms  are  equal modulo $t^2\Ocal_S[[t]]$. 

There is an evident relative version of the central extension of the loop algebra $ \glie\Kcal_P:=\glie\otimes \Kcal_P$:
\[
0\to \Ocal_S\to \widehat{\glie\Kcal_P}\to \glie\Kcal_P\to 0.
\]
for which we can  form the $\Ocal_S$-modules
\[
\tilde\Vcal_{\hat\lambda}(\Ccal/S,P)=\tilde\Vcal_{\hat\lambda}(\Kcal_P)_{\mathring F_*\glie\Ocal_{\mathring\Ccal}} 
\text{ and  } \Vcal_{\hat\lambda}(\Ccal/S,P)=\Vcal_{\hat\lambda}(\Kcal_P)_{\mathring F_*\glie\Ocal_{\mathring\Ccal}}.
\]
The $\Ocal_S$-module $\Vcal_{\hat\lambda}(\Ccal/S,P)$  is called the \emph{bundle of conformal blocks} defined by 
these data, and  is known to be  a locally free $\Ocal_S$-module of finite rank (\cite{TUY}, see also \cite{looij2013} (\footnote{Presumably  a proof of this fact can also be based on Theorem \ref{thm:globaldual1}.})). 
As before, we often omit $P$ from the notation by regarding $\hat\lambda$ as a weight valued function on the 
total space. But even if all weights are equal to $(\ell,0,\dots, 0)$, we still insist  $P$ that  be nonempty.
The relative versions of  Theorems \ref{thm:globaldual1}  and  \ref{thm:globaldual2} identify this with the  
$\Ocal_S$-module 
$\Vcal^{\hat\lambda}_\qlog (\Ccal/S)$  resp.\   $\Vcal^{\hat\lambda}_\llog (\Ccal/S)$, 
whose definitions should be obvious (polydifferentials are replaced by relative polydifferentials).  
The WZW-connection is a (projectively) flat connection $\nabla^{WZW}$ on the  bundle 
$\Vcal_{\hat\lambda}(\Ccal/S)$ that we shall later explicate. It determines a connection  on  its dual that is characterized by the property that the duality pairing is flat. So this gives  also a connection on  
$\Vcal_{\hat\lambda}(\Ccal/S)$ and $\Vcal_{\hat\lambda}(\Ccal/S)$; we  shall still denote that connection by $\nabla^{WZW}$. 

As we have seen  $\Vcal^{\hat\lambda}_\qlog (\Ccal/S)$  and   $\Vcal^{\hat\lambda}_\llog (\Ccal/S)$ can, at least fiberwise,  be expressed in terms of the cohomology bundles of configuration spaces. In order to be able to be more precise,  let us agree on a bit of notation first. For any integer $N\ge 0$, let
\[
F^{(N)}: (\Ccal/S)^{(N)}\to S \text{  resp.\  } \inj_N\! F: \inj_N(\Ccal/S)\to S
\] 
stand for the $N$-fold fiber product  $\Ccal\times _S\Ccal\times_S\cdots\times_S\Ccal$ over $S$ resp.\ 
for its intersection with $\inj_N(\Ccal)$ and similarly with $\Ccal$ replaced by $\mathring\Ccal$. 
The bundle $\mathring F: \mathring\Ccal/S \to S$ is locally trivial and hence so is 
$\inj_N\!\mathring F)$. It follows that $R^N\inj_N\!\mathring F_*\underline\QQ(N)$ is a  local system that supports a variation of mixed Hodge structure whose underlying vector bundle
\begin{equation}\label{eqn:gm}
\Ocal_S\otimes_{\underline \QQ} R^N\inj_N\!\mathring F_*\underline\QQ(N)\cong  R^N\inj_I\!\mathring F_*(\inj_I\!\mathring F)^{ -1}\Ocal_S 
\end{equation}
has  after a twist with sign character $\sign (N)$, the bundle  $F^{(N)}_*\Omega_{\Ccal/S}(P)^{(N)}\la \llog\ra$ as its  $F^0$-Hodge subbundle 
and for which $F^{(N)}_*\Omega_{\Ccal/S}(P)^{(N)}\la \qlog\ra$ is contained in the $F^{-1}$-Hodge subbundle.
We here inserted a Tate twist on the left because it  is natural here. As mentioned in  Remark \ref{rem:tateinsertion}, this ensures that residues preserve the Hodge filtrations, so that this  gives rise $\Rscr_P$-module in the category of mixed Hodge structures.
Mixed Hodge theory then tells us  that  the graded pieces $\Gr^\pt_W$ and $\Gr^F_\pt$ are zero  outside the range  $\{-N, \dots ,0\}$.  

The left hand side of the identity  \ref{eqn:gm} comes with an evident  flat connection $\nabla^{GM}$ (the \emph{Gau\ss-Manin} connection): if $D\in\theta_S$, then $\nabla^{GM}_D=D\otimes_{\underline \CC} 1$, where $D$ acts on $\Ocal_S$ in the usual manner. Its flat sections return the
complexified local system $R^N\inj_I\!\mathring F_*\underline\CC(N)$. 

A version of Theorem \ref{thm:globaldual1} with parameters
yields a $\Ocal_S$-homomorphism
\begin{multline*}
\gamma_N: \Vcal^{\hat\lambda}_\qlog (\Ccal/S)\to \Hom_\CC\big(\glie^{\otimes N}\otimes V_\lambda, F^{(N)}_*\Omega_{\Ccal/S}(P)^{(N)}\la \llog\ra\big)\subset \\
\subset \Hom_\CC\big(\glie^{\otimes N}\otimes V_\lambda, F^{-1}  
\big(\Ocal_S\otimes_{\underline \QQ} R^N\inj_N\!\mathring F_*\underline\QQ(N))\big)\subset
\Ocal_S\otimes_{\underline \CC}\Hom_\CC\big(\glie^{\otimes N}\otimes V_\lambda, R^N\inj_N\!\mathring F_*\underline\CC(N)).
\end{multline*}
Similarly, Theorem \ref{thm:globaldual2}  yields a $\Ocal_S$-homomorphism  
\begin{multline*}
\gamma'_N: \Vcal^{\hat\lambda}_\llog (\Ccal/S)\to \Hom_\CC\big(\glie^{\otimes N}\otimes V_\lambda, F^{(N)}_*\Omega_{\Ccal/S}(P)^{(N)}\la \llog\ra\big)\subset \\
\subset \Hom_\CC\big(\glie^{\otimes N}\otimes V_\lambda, F^0  
\big(\Ocal_S\otimes_{\underline \QQ(N)} R^N\inj_N\!\mathring F_*\underline\QQ)\big)\subset
\Ocal_S\otimes_{\underline \CC}\Hom_\CC\big(\glie^{\otimes N}\otimes V_\lambda, R^N\inj_N\!\mathring F_*\underline\CC(N)).
\end{multline*}
In either case, the  first arrow is an embedding for $N$ large enough. The interest of these maps is that 
both are $\Ocal_S$-homomorphisms whose ultimate target is the vector bundle underlying a local system. 
Our goal is to explicate the WZW-connection  by comparing it with thie Gau\ss-Manin connection in the following sense.
Letting $D\in \theta_S$ act on $\Ocal_S$ be derivation, then 
\[
\gamma_N\nabla^{WZW}_D- (D\otimes 1)\gamma_N: \Vcal^{\hat\lambda}_\qlog (\Ccal/S)\to \Ocal_S\otimes_{\underline \CC}\Hom_\CC\big(\glie^{\otimes N}\otimes V_\lambda, R^N\inj_N\mathring F_*\underline\CC(N)).
\]
is the difference  between two derivations from $\Vcal^{\hat\lambda}_\qlog (\Ccal/S)$ to the $\Ocal_S$-module on the right and hence is
$\Ocal_S$-linear. We shall exhibit this $\Ocal_S$-homomorphism. This will include a description of how this factors through a
Kodaira-Spencer homomorphism and this is why we discuss this notion first.

\subsection{The Kodaira-Spencer homomorphism}\label{subsect:ks}
It is known that ${\mathring\Ccal}$ and the  $\{\Ocal_p\}_{p\in P}$ define an acyclic covering of 
$\Ccal$ for the direct image functor $F_*$ applied to any  coherent $\Ocal_\Ccal$-module
$\Fcal$ so that
\[
R^1F_*\Fcal= \coker\big(F_*\Fcal (\infty P)\to \oplus_{p\in P} \Fcal\otimes_{\Ocal_p}(\Kcal_p/\Ocal_p)\big),
\]
where $\Fcal (\infty P):=\sum_{n\ge 0} \Fcal (n P)$.  If $\Fcal$ is a locally free of finite rank, then the pairing  
that is perfect by relative Serre duality
$R^1F_*\Fcal \otimes F_*\Hcal om_{\Ocal_{\Ccal}} (\Fcal,\Omega_{\Ccal/S})\to \Ocal_S$
is given by the  residue pairing
\[
\textstyle [(s_p\otimes f_p)_{p\in P}]\otimes \sigma\mapsto \sum_{p\in P}\res_p f_p\sigma (s_p),
\]
where $s_p\in\Fcal_p$, $f_p\in \Kcal_p$ and  the brackets indicate the image in $R^1F_*\Fcal$. 

We apply this to the case when $\Fcal=\theta_{\Ccal/S}(-2P)$. 
Let $D$ and  $\tilde D$  be as above.
The isomorphism of  $\Ocal_S$-algebras $\phi_p: \Ocal_p\cong \Ocal_S[[t]]$ yields a lift $\tilde D_p$ of $D$ at $p$.  
Then $D_p:=\tilde D-\tilde D_p\in \theta_{\Ccal/S}\otimes\Kcal_p$ and the preceding tells us that 
$D_P= (D_p)_{p\in P}$ defines
a section  of $R^1F_*\theta_{\Ccal/S}(-2P)$. It is easily verified that this section only depends on $D$ and that  
this dependence is $\Ocal_S$-linear.
We thus have defined an $\Ocal_S$-linear homomorphism
\[
\textstyle \KS_{\Ccal/S,\vec P}: \theta_S\to R^1F_*\theta_{\Ccal/S}(-2P),
\]
referred to as the \emph{Kodaira-Spencer homomorphism}. 
This in fact the coboundary map of the long exact sequence that is associated to the functor $F_*$, 
when applied  the exact sequence 
\[
0\to \theta_{\Ccal/S}(-2P)\to \theta_{\Ccal}(\log^2 P)\to F^*\theta_S\to 0,
\]
where $\theta_{\Ccal}(\log^2 P)$ stands for the sheaf of vector fields that are tangent 
to the given sections up order two.
We observe that the Serre dual of $R^1F_*\theta_{\Ccal/S}(-2P)$ is 
$F_*\Omega^{\otimes 2}_{\Ccal/S}(2P)$, the duality being given by a residue pairing: if a local section of 
$R^1F_*\theta_{\Ccal/S}(-2P)$  resp.\  $F_*\Omega^{\otimes 2}_{\Ccal/S}(2P)$ is represented by  
$(s_p\in \Kcal_p\otimes \theta_{\Ccal/S})_{p\in P}$ resp.\ $\eta$, then the pairing is given by 
$\sum_{p\in P}\res_p\la s_p|\eta\ra\in \Ocal_S$.  In particular, the adjoint of  $\KS_{\Ccal/S,\vec P}$ 
defines a $\Ocal_S$-homomorphism $F_*\Omega^{\otimes 2}_{\Ccal/S}(2P)\to 
\Omega_S$. 
So Serre duality over $S$ enables us to regard the latter as a  section of the $\Ocal_S$-dual of 
$F_* \Omega_{\Ccal/S}^{\otimes 2}(2P))$.

\subsection{Segal-Sugawara construction}\label{subsect:ss}
In this subsection we return for a moment  to the situation of Section \ref{sect:loopalg} of a single DVR $(\Ocal, \mfrak)$
with residue field $K$. Recall that  $d: K\to \omega$ denotes the universal continuous derivation and that 
$\theta$ stands for the  $K$-dual of $\omega$
(the module of continuous $k$-derivations $K\to K$). The latter is at the same time an infinite dimensional $k$-Lie algebra. Given a uniformizer $t\in\mfrak$, then
a topological $k$-basis of $\theta$  is   $\{D_n:=t^{n+1}\frac{d}{dt}\}_{n\in \ZZ}$ on which the Lie bracket takes a 
simple form: $[D_i, D_j]=(j-i)D_{i+j}$.  
We have on the direct sum $k\hbar\oplus \theta$ (where $\hbar$ is just a name of a nonzero element of a 
one-dimensional $k$-vector space) a Lie bracket which makes it a nontrivial central extension, 
known as the \emph{Virasoro algebra}:
\[
0\to k \hbar \to \hat\theta \to \theta \to 0,
\]
whose Lie bracket is given by
\[
[\hat D_i +k\hbar ,\hat D_j+k\hbar ]= (j-i)\hat D_{i+j}+\frac{i^3-i}{12}\delta_{i+j,0}\hbar. 
\]
Let $\{ E_\kappa\}_\kappa$ be an orthonormal basis of $\glie^*$ relative to $c$, so that 
$c=\sum_{\kappa} E_\kappa\otimes E_\kappa$. Consider for $n\in \ZZ$ the formal sum
\[
\textstyle L(D_n):= \frac{-1}{2(\ell+\check{h})}\sum_ {i+j=n}\sum_\kappa : E_\kappa t^{i}\circ 
E_\kappa t^j: 
\]
Here we use the normal ordering notation,  which prescribes that the factor with the 
highest index comes last and hence acts first (the exponent of $t$ serves as index). 
In other words,
\[
L(D_n):= \frac{-1}{\ell+\check{h}}\sum_\kappa\big(\frac{1}{2}E_\kappa t^{n/2}\circ 
E_\kappa t^{n/2} +\sum_ {i>n/2} E_\kappa t^{n-i}\circ E_\kappa t^i\big), 
\]
where the term $\half t_2^{n/2}t_1^{n/2}$  must be suppressed when $n$ is odd.  An arbitrary 
$D\in\theta$ is of the form $\sum_{n>-\infty} c_n D_n$ and we then put $L(D):=\sum_n c_nL(D_n)$.
A  more canonical construction, which also  makes sense of such series (whose index set after all
runs through all the integers)  is given in \cite{looij2013} with a slightly different notation. Since we will
only use these operators as acting on the highest weight representations 
$\tilde \Vcal_{\hat\lambda}(K)$ and $\Vcal_{\hat\lambda}(K)$, the first clause of the following proposition
(whose proof can be found in \cite{br1}, see also \cite{looij2013})  relieves any concerns one might have.

\begin{proposition}[Segal-Sugawara representation]\label{prop:sugawara}
Only finitely many terms in $L(D)$ act nontrivially on any given element of  
$\tilde \Vcal_{\hat\lambda}(K)$, so that we have 
a well-defined action of $L(D)$ on $\tilde \Vcal_{\hat\lambda}(K)$. 
This action defines a representation of the Virasoro algebra $\hat\theta$ on
$\tilde \Vcal_{\hat\lambda}(K)$ which takes the central element $\hbar$  of $\hat\theta$ to scalar 
multiplication with $\ell (\ell+\check{h})^{-1}\dim \gfrak$.
This representation normalizes the one of $\widehat{\glie K}$, in fact $[L(D),\ad (Xf)]= \ad (X\, D\! f)$.
\end{proposition}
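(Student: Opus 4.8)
\textbf{Proof plan for Proposition \ref{prop:sugawara}.}

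The plan is to verify the three assertions in sequence, working entirely with the explicit mode expansion $L(D_n)$ in terms of a fixed uniformizer $t$ and reducing everything to commutator calculations in $\widehat{\glie K}$ together with the defining relation $[Xf+\cfrak,Yg+\cfrak]=[X,Y]fg+\check c(X,Y)\res(g\,df)c$. All of this is standard (the statement and its proof appear in \cite{br1} and are recalled in \cite{looij2013}); what I would do is organize it so that only the finiteness claim requires any real care.

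First I would establish the local-finiteness statement, which also makes the rest well-defined. By Lemma \ref{lemma:PBW} the representation $\tilde\Vcal_{\hat\lambda}(K)$ is, as a filtered $k$-vector space, $\sym_\pt(\glie K/\glie\Ocal)\otimes V_\lambda$ with $\glie K/\glie\Ocal$ identified with $\glie\otimes t^{-1}k[t^{-1}]$; any given vector lies in a finite symmetric power and involves only a bounded number of negative powers of $t$. For $v\in\tilde\Vcal_{\hat\lambda}(K)$ the operator $E_\kappa t^i$ with $i\gg 0$ acts as zero on $v$ (it lands in $\glie\mfrak^i$, which annihilates $V_\lambda$, and one commutes it through a finite PBW word picking up only finitely many correction terms), while for $i\ll 0$ the monomial $E_\kappa t^i\circ E_\kappa t^{n-i}$ raises the number of negative-$t$ factors and the pole order past any fixed bound, hence also kills $v$ once $|i|$ is large. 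Normal ordering guarantees each term $:E_\kappa t^{n-i}\circ E_\kappa t^i:$ is genuinely an operator (the one with larger index acts first), so $L(D_n)v$ is a finite sum. This gives a well-defined action of each $L(D_n)$, hence of $L(D)=\sum_n c_nL(D_n)$ for any $D=\sum_{n>-\infty}c_nD_n$ (for fixed $v$ only finitely many $n$ contribute, by the same pole-order bound applied to the index $n$).

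Next I would compute $[L(D),\ad(Xf)]=\ad(X\,D\!f)$, which is the assertion that the Sugawara operators normalize the $\widehat{\glie K}$-action; I would do it for $D=D_n$ and $Xf=X t^m$ and extend by linearity and continuity. Expand $[L(D_n),Xt^m]$ using $\sum_\kappa[E_\kappa t^a,[E_\kappa t^b,Xt^m]]$-type identities: the double bracket against the Casimir-shaped tensor $c=\sum_\kappa E_\kappa\otimes E_\kappa$ produces, via $\sum_\kappa[E_\kappa,[E_\kappa,X]]=2\check h X$ (the identity recalled in the last Remark of Subsection \ref{subsect:affineLie}) together with the single-bracket contribution $\sum_\kappa \check c(E_\kappa,X)[E_\kappa,\cdot]$, exactly a multiple of $X t^{m+n}$; the combinatorial coefficient $\sum_{i+j=n}$ combined with the prefactor $-1/2(\ell+\check h)$ and the $2\check h$ plus the level-$\ell$ central terms collapses to $m$, yielding $\ad(m\,X t^{m+n})=\ad(X\,D_n\! f)$ for $f=t^m$. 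The central terms $\check c(E_\kappa,E_\kappa)\res(\cdots)c=\ell c$ contributions must be tracked carefully here — this bookkeeping, reconciling the normal-ordering shift with the $\ell+\check h$ denominator, is the step most likely to be fiddly, though it is purely mechanical.

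Finally, granting $[L(D),\ad(Xf)]=\ad(X\,D\!f)$, the Virasoro relations $[L(D_i),L(D_j)]=(j-i)L(D_{i+j})+\frac{i^3-i}{12}\delta_{i+j,0}\cdot\frac{\ell\dim\glie}{\ell+\check h}$ follow from a standard computation: expand one $L$ against the other using the normalization identity to move $L(D_i)$ past the generators appearing in $L(D_j)$, collect the $(j-i)L(D_{i+j})$ term from the Leibniz-type contributions, and isolate the residual c-number; the central charge $c_{\mathrm{vir}}=\ell\dim\glie/(\ell+\check h)$ drops out of the finitely many normal-ordering corrections (equivalently, one reads it off by evaluating $[L(D_1),L(D_{-1})]$ and $[L(D_2),L(D_{-2})]$ on the highest-weight vector $1_\lambda$, which is annihilated by all $L(D_n)$ with $n>0$ and by all $E_\kappa t^i$ with $i\ge 0$). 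Since $\hbar$ is by construction the central generator of $\hat\theta$, this is precisely the statement that $\hbar$ acts as $\ell(\ell+\check h)^{-1}\dim\glie$. As all operators involved descend through the quotient defining $\Vcal_{\hat\lambda}(K)$ — the submodule generated by $(X_\theta t^{-1})^{1+\lambda_0}1_\lambda$ is $\widehat{\glie K}$-stable and $L(D)$ normalizes that action — the same conclusions hold on $\Vcal_{\hat\lambda}(K)$, which is all we need later.
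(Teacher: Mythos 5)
The paper does not actually prove Proposition \ref{prop:sugawara}: it explicitly defers the proof to Biswas--Raina \cite{br1} (see also \cite{looij2013}), so there is no in-paper argument to compare yours against. Your outline is the standard Sugawara computation (local finiteness via PBW and normal ordering, the normalization identity $[L(D),\ad(Xf)]=\ad(X\,D\!f)$ from the Casimir relation $\sum_\kappa[E_\kappa,[E_\kappa,X]]=2\check{h}X$ plus the level-$\ell$ central terms, then the Virasoro bracket and central charge), and as a plan it is essentially sound.

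Two specific statements in it are wrong as written and should be repaired. First, in the finiteness argument your reason for discarding the terms with $i\ll 0$ --- that they ``raise the number of negative-$t$ factors and the pole order past any fixed bound, hence also kill $v$'' --- is false: $\tilde\Vcal_{\hat\lambda}(K)$ is free over the negative part by Lemma \ref{lemma:PBW}, so applying arbitrarily many negative modes never annihilates a vector. The correct reason (which you in fact have available) is that for $i\ll 0$ the companion factor $E_\kappa t^{\,n-i}$ has large positive index and, by the normal-ordering convention, acts \emph{first}, hence kills the fixed vector; so all vanishing comes from high positive modes acting first. Second, $E_\kappa t^0=E_\kappa$ does not annihilate $1_\lambda$ unless $\lambda=0$; only $E_\kappa t^i$ with $i\ge 1$ (and $L(D_n)$ with $n\ge 1$, by Lemma \ref{lemma:kill}) do. Consequently, when you evaluate $[L(D_2),L(D_{-2})]$ on $1_\lambda$ to pin down the central scalar you must retain the zero-mode (Casimir) contribution, which gives $L(D_0)1_\lambda=-\tfrac{c_\lambda}{2(\ell+\check h)}1_\lambda$; this affects the $L(D_0)$-eigenvalue but cancels out of the determination of the central charge, so the conclusion $\hbar\mapsto \ell(\ell+\check h)^{-1}\dim\glie$ survives once the bookkeeping is done correctly. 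With these two corrections your plan is a legitimate self-contained proof of the proposition the paper only cites.
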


We shall need  the following simple property.

\begin{lemma}\label{lemma:kill}
For any  $D\in F^1\theta$ and $v\in V_\lambda$,  we have $L(D)\circ v=0$. 
\end{lemma}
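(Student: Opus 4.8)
The statement is local, so I work with a uniformizer $t$ and the basis $D_n = t^{n+1}\frac{d}{dt}$ of $\theta$, noting that $F^1\theta$ is the $\Ocal$-span of $\{D_n : n\ge 1\}$. By $\Ocal$-linearity of the whole situation (the $L(D_n)$ depend $\Ocal$-linearly on $D$ in the sense that $L(fD_n)$ for $f = \sum_m a_m t^m$ is $\sum_m a_m L(D_{n+m})$, and $v\in V_\lambda\subset \tilde\Vcal_{\hat\lambda}(K)$ is the image of the induced module's base), it suffices to show $L(D_n)\circ v = 0$ for every $n\ge 1$ and every $v\in V_\lambda$.

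\textbf{Key step: index bookkeeping in the normal-ordered sum.} Fix $n\ge 1$ and write
\[
L(D_n) = \frac{-1}{2(\ell+\check h)}\sum_{i+j=n}\sum_\kappa\, {:}E_\kappa t^i \circ E_\kappa t^j{:}.
\]
Recall that $V_\lambda$ is realized inside $\tilde\Vcal_{\hat\lambda}(K)$ as (a copy of) $V_\lambda^{+}$ extended to $V_\lambda$, and that $\glie\Ocal$ acts on it through its reduction $\glie\Ocal\to\glie$; in particular every operator $E_\kappa t^m$ with $m\ge 1$ kills $v$, and $E_\kappa t^0$ acts as $E_\kappa\in\glie$ on $V_\lambda$. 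In a term ${:}E_\kappa t^i\circ E_\kappa t^j{:}$ with $i+j = n\ge 1$ at least one of $i,j$ is $\ge 1$. The plan is to split the sum according to whether the inner (= higher-indexed, hence first-acting) factor has index $\ge 1$ or $\le 0$:

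\emph{Case (a):} the first-acting factor $E_\kappa t^{\max(i,j)}$ has exponent $\ge 1$. Then it already annihilates $v$, so the term contributes zero.

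\emph{Case (b):} $\max(i,j)\le 0$. But then $i+j\le 0 < n$, contradicting $i+j = n\ge 1$. So this case is vacuous.

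Hence every term in $L(D_n)$ annihilates $v$, and $L(D_n)\circ v = 0$. Since this holds for all $n\ge 1$ and the action of $L$ on $F^1\theta$ is an $\Ocal$-linear combination of such $L(D_n)$ (here one should quote the $\Ocal$-linearity/continuity established around Proposition \ref{prop:sugawara} and in \cite{looij2013}), we conclude $L(D)\circ v = 0$ for all $D\in F^1\theta$.

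\textbf{Main obstacle.} The only delicate point is making the normal-ordering convention and the "only finitely many terms act nontrivially" clause of Proposition \ref{prop:sugawara} precise enough that the term-by-term vanishing argument is legitimate — i.e.\ that one may reorganize the (a priori infinite) sum over $i+j=n$ and conclude vanishing from the vanishing of each normally-ordered summand on $v$. This is handled by Proposition \ref{prop:sugawara}: on the fixed vector $v$ only finitely many of the operators ${:}E_\kappa t^i\circ E_\kappa t^j{:}$ are nonzero, so the sum is effectively finite and the above case analysis applies verbatim. No genuine computation is needed beyond this bookkeeping; the Casimir/dual-Coxeter normalization $\frac{-1}{2(\ell+\check h)}$ plays no role here, since we only use that each summand individually kills $v$.
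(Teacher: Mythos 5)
Your proof is correct and follows essentially the same route as the paper: reduce to $D=D_n$ with $n\ge 1$ via (the linearity built into) Proposition \ref{prop:sugawara}, then observe that the normal-ordering convention forces the first-acting factor in each summand of $L(D_n)$ to have a strictly positive exponent of $t$, hence to lie in $\glie\mfrak$ and kill $v$. The extra bookkeeping about finiteness of the sum on a fixed vector is exactly what the paper also delegates to Proposition \ref{prop:sugawara}, so nothing is missing.
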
 
\begin{proof}
In view of Proposition \ref{prop:sugawara}, we only need to check this for $D=D_n$ with $n\ge 1$. 
This is then obvious, for 
$L(D_n)=\sum_{i+j=n} \sum_{\kappa} :E_\kappa t^i\circ E_\kappa t^j  \circ v$ is in fact a sum over 
$E_\kappa t^i\circ E_\kappa t^j \circ V$ with $j>0$,  so that then $E_\kappa t^j\circ v=0$.
\end{proof}

We will run into a space which strictly contains $\tilde\Vcal^{\hat\lambda}_\llog (K)$, for if 
$\eta\in \omega^{(N+1)}\la \qlog \ra$ and $f\in K$, then
$\res_1 \eta\pi_1^*f$ may have arbitrary poles along the coordinate divisors.
The following lemma shows how this can happen.

\begin{lemma}\label{lemma:} For every $f\in K$, the residue $\res_{t=0} (x-t)^{-1}f(t)dt$ reproduces the polar part of $f(x)$
and $\res_{t=0} (x-t)^{-2}f(t)dt$ reproduces minus the derivative of the polar part of $f(x)$.
\end{lemma}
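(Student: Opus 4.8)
The statement is a purely local residue computation for Laurent series over $k$ in one variable, with $x$ regarded as a parameter. First I would write $f(t)\in K=k((t))$ as $f(t)=f_{\mathrm{pol}}(t)+f_{\mathrm{reg}}(t)$, where $f_{\mathrm{pol}}(t)=\sum_{m\ge 1} a_{-m} t^{-m}$ is the (finite) principal part and $f_{\mathrm{reg}}\in\Ocal=k[[t]]$. Since $\res_{t=0}$ annihilates $t^r\,dt$ for all $r\ge 0$, and since $(x-t)^{-1}$ and $(x-t)^{-2}$ are regular in $t$ near $t=0$ (for $x$ invertible, i.e. thinking of $x$ as a uniformizer on a second factor), multiplying a regular $f_{\mathrm{reg}}(t)\,dt$ by either kernel still produces a form regular in $t$, hence with zero residue. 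So it suffices to treat each monomial $t^{-m}$, $m\ge 1$.

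\textbf{Key steps.} For the first identity I would expand the kernel geometrically: $(x-t)^{-1}=x^{-1}\sum_{n\ge 0}(t/x)^{n}=\sum_{n\ge 0} t^{n} x^{-n-1}$, convergent in the $t$-adic sense. Then
\[
\res_{t=0}\frac{t^{-m}}{x-t}\,dt=\res_{t=0}\Big(\sum_{n\ge 0} t^{n-m} x^{-n-1}\Big)dt = x^{-m},
\]
picking out $n=m-1$. Summing over the principal part gives $\sum_{m\ge 1} a_{-m} x^{-m}=f_{\mathrm{pol}}(x)$, which is exactly the polar part of $f(x)$. For the second identity I would either differentiate the kernel relation $\partial_x(x-t)^{-1}=-(x-t)^{-2}$ under the residue sign (legitimate since the residue is a finite-dimensional linear operation commuting with the formal derivative in the parameter $x$), or expand directly: $(x-t)^{-2}=\sum_{n\ge 0}(n+1)t^{n}x^{-n-2}$, so
\[
\res_{t=0}\frac{t^{-m}}{(x-t)^{2}}\,dt=\sum_{n\ge 0}(n+1)x^{-n-2}\,\res_{t=0} t^{n-m}\,dt=m\,x^{-m-1},
\]
again from $n=m-1$. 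Summing, $\sum_{m\ge 1} a_{-m}\, m\, x^{-m-1}=-\partial_x\big(\sum_{m\ge 1} a_{-m} x^{-m}\big)=-\partial_x f_{\mathrm{pol}}(x)$, the negative of the derivative of the polar part of $f(x)$.

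\textbf{Main obstacle.} There is no real obstacle here; the only point requiring a word of care is making precise in what sense these manipulations converge and in what ring the answer lives. The cleanest framing is that of the bifactor setup of Subsection \ref{subsect:LiemoduleDVR}: $f(t)\in K$ sits in the second slot, $x=t_1$ is the uniformizer of the first slot, $(x-t)=t_1-t_2$ defines the diagonal $\Delta_{12}$, and the kernel lies in $\Ocal^{(\{1,2\})}(\infty\Delta_{12})$; then $\res_{t=0}=r_2$ is a continuous $\Ocal$-linear map into $K$ (values in $t_1^{-1}\Ocal\,dt_1$-free modules), and the geometric expansions above are $t_2$-adically convergent, so term-by-term application of $r_2$ is justified. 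I would state the lemma and its proof in exactly that language to keep it consistent with the surrounding text, and remark that it is coordinate-independent in the obvious sense (the polar part of $f$ is well defined modulo $\Ocal$, and so is its derivative up to an element of $\omega$ regular at $t=0$).
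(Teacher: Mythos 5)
Your proposal is correct and follows essentially the same route as the paper: reduce by (continuity/linearity) to the monomials $t^r$, expand the kernel $(x-t)^{-1}$ geometrically to pick out the residue, and obtain the second assertion by differentiating under the residue sign (the paper does exactly this, while you also note the equivalent direct expansion of $(x-t)^{-2}$). No gaps.
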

\begin{proof}
Since differentiating with respect to $x$ commutes with taking $\res_{t=0}$, the second assertion follows from the first. By continuity, it suffices to  check the  first assertion for the basic case $f=t^r$, where $r\in\ZZ$. We then observe that
\[
\res_{t=0}\frac{t^rdt}{x-t}=\res_{t=0}\frac{t^rdt}{x(1-t/x)}= \res_{t=0}\frac{t^rdt}{x}\sum_{n\ge 0} \Big(\frac{t}{x}\Big)^n.
\]
is zero unless $r<0$, in which case we get only a contribution for $n=-r-1$ yielding $x^r$. 
\end{proof}

On the other hand, the pole order along a diagonal will not go up. 
In particular, such a  residue takes $\omega^{(N+1)}\la \qlog \ra$ to $\omega^{(N)}(2)$.
Let us therefore denote  for $r=1, 2, \dots$ by $\tilde\Vcal ^{\hat\lambda}(r,K)$ the space
of sequences of symmetric maps $\xi=(\xi_N: \glie^{\otimes N}\otimes V\to \omega^{(N)}(r))_{N\ge 0}$  
satisfying the residual compatibility properties that  we imposed to define  
$\tilde\Vcal^{\hat\lambda}_\llog (K)$ resp.\  $\tilde\Vcal^{\hat\lambda}_\qlog (K)$ so that 
$\tilde\Vcal^{\hat\lambda}_\llog (K)\subset \tilde\Vcal^{\hat\lambda} (1, K)$ and 
$\tilde\Vcal^{\hat\lambda}_\qlog (K)\subset \tilde\Vcal^{\hat\lambda} (2, K)$.
We define $\Vcal ^{\hat\lambda}(r,K)$ analogously (as the linear maps that vanish on
$\glie^{\otimes *}\otimes X_\theta^{\lambda_0 +1}\otimes 1_\lambda$).

\begin{lemma}\label{lemma:sugawara}
Choose  a uniformizer $t$ for $\Ocal$.
Let $\xi\in \tilde\Vcal^{\hat\lambda}_\qlog (K)$. Given $W\in \glie^{\otimes N}$ and $v\in V_\lambda$, then 
\[
\xi'_{N+2}(W\otimes c\otimes v):=\xi _{N+2}(W\otimes c\otimes v)-\xi_{N+2, \dots, 3}(W\otimes v) \ell \dim\glie \frac{dt_1dt_2}{(t_2-t_1)^2}
\]
has no pole along $\Delta_{12}$. If we pull  this polydifferential back along $\Delta_{12}$, then we obtain a quadratic differential in the $t_{12}$-variable with coefficients in  $\omega^{(N)}(2)$ and  for any $D\in \theta$, we have
\[
\res_{[12]}\iota^{[12]}_{D} \Delta_{12}^*\xi'(W\otimes c\otimes v)=-2(\ell +\check h)\res_2\res_1
\xi_{N+2}(W\otimes L(D)\otimes v),
\]
where $\iota^{[12]}_{D}$ stands for taking the  inner product with $D$ and $\res_{[12]}$ for the residue, both using the first ($t_{12}$) component.  The map
$W\otimes v\in \glie^{\otimes N}\otimes V\mapsto \res_2\res_1
\xi_{N+2}(W\otimes L(D)\otimes v)$ gives rise to a linear map  
\[
\Lscr(D): \tilde\Vcal^{\hat\lambda}_\qlog (K)\to \tilde\Vcal^{\hat\lambda} (2,K)
\]
which is zero when $D\in F^1\theta$. It takes $\Vcal^{\hat\lambda}_\qlog (K)$ to $\Vcal^{\hat\lambda} (2,K)$.
\end{lemma}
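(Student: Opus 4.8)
\emph{Strategy and the regularity claim.} Fix a uniformizer $t$ of $\Ocal$ and write $c=\sum_\kappa E_\kappa\otimes E_\kappa$ for a $\check{c}$-orthonormal basis $\{E_\kappa\}$ of $\glie$, so that $[E_\kappa,E_\kappa]=0$ and $\sum_\kappa\check{c}(E_\kappa,E_\kappa)=\dim\glie$. Put $\alpha:=\xi_{N+2}(W\otimes c\otimes v)$ and $\xi':=\xi'_{N+2}(W\otimes c\otimes v)$, regarded in the two slots indexed $1,2$ with the remaining $N$ slots as coefficients. Property (iii), applied to each summand of $c$, gives $\br_{12}(\alpha)=\ell\,(\sum_\kappa\check{c}(E_\kappa,E_\kappa))\,\xi_N(W\otimes v)=\ell\dim\glie\cdot\xi_N(W\otimes v)$, while the biresidue of the canonical bidifferential $(t_2-t_1)^{-2}dt_1dt_2$ is $1$; hence $\br_{12}(\xi')=0$. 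Property (ii) with $[E_\kappa,E_\kappa]=0$ gives $r_{12}(\alpha)=0$, and $r_{12}$ annihilates $(t_2-t_1)^{-2}dt_1dt_2$ (a $\sigma$-invariant form with constant numerator), so $r_{12}(\xi')=0$. Since $c$ is symmetric and polydifferentials transpose without a sign, $\xi'$ is $\sigma$-invariant, so its class in $\omega^{(N+2)}(2\Delta_{12})/\omega^{(N+2)}$ lies in the $(+1)$-eigenspace, i.e.\ $\xi'=g\,(t_1-t_2)^{-2}dt_1dt_2+(\text{regular along }\Delta_{12})$ with $g$ symmetric; then $\br_{12}(\xi')=0$ forces $g|_{\Delta_{12}}=0$, after which $r_{12}(\xi')=0$ forces $(\p g/\p t_1)(t,t)=0$, whence $g$ is divisible by $(t_1-t_2)^2$ and $\xi'$ is regular along $\Delta_{12}$ (this is the local analysis of Example \ref{example:basic2}). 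Because $c\in\glie\otimes\glie$ puts a one-form in each of slots $1,2$, restriction to $\Delta_{12}$ merges $dt_1\otimes dt_2$ into $(dt_{[12]})^{\otimes2}$, and the quasi-logarithmic structure of $\xi'$ (regular along $\Delta_{12}$) keeps the poles along the remaining diagonals of order at most two; thus $\Delta_{12}^*\xi'$ is a quadratic differential in $t_{[12]}$ with coefficients in $\omega^{(N)}(2)$.

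\emph{The residue identity.} By $\CC$-linearity and the first clause of Proposition \ref{prop:sugawara} it suffices to treat $D=D_n=t^{n+1}\p_t$. Expand $L(D_n)=\tfrac{-1}{2(\ell+\check{h})}\sum_{i+j=n}\sum_\kappa{:}E_\kappa t^i\circ E_\kappa t^j{:}$; using the extension of $\xi_{N+2}$ to $\glie K$-arguments, and the pairing convention that the factor acting first (which after normal ordering is the one with the higher power of $t$) sits in slot $1$, one rewrites $-2(\ell+\check{h})\,\xi_{N+2}(W\otimes L(D_n)\otimes v)=2P_n(t_1,t_2)\,\alpha$ with $P_n=\sum_{i>n/2}t_1^it_2^{\,n-i}+\tfrac12[n\text{ even}](t_1t_2)^{n/2}$, the factor $2$ recording that each unordered pair $\{i,j\}$ with $i\neq j$ is counted twice. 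Splitting $\alpha=\xi'+\ell\dim\glie\,\xi_N(W\otimes v)(t_2-t_1)^{-2}dt_1dt_2$, a direct expansion shows $\res_2\res_1\big(P_n(t_2-t_1)^{-2}dt_1dt_2\big)=0$ for every $n$, so $\res_2\res_1(P_n\alpha)=\res_2\res_1(P_n\xi')$; and writing $\xi'=h\,dt_1dt_2$ with $h$ regular along $\Delta_{12}$ and with pole order at most one at $t_1=0$ and at $t_2=0$, a comparison of Laurent coefficients gives $\res_2\res_1(P_n\xi')=\tfrac12\,\res_{[12]}\iota^{[12]}_{D_n}\Delta_{12}^*\xi'$, the $\tfrac12$ coming from the symmetry $h(t_1,t_2)=h(t_2,t_1)$. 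Combining the three equalities yields $\res_{[12]}\iota^{[12]}_{D}\Delta_{12}^*\xi'=-2(\ell+\check{h})\,\res_2\res_1\xi_{N+2}(W\otimes L(D)\otimes v)$.

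\emph{The operator $\Lscr(D)$.} The rule $\xi\mapsto\Lscr(D)(\xi):=\big(W\otimes v\mapsto\res_2\res_1\xi_{N+2}(W\otimes L(D)\otimes v)\big)_{N\ge0}$ is $\CC$-linear, each component is $\Scal_N$-equivariant and $\omega^{(N)}(2)$-valued, and the residual compatibility properties defining $\tilde\Vcal^{\hat\lambda}(2,K)$ follow from those of $\xi$ together with the relation $[L(D),\ad(Xf)]=\ad(X\,Df)$ of Proposition \ref{prop:sugawara}, with the residue corrections of Lemma \ref{lemma:commutator} taken into account. If $D\in F^1\theta$, i.e.\ $D$ lies in the span of the $D_n$ with $n\ge1$, then in every normal-ordered term of $L(D_n)$ the factor acting first is $E_\kappa t^a$ with $a\ge1$, so $t_1^{\,a}\cdot(\text{a form with at most a simple pole at }t_1=0)$ is regular at $t_1=0$ and is killed by $\res_1$; hence $\Lscr(D)=0$, which is the polydifferential incarnation of Lemma \ref{lemma:kill}. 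Finally $\Lscr(D)$ sends $\Vcal^{\hat\lambda}_\qlog(K)$ into $\Vcal^{\hat\lambda}(2,K)$ because $L(D)$ preserves the left ideal $\U\widehat{\glie K}\,(X_\theta t^{-1})^{1+\lambda_0}1_\lambda$ (so it descends to $\Vcal_{\hat\lambda}(K)$, cf.\ \cite{kac}), and therefore its smeared transpose $\Lscr(D)$ preserves the subspace of maps annihilating that ideal, which by Corollary \ref{cor:topduallocal} is precisely the diagonal-vanishing condition cutting out $\Vcal^{\hat\lambda}_\qlog(K)$ inside $\tilde\Vcal^{\hat\lambda}_\qlog(K)$, resp.\ $\Vcal^{\hat\lambda}(2,K)$ inside $\tilde\Vcal^{\hat\lambda}(2,K)$.

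\emph{Main obstacle.} The delicate step is the residue identity: one must reconcile two \emph{a priori} different recipes for the same quantity while the iterated residue $\res_2\res_1$ is genuinely order-sensitive (Lemma \ref{lemma:commutator}), and pin down the constant $-2(\ell+\check{h})$ exactly. The shift $\ell\mapsto\ell+\check{h}$ is carried by the normalization of $L(D)$ rather than produced by the residue calculus, and the subtracted singular term is there solely to make $\Delta_{12}^*\xi'$ defined --- it contributes nothing to the residue.
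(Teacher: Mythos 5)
Your handling of the computational core of the lemma is correct and is essentially the paper's own argument: you reduce to $D=D_n$, use property (iii) (with $\check{c}(c)=\dim\glie$) and the symmetry of $c$ to remove the second-order and first-order polar parts along $\Delta_{12}$, observe that the symmetric polynomial produced by the normal-ordered expansion of $L(D_n)$ pairs to zero against the subtracted term $(t_2-t_1)^{-2}dt_1dt_2$, and then compare Laurent coefficients of $\xi'$ on both sides; your $\tfrac12$-plus-symmetry bookkeeping reproduces exactly the paper's identity $\psi_{-n/2,-n/2}+2\sum_{n_1>n_2}\psi_{-n_1,-n_2}=\sum_{n_1+n_2=n}\psi_{-n_1,-n_2}$, and your argument for the vanishing when $D\in F^1\theta$ (the factor acting first carries $t_1^{a}$ with $a\ge 1$ against a pole of order at most one at $t_1=0$) is the paper's as well.

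The one genuine gap is in your justification of the last clause, that $\Lscr(D)$ takes $\Vcal^{\hat\lambda}_\qlog(K)$ into $\Vcal^{\hat\lambda}(2,K)$. You argue that $L(D)$ preserves the submodule $\U\widehat{\glie K}\,(X_\theta t^{-1})^{1+\lambda_0}1_\lambda$ (true and standard), that the transpose therefore preserves the annihilator, and that by Corollary \ref{cor:topduallocal} the annihilator condition is the diagonal-vanishing condition ``resp.\ $\Vcal^{\hat\lambda}(2,K)$ inside $\tilde\Vcal^{\hat\lambda}(2,K)$''. But Corollary \ref{cor:topduallocal} establishes that equivalence only in the quasi-logarithmic dual: there the forms $\xi(Z\otimes X_\theta^{\otimes(\lambda_0+1)}\otimes v)$ have at most simple poles along the coordinate divisors, so property (i) together with $X_\theta v=0$ makes them regular at the generic point of $t_1=\cdots=t_{\lambda_0+1}=0$, and pairing against the $t^{-1}$'s computes precisely the restriction to that locus. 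Elements of $\tilde\Vcal^{\hat\lambda}(2,K)$ may have poles of arbitrary order along the coordinate divisors (that is exactly why this receptacle was introduced), the space is not exhibited as a topological dual of any representation, and ``annihilates the ideal'' is neither defined there by the paper nor equivalent to the vanishing condition that defines $\Vcal^{\hat\lambda}(2,K)$; so the transposition argument does not transplant. The intended (and simpler) argument is direct: if $W=W'\otimes X_\theta^{\otimes(\lambda_0+1)}$ and $v=1_\lambda$, then by $\Scal_{N+2}$-equivariance and the membership of $\xi$ in $\Vcal^{\hat\lambda}_\qlog(K)$, each $\xi_{N+2}(W\otimes E_\kappa t^{a}\otimes E_\kappa t^{b}\otimes v)$ vanishes at the generic point of the locus where the $\lambda_0+1$ coordinates attached to the $X_\theta$-slots are zero; multiplying by functions of $t_1,t_2$ and applying $\res_2\res_1$ in those two variables preserves this vanishing, so $(\Lscr(D)\xi)_N$ satisfies the defining condition of $\Vcal^{\hat\lambda}(2,K)$.
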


\begin{proof}
We  verify this for $D=D_n=t^{n+1}\frac{d}{dt}$.
The definition of $L(D_n)$ shows  that 
\[
-2(\ell +\check h)\res_{2}\res_{1}\xi (W\otimes L(D_n)\otimes v)
= \res_{2}\res_{1} \xi(W\otimes c\otimes v)\big( t_2^{n/2}t_1^{n/2} +
\sum_{\substack {n_1+n_2=n,\\ n_1> n_2}} 2t_2^{n_2} t_1^{n_1}\big),
\]
where the term $t_2^{n/2}t_1^{n/2}$ is suppressed in case $n$ is odd. Note that since $ \xi(W\otimes c\otimes v)$ has a pole along $t_1=0$ of order one at most, the right hand side vanishes when $n\ge 1$.

The symmetry of the tensor $c\in \glie\otimes\glie$ implies that 
$\xi(W\otimes c\otimes v)$ has no residue along along $\Delta_{12}$. Since $\check c$ takes on the tensor $c=\sum_\kappa E_\kappa\otimes E_\kappa$ the value $\dim\glie$,  it follows that we may write
\[
\xi_{N+2}(W\otimes c\otimes v)
=\sum_{r_1\ge 0, r_2\ge 0} 
\psi_{r_1,r_2}t_1^{r_1-1}t_2^{r_2-1}dt_1dt_2 +\ell \dim \glie .\xi_{N+2, \dots, 3}(W\otimes v)\frac{dt_1dt_2}{(t_2-t_1)^2}
\]
for certain $\psi_{r_1,r_2}\in \omega^{(N)}(2)$. Here it must be understood that we first expand as a Laurent series  in $t_1$ and $t_2$, meaning for instance that  if $(t_i-t_1)^{-1}$ with  $i>2$ appears as a factor on the left, then it is expanded as $t_i^{-1}\sum_{r\ge 0} (t_1/t_i)^r$. Since $c$ is symmetric, we have 
$\psi_{r_1,r_2}=\psi_{r_2,r_1}$

Before we substitute this in the preceding expression,
let us note that if we do the same with $(t_i-t_1)^{-2}$ (i.e., first expand in $t_1$), then
\[
t_2^{n_2} t_1^{n_1}\frac{dt_1dt_2}{(t_2-t_1)^2}=t_2^{n_2-2} t_1^{n_1} \frac{dt_1dt_2}{(1-t_1/t_2)^2}=
\sum_{r\ge 0} (r+1)t_2^{-r-2+n_2}t_1^{r+n_1}dt_1dt_2.
\]
So the value of $\res_2\res_1$ on this  bidifferential is zero unless for some integer $r\ge 0$, $n_1=-r-1$ and $n_2=r+1$.
But this will never happen if we also assume that $n_1\ge n_2$. It follows that
\begin{multline*}
-2(\ell +\check h)\res_{2}\res_{1}\xi (W\otimes L(D_n)\otimes v)=\\=
\textstyle \res_{2}\res_{1}\Big(\sum_{r_1\ge 0, r_2\ge 0} \psi_{r_1,r_2}t_1^{r_1-1}t_2^{r_2-1}dt_2dt_1.
\big(t_2^{n/2}t_1^{n/2} +2\sum_{\substack{n_1+n_2=n\\n_1> n_2}} t_2^{n_2} t_1^{n_1}\big)\Big)=\\
\textstyle = \psi_{-n/2,-n/2}+2\sum_{\substack{n_1+n_2=n\\n_1> n_2}}\psi_{-n_1,-n_2}=\sum_{n_1+n_2=n}\psi_{-n_1,-n_2}.
\end{multline*}
The last sum is finite and lies  in $\omega^{(N)}(2)$. It is straightforward to check that $\Lscr\xi$ satisfies the residue properties that makes it an element of $\tilde\Vcal^{\hat\lambda}(2,K)$. If $W$ as above ends with
$X_\theta^{\otimes m}$ in the sense that  $W=W'\otimes X_\theta^{\otimes m}$, then the same is true for
$\res_2\res_1 \xi_{N+2}(W\otimes L(D_n)\otimes v)$. This implies that $\Lscr(D_n)$ induces a homomorphism of
$\Vcal^{\hat\lambda}_\qlog (K)\to \Vcal^{\hat\lambda}(2,K)$

The identity
$\xi '(W\otimes c\otimes v)=\sum_{r_1\ge 0, r_2\ge 0}\psi_{r_1,r_2}t_1^{r_1-1}t_2^{r_2-1}dt_1dt_2$ 
shows that $\xi '(W\otimes c\otimes v)$ has no pole along $\Delta_{21}$.  Let us write $t$ for $t_{21}$.
Then its pull back along $\Delta_{21}$ is
$\sum_{r_1\ge 0, r_2\ge 0}\psi_{r_1,r_2}t^{r_1+r_2-2}(dt)^2$. Its inner 
product with $D_n=t^{n+1}\frac{d}{dt}$ is $\sum_{r_1\ge 0, r_2\ge 0}\psi_{r_1,r_2}t^{r_1+r_2 +n-1} dt$
and the residue of this with respect to $t$ is $\sum_{r_1+r_2=-n}  \psi_{r_1,r_2}$.
This proves the lemma.  
\end{proof}


\subsection{The WZW connection}\label{subsect:wzw} 
We return to the situation of Subsection \ref{subsect:confblocks}. In particular, we are \emph{given} a trivialization of each normal bundle $p^*\theta_{\Ccal/S}$   and we have \emph{chosen} trivializations  
$\phi_p: \Ocal_p\cong \Ocal_S[[t]]$ compatible with these.
The fact that $P\not=\emptyset$  ensures that the morphism $\inj_N\!\mathring  F:(\Ccal/S)^{(N)}\to S$ is affine for every $N$.

We first recall the way the WZW connection was defined in \cite{looij2013}. We first note that the isomorphisms of $\Ocal_S$-algebras $\phi_p: \Ocal_p\cong \Ocal_S[[t]]$ trivialize $\tilde\Vcal_{\hat\lambda}(\Kcal_P)$ as a  $\Ocal_S$-module.  Let $D\in\theta_S$. Since ${\mathring\Ccal}/S$ is affine,  $D$ can be lifted  along $\mathring F$ to a rational vector field $\tilde D$ on $\Ccal$ that is regular on ${\mathring\Ccal}$. In terms of $\phi_p$, it  will decompose at $p$ as
$\tilde D=D\otimes_\CC 1+D_p$, where $D_p$ is a vertical vector field at $p$, but with a 
possible pole along $p$ (in terms of our relative uniformizer $t$ it defines an element of $\Ocal_S\otimes_\CC\theta$). We regard 
$D_P:=\sum_{p\in P} D_p$ as an $\Ocal_S$-derivation of $\Kcal_P$ and interpret $L(D_P)$ as $\sum_{p\in P} L(D_p)$.  Then
\[
\textstyle \hat D:=D\otimes_\CC 1+L(D_P)
\]
acts as an operator acting in $\tilde\Vcal_{\hat\lambda}(\Kcal_P)$ by the following rule: if $\psi=X_Nf_N\circ\cdots \circ X_1f_1\circ v$, then 
\[
\textstyle\hat D \psi = 
X_N Df_N \circ X_{N-1}f_{N_1}\circ\cdots \circ X_1f_1\circ v + L(D_P)\circ \psi
\]
It then follows from Proposition \ref{prop:sugawara} that for $f\in  \Kcal_P$ and $X\in\glie$ the identity $[\hat D , Xf]= X\tilde Df$ holds (as operators acting on $\tilde\Vcal_{\hat\lambda}(\Kcal_P)$). Hence
\[
\hat D\psi=\textstyle \sum_{i=1}^N X_Nf_N\circ \cdots\circ  X_i\tilde D\!f_i\circ  \cdots \circ X_1f_1\circ v+ 
X_Nf_N\circ\cdots \circ X_1f_1\circ L(D_P)\circ v.
\]
This formula also shows that the operator $\hat D$, though dependent on the given trivializations of the normal bundles of the sections,  is independent of the chosen $\phi_p$, for a different choice of $\phi_p$ modifies $D_p$ by a $D'_p$ which has a zero along $p$ (in terms of $\phi_p$ we have  $D'_p\in \Ocal_S\otimes F^0\theta$) and according to Lemma \ref{lemma:kill} such a $D'_p$ kills 
$v$ when considered as an element of $\tilde\Vcal^{\hat\lambda}(\Kcal_P)$.  The formula $[\hat D , Xf]= X\tilde Df$ also shows that $\hat D$ preserves 
$\mathring F_*(\glie\Ocal_{\mathring\Ccal})\tilde\Vcal_{\tilde\lambda}(\Kcal_P)$ and hence acts in $\tilde\Vcal_{\tilde\lambda}(\Ccal/S)$.
The latter action is easily checked to independent of the chosen lift $\tilde D$ of $D$: it  only depends on $D$. This this gives us the action of $D$ on $\tilde\Vcal_{\tilde\lambda}(\Ccal/S)$ under the WZW-connection: it   equals  $\nabla^{WZW}_D$.

Let us  write  $\tilde D_i\psi$ for $X_Nf_N\otimes\cdots\otimes X_i\tilde D\!f_i\otimes \cdots \otimes X_1f_1\otimes v$.  We also abbreviate $X_Nf_N\otimes\cdots \otimes X_1f_1\otimes L(D_P)\otimes v$ by $\Lscr^\dagger(D_P)\psi$. This notation is suggested by Lemma \ref{lemma:sugawara}, as this is an adjoint of $\Lscr(D)$: we have 
\[
\la \xi |\Lscr^\dagger(D_P)\psi\ra= \la \Lscr(D_P)\xi |\psi\ra.
\] 
The preceding formula is now concisely written as 
\[
\textstyle \hat D = \sum_{i=1}^N \tilde D_i + \Lscr^\dagger(D_P).
\]
Since $\tilde D$ defines a derivation in $F_*\Ocal_{\mathring\Ccal}$, the identity $[\hat D ,Xf]= X\tilde Df$  (used in case $f\in F_*\Ocal_{\mathring\Ccal}$) also shows that 
 $L(D)$ descends to an action in both $\tilde\Vcal_{\hat\lambda}(\Ccal/S)$ and $\Vcal_{\hat\lambda}(\Ccal/S)$.  It defines an action of a (central) Virasoro extension of $\theta_S$ by $\Ocal_S\hbar$ in
$\Vcal_{\hat\lambda}(\Ccal/S)$. We may think of this as endowing 
this sheaf with a projectively flat connection.

We transfer this connection to the dual bundle $\Vcal^{\hat\lambda}(\Ccal/S)$ that is characterized  by the property that the natural pairing $\Vcal^{\hat\lambda}(\Ccal/S)\otimes_{\Ocal_S} \Vcal_{\hat\lambda}(\Ccal/S)\to \Ocal_S$ be flat. We identify $\Vcal^{\hat\lambda}(\Ccal/S)$ with $\Vcal^{\hat\lambda}_\qlog(\Ccal/S)$  so that when $\xi$ is a local section of one of these, then
\[
\la \hat D\xi | \psi\ra +\la \xi | \hat D\psi\ra=D\la \xi | \psi\ra. 
\]

In order to see what $\hat D\xi$ is like, we need the following lemma.

\begin{lemma}\label{lemma:residuederivation}
Let $R$ be a $k$-algebra, $\Kcal$ a topological $R$-algebra isomorphic to $R((t))$, $d:\Kcal\to \omega_{\Kcal/R}$ the universal continuous $R$-derivation and $\res : \omega_{\Kcal/R}\to R$ the $R$-linear extension of the usual residue.
Let $\tilde D$ be a continuous $k$-derivation of $\Kcal$ which extends a $k$-derivation $D$ of $R$. Then $\tilde D$
acts as a derivation on $\omega_{\Kcal/R}$ by the usual formula (i.e., as $\tilde D=d\iota_{\tilde D} + \iota_{\tilde D}d$) applied to a lift of $\alpha$ in $\omega_{\Kcal/k}$
and the residue pairing  $\Kcal \otimes_R \omega_{R/S}\to R$ defined by $ f\otimes\alpha \mapsto \res(f\alpha)$ satisfies 
\[
D(\res (f\alpha))=\res (\tilde D f)\alpha + \res f \tilde D\alpha.
\]
In particular,  the right hand side is independent of the choice of $\tilde D$.
\end{lemma}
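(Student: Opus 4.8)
The plan is to reduce the asserted identity to two statements: a Leibniz rule for the action of $\tilde D$ on the $\Kcal$-module $\omega_{\Kcal/R}$, and the compatibility $\res(\tilde D\alpha)=D(\res\alpha)$ for every $\alpha\in\omega_{\Kcal/R}$. Granting these, one simply writes $D(\res(f\alpha))=\res(\tilde D(f\alpha))=\res((\tilde Df)\alpha)+\res(f\,\tilde D\alpha)$, which is the displayed formula; and the right-hand side is visibly independent of the choice of $\tilde D$ because the left-hand side is.

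First I would make precise how $\tilde D$ acts on $\omega_{\Kcal/R}$. Since $\tilde D$ is only a $k$-derivation (it moves $R$), the naive relative Cartan formula is the wrong object; one must work in the continuous \emph{absolute} de Rham complex $\Omega^\bullet_{\Kcal/k}$, where $L_{\tilde D}:=d\iota_{\tilde D}+\iota_{\tilde D}d$ is a degree-zero derivation restricting to $\tilde D$ on functions. The key point is that $L_{\tilde D}$ preserves the closed $\Kcal$-submodule $J$ generated by $\{dr:r\in R\}$, so that it descends to $\omega_{\Kcal/R}=\omega_{\Kcal/k}/J$: a one-line computation with the Cartan formula gives $L_{\tilde D}(h\,dr)=(\tilde Dh)\,dr+h\,d(Dr)$ with both summands in $J$ (because $Dr\in R$), and continuity of $\tilde D$ extends this to all of $J$. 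This simultaneously shows that $\tilde D\alpha:=L_{\tilde D}(\text{any lift of }\alpha)$ is well defined on $\omega_{\Kcal/R}$ and, since $L_{\tilde D}$ is already a derivation on $\Omega^\bullet_{\Kcal/k}$, that $\tilde D(f\alpha)=(\tilde Df)\alpha+f(\tilde D\alpha)$.

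For the compatibility with the residue I would fix a relative coordinate $t$, so $\Kcal\cong R((t))$, $\omega_{\Kcal/R}=\Kcal\,dt$, and set $g:=\tilde Dt\in R((t))$ (possibly with a pole, as in the intended application). Writing $f=\sum_n a_n t^n$ with $a_n\in R$, the Cartan-formula computation, pushed down to $\omega_{\Kcal/R}$, yields $\tilde D(f\,dt)=\big(\sum_n (Da_n)\,t^n\big)dt+d(fg)$, where now $d(fg)=\partial_t(fg)\,dt$ is the relative differential of $fg\in\Kcal$. Since $\res$ kills relative exact forms, $\res(d(fg))=0$; and $\res\big((\sum_n (Da_n)t^n)\,dt\big)=Da_{-1}=D(\res(f\,dt))$. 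This proves $\res\circ\tilde D=D\circ\res$, and that the resulting statement is coordinate-free is guaranteed by the intrinsic character of $\res$ recalled earlier.

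The only genuinely delicate points are the ones already flagged: that $L_{\tilde D}$ descends to the relative differentials, and that everything is compatible with the $\mfrak$-adic topology — i.e.\ that $\tilde D$ is continuous and the modules $\omega_{\Kcal/k}$, $\omega_{\Kcal/R}$, $J$ are taken in their completed/continuous form, so that the Cartan-formula bookkeeping and the passage ``verify on $h\,dr$, then extend by continuity'' are legitimate. Everything else is a routine Laurent-series identity.
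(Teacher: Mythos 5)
Your argument is correct, and it differs from the paper's proof mainly in how it is organized. The paper reduces the descent of $d\iota_{\tilde D}+\iota_{\tilde D}d$ to the statement that this operator preserves $\Kcal\otimes_R\Omega_{R/k}$ (left there as an exercise, which you actually carry out via $L_{\tilde D}(h\,dr)=(\tilde Dh)\,dr+h\,d(Dr)$), and then verifies the displayed identity directly on monomial data $f=\phi t^m$, $\alpha=\psi t^{n-1}dt$, $\tilde D=D+\tau t^{r+1}\partial_t$, computing all three terms and observing that the extra contributions cancel because they carry a factor $(m+n+r)\delta_{m+n+r,0}$. You instead factor the identity through two cleaner statements: the Leibniz rule for the descended action (immediate since $L_{\tilde D}$ is already a derivation upstairs) and the commutation $\res\circ\tilde D=D\circ\res$, the latter proved by the Laurent computation $\tilde D(f\,dt)=\bigl(\sum_n(Da_n)t^n\bigr)dt+d(fg)$ with $g=\tilde Dt$, where exactness of $d(fg)$ makes the cancellation transparent. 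The computational content is essentially the paper's monomial check repackaged, but your route isolates the residue--derivation compatibility as a reusable statement, makes the "why it cancels" conceptual rather than a coefficient miracle, and fills in the descent step the paper omits; the paper's version is shorter on the page at the cost of leaving that step to the reader. Your attention to working in the completed/continuous modules so that the "check on generators, extend by continuity" step is legitimate is also the right precaution.
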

\begin{proof}
Any two lifts of $\alpha$ in $\omega_{\Kcal/k}$ differ by an element of $\Kcal\otimes_R\Omega_{R/k}$ and so 
the assertion that $d\iota_{\tilde D} + \iota_{\tilde D}d$  defines a derivation in $\omega_{\Kcal/R}$ amounts to the statement that it takes $\Kcal\otimes_R\Omega_{R/k}$  to itself. This left as an exercise.

If $t\in \Kcal$ is such that $\Kcal=R((t))$, then the basic case is 
when $f=\phi t^m$, $\alpha= \psi  t^{n-1}dt$  and $\tilde D=D+ \tau t^{r+1}\p /\p t$ with
$\phi, \psi, \tau\in \Ocal_S$.
The verification is then straightforward:  we find 
\begin{align*}
D(\res(f\alpha)) &=D(\phi \psi)\delta_{m+n,0},\\
\res_p (\tilde D f)\alpha &=\psi D\phi \delta_{m+n,0} +m\tau\phi\psi \delta_{m+n+r,0}, \\ 
\res_p f \tilde D\tilde\alpha &=(n+r)\tau\phi\psi \delta_{m+n+r,0} +\phi D\psi \delta_{n+m,0},
\end{align*}
which clearly implies the assertion.
\end{proof}

We want to apply this  to a multidimensional (polydifferential) situation. To this end, 
we note that the rational lift $\tilde D$ of $D$ extends to lifts $\tilde D^{(1)}, \tilde D^{(2)}, \dots , \tilde D^{(N)}$ for the successive projections
\[
S\leftarrow\Ccal=(\Ccal/S)^{(1)}\leftarrow (\Ccal/S)^{(2)}\leftarrow \cdots  \leftarrow (\Ccal/S)^{(N)}
\]
with $\tilde D^{(i)}$ regular on $(\mathring\Ccal/S)^{(i)}$. Indeed, $\theta_{(\Ccal/S)^{(i)}}$ can be obtained as the fiber product (of vector bundles) defined by  $i$ copies of $\theta_{\Ccal}\to F^*\theta_S$ pulled back to $(\Ccal/S)^{(i)}$ and $(\tilde D^{(i)},D)$ corresponds by taking the pair $(\tilde D, D)$ in each such copy.

\begin{corollary}\label{cor:residuederivation}
Let $\eta$ be a section of  $\Omega_{\Ccal/S}^{(N)}$ over $\inj_N(\mathring\Ccal/S)$.
Then for any $N$-tuple of sections 
$f_1, \dots , f_N$ of $\Kcal_P$ we have  
\[
D (r_N\cdots r_1 \pi_1^*f_1\cdots \pi_N^*f_N\, \eta )=
\sum_{i=1}^N r_N\cdots r_1 \big(\pi_1^*f_1\cdots \pi_i^*\tilde Df_i\cdots\pi_N^*f_N\, \eta \big)+r_N\cdots r_1 \big(\pi_1^*f_1\cdots \pi_N^*f_N D^{(N)}\tilde\eta \big),
\]
where $r=\sum_{p\in P}r ^p :\omega_{\Kcal_P/S}\to \Ocal_S$ stands for the residue sum  as before.
\end{corollary}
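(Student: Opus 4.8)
The plan is to reduce Corollary \ref{cor:residuederivation} to the one-dimensional Lemma \ref{lemma:residuederivation} by peeling off the residue maps $r_1,\dots,r_N$ one at a time, treating at each stage the contribution in the $i$-th factor as a residue pairing over the base given by the fiber product of the remaining factors. First I would fix the geometric picture: the section $\eta$ lives over $\inj_N(\mathring\Ccal/S)$, and for each $i$ the projection $\pi_i\colon (\Ccal/S)^{(N)}\to \Ccal$ together with the sections $P$ identifies a formal neighbourhood of $\pi_i^{-1}(P)$ with $\Kcal_P$-coefficients relative to the base $(\Ccal/S)^{(N-1)}$ (the fiber product of the other factors, restricted away from the diagonals so that $\eta$ is regular there). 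So $r_i$ applied to $\pi_i^*f_i\cdot(\text{stuff})$ is exactly a residue pairing of the form $f\otimes\alpha\mapsto \res(f\alpha)$ with $R$ taken to be the ring of functions on (an open of) $(\Ccal/S)^{(N-1)}$, the role of $\Kcal$ played by $\Kcal_P$ over that ring, and the lift $\tilde D$ being $\tilde D^{(N)}$ — or rather its restriction to the relevant stratum, which by the fiber-product description of $\theta_{(\Ccal/S)^{(i)}}$ is compatible with $\tilde D^{(i-1)}$ on the base and with $\tilde D$ in the $i$-th slot.

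Next I would set up the induction on $N$. For $N=1$ the statement is precisely Lemma \ref{lemma:residuederivation} applied with $R$ = coordinate ring of $S$ (or an affine open thereof), $\Kcal=\Kcal_P$, $\alpha=\eta$, $f=f_1$, $\tilde D=\tilde D^{(1)}=\tilde D$: it gives $D(r_1\pi_1^*f_1\,\eta)=r_1(\pi_1^*\tilde Df_1\,\eta)+r_1(\pi_1^*f_1\,D^{(1)}\tilde\eta)$, noting $\tilde\eta$ is a lift of $\eta$ over $\Kcal_P$ and $D^{(1)}=\tilde D$ acts on it by $d\iota+\iota d$. For the inductive step, write $r_N\cdots r_1(\pi_1^*f_1\cdots\pi_N^*f_N\,\eta) = r_1\big(\pi_1^*f_1\cdot \beta\big)$ where $\beta:=r_N\cdots r_2(\pi_2^*f_2\cdots\pi_N^*f_N\,\eta)$ is now a section of the appropriate sheaf over (an open of) $\Ccal$, i.e.\ over the base for the first factor. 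Apply Lemma \ref{lemma:residuederivation} once more in the first slot: $D(r_1(\pi_1^*f_1\,\beta)) = r_1(\pi_1^*\tilde Df_1\,\beta)+r_1(\pi_1^*f_1\, D^{(1)}\tilde\beta)$. The first term is the $i=1$ summand we want. For the second term I would use the induction hypothesis applied to the $(N-1)$-fold situation in factors $2,\dots,N$, but with the base $S$ replaced by $\Ccal$ and the derivation $D$ replaced by $\tilde D=\tilde D^{(1)}$; this expresses $D^{(1)}\tilde\beta$ — more precisely $\tilde D^{(1)}$ acting on a lift of $\beta$ — as $\sum_{i=2}^N r_N\cdots r_2(\cdots \pi_i^*\tilde Df_i\cdots\eta) + r_N\cdots r_2(\pi_2^*f_2\cdots\pi_N^*f_N\,D^{(N)}\tilde\eta)$, using the compatibility $\tilde D^{(N)}|_{\text{relevant locus}}$ restricts correctly and $D^{(N)}$ on the $(\Ccal/S)^{(N)}$-level agrees with the iterated construction. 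Reassembling, and pushing $r_1(\pi_1^*f_1\cdot(-))$ through (which is $R$-linear, so it passes the base derivation untouched), gives exactly the asserted formula.

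The main obstacle — and the step that needs genuine care rather than bookkeeping — is the compatibility of the lifts $\tilde D^{(1)},\dots,\tilde D^{(N)}$ with the successive residue operations, i.e.\ checking that when one restricts $\tilde D^{(i)}$ to the formal neighbourhood of a diagonal-free stratum it genuinely acts as the derivation $\tilde D^{(i-1)}$ on the base times $\tilde D$ (with a possible pole along $P$) in the new slot, and that $D^{(N)}\tilde\eta$, defined via $d\iota_{\tilde D^{(N)}}+\iota_{\tilde D^{(N)}}d$ on a lift $\tilde\eta$ over $(\Ccal/S)^{(N)}$, matches the iterated $D^{(1)}$-on-$\tilde\beta$ picture. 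This is exactly where the fiber-product description $\theta_{(\Ccal/S)^{(i)}} = \theta_\Ccal\times_{F^*\theta_S}\cdots\times_{F^*\theta_S}\theta_\Ccal$ (pulled back) that was recorded just before the statement does the work: $(\tilde D^{(i)},D)$ is by definition the pair $(\tilde D,D)$ in each factor, so restricting to a stratum where some coordinates are generic simply reads off the corresponding factors, and the regularity of $\tilde D^{(i)}$ on $(\mathring\Ccal/S)^{(i)}$ guarantees $\eta$ (regular on $\inj_N(\mathring\Ccal/S)$) stays in the domain where Lemma \ref{lemma:residuederivation} applies. The remaining verification — that $d\iota+\iota d$ preserves $\Kcal\otimes_R\Omega_{R/k}$, hence descends to the relative forms — is the "exercise" already flagged in the proof of Lemma \ref{lemma:residuederivation}, and I would simply invoke it. Everything else is linearity of $\res$ over the base and the Leibniz-type identity already isolated in the lemma, so no further computation is needed.
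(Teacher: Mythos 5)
Your overall strategy --- induct on $N$ by peeling off one residue at a time via Lemma \ref{lemma:residuederivation}, with the compatible lifts $\tilde D^{(1)},\dots,\tilde D^{(N)}$ over the successive projections doing the bookkeeping --- is the paper's own. But the reduction step as you wrote it fails. You claim $r_N\cdots r_1(\pi_1^*f_1\cdots\pi_N^*f_N\,\eta)=r_1\big(\pi_1^*f_1\cdot\beta\big)$ with $\beta=r_N\cdots r_2(\pi_2^*f_2\cdots\pi_N^*f_N\,\eta)$. In the stated expression $r_1$ is the \emph{innermost} (first applied) residue, so your rewriting moves $r_1$ past $r_2,\dots,r_N$; these operators do not commute on the forms allowed here. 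The section $\eta$ is only assumed regular on $\inj_N(\mathring\Ccal/S)$, hence may have poles along the diagonals, and by Lemma \ref{lemma:commutator} one has $[r_i,r_j]=r_{[ij]}r_{ij}$ on such forms; these commutators are nonzero precisely in the situation the corollary is used for (namely $\eta=\xi_N(\cdots)$ quasi-logarithmic in Corollary \ref{cor:wzwdual}), where they encode the Lie bracket and the central term in the pairing of Proposition \ref{prop:localduality1}. So the quantity you go on to manipulate is not the one in the statement, and the induction is built on a false identity.

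The repair is to peel from the other end, i.e.\ the outermost (last applied) residue, which is what the paper does: apply Lemma \ref{lemma:residuederivation} with $R=\Ocal_S$, $f=f_N$ and $\alpha:=r_{N-1}\cdots r_1(\pi_1^*f_1\cdots\pi_{N-1}^*f_{N-1}\,\eta)$ regarded as a relative differential on $\Ccal/S$ near $P$; this needs no interchange of residues, since $\pi_N^*f_N$ does not involve the first $N-1$ factors. One then treats the resulting term $r_N\big(\pi_N^*f_N\,\tilde D^{(1)}\alpha\big)$ by induction over the projections $(\Ccal/S)^{(i)}\to(\Ccal/S)^{(i-1)}$ with the lifts $\tilde D^{(i)}$, and the compatibility you argued via the fiber-product description of $\theta_{(\Ccal/S)^{(i)}}$ applies verbatim there. (Alternatively, since the statement quantifies over all $\eta$ and all $f_1,\dots,f_N$ and everything is equivariant under permuting the factors, you could relabel once and for all so that the residue you peel off is the outermost one --- but that step must be made explicit; as written, the asserted equality with $\beta$ is simply false in general.)
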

\begin{proof}
Assume $N\ge 1$.  We apply Lemma \ref{lemma:residuederivation} to $\alpha:= r_{N-1}\cdots r_1 \pi_1^*f_1\cdots \pi_{N-1}^*f_{N-1}\, \eta$ (regarded as a relative differential on $\Ccal/S$ near $P$),  the projection $\Ccal=(\Ccal/S)^{(1)}\to S$ and $f_N$.  This gives:
\begin{multline*}
D (r_N\cdots r_1 \pi_1^*f_1\cdots \pi_N^*f_N\,\eta )=\\=
r_N (r_{N-1}\cdots r_1 \pi_1^*f_1\cdots \pi_{N-1}^*f_{N-1} \pi_N^*\tilde Df_N\, \eta) +
r_N\big(\pi_N^*f_N.\tilde D^{(1)} (r_{N-1}\cdots r_1 \pi_1^*f_1\cdots \pi_{N-1}^*f_{N-1}\, \eta )\big).
\end{multline*}
We have a similar formula for $\tilde D^{(1)} (r_{N-1}\cdots r_1 \pi_1^*f_1\cdots \pi_{N-1}^*f_{N-1}\, \eta)$, obtained by  applying 
Lemma \ref{lemma:residuederivation} to $\alpha:=r_{N-2}\cdots r_1 \pi_1^*f_1\cdots \pi_{N-2}^*f_{N-2}\, \eta$ (regarded as a relative differential on $(\Ccal/S)^{(2)}$ near $(P/S)^{(2)}$), the projection $(\Ccal/S)^{(2)}\to (\Ccal/S)^{(1)}$ with
$\tilde D=\tilde D^{(2)}$ viewed as a lift of $D=\tilde D^{(2)}$ and for which we take $f$ to be the pull-back of $f_{N-1}$ to the fiber factor of $(\Ccal/S)^{(2)}\to (\Ccal/S)^{(1)}$ (we might denote this by $\pi_{N-1}^*f_{N-1}$). Then the asserted identity follows readily with induction.
\end{proof}

We expect not to cause any  confusion by writing  $\tilde D$ for $(\tilde D^{(N)})_{N\ge 0}$ with $\tilde D^{(N)}$ acting in degree $N$. 
Lemma \ref{lemma:sugawara} has the following global, relative counterpart:

\begin{lemma}\label{lemma:Loperator}
Let  $\xi\in \tilde\Vcal^{\hat\lambda}_\qlog (\Ccal/S)$,   $W\in \glie^{\otimes N}$ and $v\in V_\lambda$. Then 
\[
\xi'_{N+2}(W\otimes c\otimes v):=\xi _{N+2}(W\otimes c\otimes v)-\xi_{N+2, \dots, 3}(W\otimes v) \ell \dim\glie \pi_{2,1}^*\zeta_2.
\]
has no pole along $\Delta_{12}$ and
\begin{equation}\label{eqn:Loperator}
r^p_2 r^p_1
\xi_{N+2}(W\otimes L(D_p)\otimes v)=\frac{-1}{2(\ell +\check h)}r^p_{[12]}\iota^{[12]}_{D_p} \Delta_{12}^*\xi_{N+2}'(W\otimes c\otimes v).
\end{equation}
This is zero when $D_p$ vanishes of order $\ge 2$ along $p$.
By assigning to $\xi$ and $W\otimes v$ the sum over $p\in P$ of the above expression we obtain a $\Ocal_S$-homomorphism
\[
\textstyle \Lscr_N(D_P): \tilde\Vcal^{\hat\lambda}_\qlog (\Ccal/S)\to\Hom_\CC(\glie^{\otimes N}\otimes V,  
F^{(N)}\Omega_{\Ccal/S}(P)^{(N)}(2))
\]
which only depends on the image $[D_P]$ of $D_P$ in $R^1F_*\theta_C(-2P)$. This results in a  $\Ocal_S$-homomorphism
\[
\Lscr_N^u: R^1F_*\theta_C(-2P)\to\Hom_\CC(\glie^{\otimes N}\otimes V,  
F^{(N)}\Omega_{\Ccal/S}(P)^{(N)}(2))
\]
with the property that $\Lscr_N(D_P)=\Lscr_N^u\KS_{\Ccal/S,\vec P}(D)$.
\end{lemma}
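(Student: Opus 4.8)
The statement of Lemma \ref{lemma:Loperator} is really three assertions that should be established in sequence: (a) the corrected polydifferential $\xi'_{N+2}(W\otimes c\otimes v)$ is regular along $\Delta_{12}$ and the local identity \eqref{eqn:Loperator} holds; (b) the resulting expression, summed over $p\in P$, is regular on $(\Ccal/S)^{(N)}$ and depends only on $[D_P]\in R^1F_*\theta_\Ccal(-2P)$; (c) via the Kodaira--Spencer map this produces the claimed universal $\Ocal_S$-homomorphism $\Lscr_N^u$. The plan is to derive (a) as a routine relativisation of the already-proved local statement Lemma \ref{lemma:sugawara}, then spend the real work on (b), and finally read off (c) formally.

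First I would treat (a). Working in a formal coordinate $\phi_p:\Ocal_p\cong\Ocal_S[[t]]$ at each $p\in P$, the section $\xi_{N+2}(W\otimes c\otimes v)$ has along $\Delta_{12}$ the same structure as in Lemma \ref{lemma:sugawara}: since $c$ is a symmetric tensor the diagonal residue $r_{12}$ vanishes, so the only pole is the pure double pole whose biresidue is the scalar $\ell\check c(c)=\ell\dim\glie$ times the lower polydifferential $\xi_{N+2,\dots,3}(W\otimes v)$; subtracting $\ell\dim\glie\,\xi_{N+2,\dots,3}(W\otimes v)\pi_{2,1}^*\zeta_2$ removes it. Here $\pi_{2,1}^*\zeta_2$ is the correct coordinate-invariant replacement for the ``$(t_2-t_1)^{-2}dt_1dt_2$'' of the local lemma, because by Corollary~\ref{cor:} the canonical bidifferential $\zeta=\zeta_2$ restricts to the biresidue trivialisation along the diagonal — this is exactly why $\zeta_2$ enters and why the statement is coordinate-free. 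Once regularity is in hand, equation~\eqref{eqn:Loperator} is precisely the content of the displayed identity in Lemma~\ref{lemma:sugawara} ($\res_{[12]}\iota_{D}^{[12]}\Delta_{12}^*\xi'=-2(\ell+\check h)\res_2\res_1\xi(W\otimes L(D)\otimes v)$), applied fibrewise with $R=\Ocal_S$ and $\Kcal=\Kcal_p$, together with the definition of $L(D_p)$; that the right-hand side vanishes when $D_p\in\Ocal_S\otimes F^2\theta$ (vanishing of order $\ge 2$ along $p$) follows from the last clause of Lemma~\ref{lemma:sugawara}, or directly from $\iota_{D_p}$ killing enough of the quadratic differential.

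The heart of the argument is (b). Summing \eqref{eqn:Loperator} over $p\in P$ I get a collection of local polydifferentials which I must show glue to a global section of $F^{(N)}\Omega_{\Ccal/S}(P)^{(N)}(2)$. The right approach is \emph{not} to glue directly but to use the adjunction of Lemma~\ref{lemma:sugawara}: for $\psi=X_Nf_N\circ\cdots\circ X_1f_1\circ v$ in $\tilde\Vcal_{\hat\lambda}(\Ccal/S)$ one has $\la\Lscr_N(D_P)\xi\mid\psi\ra=\la\xi\mid\Lscr^\dagger(D_P)\psi\ra$ with $\Lscr^\dagger(D_P)\psi=X_Nf_N\otimes\cdots\otimes X_1f_1\otimes L(D_P)\otimes v$, and $\hat D=\sum_i\tilde D_i+\Lscr^\dagger(D_P)$ is the operator that, by the discussion in Subsection~\ref{subsect:wzw}, preserves $\mathring F_*(\glie\Ocal_{\mathring\Ccal})\tilde\Vcal_{\hat\lambda}(\Kcal_P)$ and descends to $\tilde\Vcal_{\hat\lambda}(\Ccal/S)$. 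Combining this with Corollary~\ref{cor:residuederivation} — which expresses $D$ applied to the duality pairing in terms of the $\tilde D_i$-part plus $r_N\cdots r_1(\pi_1^*f_1\cdots\pi_N^*f_N\,D^{(N)}\tilde\eta)$ — the flatness of the pairing forces $\Lscr_N(D_P)\xi$ to be the $\Ocal_S$-linear functional on $\tilde\Vcal_{\hat\lambda}(\Ccal/S)$ given by pairing against $\gamma_N(\xi)$ twisted by $D^{(N)}$; in particular it annihilates $\glie[\Ccal\ssm P]\cdot\tilde\Vcal_{\hat\lambda}(\Kcal_P)$, which by Lemma~\ref{lemma:serreduality} (the global Serre-duality/residue reformulation used throughout) is exactly the condition that the local polar parts assemble into a global section over $C^N$ with the prescribed residues along the boundary divisors. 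The $\Ocal_S$-linearity of $D\mapsto\Lscr_N(D_P)$ and of $\xi\mapsto\Lscr_N(D_P)\xi$ is immediate from the formulas, since $L(D_P)$ depends $\Ocal_S$-linearly on $D_P$ and residues are $\Ocal_S$-linear. For independence of the chosen lift $\tilde D$ — i.e. dependence only on $[D_P]\in R^1F_*\theta_\Ccal(-2P)$ — I would use that changing $\tilde D$ by a global vertical field regular on $\mathring\Ccal$ changes each $D_p$ by a regular-at-$P$ correction, and by Lemma~\ref{lemma:kill}/the last clause of Lemma~\ref{lemma:sugawara} such corrections contribute $0$ to $L(D_P)$ acting in the highest-weight representations; equivalently, independence of $\tilde D$ is visible from Corollary~\ref{cor:residuederivation}, whose right-hand side the proof of Lemma~\ref{lemma:residuederivation} already shows is lift-independent.

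Finally (c) is formal. The construction $D_P\mapsto\Lscr_N(D_P)$ has just been shown to factor through the class $[D_P]\in R^1F_*\theta_\Ccal(-2P)$, and $[D_P]=\KS_{\Ccal/S,\vec P}(D)$ by the very definition of the Kodaira--Spencer homomorphism in Subsection~\ref{subsect:ks} (it is the coboundary sending $D$ to the \v{C}ech class of the $(D_p)_{p\in P}$). Hence there is a unique $\Ocal_S$-homomorphism $\Lscr_N^u:R^1F_*\theta_\Ccal(-2P)\to\Hom_\CC(\glie^{\otimes N}\otimes V,F^{(N)}\Omega_{\Ccal/S}(P)^{(N)}(2))$ with $\Lscr_N(D_P)=\Lscr_N^u\,\KS_{\Ccal/S,\vec P}(D)$; uniqueness because $\KS_{\Ccal/S,\vec P}$ together with the local-to-global surjectivity of polar-part prescription (Lemma~\ref{lemma:serreduality}) makes $R^1F_*\theta_\Ccal(-2P)$ generated, locally on $S$, by classes of the form $[D_P]$, and the target is separated. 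The step I expect to be the genuine obstacle is the regularity/globality part of (b): showing that the individually singular local contributions $r^p_2 r^p_1\xi_{N+2}(W\otimes L(D_p)\otimes v)$ (each a priori a section over the formal neighbourhood of $P^N$ with poles allowed along all coordinate hyperplanes, per Lemma~\ref{lemma:}) cancel in their polar parts after summing over $p$ and become a section of $F^{(N)}\Omega_{\Ccal/S}(P)^{(N)}(2)$ regular on all of $(\Ccal/S)^{(N)}$ — this is where one must genuinely invoke the adjunction with $\hat D$, the WZW-invariance statement, and the Serre-duality characterisation of global polydifferentials, rather than any bare coordinate computation.
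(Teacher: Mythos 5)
Your part (a) and part (c) match the paper, but in part (b) --- which you rightly single out as the heart of the matter --- the argument goes wrong in two ways. The lighter problem is that the globality of $\Lscr_N(D_P)\xi$ needs none of the adjunction/Serre-duality machinery you propose: the right-hand side of \eqref{eqn:Loperator} is built from the \emph{global} sections $\xi_{N+2}(W\otimes c\otimes v)$ and $\zeta_2$, and the operations $\Delta_{12}^*$, $\iota^{[12]}_{D_p}$, $r^p_{[12]}$ consume only the first two variables, so the outcome is manifestly a global section of $\Omega_{\Ccal/S}(P)^{(N)}(2)$ over $(\Ccal/S)^{(N)}$; no gluing of formal polar parts via Lemma \ref{lemma:serreduality} is needed. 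Moreover your appeal to ``flatness of the pairing forces $\Lscr_N(D_P)\xi$ to be \dots'' is circular: the identity $\hat D\xi=\tilde D^{(N)}\xi-\Lscr_N(D_P)\xi$ on the dual bundle is Corollary \ref{cor:wzwdual}, which in the paper is \emph{deduced from} this lemma and so is not available for its proof.

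The serious gap is in your treatment of the dependence on $[D_P]\in R^1F_*\theta_{\Ccal/S}(-2P)$. Changing the lift $\tilde D$ changes each $D_p$ by the germ at $p$ of a vertical field that is regular only on $\mathring\Ccal$, hence in general has a \emph{pole} at $p$; it is not a ``regular-at-$P$ correction'', so Lemma \ref{lemma:kill} and the last clause of Lemma \ref{lemma:sugawara} do not apply, and Corollary \ref{cor:residuederivation} says nothing about $\Lscr_N$. Your reasoning only covers (correctly, via the order-$\ge 2$ vanishing visible on the right of \eqref{eqn:Loperator}) the other half of what is needed. The missing ingredient is exactly the paper's key step: if $E_P$ is the restriction to the formal neighbourhood of $P$ of a section of $\mathring F_*\theta_{\mathring\Ccal/S}$, then $\iota_{E}\Delta_{12}^*\xi'_{N+2}(W\otimes c\otimes v)$ is a rational relative $1$-form (with polydifferential coefficients in the remaining $N$ variables) regular away from $P$, so the sum over $p\in P$ of the residues appearing on the right-hand side of \eqref{eqn:Loperator} vanishes by the residue theorem; this vanishing, combined with the order-$\ge 2$ vanishing, is what shows that $D_P\mapsto\Lscr_N(D_P)$ factors through $R^1F_*\theta_{\Ccal/S}(-2P)$ and hence through $\KS_{\Ccal/S,\vec P}$.
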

\begin{proof}
We only prove the assertions regarding the factorization over the Kodaira-Spencer homomorphism, for the other assertions follow in a straightforward manner from  Lemma \ref{lemma:sugawara}. It is already clear that 
$\Lscr(D_P)$ vanishes in case every $D_p$ lies in $F^1\theta_p$.  On the other hand, if $D_P$ is the restriction of a section if  $\mathring F_*\theta_{\mathring \Ccal/S}$, then $\Lscr(D_P)$ vanishes by the residue formula applied to the right hand side of the identity \ref{eqn:Loperator}. It follows that this expression only depends on the image of $D_P$ in $R^1F_*\theta_C(-2P)$. The factorization property of $\Lscr$ is then a straightforward check. 
\end{proof}

\begin{corollary}\label{cor:wzwdual}
The WZW-action $\nabla^{WZW}_D$ on $\Vcal^{\hat\lambda}_\qlog(\Ccal/S)$ is given by $\hat D =  \tilde D  - \Lscr_\pt^u\KS_{\Ccal/S,\vec P}(D)$.
\end{corollary}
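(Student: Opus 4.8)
The plan is to pass to the dual bundle and read off $\hat D\xi$ from the flatness of the duality pairing. Recall that the transported WZW-connection on $\Vcal^{\hat\lambda}(\Ccal/S)\cong\Vcal^{\hat\lambda}_\qlog(\Ccal/S)$ is characterized by $\la\hat D\xi\,|\,\psi\ra=D\la\xi\,|\,\psi\ra-\la\xi\,|\,\hat D\psi\ra$ for local sections $\xi$ of $\Vcal^{\hat\lambda}_\qlog(\Ccal/S)$ and $\psi$ of $\tilde\Vcal_{\hat\lambda}(\Ccal/S)$, and that $\hat D$ preserves the bundle of conformal blocks so that $\hat D\xi$ is again quasi-logarithmic. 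First I would note that, by nondegeneracy of the pairing (Proposition \ref{prop:localduality2} in the relative, global form furnished by Theorem \ref{thm:globaldual1} and the propagation principle), it is enough to evaluate both sides on representatives $\psi=X_Nf_N\circ\cdots\circ X_1f_1\circ v$ with the $f_i$ sections of $\Kcal_P$ and $v\in V_\lambda$, for every $N\ge 0$; the images of such elements span $\tilde\Vcal_{\hat\lambda}(\Ccal/S)$.

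Next I would compute $D\la\xi\,|\,\psi\ra$ by applying the ``integration by parts'' formula of Corollary \ref{cor:residuederivation} to $\eta=\xi_N(X_N\otimes\cdots\otimes X_1\otimes v)$, after fixing a rational lift $\tilde D$ of $D$ regular on $\mathring\Ccal$ and its induced lifts $\tilde D^{(N)}$ to $(\Ccal/S)^{(N)}$; this yields
\[
D\la\xi\,|\,\psi\ra=\sum_{i=1}^N r_N\cdots r_1\big(\pi_1^*f_1\cdots\pi_i^*\tilde Df_i\cdots\pi_N^*f_N\,\xi_N\big)+r_N\cdots r_1\big(\pi_1^*f_1\cdots\pi_N^*f_N\,\tilde D^{(N)}\xi_N\big).
\]
On the other side, the description of the WZW operator in Subsection \ref{subsect:wzw} gives $\hat D\psi=\sum_i\tilde D_i\psi+\Lscr^\dagger(D_P)\psi$, and $\la\xi\,|\,\tilde D_i\psi\ra$ equals the $i$-th summand of the first sum above, so that these $N$ terms cancel when one forms $D\la\xi\,|\,\psi\ra-\la\xi\,|\,\hat D\psi\ra$. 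What survives is the single ``regular'' term, which is $\la\tilde D\xi\,|\,\psi\ra$ with $\tilde D\xi:=(\tilde D^{(N)}\xi_N)_N$, minus $\la\xi\,|\,\Lscr^\dagger(D_P)\psi\ra$.

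For the remaining term I would invoke the adjointness noted in Subsection \ref{subsect:wzw}, $\la\xi\,|\,\Lscr^\dagger(D_P)\psi\ra=\la\Lscr(D_P)\xi\,|\,\psi\ra$, together with Lemma \ref{lemma:Loperator}: unwinding $\la\xi\,|\,\Lscr^\dagger(D_P)\psi\ra$, the residues taken on the two extra tensor slots carrying $L(D_p)$ produce, summed over $p\in P$, exactly $\Lscr_N(D_P)\xi$ evaluated on $X_N\otimes\cdots\otimes X_1\otimes v$ --- this is the content of identity \eqref{eqn:Loperator}, where the subtraction of the $\ell\dim\glie\,\pi_{2,1}^*\zeta_2$-term absorbs the central-charge contribution of the normal ordering --- and Lemma \ref{lemma:Loperator} identifies $\Lscr_N(D_P)$ with $\Lscr_N^u\KS_{\Ccal/S,\vec P}(D)$. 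Assembling, $\la\hat D\xi\,|\,\psi\ra=\la(\tilde D-\Lscr_\pt^u\KS_{\Ccal/S,\vec P}(D))\xi\,|\,\psi\ra$ for all $\psi$, whence $\hat D=\tilde D-\Lscr_\pt^u\KS_{\Ccal/S,\vec P}(D)$ by nondegeneracy.

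The main obstacle I anticipate is not the algebra but keeping the function spaces honest. Both $\tilde D\xi$ and $\Lscr_\pt^u\KS_{\Ccal/S,\vec P}(D)\xi$ acquire poles along the sections $P$ of order higher than quasi-logarithmic, so each lies only in the ambient space $\tilde\Vcal^{\hat\lambda}(2,\Ccal/S)$ (the relative analogue of the space $\tilde\Vcal^{\hat\lambda}(2,K)$ of Subsection \ref{subsect:ss}); it is only their difference, which we are claiming equals $\hat D\xi$, that falls back into $\Vcal^{\hat\lambda}_\qlog(\Ccal/S)$. One must therefore run the nondegeneracy argument in that ambient space, on which the residue pairing against $\tilde\Vcal_{\hat\lambda}(\Ccal/S)$ remains faithful, and afterwards record that $\tilde D\xi$ is independent of the chosen lift $\tilde D$ modulo $\Lscr_\pt^u\KS_{\Ccal/S,\vec P}(D)\xi$, consistently with the lift-independence of $\hat D$. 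A second, more bookkeeping-heavy point is that $\Lscr^\dagger(D_P)\psi$ involves the infinite normally ordered operator $L(D_P)=\sum_p L(D_p)$ inserted into two tensor slots localized at the points of $P$; checking that, after pairing with $\xi_{N+2}$, only the residues at $p$ in those slots contribute and that the series truncates is precisely the finiteness and locality guaranteed by Proposition \ref{prop:sugawara} and used in Lemma \ref{lemma:sugawara}. Granting those lemmas, the computation above is a routine matching of the displayed expressions.
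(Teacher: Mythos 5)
Your argument is essentially the paper's own proof: you differentiate the flat duality pairing, apply Corollary \ref{cor:residuederivation} to $\xi_N$, cancel the $\sum_i\tilde D_i$ terms against $\hat D\psi$, and finish with the adjunction $\la\xi\,|\,\Lscr^\dagger(D_P)\psi\ra=\la\Lscr(D_P)\xi\,|\,\psi\ra$ together with the factorization $\Lscr_N(D_P)=\Lscr_N^u\KS_{\Ccal/S,\vec P}(D)$ of Lemma \ref{lemma:Loperator}. Your added remark that $\tilde D\xi$ and $\Lscr_\pt^u\KS_{\Ccal/S,\vec P}(D)\xi$ individually only live in the ambient space $\tilde\Vcal^{\hat\lambda}(2,\Ccal/S)$, with only their difference quasi-logarithmic, is a correct and welcome precision but not a different route.
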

\begin{proof}
Corollary \ref{cor:residuederivation} yields
$D\la \xi_N | \psi\ra=\sum_{i=1}^N \la \xi_N |\tilde D_i\psi\ra +\la \tilde D^{(N)}\xi_{N} |\psi \ra$.
Since $(\hat D\xi)_N= \sum_{i=1}^N \tilde D_i\xi_N+(\Lscr^\dagger(D_P)\xi)_N$, we then find
\[
\textstyle \la \xi | \hat D\psi\ra=\sum_{i=1}^N \la \xi |\tilde D_i\psi\ra+\la \xi|\Lscr^\dagger(D_P)\psi \ra = \big(D\la \xi | \psi\ra -\la \tilde D^{(N)}\xi |\psi \ra\big) +\la \Lscr(D_P)\xi |\psi )\ra 
\]
and so $\hat D\xi (\psi)=  \la \tilde D^{(N)}\xi |\psi \ra - \la \Lscr_N(D_P)\xi |\psi \ra$.  Since $\Lscr_N(D_P)=\Lscr_N^u\KS_{\Ccal/S,\vec P}(D)$, the corollary follows.
\end{proof}

As we shall now explain, once we divide out by the exact relative $N$-polydifferentials on $\mathring\Ccal^{(N)}/S$ (we here view $N$-polydifferentials on $\mathring\Ccal^{(N)}/S$ as closed $N$-forms twisted with a sign character), the  operator $\tilde D$  acquires a  De Rham interpretation.  Since $\inj_N\!\mathring F$ is affine, its direct image functors
are acyclic when applied to the terms to coherent $\Ocal_{\inj_N(\mathring\Ccal/S)}$-modules. If we apply this 
to the terms  of the relative algebraic De Rham complex, we obtain  isomorphisms
\[
\Hcal^N\big(\inj_I\!\mathring F_*\Omega^\pt_{\inj_N(\mathring\Ccal/S)/S}\big)\cong
\Ocal_S\otimes_{\underline\CC} R^N\inj_N\!\mathring F_*{\underline\CC(N)},
\]
where we note that the left hand side is the quotient of $\inj_I\!\mathring F_*\Omega^N_{\inj_N(\mathring\Ccal/S)/S}$ by the $\Ocal_S$-submodule of relative exact forms $d\inj_I\!\mathring F_*\Omega^{N-1}_{\inj_N(\mathring\Ccal/S)/S}$.

The derivation  $\tilde D$ acts on this De Rham resolution in such a manner that the induced action on the right hand side is given by $D\otimes 1$ on the left hand side.  This action will in general not preserve the Hodge filtration, in particular, it need not preserve 
$F^0\big(\Ocal_S\otimes_{\underline \CC} R^N\inj_N\!\mathring F_*\underline\CC(N)\big)$. A similar remark applies to $\Vcal^{\hat\lambda}_\qlog (\Ccal/S)$ (where then $F^0$ is replaced by $F^{-1}$). 

If in the preceding Corollary \ref{cor:wzwdual}, we divide out by the sheaf of exact polydifferentials, we find:

\begin{theorem}\label{thm:gmversuswzw} We have a natural $\Ocal_S$-homomorphism  
\[
\gamma_N: \Vcal^{\hat\lambda}_\qlog (\Ccal/S)\to 
\Ocal_S\otimes_{\underline\CC}\Hom_{\CC}^{\sign_N}(\glie^{\otimes N}\otimes V_\lambda,  
R^N\inj_N\!\mathring F_*\underline {\CC}(N))
\]
whose target is a variation of mixed Hodge structure that takes in fact its values in the $F^{-1}$-part of the Hodge filtration. This is an embedding for $N$ sufficiently large and the product of these maps over $N\ge 0$ is also an embedding of $\Rscr_P$-modules.
Moreover, if  $D\in  \theta_S$, then 
\[
\gamma_N\nabla^{WZW}_D=(D\otimes 1)\gamma_N -\Lscr_N^u\KS_{\Ccal/S,\vec P}(D),
\]
where $D$ acts on $\Ocal_S$ be derivation, $\Lscr_N^u: R^1F_*\theta_C(-2P)\to\Ocal_S\otimes_{\underline\CC}\Hom_{\CC}^{\sign_N}(\glie^{\otimes N}\otimes V_\lambda,  
R^N\inj_N\!\mathring F_*\underline {\CC}(N))$  is as in Lemma  \ref{lemma:Loperator} and $\KS_{\Ccal/S,\vec P}: \theta_S\to R^1F_*\theta_{\Ccal/S}(-2P)$ is the Kodaira-Spencer homomorphism. 
$\square$
\end{theorem}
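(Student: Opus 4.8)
\textbf{Proof plan for Theorem \ref{thm:gmversuswzw}.}

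The plan is to assemble the statement from three ingredients already in place: (a) the relative version of Theorem \ref{thm:globaldual1}, which provides the map $\gamma_N$ and the fact that it is an embedding for $N$ large and that the product over all $N$ is an embedding of $\Rscr_P$-modules; (b) Corollary \ref{cor:wzwdual}, which identifies $\nabla^{WZW}_D$ acting on $\Vcal^{\hat\lambda}_\qlog(\Ccal/S)$ with the operator $\hat D=\tilde D-\Lscr^u_\pt\KS_{\Ccal/S,\vec P}(D)$ \emph{before} passing to cohomology; and (c) the De Rham interpretation of $\tilde D$ explained just above the theorem, namely that $\tilde D$ acts on the relative algebraic De Rham resolution $\inj_N\!\mathring F_*\Omega^\pt_{\inj_N(\mathring\Ccal/S)/S}$ and induces $D\otimes 1$ on $\Ocal_S\otimes_{\underline\CC}R^N\inj_N\!\mathring F_*\underline\CC(N)$. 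So the first thing I would do is record that the target of $\gamma_N$ is the $F^{-1}$-part of the Hodge filtration of this variation of mixed Hodge structure, which is immediate from the identification in Subsection \ref{subsect:confblocks} of $F^{(N)}_*\Omega_{\Ccal/S}(P)^{(N)}\la\qlog\ra$ with a subsheaf of the $F^{-1}$-Hodge subbundle together with the relative form of Theorem \ref{thm:globaldual1}.

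Next I would carry out the quotient-by-exact-forms step. The point is that $\gamma_N$ lands in relative polydifferentials $F^{(N)}_*\Omega_{\Ccal/S}(P)^{(N)}\la\llog\ra$ (for $\gamma'_N$; for $\gamma_N$ in $\la\qlog\ra$), which we regard as closed relative $N$-forms modulo exact ones via the De Rham resolution; since $\inj_N\!\mathring F$ is affine, $\Hcal^N\big(\inj_N\!\mathring F_*\Omega^\pt_{\inj_N(\mathring\Ccal/S)/S}\big)$ computes $\Ocal_S\otimes_{\underline\CC}R^N\inj_N\!\mathring F_*\underline\CC(N)$, and this is exactly where the value of $\gamma_N$ lives after the projection. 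Then I would apply Corollary \ref{cor:wzwdual} and project: $\nabla^{WZW}_D=\tilde D-\Lscr^u_\pt\KS_{\Ccal/S,\vec P}(D)$, and passing to De Rham cohomology the operator $\tilde D$ becomes $D\otimes 1$ (the Gau\ss-Manin connection), while $\Lscr^u_N\KS_{\Ccal/S,\vec P}(D)$ is already $\Ocal_S$-linear by Lemma \ref{lemma:Loperator}, so it survives the quotient unchanged. This yields $\gamma_N\nabla^{WZW}_D=(D\otimes 1)\gamma_N-\Lscr^u_N\KS_{\Ccal/S,\vec P}(D)$, which is the displayed formula. The $\Ocal_S$-linearity of the difference $\gamma_N\nabla^{WZW}_D-(D\otimes 1)\gamma_N$ that was asserted in Subsection \ref{subsect:confblocks} is then re-derived as a consequence rather than assumed.

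The one point that needs genuine care — and which I expect to be the main obstacle — is the compatibility of $\tilde D$ with the quotient by exact relative forms and the verification that the induced operator is indeed the Gau\ss-Manin connection $D\otimes 1$ and not something differing by a correction term. Concretely: $\tilde D$ is only a \emph{rational} lift of $D$, regular on $\inj_N(\mathring\Ccal/S)$, and one must check that the Lie-derivative action $\tilde D=d\iota_{\tilde D}+\iota_{\tilde D}d$ on $\Omega^N_{\inj_N(\mathring\Ccal/S)/S}$ preserves exact forms (so descends to cohomology) and that its action on $\Hcal^N$ agrees with $D\otimes 1$ under the Grothendieck/affine comparison isomorphism; this is the content of the remark "left as an exercise" in Lemma \ref{lemma:residuederivation} together with Corollary \ref{cor:residuederivation}, but in the global relative setting it requires knowing that the lifts $\tilde D^{(1)},\dots,\tilde D^{(N)}$ built from the fiber-product description of $\theta_{(\Ccal/S)^{(N)}}$ are compatible with the De Rham differential on $\inj_N(\mathring\Ccal/S)$ (not merely the relative one). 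Once that is settled, the independence of the chosen lift $\tilde D$ is automatic because any two lifts differ by a vertical vector field regular on $\mathring\Ccal$, whose contribution to the cohomology action vanishes by the residue formula exactly as in the proof of Lemma \ref{lemma:Loperator}. The remaining assertions — that $\gamma_N$ is an embedding for $N$ large and that $\prod_N\gamma_N$ is an embedding of $\Rscr_P$-modules — are inherited verbatim from Theorem \ref{thm:globaldual1} and the $\Rscr_P$-module structure on $I\mapsto H^N(\inj_I(C\ssm P);\ZZ(|I|))$ noted in Remark \ref{rem:tateinsertion}.
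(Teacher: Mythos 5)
Your proposal is correct and follows essentially the same route as the paper, which states the theorem with no separate proof precisely because it is obtained by combining the relative form of Theorem \ref{thm:globaldual1}, Corollary \ref{cor:wzwdual}, and the passage to the quotient by exact relative polydifferentials under which $\tilde D$ induces $D\otimes 1$. You also correctly isolate the one point the paper treats lightly (that the Lie-derivative action of the rational lift $\tilde D$ descends to relative De Rham cohomology and agrees there with the Gau\ss-Manin connection), which is exactly where the paper's own discussion is merely asserted.
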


\begin{remark}\label{rem:noweightfiltration}
The kernel  of  $\gamma_N$  consists of the $\xi\in\Vcal^{\hat\lambda}_\qlog (\Ccal/S)$ with $\xi_N=0$. This is in fact
$W_{-1-N}\Vcal^{\hat\lambda}_\qlog (\Ccal/S)$ and so  these kernels define the opposite of the weight filtration: 
Indeed, $\xi$ lies in this kernel if and only if $\xi_{N+1}$ is without poles and this is equivalent (in view  of the Tate twist with $\ZZ(N+1)$) to $\xi_N$ being of weight $-1-N$. 
While the Gau\ss-Manin connection preserves the weight filtration, we cannot expect this to be so for the  WZW-connection: it might well happen that there exists a $\xi \in \Vcal^{\hat\lambda}_\qlog (\Ccal/S)$ with $\xi_{N-1}=0$, but  for which  $\Lscr_{N-1}(D)\xi$ (which after all depends on $\xi_{N+1}$ and $D$) is nonzero for some $D\in\theta_S$. So there is no weight filtration on the underlying local system.
\end{remark}

\begin{remark}\label{rem:topinterpretation}
Taking our cue from the genus zero case, we expect that the WZW-connection  will be a Gau\ss-Manin connection on the nose if  instead of the ordinary cohomology of the configuration bundle, we take  its cohomology with values in a local system. 
We should perhaps allow this  local system to depend on the argument, in the sense  that we might have a fixed decomposition 
 of $\glie^{\otimes N}\otimes V_\lambda$ into $\glie$-subresentations with a local system specified for every summand and
also allow the cohomology to be subject to perversity (support) conditions. But in the end it should be expressible in terms of the De Rham theory of the fundamental groupoid of the pointed curve restricted to the points at infinity specified by nonzero tangent vectors (when there are no points, this must be replaced by the De Rham theory of the fundamental group up to inner automorphisms). Our hope is that this would then produce a direct summand of a variation of a polarized Hodge structure, which then at the same time comes with the desired unitary structure. But note that for such a program even in the genus zero case work remains to be done.
\end{remark}

\end{document}